\documentclass[a4paper,UKenglish,cleveref, autoref, thm-restate, numberwithinsect]{lipics-v2021}

\pdfoutput=1 
\hideLIPIcs  

\title{
Measuring Depth of Matroids} 

\author{Jakub Balabán}{Faculty of Informatics, Masaryk University, Brno, Czech Republic}{jakbal@mail.muni.cz}{https://orcid.org/0000-0002-2475-8938}{Brno Ph.D. Talent Scholarship Holder – Funded by the Brno City Municipality}

\author{Petr Hlin{\v{e}}n{\'{y}}}{Faculty of Informatics, Masaryk University, Brno, Czech Republic}{hlineny@fi.muni.cz}{https://orcid.org/0000-0003-2125-1514}{}

\author{Jan Jedelský}{Faculty of Informatics, Masaryk University, Brno, Czech Republic}{xjedelsk@fi.muni.cz}{https://orcid.org/0000-0001-9585-2553}{}

\author{Kristýna Pekárková}{AGH University of Kraków, Poland}{pekarkova@agh.edu.pl}{https://orcid.org/0000-0003-3539-6431}{}

\authorrunning{J. Balabán and P. Hlin{\v{e}}n{\'{y}} and J. Jedelský and K. Pekárková} 

\Copyright{Jakub Balabán and Petr Hlin{\v{e}}n{\'{y}} and Jan Jedelský and Kristýna Pekárková} 

\ccsdesc[500]{Mathematics of computing~Matroids and greedoids}
\ccsdesc[300]{Mathematics of computing~Graph theory}

\keywords{Matroid, depth parameter, branch-depth, tree-depth} 

\category{} 

\relatedversion{} 

\funding{The authors have been supported by the project 24-11098S of the Czech
Science Foundation.}

\nolinenumbers 

\EventEditors{John Q. Open and Joan R. Access}
\EventNoEds{2}
\EventLongTitle{42nd Conference on Very Important Topics (CVIT 2016)}
\EventShortTitle{CVIT 2016}
\EventAcronym{CVIT}
\EventYear{2016}
\EventDate{December 24--27, 2016}
\EventLocation{Little Whinging, United Kingdom}
\EventLogo{}
\SeriesVolume{42}
\ArticleNo{23}

\usepackage[utf8]{inputenc}
\usepackage[T1]{fontenc}
\usepackage{amsmath}
\usepackage{amssymb}
\usepackage{amsthm}
\usepackage{stmaryrd}
\usepackage{mathrsfs} 
\usepackage{mathtools}
\usepackage{thm-restate}

\usepackage{url}
\usepackage{todonotes}
\usepackage{mathrsfs} 
\usepackage{soul}
\usepackage{relsize}
\usepackage[all,defaultlines=3]{nowidow}
\usepackage[mathlines]{lineno}
\usepackage{multicol}
\usepackage{xspace}
\usepackage{lineno}
\usepackage{nicefrac}
\usepackage{wasysym}
\usepackage{textpos}

\newcommand{\FF}{\mathbb{F}}

\newcommand{\ZZ}{\mathbb{Z}}

\DeclareMathOperator{\cl}{cl}

\DeclareMathOperator{\hght}{ht}

\DeclareMathOperator{\mtw}{mtw}
\DeclareMathOperator{\td}{td}
\DeclareMathOperator{\mnw}{w}
\DeclareMathOperator{\mtd}{mtd}

\DeclareMathOperator{\bw}{bw}
\DeclareMathOperator{\bd}{bd}

\DeclareMathOperator{\cd}{cd}
\DeclareMathOperator{\dd}{dd}
\DeclareMathOperator{\cdd}{cdd}

\DeclareMathOperator{\csd}{c^{*}\hspace{-2pt}d}
\DeclareMathOperator{\dsd}{d^{*}\hspace{-2pt}d}
\DeclareMathOperator{\csdd}{c^{*}\hspace{-2pt}dd}
\DeclareMathOperator{\cdsd}{cd^{*}\hspace{-2pt}d}
\DeclareMathOperator{\csdsd}{c^{*}\hspace{-2pt}d^{*}\hspace{-3pt}d}

\newcommand{\csdepth}{c$^*$-depth\xspace}
\newcommand{\dsdepth}{d$^*$-depth\xspace}
\newcommand{\cdsdepth}{cd$^*$-depth\xspace}
\newcommand{\csddepth}{c$^*$d-depth\xspace}
\newcommand{\csdsdepth}{c$^*$d$^*$-depth\xspace}

\DeclareMathOperator{\rnk}{r}
\DeclareMathOperator{\clo}{cl}

\DeclareMathOperator{\funeq}{\sim_{\rm f}}
\DeclareMathOperator{\funle}{\le_{\rm f}}
\DeclareMathOperator{\funnleq}{\nleq_{\rm f}}

\DeclareMathOperator{\extcontr}{{\oslash}}
\DeclareMathOperator{\coextdel}{\mbox{\reflectbox{$\oslash$}}}

\newtheorem{problem}[theorem]{Problem}

\newcommand{\hy}{\hbox{-}\nobreak\hskip0pt}

\newcommand{\NPh}{$\mathsf{NP}$\hy{}hard\xspace}
\newcommand{\NPc}{$\mathsf{NP}$\hy{}complete\xspace}
\newcommand{\FPT}{$\mathsf{FPT}$\xspace}

\def\ve#1{\mathchoice{\mbox{\boldmath$\displaystyle\bf#1$}}
{\mbox{\boldmath$\textstyle\bf#1$}}
{\mbox{\boldmath$\scriptstyle\bf#1$}}
{\mbox{\boldmath$\scriptscriptstyle\bf#1$}}}

\newcommand\veb{{\ve b}}

\newcommand\veAm{{\ve A}}

\newcommand\vel{{\ve l}}

\newcommand\veu{{\ve u}}
\newcommand\vev{{\ve v}}

\newcommand\vex{{\ve x}}

\newcommand{\sep}{\;|\;}

\newcommand{\contract}{\mathbin{/}}

\newcommand{\ca}{\mathcal}
\newcommand{\seq}{\subseteq}

\usepackage{todonotes}
\usepackage{tikz-cd}

\begin{document}

\maketitle

\begin{abstract}
    Motivated by recently discovered connections between matroid
    depth measures and block-structured integer programming [ICALP 2020, 2022], we undertake
    a systematic study of recursive depth parameters for matrices and matroids,
    aiming to unify recently introduced and scattered concepts.
    We propose a general framework that naturally yields eight 
    different depth measures for matroids, 
    prove their fundamental properties and relationships,
    and relate them to two established notions in the field: matroid branch-depth
    and a newly introduced natural depth counterpart of matroid tree-width.
    In particular, we show that six of our eight measures are mutually functionally inequivalent,
    and among these, one is functionally equivalent to matroid branch-depth and another to matroid tree-depth.
    Importantly, we also prove that these depth measures coincide on matroids and on matrices over any field,
    which is (somehow surprisingly) not a trivial task.
    Finally, we provide a comparison between the matroid parameters and classical depth measures of graphs.
\end{abstract}

\section{Introduction}

Structural parameters play a central role in combinatorial optimisation 
and algorithm design: they allow us to identify large classes of instances that, 
while intractable in general, admit efficient algorithms
when the underlying structure is sufficiently simple.

In graph theory, undoubtedly the most prominent structural parameter is tree-width,
whose importance is reflected in a wide range of structural and algorithmic consequences.
Among these is a fundamental result of Courcelle~\cite{courcelle1990monadic}, which shows that any property
expressible in monadic second-order logic (MSO) can be tested in linear time 
on graph classes of bounded tree-width. A closely related, but 
more restrictive parameter is the notion of tree-depth,
which plays a fundamental role in the sparsity theory of Nešetřil and Ossona de Mendez~\cite{sparsity-book}.

In the matroid setting, the analogue of graph tree-width and the most established notion is the parameter of branch-width. 
Numerous results known for graphs of bounded tree-width extend naturally to matroids of bounded branch-width.
Notably, Hliněný~\cite{Hli06} generalised Courcelle's Theorem to the matroid setting, 
proving that MSO properties of matroids represented over a finite field 
can be tested in polynomial time when branch-width is bounded.
As for depth parameters, two notions have been proposed as matroid analogues of tree-depth: {\em branch-depth}, introduced by
DeVos, Kwon, and Oum~\cite{DeVKO20}, and {\em contraction$^*$-depth}, defined independently by Kardoš,  Kr{\'{a}}l', Liebenau, and Mach~\cite{KarKLM17}.

Chan, Cooper, Kouteck\'y, Kr{\'{a}}l', and Pek{\'{a}}rkov{\'{a}}~\cite{ChaCKKP20,ChaCKKP22}
uncovered a surprising and powerful connection 
between matroid depth parameters and integer programming. 
In particular, they showed that matrices of bounded tree-depth -- an important tractable class of integer programs -- can be understood through 
the structure of the underlying vector matroids, 
and that suitable matroid depth measures capture exactly the minimum tree-depth attainable under row operations.

Building on this result, subsequent work has deepened both the structural and algorithmic understanding of these matroid parameters. 
Briański, Kouteck{\'{y}}, Kr{\'{a}}l', Pek{\'{a}}rkov{\'{a}}, and Schr{\"{o}}der~\cite{BriKKPS22,BriKKPS24} extended the results of Chan et al. 
to further matrix tree-depth variants and established additional connections between matroid depth measures and the structure of integer programs, 
in particular via bounds on the Graver bases of matrices, leading to broader fixed-parameter tractability results. 
Briański, Kráľ, and Lamaison~\cite{BriKL25} clarified the structural nature of contraction$^*$-depth 
by relating it to minimal contraction-depth extensions and showing that it admits characterisations analogous to those known for graph tree-depth.
Gajarský, Pekárková, and Pilipczuk~\cite{GajPP25} further investigated the obstructions for contraction$^*$-depth, 
contraction-depth, and deletion-depth of matroids representable over finite fields, and introduced a dual notion, deletion$^*$-depth.
Very recently, Bria\'nski, Hlin\v{e}n\'y, Kr{\'{a}}l', and Pek{\'{a}}rkov{\'{a}}~\cite{DBLP:journals/corr/abs-2402-16215prep} have shown
that matroid branch-depth is functionally (at most quadratically) equivalent to contraction$^*$-deletion$^*$-depth.

Motivated by these developments, we introduce and study a unifying framework for recursive depth measures of matroids,
which naturally yields a family of eight such different depth measures, out of which six are mutually functionally inequivalent.
The measures in our family exactly capture all aforementioned published depth measures except branch-depth,
and they include measures which are functionally equivalent to branch-depth, and also to the newly introduced matroid tree-depth,
which is a natural depth counterpart of the less known matroid tree-width of Hlin\v{e}n\'y and Whittle~\cite{matroid-tree-width}.
In addition to proving mutual comparisons of the known and new matroid depth measures, we, in particular,
prove that the recursively defined measures of our framework exactly coincide on matroids and on matrices over any field,
which---perhaps surprisingly---does not follow directly from their definitions.
We also give a comparison between these matroid parameters and classical depth measures of graphs.

\section{Basic terminology and notation}\label{sec:terminology}

In this preliminary section, we establish the notation and terminology used throughout our paper.
We denote the set of nonnegative integers by $\mathbb{N}$ and the set $\{1, \ldots, k\}$
of the first $k$ positive integers by $[k]$. Vectors are written in bold, such as $\ve v$, while their coordinates are in normal font.
To avoid a clash with the matroid deletion operator, we use the ordinary `minus' sign for set subtraction, i.e., we write $A-B$ for sets~$A,B$.

A \emph{parameter} (of a graph, a matroid, or a matrix) is a function that assigns a nonnegative integer to each argument.
A parameter $p$ is \emph{bounded} on a class of structures $\ca C$ if there exists $k \in \mathbb{N}$ such that $p(S) \leq k$ for every $S \in \ca C$.
We say that a parameter $p$ is \emph{functionally smaller} on $\ca C$ than a parameter $p'$, denoted $p \funle p'$, 
if there is a computable function $f$ such that $p(S) \le f(p'(S))$ for all~$S\in\ca C$.
If $p \funle p'$ and $p' \funle p$, we say that $p$ and $p'$ are \emph{functionally equivalent} and write $p \funeq p'$.

\subsection{Graphs}

All graphs considered in this paper are finite and undirected, and they may contain loops and parallel edges.
The \emph{radius} of a graph $G$ is the minimum $r \in \mathbb{N}$ such that there exists a vertex with distance at most $r$ from every vertex of $G$.

A \emph{rooted tree} $T$ is a connected acyclic graph with a specified vertex, called the \emph{root} of $T$.
The \emph{height} $\hght(T)$ of a rooted tree $T$ is the number of vertices on the longest root-to-leaf path of~$T$.
Given a vertex $v$ of a rooted tree $T$, we say that a vertex $u$ is an \emph{ancestor} of $v$ if $u$ lies on the unique path from $v$ to the root of $T$. Any vertex $w$ such that $v$ is an ancestor of $w$ is then called a \emph{descendant} of $v$. 
By the \emph{closure} $\cl(T)$ of a rooted tree $T$ we understand the graph obtained from $T$ by adding an edge from every vertex to each of its ancestors.
A \emph{rooted forest} $F$ is a graph whose each connected component is a rooted tree.
The \emph{height} of a rooted forest is the maximum height of its components.
The \emph{closure} $\cl(F)$ of a rooted forest $F$ is the graph obtained by taking the closure of every connected component of $F$.

The \emph{tree-depth} $\td(G)$ of a graph $G$ is the minimum height of a rooted forest $F$ such that the closure $\cl(F)$ contains $G$ as a subgraph.
Observe that in this definition, the tree-depth of a single-vertex graph is one, and the tree-depth of any star is 2.

\subsection{Matrices}
\label{sub:matrices}

If $\FF$ is a field, we denote the set of all matrices with $m$ rows and $n$ columns by $\FF^{\,m \times n}$.
Given a matrix $\veAm$, we denote the entry on its $i$-th row and $j$-th column by $A_{ij}$.
If $\veAm$ and $\veAm'$ are two matrices over $\FF$, we say that they are \emph{row-equivalent}
if one can be obtained from the other by performing elementary row operations.

In the context of integer programming, several variants of tree-depth of matrices are used.
Given a matrix $\veAm$, the \emph{primal graph} $G_P(\veAm)$ of $\veAm$ is the graph in which vertices one-to-one correspond to columns,
and two vertices are adjacent if $\veAm$ contains a row in which the corresponding entries are both nonzero.
Analogously, the \emph{dual graph} $G_D(\veAm)$ of a matrix $\veAm$ is the graph with vertices corresponding to rows
and two vertices being adjacent if there exists a column of $\veAm$ in which the corresponding entries are both nonzero.
Finally, the \emph{incidence graph} $G_I(\veAm)$ of a matrix $\veAm$ is a bipartite graph in which one part of vertices corresponds to rows,
the other to columns, and two vertices corresponding to a row $i$ and a column $j$ are adjacent if $A_{ij} \neq 0$.
The \emph{primal tree-depth} $\td_P(\veAm)$ of a matrix $\veAm$ is then the tree-depth of its primal graph $G_P(\veAm)$.
The \emph{dual} and the \emph{incidence tree-depth} are defined analogously as the tree-depth of the dual and incidence graphs of~$\veAm$.

\subsection{Matroids}\label{sub:matroids}

We follow the standard matroid terminology from the book of Oxley~\cite{Oxley}.
A \emph{matroid} is a pair $(E, \mathcal{I})$ where $E$ is a finite set and 
$\mathcal{I} \subseteq 2^E$ is a collection of subsets of $E$ satisfying the following three axioms:
\begin{enumerate}
    \item $\emptyset \in \mathcal{I}$.
    \item If $A \in \mathcal{I}$ and $B \subseteq A$, then $B \in \mathcal{I}$.
    \item If $A, B \in \mathcal{I}$ and $|A| > |B|$, then there exists an element $e \in (A-B)$ such that $B \cup \{e\} \in \mathcal{I}$.
\end{enumerate}

For a matroid $M = (E, \ca I)$,
the set $E$ is called the \emph{ground set} of $M$, and the sets in $\mathcal{I}$ are referred to as \emph{independent}.
We denote these sets by $E(M)$ and $\ca I(M)$, respectively.
A maximal independent set 
is called a \emph{basis}, while a minimal dependent set is called a \emph{circuit}.
We denote the set of all bases of $M$ by $\mathcal{B}(M)$, and the set of all circuits by $\mathcal{C}(M)$.

The \emph{dual matroid} $M^*$ of a matroid $M$ is one whose bases are set-complements of the bases of~$M$.
Formally, $\mathcal{B}(M^*)=\{B'\mid B'=E(M)-B \mbox{ for some }B\in\mathcal{B}(M)\}$.

Let $X \subseteq E$ be a set of matroid elements. The \emph{rank} $\rnk_M(X)$ of $X$ is defined as the cardinality of the largest independent subset of $X$.
The rank of $M$, denoted by $\rnk(M)$, is the rank of its ground set $E$.
An element $e \in E$ is called a \emph{loop} if $\rnk_M(\{x\}) = 0$ and a \emph{coloop} if $\rnk_{M^*}(\{e\})=0$ (that is, if $e$ is contained in every basis of $M$).
Two elements $e$ and $e'$ are \emph{parallel} if $\rnk_M(\{e\}) = \rnk_M(\{e'\}) = \rnk_M(\{e, e'\}) = 1$.

The rank function of a matroid satisfies the \emph{submodularity} condition; for any $X,Y\subseteq E(M)$, we have
$\rnk_M(X)+\rnk_M(Y)\geq\rnk_M(X\cup Y)+\rnk_M(X\cap Y)$.
If an equality holds there, then $(X,Y)$ is called a \emph{modular pair} in~$M$.
The \emph{closure} function of a matroid $M$ is defined on the subsets of $E(M)$ by $\cl_M(X)=\{e\mid \rnk_M(X\cup\{e\})=\rnk_M(X)\}$.

\paragraph*{Matroid operations}

Let $M$ be a matroid and $e\in E(M)$.
The operation of \emph{deletion} of $e$, denoted by $M \setminus e$, yields the matroid 
$(E(M) - \{e\}, \{I \mid I \in \mathcal{I}\wedge I \subseteq E-\{e\}\})$. 
The \emph{contraction} of $e$ in $M$, denoted by $M\contract e$, is the operation dual to deletion, that is, $M\contract e=(M^*\setminus e)^*$.
In other words, $M\contract e$ yields the matroid $(E - \{e\}, \{I \mid (I \cup \{e\}) \in \mathcal{I}\wedge I \subseteq E-\{e\} \})$
if $e$ is not a loop, and $M\setminus e$ if $e$ is a loop.
These operations are naturally extended to sets $X$ in place of single $e$.

A matroid $M'$ is a \emph{restriction} of $M$ if $M'=M\setminus X$ for some~$X\subseteq E(M)$.
A matroid $M'$ is a \emph{minor} of $M$ if $M'$ is obtained by a sequence of element deletions and contractions from~$M$.
This is equivalent to stating that $M'=M\setminus X\contract Y$ for some disjoint~$X,Y\subseteq E(M)$.

A matroid $M'$ is an \emph{extension} (a \emph{coextension}) of $M$ by element $e$ if $M=M'\setminus e$ ($M=M'\contract e$) for some~$e\in E(M')$.
A coextension is thus a dual operation to an extension.
An extension $M'$ of $M$ by $e$ is called \emph{free} if all sets $Z\subseteq E(M)$ such that $e\in\cl_{M'}(Z)$ are of rank~$\rnk(M)$.

A pair $(X,Y)$ of disjoint sets $X,Y\subseteq E(M)$ is a \emph{bispan} if $E(M)=\cl_M(X)\cup\cl_M(Y)$.
A bispan $(X,Y)$ is a \emph{connected bispan} if $\rnk_M(X)+\rnk_M(Y)>\rnk(M)$.
An extension $M'$ of $M$ by $e$ is called \emph{relatively free in a bispan $(X,Y)$} of $M$ if $e\in\cl_{M'}(Z)$ for $Z\subseteq E(M)$,
if and only if $(X\cup Z,Y\cup Z)$ is a modular pair.
While a free extension trivially exists for every matroid, the existence of a relatively free extension in an arbitrary connected bispan
is a nontrivial feature, see \Cref{thm:GGWadde}.

\paragraph*{Matroid representations}

Two fundamental ways of representing matroids are the graphic and vector matroids. 
First, the \emph{cycle matroid} $M(G)$ of a graph $G$ is the matroid whose ground set is the set of edges $E(G)$,
and the independent sets are precisely all subsets of $E(G)$ that are acyclic in $G$. 

Second, given a matrix $\veAm$ over a field $\mathbb{F}$, the \emph{vector matroid} $M(\veAm)$ is the matroid whose ground set consists of the column vectors of $A$. 
A subset of elements is independent in this matroid if and only if the corresponding column vectors are linearly independent over $\mathbb{F}$.
Note that elementary row operations in $\ve A$ do not change the matroid~$M(\ve A)$.

If a matroid $M$ is isomorphic to the vector matroid of some matrix $\veAm$ over a field $\FF$, we say that $M$ is \emph{representable over} $\FF$ (or $\FF$\emph{-representable}),
and the matrix $\veAm$ is called a \emph{representation} (or an $\FF$\emph{-representation}).
If a matroid $M$ is associated with a particular $\FF$-representation $\ve A$ (precisely, with the row-equivalence class of $\ve A$),
then we speak about an $\FF$\emph{-represented matroid}.

In the case of a represented matroid $M=M(\ve A)$, when $\vev$ is a column of~$\ve A$ representing a non-coloop element~$e$,
then the operation of deleting $\vev$, denoted by $\ve A\setminus\vev$, simply removes the column $\vev$ from~$\ve A$,
and the resulting matrix represents~$M\setminus e$.
For the operation $\veAm \contract \vev$ of contracting a nonzero (non-loop) vector $\vev$ in~$\ve A$, we interpret this operation
as pivoting $\vev$ in $\ve A$ into a unit vector $\hat \vev$, and then removing the row with entry one in column $\hat \vev$, as well as the column $\hat \vev$ itself.
One can easily check that the resulting matrix represents~$M\contract e$.
Additionally, we use the following two operations, which differ from standard matrix contractions and deletions.
By $\veAm \extcontr \vev$, we denote the matrix obtained from $\veAm$ by adding a column vector $\vev$ to $\veAm$ and then contracting it.
Analogously, by $\veAm \coextdel \vev$, we denote the matrix obtained from $\veAm$ by adding a row vector $\vev$ to $\veAm$
(which actually represents the operation of coextending $\veAm$ with a new unit vector padded with $\vev$ in its new row) and then deleting the column of this unit vector.

\paragraph*{Connectivity}

A \emph{component} of a matroid $M$ is an inclusion-wise maximal subset of elements of $M$ such that any two of its elements are contained in a common circuit.
A matroid is \emph{connected} if it has only one component.
In the case of matrices, a matrix $\ve A$ is connected if the matroid $M(\ve A)$ is connected, and the components of $\ve A$ are defined analogously.

The \emph{connectivity function} of a matroid $M$ is defined, for a set $X\subseteq E(M)$, as $\lambda_M(X)=\rnk_M(X)+\rnk_M(E(M) - X)-\rnk(M)$.
Note that in some literature, the connectivity function of matroids is defined with an additional~$+1$.
It is easy to observe that $\lambda_M(X)=0$ if $X$ is a component of~$M$.

\section{Technical overview}

We begin with an extended overview of our paper, introducing the studied depth
concepts and summarising our main results.
We refer to further \Cref{sec:depths} for exact definitions of the terms sketched here, 
and to the subsequent sections for the full formal statements and proofs.

The tree-depth defined above is a prominent structural depth measure of graphs.
Although the name `tree-depth' was introduced only later within the sparsity theory of Ne\v{s}et\v{r}il and Ossona de Mendez~\cite{sparsity-book},
several equivalent concepts had been studied before.
In the context of this paper, it is interesting that the tree-depth of a graph~$G$ can be equivalently defined by the following recursion.
If $G$ consists of a single vertex, then $\td(G) = 1$. 
If $G$ is disconnected, consisting of components $G_1,\ldots,G_k$, then $\td(G)=\max\{\td(G_i)\mid i\in[k]\}$.
Finally, if $G$ is connected and has more than one vertex, then $\td(G)= 1 + \min\{\td(G-v)\mid v\in V(G)\}$.

\paragraph*{Matroid and matrix depth measures}
Among matroid depth measures, the notions of branch-depth of \cite{DeVKO20}
and contraction$^*$-depth of \cite{KarKLM17} (originally also
named `branch-depth' there) are both defined via decompositions, analogously to the first definition of tree-depth above.

Namely, a \emph{branch-depth decomposition} (\Cref{def:branchdepth}) of a matroid $M$ is a rooted tree $T$ associated with a bijection 
from the elements $E(M)$ of $M$ to the leaves of $T$.
The width of an internal node $w$ of $T$ is the maximum connectivity of a set $X\subseteq E(M)$ where $X$ is the union of the elements
mapped to a subset of the components of~$T-w$.
The \emph{branch-depth} of $M$ is then the minimum $k$ such that there exists a branch-depth decomposition of $M$ of both radius and maximum width over its nodes at most~$k$.

\smallskip
On the other hand, one may choose to generalise the second, recursive approach to tree-depth.
Since matroids do not have vertices (very informally, they may be viewed as ``graphs consisting only of edges as the elements''),
such a definition has to remove matroid elements. This can be done in two ways -- by deletion or contraction.
To this end we say that a matroid $M'$ is \emph{c-transformed} (\emph{d-transformed}) from a matroid $M$ if
$M'$ is obtained by contracting an element of $M$, that is $M'=M\contract e$ (deleting an element of $M$, that is $M'=M\setminus e$, respectively)
for some~$e\in E(M)$, and $M'$ is \emph{cd-transformed} from $M$ if $M'$ is c-transformed or d-transformed from~$M$.
We can then define the following:

\begin{definition}[\Cref{def:eight-parameters}]\label{def:eight-prelim}
For $\gamma\in\{$`c',`d',`cd'$\}$, the \emph{$\gamma$-depth} of a matroid $M$ is:
\begin{enumerate}[i.]
\item $\gamma$-depth$(M)=1$ if $M$ has only one element, and
\item $\gamma$-depth$(M)=\max\{\gamma$-depth$(M')\mid M'$ is a component of~$M\}$ if $M$ is not connected, and
\item $\gamma$-depth$(M)= 1 + \min\{\gamma$-depth$(M')\mid M'$ is $\gamma$-transformed from~$M\}$ otherwise.
\end{enumerate}
\end{definition}

In~\cite{DeVKO20}, DeVos, Kwon, and Oum used \Cref{def:eight-prelim} to define the \emph{contraction-depth}, \emph{deletion-depth},
and \emph{contraction-deletion-depth} of matroids, which we abbreviate in this paper as the \emph{c-depth}, \emph{d-depth}, and \emph{cd-depth}.
Note that some other authors choose in \Cref{def:eight-prelim} to define point (i.) as $\gamma$-depth$(M)=\rnk(M)$ when $M$ has one element.
This change, overall, decreases the resulting value by one, unless $M$ is of positive rank and consists of only loops and~coloops.
However natural it may seem, choosing $\gamma$-depth$(M)=\rnk(M)$ when $M$ has one element does not play well with matroid duality,
and hence we stick with exact \Cref{def:eight-prelim}.

When dealing with vector matroids represented by a matrix $\ve A$ (over an arbitrary field, with matroid elements being the column vectors), 
we define the $\gamma$-depth of $\ve A$ as the $\gamma$-depth of the matroid $M(\ve A)$.
In this view, a d-transformation simply means the removal of a column~of~$\ve A$, while a c-transformation is done by contracting (i.e., projecting along) a column vector of~$\ve A$.
This approach is taken, for instance, by the authors of the aforementioned~works~\cite{ChaCKKP22,BriKKPS24,GajPP25}.

Considering the depth measures of matrices -- that is, of actual vector representations of matroids -- brings one more natural possibility
for a $\gamma$-transformation in \Cref{def:eight-prelim} (iii.):
instead of contracting a column of $\ve A$, we may contract an arbitrary $1$-dimensional subspace in the vector space of~$\ve A$,
abbreviated as being \emph{c$^*$-transformed}.
The dual operation to this, called \emph{d$^*$-transformation}, then simply corresponds to adding a new row to the matrix~$\ve A$.
This specific approach has been used, along the lines of \Cref{def:eight-prelim}, to define 
the contraction$^*$-depth of matrices in \cite{ChaCKKP22} (see \Cref{def:csd-representable}), the contraction$^*$-deletion-depth of matrices 
in \cite{BriKKPS24} (see \Cref{def:csdd-representable}) and the deletion$^*$-depth of matrices in \cite{GajPP25} (see \Cref{def:dsd-representable}).
After extending the list of admissible symbols for $\gamma$ in \Cref{def:eight-prelim} 
by $c^*$ and $d^*$ together with the appropriate transformations, we abbreviate these measures as the c$^*$-depth, c$^*$d-depth, and d$^*$-depth of matrices.

\paragraph*{Connection to Integer Programming}
The primary motivation for the study of matroid and matrix depth parameters came from a surprising connection
to the computational complexity of certain variants of integer programming (see \Cref{sub:IPconn}), 
formalised in the following summarising theorem.
Here, by $\td^*_P(\veAm), \td^*_D(\veAm)$, and $\td^*_I(\veAm)$ we denote 
the minimum primal, dual, and incidence tree-depth, respectively, across all matrices that are row-equivalent to $\veAm$.

\begin{restatable}[\cite{ChaCKKP22} and \cite{BriKKPS24}]{theorem}{IPresults}
\label{thm:min_td_equivalences}
For every matrix $\veAm$ it holds that
\begin{enumerate}[(a)]
\item the minimum primal tree-depth of a matrix row-equivalent to $\veAm$ is equal to the d-depth of $\veAm$,
    that is, $\td^*_P(\veAm)=\dd(\veAm)$,
\item the minimum dual tree-depth of a matrix row-equivalent to $\ve A$ is equal to the c$^*$-depth of $\veAm$ decreased by one,
    that is, $\td^*_D(\veAm)=\csd(\veAm) - 1$, unless $\ve A$ is of positive rank and $M(\ve A)$ consists only of loops and 
    coloops, in which case $\td^*_D(\veAm)=\csd(\veAm)$,
\item the minimum incidence tree-depth of a matrix row-equivalent to $\ve A$ is equal to the \csddepth of $\veAm$ increased by one, that is, $\td^*_I(\veAm)=\csdd(\veAm) + 1$.
\end{enumerate}
\end{restatable}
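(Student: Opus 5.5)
Each of the three equalities will be proved by a pair of inequalities, by induction on the number of columns of $\veAm$ following the recursive \Cref{def:eight-prelim}. Both directions use the recursive description of graph tree-depth recalled above. The three parts are handled in parallel, and the additive constants are fixed by the base cases.

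\textbf{Part (a).} For $\td^*_P(\veAm)\le\dd(\veAm)$ I first reduce to components. If $M(\veAm)$ splits into components, I choose row operations making $\veAm$ block-diagonal with respect to them. Then the primal graph is the disjoint union of the primal graphs of the blocks, and tree-depth takes the maximum, matching (ii). If $M(\veAm)$ is connected and $e=\vev$ attains the minimum in (iii), induction gives a row-equivalent form of $\veAm\setminus\vev$ of primal tree-depth $\dd(\veAm)-1$. The same row operations applied to $\veAm$ leave $\vev$ as an extra column; in the primal graph $\vev$ is adjacent to at most everything, so it becomes the new root, giving $+1$.

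For the converse, I take a row-equivalent $\veAm'$ with primal tree-depth $t$. If $G_P(\veAm')$ is disconnected, its vertex parts are unions of matroid components, since a circuit induces a connected subgraph of the primal graph. So I may assume it is connected and delete the root column $v$. Then $G_P(\veAm'\setminus v)$ has tree-depth $t-1$. Its connected pieces need not be matroid components, but each matroid component lies inside one piece, and d-depth is monotone under restriction. Monotonicity of d-depth is a small lemma I will prove separately by induction.

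\textbf{Parts (b) and (c).} For (b), a c$^*$-transformation contracts a vector $\veu$ of the column space. I put $\veu$ in a basis so that it corresponds to one row; contracting it then removes that row, which removes a vertex of the dual graph. Conversely, deleting the root row $r$ of a dual tree-depth decomposition is exactly contracting the $1$-dimensional subspace dual to that row. The component case again uses block-diagonalisation for one direction. For the other, components of the dual graph split the rows, hence split the columns into column sets with disjoint row supports, which are unions of matroid components. The exceptional loops/coloops case is a base-case check: a single nonzero column has $\csd=1$, while its dual graph has one vertex. Part (c) combines both moves, deleting either a row vertex or a column vertex of the incidence graph, and the $+1$ comes from the base case, where a single column together with its row gives incidence tree-depth $2$.

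The main obstacle is the ``$\ge$'' direction, i.e.\ from a tree-depth decomposition to a depth sequence. There the graph components after removing the root do not coincide with matroid components, and I must argue monotonicity of the depth parameters under taking submatrices/restrictions, or under restricting to a subset of components. For c$^*$ this monotonicity requires care, since contraction and restriction interact; I would prove that restricting to a union of components commutes with contracting a vector, after projecting the vector onto that part's span.
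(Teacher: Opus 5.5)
\textbf{The gap is in part (c), and it cannot be repaired: (c) as normalised in this paper is false.} (The paper only quotes this theorem from \cite{ChaCKKP22,BriKKPS24} and gives no proof, so I assess your outline on its own.)

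Your claim that the $+1$ ``comes from the base case, where a single column together with its row gives incidence tree-depth $2$'' ignores zero columns. A loop has $\csdd=1$, but as an isolated column vertex its incidence tree-depth is $1$, not $2$. Loops are exactly what c$^*$-steps tend to produce. Concretely, take $\ve A=(1\;\;1)$. Deleting a column gives $\csdd(\ve A)=1+\csdd((1))=2$. Every matrix row-equivalent to $\ve A$ is $(a\;\;a)$ with $a\neq0$, whose incidence graph is a path on three vertices, so $\td^*_I(\ve A)=2\neq 3$.

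So the direction ``from an incidence tree-depth decomposition to a depth sequence'' cannot be completed. Your two moves (a root column is a deletion, a root row is a contraction) establish only
\[ \csdd(\ve A)\le\td^*_I(\ve A)\le\csdd(\ve A)+1 . \]
Both bounds are attained: $(1\;\;1)$ above attains the lower one, and the $2\times3$ representation of $U_{2,3}$ (with $\csdd=2$, $\td^*_I=3$) attains the upper one. The identity $\td^*_I=\csdd+1$ does hold for the variant of \cite{BriKKPS24} in which a one-column matrix has depth equal to its rank (cf.\ \Cref{rem:diff1}). With that base case, loops give $0+1$ and nonzero columns $1+1$, and your induction goes through.

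\textbf{Parts (a) and (b): the scheme is right, but two points need fixing.} Block-diagonalising along matroid components, and matching removal of a root row $r$ with contracting the unit vector $\ve e_r$, are correct. Note that $\ve e_r$ lies in the column space when the rows are independent; otherwise deleting row $r$ does not change the matroid.

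First, the obstacle you single out is not real. Induct on the statement that the depth of $M(\ve B)$ is at most $\td(G(\ve B))$ plus the constant, for every matrix $\ve B$. If $M(\ve B)$ is disconnected, the graph of each component's column submatrix is a subgraph of $G(\ve B)$, so no monotonicity of the matroid parameter is needed. For c$^*$-depth such monotonicity is in any case \Cref{lem:extcd-to-csd}.

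Second, the offset in (b) is not a mere base-case check. If $\ve A\extcontr\vev$ fell into the exceptional case, your step would give only $\td^*_D(\ve A)\le\csd(\ve A)$. It never does: coloops of $M^+\contract f$ are coloops of $M^+$, hence of $M=M^+\setminus f$, so a c$^*$-transformation of a connected matroid on at least two elements has no coloops. The disconnected case then needs a short computation of the maximum: coloop components have $\td^*_D=\csd=1$, the other non-loop components have $\td^*_D=\csd-1\ge1$. Also, (b) needs independent rows; for instance the $1\times1$ zero matrix has $\td^*_D=1$, not $0$.
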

\medskip
The occurrence of the starred depth measures c$^*$-depth and \csddepth in \Cref{thm:min_td_equivalences}, 
even outside of the IP-related application to matrices, naturally prompts the following question:
Can these parameters be defined in a uniform way for abstract matroids, that is, without referring to a particular configuration of vectors in a vector space?
If so, then what are the structural and algorithmic properties and mutual relations of these depth measures?

Answering these and the associated basic theoretical questions constitutes the main new contribution of our paper.

\paragraph*{Unifying matroid-depth framework}

In this paper, we introduce a new unifying framework (see \Cref{sec:recursivemeas}) which, following the recursive scheme of \Cref{def:eight-prelim},
also incorporates the concepts of being `c$^*$-transformed' and `d$^*$-transformed' in purely matroidal terms.

Inspired by the concept of elementary operations of Geelen, Gerards, and Whittle~\cite{DBLP:journals/siamdm/GeelenGW06}, 
we define two operations that will take over the role of $c^*$- and $d^*$-transformations of matrices.

\begin{definition}
A matroid $M'$ is said to be a \emph{c$^*$-transformation} of a matroid $M$ if there exists a matroid $M^+$ and an element $f \in E(M^+)$ 
such that $f$ is not a loop nor coloop, $M = M^+ \setminus f$, and $M' = M^+ \contract f$.
Analogously, $M'$ is called a \emph{d$^*$-transformation} of $M$ if there is $M^+$ and $f \in E(M^+)$ 
such that $f$ is not a loop nor coloop, $M = M^+ \contract f$, and $M' = M^+ \setminus f$.
\end{definition}
Note that a d$^*$-transformation is the dual operation to a c$^*$-transformation.
\medskip

These two new operations then define the meaning of `$\gamma$-transformed' in \Cref{def:eight-prelim} for the symbols `c$^*$' and `d$^*$' within $\gamma$
(see \Cref{def:eight-parameters}).
Thus, for illustration, the c$^*$-depth of a matroid $M$ is defined as follows: (i) $1$ if $M$ has only one element,
(ii) the maximum of the c$^*$-depths of the components of $M$, if $M$ is disconnected, and (iii)
one plus the minimum of the c$^*$-depth over all matroids which are c$^*$-transformations of~$M$ otherwise.

A natural question now is whether, for a matroid $M=M(\ve A)$ represented by a matrix~$\ve A$, the c$^*$-depth (or \csddepth) 
of the matrix $\ve A$ is equal to the c$^*$-depth of~$M$ itself.
Moreover, we may ask further: if $\ve A_1$ and $\ve A_2$ are two matrices representing the same matroid
(but possibly over different fields), is it true that the c$^*$-depths of $\ve A_1$ and of $\ve A_2$ are equal?
Although the definitions of c$^*$-depth for matrices and matroids in the framework of \Cref{def:eight-prelim} look ``exactly the same'',
this is not a trivial question.
In the recursive process of computing the c$^*$-depth, one may choose to contract a $1$-dimensional subspace represented
by a vector which is over the field of $\ve A_1$, but it cannot be represented over the field of $\ve A_2$.
Or, one may choose a c$^*$-transformation of a matroid $M$ by an element $f$ which is representable over no field at all.

To illustrate that this is a real problem, consider the following example with the Fano matroid~$F_7$ (i.e., the projective plane over the binary field),
which is illustrated in \Cref{fig:fanoex}.
The matroid $M_1=F_7\contract f$ (where $f\in E(F_7)$) consists of three parallel pairs in a line.
The matroid $M_2=F_7\setminus f$ is representable over, say, the reals $\mathbb{R}$.
Obviously, $M_1$ is a c$^*$-transformation of~$M_2$.
However, if there was an $\mathbb{R}$-representation $\ve A$ of $M_2$ such that $\ve A$ can be c$^*$-transformed to a representation of $M_1$,
then the three lines of $M_2$ which formerly contained $f$ in $F_7$ would need to have a point in common in the representation $\ve A$,
in order to yield a representation of $M_1$ after any contraction.
Hence, we would get an $\mathbb{R}$-representation of the Fano $F_7$, which is impossible.

\begin{figure}[t]
    \centering
\begin{tikzpicture}[xscale=1.5,
    every node/.style={circle, fill=black, inner sep=1.2pt, minimum size=1.6pt},
]
    \node (A) at (90:2) {};
    \node (B) at (330:2) {};
    \node[label=above left:$f$] (C) at (210:2) {};
    \node (D) at (30:1) {};
    \node (E) at (150:1) {};
    \node (F) at (270:1) {};
    \node (G) at (0,0) {};
    \draw (A) -- (B) -- (C) -- (A); 
    \draw (A) -- (F); 
    \draw (B) -- (E); 
    \draw (C) -- (D); 
    \draw (G) circle (9.9mm);
\end{tikzpicture}
    \caption{The Fano matroid $F_7$ -- the projective plane over the binary field (its $7$ lines are the six line segments and the central cycle), 
	which is not representable over fields whose characteristic is different from~$2$. }
    \label{fig:fanoex}
\end{figure}

\medskip
To answer the previous questions, we prove:

\begin{theorem}[\Cref{cor:csdmatrixeq} and \Cref{cor:csxxmatrixeq}]\label{thm:matromatrix}
Let $\veAm$ be a matrix over a field $\FF$ and $M=M(\ve A)$ be the matroid represented by $\ve A$.
Then, for any $\gamma\in\{$`c$^*\!$', `d$^*\!$', `c$^*\!$d', `cd$^*\!$'$\,\}$,
the $\gamma$-depth of the matrix $\ve A$ equals the $\gamma$-depth of the matroid~$M$.
\end{theorem}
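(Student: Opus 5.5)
The direction ``matroid $\gamma$-depth $\le$ matrix $\gamma$-depth'' is the easy one, and I would do it first by induction on $|E(M)|+\rnk(M)$. Every matrix operation used in the matrix definitions is a matroid $\gamma$-transformation. Deleting a column or contracting it (via pivoting) is a d- or c-transformation. Contracting a $1$-dimensional subspace spanned by a vector $\vev$, that is, forming $\veAm\extcontr\vev$, is a c$^*$-transformation with $M^+=M(\veAm\,|\,\vev)$ and $f=\vev$. Adding a row, $\veAm\coextdel\vev$, is a d$^*$-transformation. The cases where $f$ would be a loop or coloop give trivial transformations and can be discarded. Components of $M(\veAm)$ correspond to column blocks of a row-equivalent block-diagonal matrix, so step (ii) of the recursion matches on both sides. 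Induction then gives this inequality for all four $\gamma$.

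The substance is the reverse inequality: every matroid c$^*$-transformation used in an optimal recursion must be simulated inside $\FF$. Here $f$ may be non-representable, as the Fano example shows. My plan is to prove a \emph{monotonicity lemma under quotients}. If $N$ is a quotient of $M$ on the same ground set (every flat of $N$ is a flat of $M$) and both are represented over $\FF$ with $N$ obtained from $M$ by a sequence of matrix c$^*$-operations, then the matrix $\gamma$-depth of $N$ is at most that of $M$. Dually, for lifts I would use the d$^*$ operations. With this lemma, it suffices to find, for a given c$^*$-transformation $M^+\contract f$ of $M=M(\veAm)$, a vector $\vev$ over $\FF$ such that $M(\veAm\extcontr\vev)$ is a \emph{free-er} elementary quotient, meaning $M(\veAm\extcontr\vev)$ has $M^+\contract f$ as a quotient. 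Equivalently, the modular cut of $\vev$ in $M(\veAm)$ should be contained in the modular cut of $f$.

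To find $\vev$, I would look at the modular cut $\mathcal{M}_f$ of flats of $M$ whose closure in $M^+$ contains $f$. Let $F_0$ be the intersection of all flats in $\mathcal{M}_f$. If $F_0$ is a single flat with $\rnk_M(F_0)\ge1$, I would take $\vev$ to be any nonzero vector in the span of $F_0$ chosen as the principal point; then the modular cut of $\vev$ is the principal cut generated by $F_0$, and every flat in it contains $F_0$ and so lies in $\mathcal{M}_f$. Otherwise, the two flats of $\mathcal{M}_f$ with empty or too-small intersection form a modular pair, and in particular a connected bispan $(X,Y)$. Then $\mathrm{span}(X)\cap\mathrm{span}(Y)\neq 0$ in $\FF^m$, and I would pick $\vev$ there. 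I would then need to argue, using the relatively-free extension of \Cref{thm:GGWadde}, that one may assume $f$ is relatively free in such a bispan without increasing the depth. For this I would again apply the quotient lemma: the relatively free extension yields the freest quotient compatible with the bispan, and $\vev$ lies in its modular cut.

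After this reduction, the induction closes for c$^*$. The d$^*$ case follows by duality, using the standard orthogonal-complement representation of $M^*$, under which adding a row is dual to contracting a vector. The mixed cases c$^*$d and cd$^*$ combine this with the trivially representable d- and c-steps.

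The main obstacle is the quotient-monotonicity lemma combined with the reduction to relatively free elements over a \emph{fixed, possibly finite} field. Over a finite field a generic vector in a subspace need not exist, so the freest choice is not available. I would first try to prove monotonicity purely combinatorially: show by induction that any recursion tree for $M$ can be transported to a quotient $N$, using that components of $N$ are unions of components of $M$ and that a c$^*$-step for $M$ induces one for $N$ via the Higgs factorisation of quotients. Only afterwards would I check that each induced step is realisable by a vector over $\FF$ lying in the intersection of the spans of a connected bispan.
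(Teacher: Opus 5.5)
\textbf{There is a genuine gap.} Your quotient-monotonicity lemma is false, and even granting it, the reduction would not close the induction.

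A quotient can have larger c$^*$-depth. We have $\csd(I_2)=1$, since $M(I_2)$ consists of two coloops. But $I_2\extcontr(1,1)^T$ is a $1\times 2$ matrix with two nonzero entries, i.e.\ $U_{1,2}$, which has c$^*$-depth $2$. More drastically, $I_n\extcontr(1,\dots,1)^T$ represents the circuit $U_{n-1,n}$, whose c$^*$-depth is at least $\log_2 n$ by \Cref{thm:csd_circuits}. A quotient can equally have smaller depth ($U_{1,2}\to U_{0,2}$). So there is no monotonicity in either direction, and ``transporting a recursion tree to a quotient'' cannot work.

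Suppose the lemma held anyway. If $M(\veAm\extcontr\vev)$ has $M^+\contract f$ as a quotient, the lemma gives $\csd(M^+\contract f)\le\csd(\veAm\extcontr\vev)$. The induction step needs the reverse, $\csd(\veAm\extcontr\vev)\le\csd(M^+\contract f)$. Worse, when $\vev$ and $f$ are neither loops nor coloops, both matroids have rank $\rnk(M)-1$, and a quotient of the same rank is the matroid itself. So your requirement is precisely that $M^+\contract f$ be realised as $M(\veAm\extcontr\vev)$, which the Fano example shows is impossible in general. The requirement is also not equivalent to containment of modular cuts.

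The construction of $\vev$ has further defects. Over a finite field, every nonzero vector of $\mathrm{span}(F_0)$ may be parallel to an element; for example, take $F_0=E$ the full projective line over $\FF$ with $f$ free. Then the modular cut of $\vev$ is not the principal cut of $F_0$. Also, two flats of $\mathcal M_f$ whose intersection lies outside $\mathcal M_f$ form a \emph{non}-modular pair, not necessarily a bispan.

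The missing idea is that you should not imitate individual c$^*$-transformations at all. The paper unrolls the recursion for $\csd(M)$ down to the first disconnected matroid $M_\ell$. It lets $A$ be the ground set of one component of $M_\ell$ and $B=E(M)-A$, and proves $\max\{\csd(M\contract A),\csd(M\contract B)\}\le\csd(M)-\lambda_M(A)$ (\Cref{lem:startingguts}). The argument replays the same sequence on $M\contract A$. At least $\lambda_M(A)$ of the steps lower $\lambda(A)$, and each such step adds an element of $\cl(A)$, which becomes a loop and is skipped (\Cref{lem:startingseq}). The remaining bound $\csd(M_\ell\contract A)\le\csd(M_\ell)$ is minor-monotonicity (\Cref{lem:extcd-to-csd}).

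Over the given field one then contracts any basis of $\langle A\rangle\cap\langle B\rangle$, which always consists of exactly $\lambda_M(A)$ vectors of $\FF^m$. The resulting matrix represents $M\contract A\oplus M\contract B$. This outcome is canonical, so it does not matter that the matroid certificate used non-representable extensions, and induction finishes. For c$^*$d, \Cref{lem:startinggutsd} first moves all deletions to the front. The d$^*$ and cd$^*$ cases follow by duality. Your easy direction is fine and agrees with the paper.
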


Hence, in particular, the matrix $\gamma$-depth does not depend on a particular matrix representation of the same matroid.

\medskip
The proof of \Cref{thm:matromatrix} can be sketched, up to duality, for the c$^*$-depth and the c$^*$d-depth measures as follows:
\begin{enumerate}
\item The inequalities c$^*$-depth$(M)\leq\,$c$^*$-depth$(\ve A)$ and c$^*$d-depth$(M)\leq\,$c$^*$d-depth$(\ve A)$ are trivial.
If a matrix $\ve A_2$ is c$^*$-transformed from a matrix $\ve A_1$, then the matroid $M(\ve A_2)$ is a c$^*$-transformation of the matroid $M(\ve A_1)$
by an element represented by the vector of contraction. The claim follows by induction.
\item\label{it:cdcdAM}
In the opposite direction, that is c$^*$-depth$(\ve A)\leq\,$c$^*$-depth$(M)$, we again proceed by induction on \Cref{def:eight-prelim} for c$^*$-depth$(M)$.
The induction step is easy in the case of a disconnected matroid~$M$.
Otherwise, by (informally) untangling the recursion in \Cref{def:eight-prelim}(iii.), we arrive to a matroid $M_1$ obtained from $M$ by a sequence
of $\ell$ c$^*$-transformations, and $M_1$ being disconnected and satisfying c$^*$-depth$(M)=$c$^*$-depth$(M_1)+\ell$.
So, there is a bipartition $(A,B)$ of the ground set $E(M_1)=E(M)$ such that $\lambda_{M_1}(A)=0$ (where $\lambda_M$ is the standard connectivity function of a matroid~$M$).

Trivially, $\ell\geq\lambda_{M}(A)>0$, and we prove (\Cref{lem:startingguts}) that we can in fact choose our sequence of $\ell$ c$^*$-transformations
such that $\ell=\lambda_{M}(A)$.
From $\ell=\lambda_{M}(A)$ we get that the matroid $M_1$ is a direct sum of the matroids $M\contract A$ and $M\contract B$.
On the other hand, in the vector space of the matrix $\ve A$, we choose an arbitrary basis $X$ of the subspace $\langle A\rangle\cap\langle B\rangle$
(which is of cardinality~$\ell$) and iteratively contract all elements of $X$ in $\ve A$, which yields a sequence of
$\ell$ c$^*$-transformations of matrices resulting in a matrix $\ve A_1$.
It is easy to verify that $\ve A_1$ represents the matroid $M_1$, and we finish by induction.

\item 
Regarding the inequality c$^*$d-depth$(\ve A)\leq$c$^*$d-depth$(M)$, we start as in \Cref{it:cdcdAM} by untangling the recursion in \Cref{def:eight-prelim},
which results in a sequence of c$^*$-transformations and deletions (in an arbitrarily mixed order) of $M$ and yields a matroid~$M_1$.
Let $D\subseteq E(M)$ be the set of deleted elements of $M$ in this process.
We then apply the subsequence of the c$^*$-transformations to the matroid $M_0=M\setminus D$ (instead of to $M$).
This process gives (see \Cref{lem:startinggutsd}) essentially the same outcome as in \Cref{it:cdcdAM}.
\end{enumerate}

\paragraph*{Comparing the parameters}

Based on \Cref{def:eight-prelim}, we obtain altogether eight different depth measures of matrices and/or abstract matroids, listed as the
c-depth, d-depth, c$^*$-depth, d$^*$-depth, cd-depth, c$^*$d-depth, cd$^*$-depth, and c$^*$d$^*$-depth.
There are also the two previously mentioned decomposition-based measures, the branch-depth by ~\cite{DeVKO20} and the contraction$^*$-depth by~\cite{KarKLM17}.
However, the contraction$^*$-depth of a matroid $M$ is (by \Cref{thm:contractionstarx}) equal, up to a technical `minus one' difference, 
to the c$^*$-depth of~$M$, and hence we will not count the contraction$^*$-depth by~\cite{KarKLM17} as a different measure in our comparison.
Furthermore, there is also a natural depth counterpart of the less known matroid tree-width~\cite{matroid-tree-width}, which has not been
explicitly named in the literature so far, and which we introduce here (\Cref{sec:treede}) as the \emph{matroid tree-depth}.

\begin{figure}[t]
    \centering
    \begin{tikzpicture}[yscale={1}, xscale={2}, every node/.style={draw, very thin, fill=white, inner sep=5pt}]
        \node (csdsd) at (0,0) {\csdsdepth $\funeq~$branch-depth};
        \node (csdd) at (-2, -1.1) {\csddepth};
        \node (cdsd) at (2, -1.1) {\cdsdepth};
        \node (cdd) at (0, -2.2) {cd-depth};
        \node (cd) at (-2, -3.3) {c-depth $~\funeq~$\csdepth $\funeq~$m.~tree-depth};
        \node (dd) at (2, -3.3) {d-depth$~\funeq~$\dsdepth};
	\tikzstyle{every node}=[]
        \draw[<-] (csdsd) -- node[below] {\scriptsize f} (csdd) ; \draw[<-] (csdd) -- node[below] {\scriptsize f} (cdd) ;
	\draw[<-] (cdd) -- node[above] {\scriptsize f} (cd);
        \draw[<-] (csdsd) -- node[below] {\scriptsize f} (cdsd) ; \draw[<-] (cdsd) -- node[below] {\scriptsize f} (cdd) ;
	\draw[<-] (cdd) -- node[above] {\scriptsize f} (dd);
        \draw (csdsd) -- (cdsd) -- (cdd) -- (dd);
	\draw[dotted,<->] (-0.3,-3.3) -- (1,-3.3) node[midway, below=0pt] {\mbox{\scriptsize duality}};
	\draw[dotted,<->] (-0.8,-1.2) -- (0.8,-1.2) node[midway, below=0pt] {\mbox{\scriptsize duality}};
    \end{tikzpicture}
    \caption{A comparison of the considered matroid depth parameters in \Cref{thm:depthcompar}. 
	See \Cref{sec:terminology} for the definitions of functional comparison $p \funeq p'$ (``equal'') and $p \funle p'$ (``at most'') between measures.
	An arrow connection $p\to_{\rm f} q$ in the picture means functional ``strictly greater'', i.e., $q \funle p$ but $p \funnleq q$. 
	Moreover, c-depth and d-depth are dual to each other, but they are functionally incomparable both directions, as are \csddepth and \cdsdepth.}
    \label{fig:hasse-csdsd}
\end{figure}

We note that all the aforementioned parameters are well-defined, and the ``starred'' variants of the parameters are always less or equal to the corresponding ``non-starred'' variant. We defer the arguments to \Cref{rem:starstronger,rem:well-defined}.

We prove that, among these depth measures, there are six functionally inequivalent classes with mutual relations as summarised here:

\begin{theorem}[\Cref{cor:xd_circuits}, \Cref{lem:starredineq}, \Cref{thm:bd-csdsd}, \Cref{pro:allstrict}, \Cref{pro:allvalid}]
\label{thm:depthcompar}%
Among the ten parameters, c-depth, d-depth, c$^*$-depth, d$^*$-depth, cd-depth, c$^*$d-depth, cd$^*$-depth, c$^*$d$^*$-depth,
branch-depth and matroid tree-depth, the functional equivalence and inequality relations depicted in \Cref{fig:hasse-csdsd} hold on the class of all matroids.
\end{theorem}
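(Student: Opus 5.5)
This theorem bundles several relations, so I would assemble it from separate pieces. The plan is to prove all the ``upper'' inequalities $\funle$ depicted in \Cref{fig:hasse-csdsd}, then the claimed equivalences, and finally the strictness and incomparability statements by explicit families of matroids.

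\textbf{Trivial inequalities.} Every $\gamma$-transformation is also a $\gamma'$-transformation whenever $\gamma\subseteq\gamma'$ as symbol sets. Moreover, an ordinary contraction $M\contract e$ is a c$^*$-transformation: take $M^+$ to be $M$ with a parallel copy $f$ of $e$, so that $M^+\contract f$ equals $M\contract e$ up to a loop, which is a separate component. A simple induction on \Cref{def:eight-prelim} then gives that the depth with the larger set of allowed moves is at most the other. This yields c-depth, d-depth $\ge$ cd-depth $\ge$ c$^*$d-depth, cd$^*$-depth $\ge$ c$^*$d$^*$-depth, and the starred variants are at most the unstarred ones; this is \Cref{lem:starredineq}. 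The duality arrows follow because $(M\contract e)^*=M^*\setminus e$, components are preserved under duality, and c$^*$- and d$^*$-transformations are dual by definition.

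\textbf{Equivalences.} For d-depth $\funeq$ d$^*$-depth, I would use \Cref{cor:xd_circuits}: a bounded d$^*$-depth should force bounded circuit length, and bounded circuit length in a connected matroid bounds d-depth. The intended argument is that deleting an element of a long circuit keeps the matroid connected-ish in a controlled way, and that a circuit of length $L$ survives d$^*$-transformations with length decreasing only by a bounded amount per step. The c-side then follows by duality, using cocircuits and c$^*$-depth, and matroid tree-depth is compared via the decompositions of \Cref{sec:treede}. For c$^*$d$^*$-depth $\funeq$ branch-depth I would invoke \Cref{thm:bd-csdsd}, which rests on the cited result of Briański, Hliněný, Kráľ and Pekárková. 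One direction goes by building a branch-depth decomposition from the recursion tree; the other uses the relatively free extensions in connected bispans of \Cref{thm:GGWadde} to realise the separations of a decomposition node by $\lambda$ many c$^*$- and d$^*$-steps.

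\textbf{Strictness.} For the non-equivalences (\Cref{pro:allstrict}, \Cref{pro:allvalid}) I would use standard families:
\begin{itemize}
\item Long circuits $U_{n-1,n}$ have bounded c-depth but unbounded d-depth, which, dually, separates c-depth from d-depth.
\item Direct-sum or parallel-connection combinations of circuits and cocircuits give matroids of bounded cd-depth where both c-depth and d-depth are unbounded.
\item Cycle matroids of large grids, or of their suitable ``fans'', give bounded c$^*$d-depth but unbounded cd$^*$-depth, and dually.
\item A matroid of bounded branch-depth combining both duals shows that c$^*$d$^*$-depth is strictly below the two middle measures.
\end{itemize}

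\textbf{Main obstacle.} I expect the lower bounds on depth in these examples to be the hardest part, since they must hold against arbitrary, possibly non-representable, c$^*$- or d$^*$-extensions. For these I would first try invariants that are monotone under the starred transformations: maximum circuit or cocircuit length, and the connectivity $\lambda$ of a fixed separation, which drops by at most one per transformation.
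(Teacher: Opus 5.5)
The trivial inequalities and the duality step are handled as in the paper (\Cref{lem:starredineq}, \Cref{obs:depthsduality}). The strictness part, however, has a genuine gap at its core: the family you propose to separate c$^*$d-depth from cd$^*$-depth cannot work, and nothing else in your plan replaces it.

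\textbf{The separating family.} Any such family must have bounded c$^*$d$^*$-depth, since $\csdsd\le\csdd$ and $\csdsd\le\cdsd$. Large grids fail this. Their cycle matroids have unbounded branch-width, hence unbounded branch-depth, hence unbounded c$^*$d$^*$-depth by \Cref{thm:bd-csdsd}. Fans fail too. A fan is a wheel minus one rim edge, and wheels are $3$-connected with unbounded tree-depth. So \Cref{prop:3-connected} applied to wheels, together with $\csdsd(M)\le 1+\csdsd(M\setminus e)$, gives unbounded c$^*$d$^*$-depth for fans as well. On both families, therefore, both middle measures are unbounded.

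\textbf{Your lower-bound invariants.} These cannot supply the needed bound either. Since c$^*$d-depth allows deletions, circumference says nothing: $U_{n-1,n}$ has c$^*$d-depth at most $2$. The connectivity of a fixed separation also gives nothing, because on fat cycles a single parallel class already has connectivity $1$.

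\textbf{The missing idea: the fat cycle $M(C_{2^i,i})$.}
\begin{itemize}
\item Upper bound. One d$^*$-step (coextend to $M(D_{2^i,i})$, then delete the thin edge) leaves a direct sum of parallel classes. So its cd$^*$-depth is at most $3$ (\Cref{obs:destroy-fat}).
\item Lower bound. Its c$^*$d-depth is at least $i$ (\Cref{lem:fat-cycles}). A deletion costs only one unit of fatness. A c$^*$-step keeps, inside a long circuit, a circuit of at least half its length \cite[Lemma 5.6]{DeVKO20}. In either case a fat cycle $C_{j',i-1}$ with $j'\ge 2^{i-1}$ survives as a restriction.
\item Monotonicity. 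The lower bound also needs restriction-monotonicity of c$^*$d-depth (\Cref{lem:csdd-monotone-restrictions}). This must be proved separately, because c$^*$d-depth is not minor-monotone.
\end{itemize}
With duality, this example also yields the strict arrows from cd-depth to the middle layer and from the middle layer to c$^*$d$^*$-depth (\Cref{pro:allstrict}). Your item 4, and the cd-depth arrows you did not address, rest on it.

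\textbf{Circuits and cocircuits are swapped in the c/d part.} By \Cref{thm:cd_circuits}, d-depth is governed by the longest \emph{cocircuit} and c-depth by the longest circuit. Consequences:
\begin{itemize}
\item $U_{n-1,n}$ has d-depth $2$ and c-depth $n$, the reverse of your first example.
\item Bounded circuit length does not bound d-depth: $U_{1,n}$ is connected, all its circuits have size $2$, and its d-depth is $n$.
\item A circuit need not survive a d$^*$-transformation at all. If $C\cup f$ is a circuit of $M^+$, then $C$ is independent in $M^+\setminus f$.
\item Under a c$^*$-transformation a circuit can lose half its length, not a bounded amount. That halving is exactly why the lower bounds are logarithmic.
\end{itemize}
Once the swap is corrected, $\cd\funeq\csd$ and $\dd\funeq\dsd$ do follow from \Cref{thm:cd_circuits} and \Cref{thm:csd_circuits}, as in \Cref{cor:xd_circuits}.

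\textbf{Matroid tree-depth.} ``Compared via the decompositions'' is not an argument. The paper proves the two directions as follows.
\begin{itemize}
\item $\mtd\le\csd$ follows from $\omega_M\le\omega_{M'}+1$ for a c$^*$-transformation $M'$ of $M$ (\Cref{lem:omegaineq}).
\item $\csd\le\mtd^2+1$ is obtained by realising the root of a tree-decomposition with at most $t$ c$^*$-steps. These are parallel copies of root-bag elements and relatively free extensions (\Cref{thm:GGWadde}) in the bipartitions $(X_j,E(M)-X_j)$. Each step lowers $\omega$ by exactly one, while \Cref{lem:omegaineq}\,c) keeps the widths of the subtrees bounded.
\end{itemize}
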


For instance, the functional equivalence of the measures c-depth and c$^*$-depth can be easily derived from existing results.
DeVos, Kwon, and Oum~\cite{DeVKO20} proved that the c-depth of a matroid $M$ is functionally related to the size of the longest circuit in $M$ (\Cref{thm:cd_circuits}),
while Kardoš et al.~\cite{KarKLM17} proved that the c$^*$-depth of a matroid $M$ is again functionally related to the size of the longest circuit in $M$ (\Cref{thm:csd_circuits}).
The difference between c$^*$-depth and c-depth can be up to exponential, as witnessed by the matroid formed by one long circuit.
The functional equivalence between d-depth and d$^*$-depth then follows by duality.

The proof of the functional equivalence between c$^*$-depth and matroid tree-depth (\Cref{thm:mtreedepth-rel}),
this time quadratic, is a bit more involved and uses similar tools as the proof of \Cref{thm:matromatrix} sketched above.
The proof of the functional equivalence between c$^*$d$^*$-depth and branch-depth (\Cref{thm:bd-csdsd}) 
is one of the two core results of \cite{DBLP:journals/corr/abs-2402-16215prep}.

Furthermore, the chain of inequalities 
c$^*$d$^*$-depth$(M)\leq\,$c$^*$d-depth$(M)\leq\,$cd-depth$(M)\leq\,$c-depth$(M)$
follows rather directly by induction from the definitions (\Cref{lem:starredineq}).
To show that c-depth is not functionally related to d-depth in any direction (\Cref{lem:incom-cddd}), 
consider a simple example of a matroid $M$ formed by one long circuit:
its c-depth is unbounded by, but deleting any element of $M$ leaves the matroid independent, and so the d-depth of $M$ is at most $2$. 
The other direction then follows by duality.
This also immediately implies that cd-depth is not functionally equivalent to either c-depth or d-depth:
if that were not true, then by duality, both c-depth and d-depth would be related to cd-depth, and so to each other, which is false.

Similarly, one can prove that c$^*$d-depth is not functionally related to cd$^*$-depth in any direction (\Cref{fig:fat-cycle} and \Cref{obs:destroy-fat}),
and that the remaining dependencies in \Cref{fig:hasse-csdsd} are strict.

\paragraph*{Algorithmic aspects of depth measures}

\subsection{Algorithmic aspects of matroid depth measures}

It is known that deciding whether the branch-width is at most $k$ is \NPh~\cite{seymour1994call}.
However, for matroids represented over a finite field, a branch-decomposition of width at most $k$ can be efficiently computed in cubic time for fixed $k$
by~\cite{HliO08}. The latter result has been recently extended to the setting of abstract connectivity functions by Korhonen and Oum~\cite{korhonen2026branchwidthconnectivityfunctionsfixedparameter}.

The situation is somehow similar for the considered depth measures.
In \cite{BriKKPS24}, it is proven (\Cref{thm:depthNPhard} and its immediate corollaries) that the problem -- whether the $\gamma$-depth of a matroid $M$ is at most $k$, is \NPh
for each $\gamma\in\{$`c',`c$^*$',`d',`d$^*$',`cd',`c$^*$d',`cd$^*$'$\}$.
On the other hand, \cite{ChaCKKP22} shows that, for matroids $M$ represented over a finite field, the same problem -- whether the $\gamma$-depth of $M$ is at most $k$,
is in \FPT\ for fixed~$k$ and $\gamma\in\{$`c$^*$',`d$^*$'$\}$.
The same is true for $\gamma\in\{$`c',`d'$\}$ by \cite{BriKKPS24} (\Cref{algo:fpt_dd}), and consequently also for $\gamma=\,$`cd' via a routine modification of \cite{BriKKPS24} (\Cref{cor:cdd_fpt}).

To this picture we add (\Cref{cor:csdd_fpt}) that the problem -- whether the $\gamma$-depth of a matroid $M$ is at most $k$, is
in \FPT\ for fixed~$k$ and $\gamma\in\{$`c$^*$d',`cd$^*$'$\}$ and matroids $M$ represented over a finite field,
which we can prove by an adaptation of \cite{BriKKPS24} while using similar means as in the proof of \Cref{thm:matromatrix} above.

Interestingly, the complexity status of computing the branch-depth of matroids is still open~\cite{DeVKO20} (see \Cref{conj:bdhard}).

\paragraph*{Connection to depth measures of graphs}

Recall that every graph $G$ has a naturally associated matroid, called the \emph{cycle matroid} $M=M(G)$, where $E(M)=E(G)$ and
the independent sets of $M$ are the acyclic subsets of $E(G)$.
The traditional tree-depth of (all) graphs cannot be related to depth measures of matroids, as the tree-depth of trees
is unbounded, while all trees have a trivial independent cycle matroid.

On the other hand, it is well known (\Cref{thm:cycles-treedepth}) that the tree-depth of a $2$-connected graph $G$ is functionally
related to the length of the longest cycle in~$G$, and so the tree-depth of a $2$-connected graph $G$ is functionally
related to the c-depth and to the matroid tree-depth of the cycle matroid $M(G)$ (\Cref{cor:td-cd} via \Cref{thm:cd_circuits}).
Furthermore, \cite{DeVKO20} proved (\Cref{prop:3-connected}) that if one restricts to matroids that are the cycle matroids of $3$-connected
graphs, then the diagram in \Cref{fig:hasse-csdsd} collapses from c-depth to \csdsdepth, i.e., the c-depth of such graphic matroids is functionally
equivalent to their \csdsdepth.
An example of the cycle matroid of the graph $K_{3,n}$ shows (\Cref{pro:K3nexample}) that the analogous claim is false for the d-depth of such matroids.

There is also another, quite recent, notion of \emph{$2$-tree-depth} \cite{HuynhJMSW22}, which is defined
analogously to the recursive definition of ordinary tree-depth, but which considers the maximum of $2$-tree-depth of the blocks
for a non-$2$-connected graph (instead of the maximum over components of a disconnected graph).
While the $2$-tree-depth of any graph $G$ is upper-bounded by twice the cd-depth of the cycle matroid of~$G$ (\Cref{prop:td2-cdd}),
there is no functional upper bound on even the c$^*$d$^*$-depth of the cycle matroid of~$G$ in terms of the $2$-tree-depth of~$G$ (\Cref{prop:td2-no-csdsd}).

\paragraph*{Closure properties}

In \cite{BriKL25} (see \Cref{thm:contrclosure}), 
Bria\'nski, Kr{\'{a}}l', and Lamaison prove that the contraction$^*$-depth of a matroid $M$ is equal to the restriction closure of its c-depth,
that is, to the minimum c-depth of a matroid $M'$ that contains $M$ as a restriction
(modulo a technical difference by $1$ caused by the fact that \cite{BriKL25} treats the c-depth of a loop as $0$ and not~$1$ as we do).

We prove the analogous statement for the c$^*$-depth without using the results of \cite{BriKL25}:

\begin{theorem}[\Cref{thm:csclosure}]\label{thm:csclosurexx}
Let $M$ be a matroid. The minimum c-depth of a matroid $M'$ that contains $M$ as a restriction is equal to the c$^*$-depth of~$M$.
\end{theorem}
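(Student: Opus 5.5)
Write $\cdd'(M)$ for the minimum c-depth of a matroid $M'$ containing $M$ as a restriction; we show $\cdd'(M)=\csd(M)$ by proving both inequalities.

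\textbf{The inequality $\csd(M)\le$ c-depth$(M')$.} We show that $\csd(M)\le$ c-depth$(M')$ for every $M'$ with $M=M'\setminus Z$, by induction on $|E(M')|$ following the recursion for c-depth of $M'$.

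If $M'$ is disconnected, each component of $M$ is contained in a component of $M'$, i.e.\ it is a restriction of that component. Since c$^*$-depth is the maximum over components, the bound follows by induction.

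If $M'$ is connected with $M'\contract e$ optimal, there are two cases. If $e\in Z$, then $M'\contract e$ restricted to $E(M)$ equals $M'\setminus(Z-e)\contract e$. When $e$ is neither a loop nor a coloop of $M'\setminus(Z-e)$, this is a c$^*$-transformation of $M$, which gives one level. The degenerate cases are a coloop, where the contraction equals $M$ itself, and a loop, which is impossible in a connected $M'$ with more than one element. If instead $e\in E(M)$, then $M\contract e$ is a restriction of $M'\contract e$. So it remains to show that contracting an actual element is no worse for c$^*$-depth than a c$^*$-transformation, which I expect to hold via the free extension/parallel copy trick.

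A cleaner route may be to prove the stronger statement that restriction does not increase c$^*$-depth (\Cref{rem:well-defined}/monotonicity). Combined with c$^*\le$ c (\Cref{rem:starstronger}), this gives $\csd(M)\le\csd(M')\le$ c-depth$(M')$ directly. I would prove restriction-monotonicity of c$^*$-depth by induction: a c$^*$-transformation $M^+\contract f$ of $M'$ restricts to a c$^*$-transformation of $M$ (or to $M$ itself when $f$ becomes a coloop). Components behave as above.

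\textbf{The inequality $\cdd'(M)\le\csd(M)$.} We induct on the c$^*$-depth recursion, building $M'$ explicitly.

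If $M$ is disconnected, take a direct sum of the extensions of its components. If $M$ has one element, take $M'=M$. If $M$ is connected with an optimal c$^*$-transformation $M^+\contract f$, induction yields $N\supseteq M^+\contract f$ as a restriction with c-depth$(N)\le\csd(M)-1$. We must lift $N$ back to a coextension: find $M''$ with $M''\contract f=N$ and $M''\setminus(E(N)-E(M^+))=M^+$. Then $M=M''\setminus(\ldots\cup f)$, and contracting $f$ in (the component of) $M''$ gives c-depth at most $1+$ c-depth$(N)$.

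Connectivity of $M''$ is not needed, since the c-depth of a disconnected matroid is a maximum, and each component's c-depth is bounded either by contracting $f$ or by being a component of $N$ unaffected by $f$. I will state this as a small lemma: if $f$ is in a component of $M''$, then c-depth$(M'')\le 1+$ c-depth$(M''\contract f)$.

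\textbf{Main obstacle: the amalgam.} The hard part is constructing $M''$: an amalgam of $M^+$ (on $E(M)\cup f$) and a coextension of $N$ by $f$ agreeing with $M^+$. Amalgams need not exist in general. I would try to dodge this by strengthening the induction hypothesis so that only elements freely placed relative to $E$ are ever added, namely a principal extension. Alternatively, I would unwind the whole c$^*$ recursion first, as in \Cref{lem:startingguts}: choose a bipartition $(A,B)$ with $\ell=\lambda_M(A)$ c$^*$-steps realising $M\contract A\oplus M\contract B$. Then add $\ell$ elements spanning the "guts" $\cl(A)\cap\cl(B)$ via iterated relatively free extensions in the bispan (\Cref{thm:GGWadde}). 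Contracting these $\ell$ elements in the extension gives the direct sum, after which the induction applies to each side. The added elements are then contracted one by one, costing exactly $\ell$ levels, and the extensions for the two sides are glued along the already present guts elements.
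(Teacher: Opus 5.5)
Your first inequality is fine and matches the paper: restriction (indeed minor) monotonicity of c$^*$-depth, \Cref{lem:extcd-to-csd}, combined with $\csd\le\cd$ from \Cref{rem:starstronger}.

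\textbf{The gap.} The gap is in the reverse inequality, at exactly the step you flag but never resolve. Both of your routes need the following lifting statement. Given a matroid $N$, a set $Z\subseteq E(N)$, and a matroid $P$ containing $N\contract Z$ as a restriction, there is a matroid $N'$ containing $N$ as a restriction with $N'\contract Z=P$. Your first route needs this with $N=M^+$ and $Z=\{f\}$. Your second route needs it with $N$ the extension of $M$ by the guts elements $Z$, and $P$ the direct sum of the inductively obtained extensions of $M\contract A$ and $M\contract B$. The phrase ``glued along the already present guts elements'' is precisely this statement, asserted without proof. So the second route does not dodge the amalgamation issue; it only relocates it. As written, neither route establishes the reverse inequality.

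\textbf{How the paper fills it.} The paper supplies the missing step as \Cref{lem:commutesame}: a relatively free extension of $M_1\contract Y$ lifts to a relatively free extension $M_1'$ of $M_1$ with $M_1'\contract Y$ equal to it. It also strengthens the induction so that $M'$ is always obtained from $M$ by a sequence of relatively free extensions. This lets the extension $M''$ of $N_a=N\contract Z$ produced by induction be lifted one element at a time to some $M'$ with $M'\contract Z=M''$, which gives $\cd(M')\le\cd(M'')+|Z|$. Your instinct to add only ``freely placed'' elements points this way, but the invariant and the lifting lemma are the actual content.

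\textbf{A shorter fix for your first route.} The general amalgamation obstruction does not actually arise here. Take any single-element extension of $N\contract Z$, and let $\mathcal{M}$ be its modular cut. Then the family of flats $F$ of $N$ with $Z\subseteq F$ and $F-Z\in\mathcal{M}$ is a modular cut of $N$, because modular pairs of flats containing $Z$ correspond exactly to modular pairs in $N\contract Z$. The resulting extension of $N$ contracts back to the given one, since $\cl_{N\contract Z}(X)=\cl_N(X\cup Z)-Z$. Apply this one added element at a time. Together with your observation that $\cd(L)\le 1+\cd(L\contract f)$ for any element $f$, your first route then closes directly along the c$^*$-recursion, without \Cref{lem:startingguts} or \Cref{thm:GGWadde}.
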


We sketch the proof of \Cref{thm:csclosurexx} as follows:
\begin{enumerate}
\item The `$\geq$' direction is relatively easy to prove by induction on \Cref{def:eight-prelim} (see \Cref{lem:extcd-to-csd}).
\item For the `$\leq$' direction, similarly as in the proof of \Cref{thm:matromatrix} -- by untangling the recursion in \Cref{def:eight-prelim}(iii.), 
we arrive to a matroid $M_1$ obtained from $M$ by a sequence of $\ell$ c$^*$-transformations, 
and $M_1$ being disconnected and satisfying c$^*$-depth$(M)=$ c$^*$-depth$(M_1)+\ell$.
Again (\Cref{lem:startingguts}), we argue that we may, without loss of generality, assume that there is a bipartition $(A,B)$ of the ground set $E(M_1)=E(M)$ 
such that $\lambda_{M_1}(A)=0$, and that $\ell=\lambda_{M}(A)>0$.
\item\label{it:getrfdecomp}
We now use a result of Geelen, Gerards, and Whittle~\cite{DBLP:journals/siamdm/GeelenGW06}, that there is an extension $N$ of the matroid $M$
by an element $e$ such that $e$ is not a loop and $e\in\clo_{N}(A)\cap\clo_{N}(B)$, here called a \emph{relatively free extension in $(A,B)$} (\Cref{thm:GGWadde}).
We iterate the process of taking a relatively free extension in $(A,B)$ and then contracting it, which is a c$^*$-transformation, $\ell$ times.
It is not difficult to argue that the resulting matroid will be equal in this setting to $M_1$.
We apply the same procedure inductively to~$M_1$.
\item
As a result of \Cref{it:getrfdecomp}, we obtain a certificate of the value of c$^*$-depth$(M)$ of a very special kind:
every c$^*$-transformation in the recursive evaluation of c$^*$-depth$(M)$ by \Cref{def:eight-prelim}(iii.) does a relatively free extension
in some bipartition of the matroid~$M$.
One can then prove (\Cref{lem:commutesame}) that such special c$^*$-transformations can be ``grouped together'' in the sense that we first
perform all these extensions at once in~$M$, and then do the contractions in the original recursive manner.
This gives a matroid $M'$ whose restriction is $M$, and a certificate that c-depth$(M')\leq\,$c$^*$-depth$(M)$.
\end{enumerate}

On the other hand, we can use a small modification of this proof approach (see \Cref{sub:contrstartproof}) to provide an alternative shorter proof
of the aforementioned closure result of \cite{BriKL25}.

\section{Matroid depth and width parameters}\label{sec:depths}

In this section, we continue with a thorough overview of the existing and the newly introduced depth parameters.

\subsection{Branch-width and branch-depth}\label{sub:branchd}

Branch-width is a classical structural width measure of matroids.
A \emph{subcubic} tree is a tree whose all vertices have degree at most~$3$.

\begin{definition}\label{def:branchwidth}
A \emph{branch-decomposition} of a matroid $M$ is a pair $(T, \sigma)$ where $T$ is a subcubic tree and $\sigma$ is a bijection from $E(M)$ to the set of leaves of $T$.
Let $e$ be an edge of $T$, let $T_1, T_2$ denote the connected components of $T \setminus e$, and let $L_1, L_2$ denote the sets of leaves of $T_1$ and $T_2$.
The \emph{width} of an edge $e$ in $T$, denoted by $w_T(e)$, is defined as $w_T(e) := \lambda(\sigma^{-1}(L_1)) = \lambda(\sigma^{-1}(L_2))$. 

The \emph{width} of a branch-decomposition $(T, \sigma)$ is the maximum width of any edge of the decomposition.
Finally, the \emph{branch-width} of a matroid $M$, denoted by $\bw(M)$, is the minimum width over all branch-decompositions of $M$.
\end{definition}

Note that branch-width is \emph{self-dual}, meaning that for any matroid $M$, the branch-width of $M$ is the same as the branch-width of the dual~$M^*$.

\medskip
In~\cite{DeVKO20}, DeVos, Kwon, and Oum defined the notion of \emph{branch-depth}
as the analogue of tree-depth in the matroid setting.
Although the authors defined the parameter in the general context of connectivity functions, we recall here only its definition for matroids.

For a matroid $M$ and a bijection $\sigma$ from $E(M)$ to the set of leaves of a tree $T$, we define the {width of an inner node $v\in V(T)$}
as follows. Let $L_1,\ldots,L_k$ denote the sets of leaves (of~$T$) belonging to the $k\geq2$ components of $T-v$, and let $\ca L_v=\{\sigma^{-1}(L_1),\ldots,\sigma^{-1}(L_k)\}$.
Then the \emph{bd-width of $v$} is $\nu_M(\ca L_v):=\max_{\ca L'\seq\ca L_v} \lambda_M\big(\bigcup \ca L'\big)$, that is, the maximum connectivity of a bipartition
coarsening the partition of $E(M)$ given by the subtrees of $v$ in~$T$.

\begin{definition}[\cite{DeVKO20}]\label{def:branchdepth}
A \emph{branch-depth decomposition} of a matroid $M$ is pair $(T, \sigma)$ where $T$ is a tree with at least one inner node 
and $\sigma$ is a bijection from $E(M)$ to the leaves of $T$.
We say that a branch-depth decomposition $(T, \sigma)$ is a $(t,d)$\emph{-decomposition} if the bd-width of every inner node of $T$ is at most $t$
and the radius of $T$ is at most $d$.
The \emph{branch-depth} of a matroid~$M$, denoted by $\bd(M)$, is the minimum $k \in \mathbb{N}$ such that there exists a $(k, k)$-decomposition of $M$.
In case $|E(M)| \leq 1$, no branch-depth decomposition is defined and~$\bd(M)=0$.
\end{definition}

\subsection{Tree-width and tree-depth}\label{sec:treede}

Classical tree-width of graphs has a direct extension in matroid tree-width, which was defined by Hlin\v{e}n\'y and Whittle~\cite{matroid-tree-width,DBLP:journals/ejc/HlinenyW09}.
Due to crucial differences between graphs and matroids (such as the absence of vertices in the latter), a definition of matroid tree-width has to be
formulated very differently from the classical graph definition, albeit these two definitions give exactly the same values
on graphs and their cycle matroids \cite[Theorem~3.2]{matroid-tree-width}.

We will use the following definition, for a matroid $M$ and a collection of $k\geq1$ pairwise disjoint sets $X_1,\ldots,X_k\subseteq E(M)$:
let $\omega_M(X_1,\ldots,X_k)=\sum_{i=1}^k \rnk_M(E(M)-X_i)\>-(k-1)\rnk(M)$.
Note that the function $\omega_M$, similarly to the function $\nu_M$ from \Cref{sub:branchd}, generalises the traditional
matroid connectivity function $\lambda_M$, albeit in a different way. We also have:

\begin{lemma}\label{lem:omegaMzero}
Let $M$ be a matroid and $X_1,\ldots,X_k\subseteq E(M)$ a collection of $k\geq1$ pairwise disjoint sets.
Then $\omega_M(X_1,\ldots,X_k)\geq0$.
\end{lemma}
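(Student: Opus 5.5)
The plan is induction on $k$, using only submodularity of the rank function. For $k=1$ we have $\omega_M(X_1)=\rnk_M(E(M)-X_1)\ge 0$, since ranks are nonnegative. The base case therefore needs nothing beyond the definition.

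For the inductive step, I would prove the stronger, more convenient statement
\[
\sum_{i=1}^k \rnk_M(E(M)-X_i) \;\ge\; (k-1)\rnk(M) + \rnk_M\Bigl(E(M)-\bigcup_{i=1}^k X_i\Bigr),
\]
which immediately gives $\omega_M(X_1,\ldots,X_k)\ge \rnk_M(E(M)-\bigcup_i X_i)\ge 0$. Write $E=E(M)$, $Y_j=E-\bigcup_{i\le j}X_i$ and $Z_j=E-X_j$. Because the sets are pairwise disjoint, $Y_{j-1}\cup Z_j=E$ for $j\ge 2$: every element outside $Y_{j-1}$ lies in some $X_i$ with $i<j$, hence not in $X_j$, hence in $Z_j$. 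Also $Y_{j-1}\cap Z_j=Y_j$.

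Submodularity applied to $Y_{j-1}$ and $Z_j$ then gives
\[
\rnk_M(Y_{j-1})+\rnk_M(Z_j)\ \ge\ \rnk(M)+\rnk_M(Y_j).
\]
Summing this inequality over $j=2,\ldots,k$ and adding $\rnk_M(Y_1)=\rnk_M(Z_1)$ makes the terms telescope, which yields the displayed inequality. Equivalently, one can phrase this as an induction: the stronger inequality for $k-1$ sets, combined with a single application of submodularity, gives it for $k$ sets.

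I expect no real obstacle here. The only point that needs care is the observation $Y_{j-1}\cup Z_j=E$, which is exactly where pairwise disjointness is used. Without disjointness the claim can fail, for instance when $X_1=X_2=E$ and $\rnk(M)>0$.
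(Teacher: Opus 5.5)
Your proof is correct and is essentially the paper's argument. The paper applies submodularity to $E(M)-X_1$ and $E(M)-X_2$, whose union is $E(M)$ by disjointness, to get $\omega_M(X_1,\ldots,X_k)\geq\omega_M(X_1\cup X_2,X_3,\ldots,X_k)$ and then inducts; your telescoping chain is this same step iterated, and your explicit bound $\omega_M(X_1,\ldots,X_k)\geq\rnk_M\bigl(E(M)-\bigcup_i X_i\bigr)$ is what that induction gives when carried through to the end.
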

\begin{proof}
We proceed by straightforward induction on $k\geq1$. The statement is trivial for $k=1$ since $\omega_M(X_1)=\rnk_M(E(M)-X_1)\geq0$.
For $k>1$ we, by submodularity, obtain
{\small
\begin{align*}
\omega_M(X_1,\ldots,X_k)&= \sum_{i=1}^k \rnk_M(E(M)-X_i)\>-(k-1)\rnk(M)
\\	&\geq \rnk_M\big((E(M)-X_1)\cap(E(M)-X_2)\big)+\sum_{i=3}^k \rnk_M(E(M)-X_i)\>-(k-2)\rnk(M)
\\	&=\omega_M(X_1\cup X_2,X_3,\ldots,X_k) \geq0
.\qedhere
\end{align*}}
\end{proof}

\begin{definition}[\cite{matroid-tree-width}]\label{def:mtreewidth}
A \emph{tree-decomposition} of a matroid $M$ is a pair $(T, \tau)$ such that $T$ is a tree and $\tau\colon E(M) \rightarrow V(T)$ is an arbitrary mapping.
For a node $u \in V(T)$ of degree $d$, let $T_1,\ldots,T_d$ be the connected components of $T-u$, and let $F_i = \tau^{-1}(V(T_i))$ for $i \in [d]$.
The \emph{td-width} of the node $u$ is defined as $\mnw_{M,T}(u):=\omega_M(F_1,\ldots,F_d)$

The \emph{width} of a tree-decomposition $(T, \tau)$ is the maximum td-width over all nodes of $T$, and the \emph{matroid tree-width} of $M$ 
is the smallest width over all tree-decompositions~of~$M$.
\end{definition}

Matroid tree-width is within a constant factor of matroid branch-width \cite[Theorem~4.2]{matroid-tree-width}.
The easy potential of \Cref{def:mtreewidth} for defining a notion analogous to graph tree-depth has been overlooked by researchers so far.
The following definition thus naturally brings a new parameter:

\begin{definition}\label{def:mtreedepth}
The \emph{matroid tree-depth} of a matroid $M$, denoted by $\mtd(M)$, is the minimum $k \in \mathbb{N}$ for which there exists 
a {tree-decomposition} $(T, \tau)$ of $M$, such that the width of $(T, \tau)$ is at most $k$ and the radius of $T$ is also at most~$k$.
\end{definition}

Matroid tree-depth of a graphic matroid $M(G)$ is functionally equivalent to tree-depth of the graph $G$ provided that $G$ is $2$-connected (cf. \Cref{cor:func-equiv-td-mtd}).

Matroid tree-width is functionally preserved under duality; precisely, \cite[Theorem~4.2]{matroid-tree-width}
implies $\mtw(M)$ and $\mtw(M^*)$ are within a multiplicative constant of each other for all matroids~$M$.
Unfortunately, one direction of the proof of \cite[Theorem~4.2]{matroid-tree-width} heavily changes the decomposition tree,
and so this does not imply anything useful about the relation between matroid tree-depth of a matroid and of its dual.

In fact, it is easy to construct a sequence of matroids~$M$ (\Cref{lem:incom-cddd}; simply, the sequence $U_{n-1, n}$ for all $n$) 
such that $\mtd(M^*)$ is bounded by a constant, while $\mtd(M)$ is unbounded.
This observation shows that there is a potential for a future definition of a generalised version of matroid tree-depth, which would be closed under duality.

\subsection{The family of ``recursive'' depth measures}
\label{sec:recursivemeas}

Over the past decade, several new depth measures for both represented and general matroids have been introduced.
All of them are based on a common principle of recursively decomposing a matroid into components, 
inspired by the notion of tree-depth in graphs.
We begin the overview of these notions with a common generic definition, which we later relate 
to the existing measures.

\begin{definition}\label{def:eight-parameters}
Let $\gamma \in \{\epsilon, $`c',`c$^*\!$'$\,\}$ and $\delta \in \{\epsilon, $`d',`d$^*\!$'$\,\}$ be symbols, where $\epsilon$ stands for the empty word.
Assuming $(\gamma,\delta)\not=(\epsilon,\epsilon)$, the \emph{$\gamma\delta$-depth} of a matroid $M$ is defined as follows.
\begin{enumerate}
\item If $M$ has at most one element, then the $\gamma\delta$-depth of $M$ is $1$.\label{it:one-element}
\item If $M$ is not connected, then the $\gamma\delta$-depth of $M$ is the maximum $\gamma\delta$-depth of a component of~$M$.
Symbolically,~ \emph{$\gamma\delta$-depth$(M)=\max\{\gamma\delta$-depth$(M'): M'$ a component of $M\}$}.
\label{it:bycomponents}
\item If $M$ is connected, then the $\gamma\delta$-depth of $M$ is one plus the minimum $\gamma\delta$-depth
  of a matroid $M'$ which is $\gamma\delta$-transformed from $M$, where by `$\gamma\delta$-transformed from $M$' we mean;
\label{it:oneredstep}
\begin{enumerate}
\item $\gamma = c$ and $M' = M \contract e$ for some $e \in E(M)$, or
\item $\delta = d$ and $M' = M \setminus e$ for some $e \in E(M)$, or
\item $\gamma = c^*$ and $M'$ is obtained by a c$^*$-transformation of~$M$, or
\item $\delta = d^*$ and $M'$ is obtained by a d$^*$-transformation of~$M$.
\end{enumerate}
Hence, symbolically,~ \emph{$\gamma\delta$-depth$(M)=1+\min\{\gamma\delta$-depth$(M'): M'$ is ${\gamma\delta}$-transformed~from~$M\}$}.
\end{enumerate}
We abbreviate `$\gamma\delta$-depth' by removing the empty-word symbol $\epsilon$, that is, we speak about the following eight
measures coming from the different possible combinations of our symbols: \emph{c-depth, d-depth, c$^*$-depth, d$^*$-depth, cd-depth, c$^*$d-depth, cd$^*$-depth}
and \emph{c$^*$d$^*$-depth}.
The corresponding $\gamma\delta$-depth of a matroid $M$ is then shortly denoted by $\cd(M)$, $\dd(M)$, $\csd(M)$, $\dsd(M)$, $\cdd(M)$, 
$\csdd(M)$, $\cdsd(M)$, and $\csdsd(M)$, respectively.
\end{definition}

\begin{remark}\label{rem:starstronger}
Note that the starred variants of operators in \Cref{def:eight-parameters},~\Cref{it:oneredstep} (i.e., c$^*$ and d$^*$) are always stronger than 
the non-starred ones in the following sense; instead of contracting an element $e\in E(M)$ in \Cref{it:oneredstep}(a), we can introduce an
element $f$ in parallel to $e$ into $M$ and contract $f$ instead in \Cref{it:oneredstep}(c).
Element $e$ then becomes a loop, which will not change anything by \Cref{def:eight-parameters},~\Cref{it:bycomponents} in the next step 
if $M$ has more than one element -- see also the proof of \Cref{lem:starredineq}.
An analogous dual argument holds for \Cref{it:oneredstep}(b) and \Cref{it:oneredstep}(d).
Hence, in particular, we have $\csd(M)\leq\cd(M)$, $\dsd(M)\leq\dd(M)$, $\csdd(M) \le \cdd(M)$, $\cdsd(M) \le \cdd(M)$, and $\csdsd(M)\leq\cdd(M)$ for all matroids~$M$.
\end{remark}
\begin{remark}\label{rem:well-defined}
Note that all variants of depth measures covered by \Cref{def:eight-parameters} are well-defined;
in the worst-case scenario, we may simply contract or remove all elements of the matroid one by one.
For the starred variants of the measures, \Cref{rem:starstronger} additionally applies.
\end{remark}

We will prove later that (\Cref{thm:bd-csdsd}) matroid branch-depth is functionally equivalent to the c$^*$d$^*$-depth,
and that (\Cref{thm:mtreedepth-rel}) matroid tree-depth is functionally equivalent to the c$^*$-depth.
Other variants of measures in \Cref{def:eight-parameters} also seem interesting, though.
We will also compare the measures to depth measures on graphs in \Cref{sub:graphdepth}.

We now proceed with a brief overview of matroid depth measures in existing literature that are covered by \Cref{def:eight-parameters}.

\paragraph*{D-depth, c-depth, and cd-depth} 

Together with branch-depth, DeVos, Kwon, and Oum~\cite{DeVKO20} 
introduced three additional depth measures: \emph{deletion-depth}, 
\emph{contraction-depth}, and \emph{contraction–deletion-depth}. 
These parameters coincide with the d-depth, c-depth, and cd-depth from \Cref{def:eight-parameters}. 
To keep the terminology uniform throughout the paper, we henceforth use the names d-depth, c-depth, and cd-depth. 
We also note that cd-depth already appeared in~\cite[Section~3]{DING199521} under the name \emph{type}.

\paragraph*{\boldmath Contraction$^*$-depth}

Another measure in the batch was defined in the work of Kardo\v{s}, Kr{\'{a}}l', Liebenau, and Mach~\cite{KarKLM17},
originally under the name `{branch-depth}' and later renamed to `contraction$^*$-depth', as follows.

\begin{definition}[\cite{KarKLM17}]
\label{def:csd-decomposition}\label{def:csd-general}
    A \emph{contraction$^*$-depth decomposition} of a matroid $M$ is a pair $(T, f)$ where $T$ is a rooted tree and $f$ a mapping of
    the elements of $M$ into the leaves of $T$ such that
    \begin{itemize}
        \item $|E(T)| = \rnk(M)$, and
        \item for every $X \subseteq E(M)$ we have $\rnk_M(X) \leq |E(T_X)|$, where $T_X\subseteq T$ denotes the subtree formed
	by the union of all paths from the root to the leaves $f(x)$ where~$x\in X$.
    \end{itemize}
    The \emph{contraction$^*$-depth} of a matroid $M$ is the minimum value of $\,\hght(T)-1$ (minimum height minus one) of a contraction$^*$-depth decomposition of $M$.
\end{definition}

Although \Cref{def:csd-decomposition} has a very different form from \Cref{def:eight-parameters},
the defined parameter is nevertheless closely related to our family of measures.
In fact, Bria\'nski, Kouteck{\'{y}}, Kr{\'{a}}l', Pek{\'{a}}rkov{\'{a}}, and Schr{\"{o}}der~\cite{BriKKPS24} later
gave a definition of another parameter named contraction$^*$-depth for vector matroids represented by a matrix $\ve A \in \FF^{m \times n}$
in the style of the c$^*$-depth of \Cref{def:eight-parameters}.
There is only one minor difference (cf.~\Cref{rem:diff1}) between their definition and ours: the depth of a single-column matrix $\ve A$ in \cite{BriKKPS24} is defined
as $\rnk(\ve A)\in\{0,1\}$, instead of a fixed value $1$ as in \Cref{def:eight-parameters}.
In order to keep this paper uniform, we modify the definition of \cite{BriKKPS24} in this respect and use the name `c$^*$-depth'
in the forthcoming \Cref{def:csd-representable}.

For shorthand, we say that a depth measure $\mu$ of matrices $\veAm\in \FF^{m \times n}$ is \emph{component-shattered} if both of the following two conditions hold:
\begin{itemize}
    \item (i) $\mu(\ve A)=1$ if $n=1$ (i.e., the matroid $M(\ve A)$ has one element), and
    \item (ii) $\mu(\veAm)=\max\{\mu(\ve A'): \ve A'$ is a component of~$\ve A\}$.
\end{itemize}

\begin{definition}[adapted from \cite{BriKKPS24}]
    \label{def:csd-representable}
    Let $\FF$ be a field and $\veAm \in \FF^{m \times n}$ a matrix over $\FF$ for positive integers~$m,n$.
    The \emph{c$^*$-depth} of~$\veAm$, denoted by $\csd(\veAm)$, is a component-shattered depth measure 
    additionally satisfying the following condition:
    \begin{itemize}
        \item If $M(\ve A)$ is connected, then $\csd(\veAm) = 1 + \min\{ \csd(\veAm \extcontr \vev): \vev \in \FF^m$ is any column vector$\}$.
    \end{itemize}
Recall that $\veAm \extcontr \vev$ denotes the matrix obtained by adding a column $\ve v$ to $\ve A$ and contracting it.
\end{definition}

\begin{remark}\label{rem:diff1}
The c$^*$-depth, in the variant used in \cite{BriKKPS24}, of a matrix~$\veAm$ over $\FF$ equals
\begin{itemize}
\item $1=\csd(\veAm)$ if every element of $M(\ve A)$ is a loop or a coloop, and the rank of $\ve A$ is not null,
\item and $(\csd(\veAm)-1)$ otherwise.
\end{itemize}
\end{remark}

Although \Cref{def:csd-representable} may appear identical to \Cref{def:eight-parameters} of c$^*$-depth
of the matroid $M(\ve A)$, it is not completely true: \Cref{def:eight-parameters} allows one to contract
an arbitrary element added to $M(\ve A)$, whereas \Cref{def:csd-representable} is restricted to adding and contracting only $\FF$-represented elements.
At first glance, it is not even clear whether $\csd(\ve A_1)=\csd(\ve A_2)$ holds for all representations of the same matroid $M(\ve A_1)=M(\ve A_2)$.

To our knowledge, these questions have not been explicitly addressed in the literature yet,
and we will prove (in \Cref{cor:csdmatrixeq}) that the c$^*$-depth $\csd(\veAm)$ of any matrix $\ve A$ indeed 
equals the \mbox{c$^*$-depth} of the matroid $M(\ve A)$, to provide a complete picture.
In particular, the matrix c$^*$-depth $\csd(\ve A)$ does not depend on a particular choice of a matrix $\ve A$ representing the same matroid.

Likewise, the relation of \Cref{def:csd-decomposition} to \Cref{def:csd-representable} is not at all clear, and it is only
implicitly claimed in \cite{BriKKPS24}, without a dedicated proof.
However, this relation can be derived from the results of Bria\'nski, Kr{\'{a}}l', and Lamaison~\cite{BriKL25}, as we detail in \Cref{sub:understand-csd}:
\begin{theorem}[using \cite{BriKL25}]\label{thm:contractionstarx}
Let $M$ be a matroid and $k$ denote the contraction$^*$-depth of~$M$ (according to \Cref{def:csd-decomposition}).
Then $k=\csd(M)-1$, unless $M$ is of positive rank and consists of only loops and coloops, in which case~$k=\csd(M)=1$.
\end{theorem}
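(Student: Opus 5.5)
We will derive the statement by chaining two results: the closure theorem of \cite{BriKL25} and our own closure statement \Cref{thm:csclosurexx}.

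The result of \cite{BriKL25} (stated later as \Cref{thm:contrclosure}) says the following. The contraction$^*$-depth $k$ of $M$ in the sense of \Cref{def:csd-decomposition} equals the minimum contraction-depth of a matroid $M'$ containing $M$ as a restriction. Their contraction-depth differs from our c-depth only in the base case: they assign a loop depth $0$, while we assign every one-element matroid depth $1$. \Cref{thm:csclosurexx} says that the minimum c-depth of such an $M'$, measured in our convention, equals $\csd(M)$. So the proof reduces to an exact translation between the two conventions, applied to the optimal extension $M'$.

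The first step is a translation lemma between the two conventions. Let $\cd_0$ denote c-depth with loops given depth $0$; its base case for a single nonloop element is $1$, since we need one contraction to reach a loop. We will show by induction along \Cref{def:eight-parameters} that $\cd_0(N)=\cd(N)-1$ for every matroid $N$ that is not a nonempty union of loops and coloops with positive rank. We will also check the remaining cases directly:
\begin{itemize}
\item If $N$ consists of loops only, then $\cd_0(N)=0$ and $\cd(N)=1$.
\item If $N$ consists of loops and coloops and has at least one coloop, then $\cd_0(N)=1=\cd(N)$.
\end{itemize}
The induction step for disconnected $N$ passes through the maximum over components. We must note that the max is attained at a component that is not a single loop or coloop, unless all components are such. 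The connected case with at least two elements is immediate from the recursion. This is essentially the same bookkeeping as in \Cref{rem:diff1}.

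The second step applies the lemma to an optimal $M'$. Suppose $M$ is not of positive rank consisting of loops and coloops. We will argue that we may choose $M'$ with $M'\setminus X=M$ that also avoids the exceptional class; then $k=\cd_0(M')=\cd(M')-1=\csd(M)-1$. If some extension of $M$ fell into the exceptional class, then $M$ would itself consist of loops and coloops and have positive rank, unless $M'$ is all loops. In the all-loops case the claimed formula holds trivially, since $k=0$ and $\csd(M)=1$. In the exceptional case we will directly verify $k=1$: a single edge from the root, with all elements mapped below it, is not enough when the rank exceeds $1$. So instead we will use the star tree with one edge per coloop and map the loops to the root side, giving height $2$. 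For $\csd(M)$, every component is a single element, so $\csd(M)=1$.

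The main obstacle is the handling of minimizers $M'$ that are minimal for one convention but not the other. We must make sure that minimizing $\cd_0$ and minimizing $\cd$ over extensions produce the same optimum up to the shift. The lemma gives this, because the shift is uniform outside the exceptional class. The remaining case is when the optimum for one convention lands in the exceptional class, and this is precisely where the exceptional clause of the statement arises.
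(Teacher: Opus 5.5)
Your argument is correct, but for one of the two inequalities it takes a different route from the paper. The paper uses \Cref{thm:contrclosure} already restated in its own convention, so your translation lemma is exactly the bookkeeping hidden in that restatement. It then proves the two inequalities separately. First, $k\ge\csd(M)-1$ follows from the constructive half of \cite{BriKL25}, namely $k\ge\ell-1$, together with $\ell\ge\csd(M)$ from \Cref{lem:extcd-to-csd}. Second, $k\le\csd(M)-1$ comes from \Cref{lem:csd-to-kklm}. That lemma builds a contraction$^*$-depth decomposition directly from the bipartition of \Cref{lem:startingguts}: it glues the two inductively obtained trees at their roots and adds a path of length $\lambda_M(A)$ on top.

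Your first inequality is the same as the paper's, since the easy half of \Cref{thm:csclosure} is precisely \Cref{lem:extcd-to-csd}. For $k\le\csd(M)-1$, however, you combine the easy half of \cite{BriKL25} with the hard half of \Cref{thm:csclosure}. That hard half relies on the relatively free extensions of \Cref{thm:GGWadde} and on \Cref{lem:commutesame}. This is not circular, because the proof of \Cref{thm:csclosure} never uses \Cref{thm:contractionstarx}.

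Your route makes the theorem the one-line composition $k=\ell-1=\csd(M)-1$ once \Cref{thm:csclosure} is available. The paper's route already works in \Cref{sub:understand-csd} and avoids the Geelen--Gerards--Whittle machinery. It also yields an explicit decomposition and isolates \Cref{lem:csd-to-kklm}, which is reused later in the alternative proof of \Cref{thm:contrclosure}.

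A few small points need tidying.
\begin{itemize}
\item In the connected case of your translation lemma, say why no $N/e$ is exceptional: coloops of $N/e$ are coloops of $N$, and a connected $N$ with at least two elements has none. This is what makes the shift uniform across the minimum.
\item ``Unless $M'$ is all loops'' should read ``unless $M$ is all loops.'' What you show, and what you need, is that \emph{every} extension of a non-exceptional $M$ having a non-loop is non-exceptional, not merely that some minimiser can be chosen so.
\item In the exceptional case, the root of the star is not a leaf, so map the loops to an arbitrary leaf instead. Alternatively, read off $k=1$ from \cite{BriKL25} directly, since $\mathrm{cd}_0(M)=1$ and every extension has positive rank.
\end{itemize}
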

\noindent We will thus further use the notion of c$^*$-depth and \Cref{thm:contractionstarx} when referring to existing results on contraction$^*$-depth.

We remark that further refined properties of contraction$^*$-depth of matroids are provided in \Cref{sec:closurede};
these also imply an alternative proof for the closure result of \cite{BriKL25}.

\paragraph*{\boldmath C$^*$d-depth and d$^*$-depth of matrices}

Bria\'nski, Kouteck{\'{y}}, Kr{\'{a}}l', Pek{\'{a}}rkov{\'{a}}, and Schr{\"{o}}der~\cite{BriKKPS24} moreover defined
a new depth measure called `contraction$^*$-deletion-depth'
of represented matroids, in a way completely analogous to \Cref{def:csd-representable}, which we again abbreviate as follows.

\begin{definition}[\cite{BriKKPS24}]
    \label{def:csdd-representable}
    Let $\FF$ be a field and $\veAm \in \FF^{m \times n}$ a matrix over $\FF$.
    The \emph{c$^*$d-depth} of~$\veAm$, denoted by $\csdd(\veAm)$, is a component-shattered depth measure 
    additionally satisfying the following condition:
    \begin{itemize}
    \item If $M(\ve A)$ is connected, then $\csdd(\veAm) = 1 + \min(P\cup Q)$ where 
	$P=\{ \csdd(\veAm \extcontr \vev): \vev \in \FF^m$ is any column vector$\}$ and
	$Q=\{ \csdd(\veAm \setminus \ve w): \ve w$ is a column of $\ve A\}$.
    \end{itemize}
\end{definition}

Again, in comparison with \Cref{def:eight-parameters}, we prove in \Cref{cor:csxxmatrixeq}
that the c$^*$d-depth $\csdd(\veAm)$ of any matrix $\ve A$ equals the c$^*$d-depth of the underlying matroid $M(\ve A)$.

Finally, in~\cite{GajPP25}, Gajarsk\'y, Pek{\'{a}}rkov{\'{a}}, and Pilipczuk introduced a new parameter of represented matroids 
dual to c$^*$-depth (\Cref{def:csd-representable}), named in their work `{deletion$^*$-depth}', and here again abbreviated as d$^*$-depth.

\begin{definition}[\cite{GajPP25}]
    \label{def:dsd-representable}  
    Let $\FF$ be a field and $\veAm \in \FF^{m \times n}$ a matrix over $\FF$.
    The \emph{d\/$^*$-depth} of~$\veAm$, denoted by $\dsd(\veAm)$, is a component-shattered depth measure 
    additionally satisfying the following condition:
    \begin{itemize}
        \item If $M(\ve A)$ is connected, then $\dsd(\veAm) = 1 + \min\{ \dsd(\veAm \coextdel \vev): \vev \in \FF^n$ is any row vector$\}$.
    \end{itemize}
Recall that $\veAm \coextdel \vev$ denotes the matrix obtained by adding a row $\ve v$ to $\ve A$.
\end{definition}

Dually to the case of c$^*$-depth (cf.~\Cref{obs:depthsduality}), 
the d$^*$-depth $\dsd(\veAm)$ of any matrix~$\ve A$ equals the d$^*$-depth of the matroid $M(\ve A)$.

\subsection{Connection to Integer Programming}\label{sub:IPconn}

In~\cite{ChaCKKP22}, Chan, Cooper, Kouteck\'y, Kr{\'{a}}l', and Pek{\'{a}}rkov{\'{a}} discovered a surprising connection between 
matroid depth parameters and integer programming, in particular, integer programs of bounded primal tree-depth.
By \emph{Integer programming (IP)}, we mean the following problem:
\begin{align*}
    \min \{f(\vex) \mid \veAm\vex = \vev,\hspace{0.5em} \vel \leq \vex \leq \veu,\hspace{0.5em} \vex \in \mathbb{Z}^n\}.
\end{align*}

Here, $\veAm$ is a $\mathbb{Z}^{m \times n}$ matrix (the \emph{constraint matrix}), $f: \mathbb{R}^n \rightarrow \mathbb{R}$ a separable convex function (the \emph{objective function}),
$\veb \in \ZZ^m$ the right-hand side vector, and $\vel, \veu \in (\ZZ \cup \{\pm \infty\})^n$ the lower and upper bounds.
Integer programming is known to be \NPh~\cite{Kar72} in general. However, when the constraint matrix has a certain restricted structure, the problem often becomes tractable.
One notable class of integer programs for which tractability has been established is the class of programs whose constraint matrices have bounded tree-depth~\cite{KoulO18}.

Unfortunately, tree-depth as a matrix parameter has the disadvantage that it is not invariant under row operations. 
As a result, two integer programs that are equivalent in terms of their solution sets may have different tree-depths, and 
an algorithm that performs efficiently on one formulation might be slow -- or even infeasible -- on the other.

To overcome this drawback, Chan et al.~\cite{ChaCKKP22} and later Briański et al.~\cite{BriKKPS24} studied the problem of matrix sparsification.
That is, given a matrix of possibly large tree-depth, to determine whether there exists a row-equivalent form of smaller tree-depth,
and whether it is possible to efficiently unveil this form. 
Matroid theory, and in particular matroid depth parameters introduced above, were the central tool in answering these questions.

To this end, we denote by $\td^*_P(\veAm)$ ($\td^*_D(\veAm)$, $\td^*_I(\veAm)$)
the minimum value of $\td_P(\veAm')$ ($\td_D(\veAm')$, $\td_I(\veAm')$, respectively) over all matrices $\veAm'$ which are row-equivalent to $\veAm$.

\IPresults*

\section{Basic parameter properties}\label{sec:paramproperties}

This section establishes the basic structural properties 
of the studied matroid depth parameters, based partly on existing published results
and partly on new findings.

\subsection{Understanding \boldmath c$^*$-depth and contraction$^*$-depth}\label{sub:understand-csd}

In this section, we prove \Cref{thm:contractionstarx}, establishing the relationship between c$^*$-depth and contraction$^*$-depth 
of matroids.
We begin with three technical lemmas that reveal some fine details of the recursive definition of c$^*$-depth. 
We use these lemmas to relate c$^*$-depth to contraction$^*$-depth in this section, and also as tools to
establish further structural results in later sections.

\begin{lemma}\label{lem:extcd-to-csd}
If a matroid $M$ is a minor of a matroid~$M'$, then~$\csd(M)\leq\csd(M')$.
\end{lemma}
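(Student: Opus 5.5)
We proceed by induction, primarily on $\csd(M')$ and secondarily on $|E(M')|$, proving the statement for an arbitrary minor $M=M'\setminus X\contract Y$ directly rather than one element at a time. If $|E(M)|\le1$ then $\csd(M)=1\le\csd(M')$ and there is nothing to show. So assume $|E(M)|\ge2$.

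\emph{Disconnected $M'$.} Suppose $M'$ is disconnected with components $M'_1,\ldots,M'_k$. Deletion and contraction act componentwise, so $M=\bigoplus_i \big(M'_i\setminus(X\cap E(M'_i))\contract(Y\cap E(M'_i))\big)$. Every component of $M$ is a component of one of these summands, which is a minor of some $M'_i$. Each $M'_i$ has fewer elements than $M'$ and satisfies $\csd(M'_i)\le\csd(M')$, so the secondary induction applies. Since $\csd$ of a disconnected matroid is the maximum over its components, we obtain $\csd(M)\le\max_i\csd(M'_i)=\csd(M')$.

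\emph{Connected $M'$.} Now $M'$ is connected with at least two elements, so $\csd(M')=1+\csd(N')$, where $N'=M'^+\contract f$ and $M'=M'^+\setminus f$ for a suitable $M'^+$ and an element $f$ that is neither a loop nor a coloop. It suffices to bound $\csd(C)\le\csd(M')$ for every component $C$ of $M$. A one-element $C$ is trivial, so let $C$ be connected with at least two elements. Then $C$ is itself a minor of $M'$, namely $C=M'\setminus X'\contract Y'$ where $X'\supseteq X$ also contains the elements of the other components. Consider $C^+:=M'^+\setminus X'\contract Y'$. We have $C=C^+\setminus f$, and $C^+\contract f=N'\setminus X'\contract Y'$ is a minor of $N'$, since contraction and deletion of distinct elements commute.

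There are two cases for the status of $f$ in $C^+$.
\begin{enumerate}
\item If $f$ is neither a loop nor a coloop in $C^+$, then $C^+\contract f$ is a c$^*$-transformation of the connected matroid $C$. Since $\csd(N')<\csd(M')$, the primary induction gives $\csd(C)\le1+\csd(C^+\contract f)\le1+\csd(N')=\csd(M')$.
\item If $f$ is a loop or a coloop in $C^+$, then $C^+\contract f=C^+\setminus f=C$. Hence $C$ is itself a minor of $N'$, and induction gives $\csd(C)\le\csd(N')<\csd(M')$.
\end{enumerate}

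The main obstacle is exactly this degenerate case, where the auxiliary element $f$ becomes a loop or coloop after taking the minor, so the commuted operation is no longer a legal c$^*$-transformation. The observation that contracting or deleting a loop or coloop coincide resolves it. A second point needing care is that the induction must be set up so that $N'$, which has the same ground set as $M'$, is still smaller. This is why the primary induction is on $\csd(M')$ rather than on the size of the ground set.
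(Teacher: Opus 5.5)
Your proof is correct and is essentially the paper's argument: induct along the recursive definition of $\csd$, commute the deletions and contractions that define the minor past the auxiliary element $f$ of the c$^*$-transformation, and apply induction to the resulting minor of $M^+\contract f$. Your version is more careful than the paper's. The paper directly asserts that $(M^+\contract X\setminus Y)\contract f$ is a c$^*$-transformation of $M$ and applies the recursive step to $M$ itself, passing silently over the two cases you treat explicitly: $f$ becoming a loop or coloop in the minor, and $M$ being disconnected.
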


\begin{proof}
Let $X,Y\subseteq E(M')$ be such that $M=M'\contract X\setminus Y$ (hence~$E(M')=E(M)\cup X\cup Y$).
We will proceed by structural induction on \Cref{def:eight-parameters}.
If $|E(M')|=1$, then $\csd(M)\leq\csd(M')=1$ trivially holds.
If $M'$ is not connected, then every component $M_1$ of $M$ is a minor of a component $M_1'$ of $M'$.
By induction, $\csd(M_1)\leq\csd(M_1')$, and by \Cref{def:eight-parameters},~\Cref{it:bycomponents}, $\csd(M)\leq\csd(M')$.

Assume that $M'$ is connected, and that by \Cref{def:eight-parameters},~\Cref{it:oneredstep}(a), $\csd(M')=1+\csd(M'')$
where $M''$ is a c$^*$-transformation of $M'$, i.e., $M'=M^+\setminus f$ and $M''=M^+\contract f$ for some matroid $M^+$.
We have $M=M^+\setminus f\contract X\setminus Y=(M^+\contract X\setminus Y)\setminus f$.
Let $M_1=(M^+\contract X\setminus Y)\contract f=M^+\contract f\contract X\setminus Y$.
Then $M_1$ is a c$^*$-transformation of $M$, and $M_1$ is a minor of $M''$.
By induction, $\csd(M_1)\leq\csd(M'')$, and by \Cref{def:eight-parameters},~\Cref{it:oneredstep},
$\csd(M)\leq1+\csd(M_1)\leq1+\csd(M'')=\csd(M')$.
\end{proof}

\begin{lemma}\label{lem:startingseq}
Let $M$ be a connected matroid and $\ell$ a positive integer.
Assume there exists a sequence of $1+\ell$ matroids $M_0=M$ and $M_1,\ldots,M_\ell$ on the same ground set $E(M)$ such that
\begin{itemize}
\item for each $i\in[\ell]$, the matroid $M_i$ is a c$^*$-transformation of $M_{i-1}$ -- that is, 
there exists a matroid $M^+_i$ on the ground set $E(M)\cup\{e_i\}$ such that $M_{i-1}=M^+_i\setminus e_i$ and $M_i=M^+_i\contract e_i$,
\item $M_\ell$ is not connected.
\end{itemize}
If $C\subseteq E(M)$ is such that $\lambda_M(C)>0$ and $\lambda_{M_\ell}(C)=0$, then there is a sequence of matroids
$M'_0=M\contract C$ and $M'_1,\ldots,M'_m$ where $m\leq\ell-\lambda_{M}(C)$, such that
\begin{enumerate}[(i)]
\item\label{it:xstep} for all $i\in[m]$, the matroid $M'_i$ is a c$^*$-transformation of $M'_{i-1}$, and
\item\label{it:xfinal} $M'_m=M_\ell\contract C$.
\end{enumerate}
\end{lemma}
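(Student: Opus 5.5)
The plan is to take the given sequence and contract $C$ throughout. For each $i\in\{0,\ldots,\ell\}$ we put $N_i=M_i\contract C$ and $N^+_i=M^+_i\contract C$ for $i\in[\ell]$; here $C\subseteq E(M)$, so $e_i\notin C$. Contractions of disjoint sets commute, and deletion and contraction of disjoint sets commute. Hence
\[
N_{i-1}=(M^+_i\setminus e_i)\contract C=N^+_i\setminus e_i
\qquad\text{and}\qquad
N_i=(M^+_i\contract e_i)\contract C=N^+_i\contract e_i .
\]
The sequence $N_0=M\contract C,\ldots,N_\ell=M_\ell\contract C$ already starts and ends at the right matroids. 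It remains to show that each step is either a c$^*$-transformation or trivial (that is, $N_i=N_{i-1}$), and to count the non-trivial steps. We then discard the trivial steps to obtain $M'_0,\ldots,M'_m$.

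The main obstacle is the loop/coloop condition on $e_i$ in $N^+_i$. We handle it in two cases.

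\begin{itemize}
\item \textbf{Coloop.} First we show that $e_i$ is never a coloop of $N^+_i$. An element $e$ is a coloop of $N^+_i=M^+_i\contract C$ iff $e\notin\cl_{M^+_i\contract C}(E(M)-C)$. This is equivalent to $e\notin\cl_{M^+_i}(E(M))$, i.e., to $e$ being a coloop of $M^+_i$, which is excluded by assumption.
\item \textbf{Loop.} If $e_i$ is a loop of $N^+_i$ (equivalently $e_i\in\cl_{M^+_i}(C)$), then deleting and contracting $e_i$ coincide. Hence $N_i=N_{i-1}$ and the step is dropped.
\end{itemize}

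Otherwise the step is a genuine c$^*$-transformation, so (\ref{it:xstep}) and (\ref{it:xfinal}) hold for the shortened sequence.

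For the bound on $m$ we count ranks. Since $e_i$ is neither a loop nor a coloop of $M^+_i$, we have $\rnk(M_i)=\rnk(M_{i-1})-1$, so $\rnk(M_\ell)=\rnk(M)-\ell$. By the same reasoning, each non-trivial step lowers $\rnk(N_i)$ by exactly one, and each trivial step keeps it unchanged. Therefore
\[
m=\rnk(M\contract C)-\rnk(M_\ell\contract C)=\ell-\bigl(\rnk_M(C)-\rnk_{M_\ell}(C)\bigr).
\]
Let $D=E(M)-C$, and put $a=\rnk_M(C)-\rnk_{M_\ell}(C)$ and $b=\rnk_M(D)-\rnk_{M_\ell}(D)$. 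The hypothesis $\lambda_{M_\ell}(C)=0$ gives $\rnk_{M_\ell}(C)+\rnk_{M_\ell}(D)=\rnk(M)-\ell$. Together with $\rnk_M(C)+\rnk_M(D)=\rnk(M)+\lambda_M(C)$, this yields $a+b=\ell+\lambda_M(C)$.

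We also need $b\le\ell$. Each step decreases the rank of any fixed set $X$ by at most one, because $\rnk_{M_i}(X)=\rnk_{M^+_i}(X\cup\{e_i\})-1\ge\rnk_{M_{i-1}}(X)-1$. Hence $b\le\ell$, so $a\ge\lambda_M(C)$ and $m=\ell-a\le\ell-\lambda_M(C)$, as required.
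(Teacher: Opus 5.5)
Your proof is correct and follows essentially the same route as the paper: contract $C$ in every $M^+_i$, and discard the steps in which $e_i$ becomes a loop of $M^+_i\contract C$ (those with $e_i\in\cl_{M^+_i}(C)$). The differences are only in bookkeeping. You count the discarded steps exactly as $\rnk_M(C)-\rnk_{M_\ell}(C)$ and bound this below by $\lambda_M(C)$ via $a+b=\ell+\lambda_M(C)$ and $b\le\ell$, whereas the paper argues that $\lambda_{M_i}(C)$ drops by at most one per step, and only when $e_i\in\cl_{M^+_i}(C)$. You also explicitly check that $e_i$ cannot become a coloop of $M^+_i\contract C$, a point the paper leaves implicit.
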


\begin{proof}
We first define another sequence of matroids as follows; $N'_0=M\contract C$, and for $i=1,\ldots,\ell$ we set
$N^+_{i}=M^+_i\contract C$ and $N'_{i}=N^+_{i}\contract e_i$.
Then, naturally, $N'_\ell=M^+_\ell\contract C\contract e_\ell=M^+_\ell\contract e_\ell\contract C=M_\ell\contract C$,
and for all $i\in[\ell]$,~ $N^+_{i}\setminus e_i=M^+_i\contract C\setminus e_i=M^+_i\setminus e_i\contract C=M_{i-1}\contract C=N'_{i-1}$.
So, in particular, each $N'_i$ is a c$^*$-transformation of~$N'_{i-1}$.

We are almost there, only a small problem remains -- the sequence $N'_0,\ldots,N'_\ell$ is too long compared to the desired sequence $M'_0,\ldots,M'_m$.
However, we have $\lambda_{M_\ell}(C)=0$, $\>\lambda_{M_{i-1}}(C)-\lambda_{M_{i}}(C)\leq1$ for each $i\in[\ell]$
(since only one contraction happens in a c$^*$-transformation), and $\lambda_{M_{i}}(C)<\lambda_{M_{i-1}}(C)$ implies that $e_i\in\clo_{M^+_i}(C)$.
So, there are at least $\lambda_M(C)>0$ values of the index $i\in[\ell]$ such that $e_i\in\clo_{M^+_i}(C)$,
and in all such cases $e_i$ is a loop in $N^+_{i}=M^+_i\contract C$, which means that $N'_{i}=N^+_{i}\contract e_i=N^+_{i}\setminus e_i=N'_{i-1}$.
By skipping such repeated matroids from the sequence $N'_0,\ldots,N'_\ell$ we hence get a sequence of desired length $m\leq\ell-\lambda_{M}(C)$.
\end{proof}

\begin{lemma}\label{lem:startingguts}
Let $M$ be a matroid on more than one element.
Then there exists a bipartition $(A,B)$ of $E(M)$ such that $A\not=\emptyset\not=B$ and $\>\max\{\csd(M\contract A),\csd(M\contract B)\}\leq\csd(M)-\lambda_{M}(A)$.
\end{lemma}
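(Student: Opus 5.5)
We argue by strong induction on $\csd(M)$, in the manner of \Cref{lem:extcd-to-csd}, and split according to whether $M$ is connected.

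\emph{Disconnected case.} Let $A$ be a component of $M$ and $B=E(M)-A$; both sets are nonempty. Then $\lambda_M(A)=0$. Moreover, $M\contract A$ and $M\contract B$ are minors of $M$ (they are just the restrictions to the other part). Hence \Cref{lem:extcd-to-csd} gives $\max\{\csd(M\contract A),\csd(M\contract B)\}\le\csd(M)=\csd(M)-\lambda_M(A)$.

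\emph{Connected case: setting up the sequence.} Unfold \Cref{def:eight-parameters}, \Cref{it:oneredstep}, repeatedly. We obtain a sequence $M_0=M,M_1,\dots,M_\ell$ of matroids on the ground set $E(M)$ with the following properties. Each $M_i$ is a c$^*$-transformation of $M_{i-1}$. The matroids $M_0,\dots,M_{\ell-1}$ are connected, while $M_\ell$ is disconnected (or has one element, which cannot happen since the ground set is fixed and has more than one element). Finally, $\csd(M)=\ell+\csd(M_\ell)$. Such a sequence exists because each recursion step stays on the same ground set and either terminates at a disconnected matroid or continues. Since $M_\ell$ is disconnected, pick a component $A$ of $M_\ell$ and set $B=E(M)-A$, so $\lambda_{M_\ell}(A)=0$. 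Because $M$ is connected, $\lambda_M(A)>0$.

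\emph{Connected case: bounding the contractions.} Apply \Cref{lem:startingseq} with $C=A$. This gives a sequence of $m\le\ell-\lambda_M(A)$ c$^*$-transformations leading from $M\contract A$ to $M_\ell\contract A$. Each c$^*$-transformation raises $\csd$ by at most one, by \Cref{def:eight-parameters}, \Cref{it:oneredstep}, together with the handling of disconnected steps. Strictly, $\csd(N)\le1+\csd(N')$ when $N'$ is a c$^*$-transformation of $N$. This holds directly if $N$ is connected. If $N$ is disconnected, apply it componentwise: a c$^*$-transformation of $N$ restricted to a component is a minor of a c$^*$-transformation, so \Cref{lem:extcd-to-csd} applies. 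Therefore
\[
\csd(M\contract A)\le m+\csd(M_\ell\contract A)\le \ell-\lambda_M(A)+\csd(M_\ell).
\]
Here the last step uses \Cref{lem:extcd-to-csd}, since $M_\ell\contract A$ is a minor of $M_\ell$. The right-hand side equals $\csd(M)-\lambda_M(A)$. Because $\lambda_M(B)=\lambda_M(A)$ and $\lambda_{M_\ell}(B)=0$, the same argument with $C=B$ gives the bound for $B$.

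\emph{Main obstacle.} The delicate point is justifying the inequality $\csd(N)\le1+\csd(N')$ for a c$^*$-transformation $N'$ of a possibly disconnected $N$. When $N=N_1\oplus N_2$ and the element $f$ spans across components, $N'$ need not split accordingly. We would handle this with a weaker monotonicity: for each component $N_1$ of $N$, the restriction $N'|E(N_1)$ is a minor of a c$^*$-transformation of $N_1$. Concretely, take $M^+|(E(N_1)\cup\{f\})$; if $f$ becomes a loop or coloop there, a contraction or deletion suffices, and the bound is trivial by the minor lemma. Combined with \Cref{lem:extcd-to-csd} and induction on $|E|$, this gives the per-component bound and hence the claim.
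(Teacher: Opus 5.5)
Your proof is correct and follows the paper's argument step for step: untangle the recursion to a disconnected $M_\ell$, take a component $A$ of $M_\ell$, apply \Cref{lem:startingseq} with $C=A$ and with $C=B$, and close with \Cref{lem:extcd-to-csd}. In the disconnected case you use minor-monotonicity instead of the direct-sum identity, which is equally valid.

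Your extra check that $\csd(N)\le 1+\csd(N')$ also holds when an intermediate matroid $N$ is disconnected makes explicit a point the paper covers only with ``by Definition''. State it as the identity $N'\restriction E(N_1)=(M^+\restriction(E(N_1)\cup\{f\}))\contract f$. This is a c$^*$-transformation of $N_1$ (or $N_1$ itself if $f$ is a loop or coloop there) and a restriction of $N'$, which is the direction you need. Your current wording, ``a minor of a c$^*$-transformation'', states it in reverse.
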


\begin{proof}
If $M$ is not connected, then we choose $\emptyset\not=A\subsetneq E(M)$ as the ground set of any component of $M$, and set $B=E(M)-A$.
Then $M$ is a direct sum of $M\restriction\! A=M\contract B$ and of $M\restriction\! B=M\contract A$,
and $\csd(M)=\max\{\csd(M\contract A),\csd(M\contract B)\}$ by definition.
Moreover, $\lambda_{M}(A)=0$, and so $\csd(M)\leq\csd(M)-\lambda_{M}(A)$ holds.
We may thus assume that $M$ is connected.

By \Cref{def:eight-parameters} applied to $\csd(M)$ (informally, by untangling the recursion), we get that there exist an integer $\ell\geq1$
and a sequence of $1+\ell$ matroids $M_0=M$ and $M_1,\ldots,M_\ell$ on the same ground set $E(M)$ which satisfies the assumptions of \Cref{lem:startingseq}
and, moreover, the matroids $M_1,\ldots,M_{\ell-1}$ are connected.
Note that \Cref{def:eight-parameters} also establishes that $\csd(M_{i-1})=\csd(M_{i})+1$ for all $i\in[\ell]$,
and hence $\csd(M)=\csd(M_{\ell})+\ell$.

We now choose $\emptyset\not=A\subsetneq E(M)$ as the ground set of any component of $M_\ell$, and again set $B=E(M)-A$.
So, $\lambda_{M_\ell}(A)=\lambda_{M_\ell}(B)=0$ and $M_\ell$ is a direct sum of $M_\ell\restriction\! A=M\contract B$ and of $M_\ell\restriction\! B=M\contract A$.
In particular, $\csd(M_\ell)=\max\{\csd(M\contract A),\csd(M\contract B)\}$.
Since $M$ is connected, we also have $\lambda_{M}(A)=\lambda_{M}(B)>0$.

Applying \Cref{lem:startingseq} with $C=A$, we get a sequence of matroids $M'_0=M\contract A$ and $M'_1,\ldots,M'_m$ where 
$m\leq\ell-\lambda_{M}(A)$, satisfying the conditions (i) and (ii) of \Cref{lem:startingseq}.
This sequence, by \Cref{def:eight-parameters}, certifies that 
$\csd(M\contract A)\leq \csd(M'_m)+m=\csd(M_\ell\contract A)+m\leq\csd(M_\ell)+m=\csd(M)-\ell+m\leq\csd(M)-\lambda_{M}(A)$,
where $\csd(M_\ell\contract A)\leq\csd(M_\ell)$ follows from \Cref{lem:extcd-to-csd}.
Symmetrically with $C=B$, we also get $\csd(M\contract B)\leq\csd(M)-\lambda_{M}(B)=\csd(M)-\lambda_{M}(A)$, which finishes the proof.
\end{proof}

As observed above, \Cref{def:csd-general} of contraction$^*$-depth stands out
from the other parameter definitions.
We now therefore proceed with showing how exactly \Cref{def:csd-general} relates to \Cref{def:eight-parameters}, particularly to the notion of c$^*$-depth.
This relation, already stated as \Cref{thm:contractionstarx} above, is indirectly captured by the following result of Bria\'nski, Kr{\'{a}}l', and Lamaison~\cite{BriKL25}:

\begin{theorem}[\cite{BriKL25}]\label{thm:contrclosure}
Let $M$ be a matroid, $k$ be the contraction$^*$-depth of $M$, and $\ell$ denote the minimum c-depth of a matroid $M'$ that contains $M$ as a restriction.
Then $k=\ell-1$, unless $M$ is of positive rank and consists of only loops and coloops, in which case~$k=\ell=1$.
\end{theorem}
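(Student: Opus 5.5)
The plan is to prove the two inequalities separately, translating between a contraction$^*$-depth decomposition $(T,f)$ of $M$ and a recursive c-depth evaluation of an extension $M'\supseteq M$. Throughout, I first set aside the degenerate case where $M$ has positive rank and consists only of loops and coloops: then both quantities equal $1$ by direct inspection (a star decomposition gives $k=1$, and $M'=M$ already has c-depth $1$ since all its components are single elements). I also note that loops of $M$ can be hung anywhere in $T$ and are irrelevant for c-depth, so I may assume $M$ is loopless.

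For $k\le \ell-1$, fix $M'$ with $M=M'\setminus Y$ and $\cd(M')=\ell$. I would unfold the recursion of \Cref{def:eight-parameters} for $\cd(M')$ into a rooted tree. Each contraction of an element $e$ gives a tree edge, but only when $e$ is not a loop at that moment; each split into components gives a branching. Each element of $M$ is attached as a leaf at the point where it becomes a single-element component (or gets contracted).

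For $X\subseteq E(M)$, the rank $\rnk_M(X)=\rnk_{M'}(X)$ is at most the number of non-loop contractions on the paths of $T_X$. The reason is that along any root-to-leaf branch, rank only decreases by contracting non-loop elements, and components separate rank additively. This gives the second condition of \Cref{def:csd-general}, with height at most $\ell$, i.e.\ value at most $\ell-1$.

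The first condition $|E(T)|=\rnk(M)$ may fail, since $\rnk(M')$ can exceed $\rnk(M)$. I would fix this by repeatedly contracting a tree edge $uv$ (identifying $u$ and $v$) chosen so that no set $X$ with $\rnk_M(X)=|E(T_X)|$ uses $uv$. Such an edge exists whenever $|E(T)|>\rnk(M)$: the union of the tight sets is tight by submodularity, and the tree it spans is proper. Contracting edges never increases height.

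For $\ell-1\le k$, take a decomposition $(T,f)$ of height $k+1$ and induct on $|E(T)|$. If the root has several children, the leaf sets $X_1,\dots,X_d$ of the subtrees satisfy $\sum_i\rnk(X_i)\le\sum_i|E(T_{X_i})|=\rnk(M)$. Hence $M$ is the direct sum of the restrictions $M|X_i$, and the induction hypothesis applies to each part separately.

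If the root has a single child edge $g$, call $X\subseteq E(M)$ \emph{tight} when $\rnk_M(X)=|E(T_X)|$, and let $\ca F$ be the set of closures of the nonempty tight sets. The inequality $|E(T_{X\cup Y})|+|E(T_{X\cap Y})|\le|E(T_X)|+|E(T_Y)|$ together with submodularity shows that unions of tight sets are tight and that two tight sets form a modular pair. Hence $\ca F$ is a modular cut, and it defines a single-element extension $M^+$ of $M$ by a non-loop element $e$ with $e\in\cl(X)$ exactly for $\cl(X)\in\ca F$.

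Then $(T-g,f)$ is a decomposition of $M^+\contract e$ of smaller height: tight sets lose exactly one unit of rank, matching the loss of $g$, while non-tight sets have slack to absorb the lost edge. By induction, $M^+\contract e$ is a restriction of some $N$ with $\cd(N)\le k$.

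The main obstacle is to assemble these pieces into a single matroid $M'$ that contains $M$ as a restriction and satisfies $\cd(M')\le k+1$. The induction produces an extension of $M^+\contract e$, not of $M^+$, and in general extensions do not lift through a contraction.

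To overcome this, I would not induct on extensions. Instead, I would first build the whole extension at once, adding one element $e_g$ for every edge $g$ of $T$, via iterated modular-cut extensions defined top-down by the tight sets relative to subtrees. I would then verify that contracting the elements $e_g$ in root-to-leaf order realises the recursion, in the spirit of the commuting argument behind \Cref{lem:commutesame}. The crux is checking that the later extensions, defined in $M$, remain the correct ones after the earlier contractions. This amounts to showing that tightness with respect to a subtree is preserved when the ancestors' elements are contracted.
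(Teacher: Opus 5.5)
Your proposal has genuine gaps in both directions.

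For $k\le\ell-1$, the repair of the unfolded tree does not work. Both $\rnk_M$ and $X\mapsto|E(T_X)|$ are submodular and agree on tight sets $X,Y$, but this gives no \emph{lower} bound on $\rnk_M(X\cup Y)$. So tight sets are not closed under union, and an edge avoided by all tight sets need not exist. Here is a concrete case. Let $G$ have vertices $u,w,s,t$, two parallel edges $x,y$ joining $u$ and $w$, and the paths $usw$ and $utw$. Put $M'=M(G)$ and let $M$ be the restriction of $M'$ to $\{x,y\}$. Unfold $\cd(M')=3$ by contracting $x$ (root edge $g$ to $v$); then $y$ is a loop, and the other four edges form two parallel pairs, each finished by one contraction (edges $va$, $vb$). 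Map $x$ to $a$ and $y$ to $b$. No nonempty subset of $\{x,y\}$ is tight, so your rule may contract $g$. Afterwards $\{x\}$ and $\{y\}$ are tight, $\{x,y\}$ is not, and every edge lies in a tight set, yet $|E(T)|=2>1=\rnk(M)$. Your argument never uses minimality of $M'$, so it would have to work here. In general the condition $\rnk_M(X)\le|E(T_X)|$ alone says little: a star with one element per leaf satisfies it for every circuit $U_{n-1,n}$. The paper avoids any repair. It uses $\csd(M)\le\csd(M')\le\cd(M')$ (\Cref{lem:extcd-to-csd}, \Cref{rem:starstronger}), and \Cref{lem:csd-to-kklm} builds a decomposition with exactly $\rnk(M)$ edges from the bipartition of \Cref{lem:startingguts}.

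For $\ell-1\le k$, your single step reaches a true conclusion by an invalid argument. Two tight sets need not form a modular pair. Take $U_{3,4}$ on the tree formed by a root edge followed by two leaf edges, with $u,x$ on one leaf and $y_1,y_2$ on the other. Then $X=\{u,x\}$ and $Y=\{u,y_1,y_2\}$ are tight, yet $\rnk(X)+\rnk(Y)=5>4=\rnk(X\cup Y)+\rnk(X\cap Y)$. Also, closures of tight sets need not be closed upward, so $\mathcal F$ is not a modular cut; properness of the cut it generates is exactly what still needs proof.

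The more serious gap is the one you flag yourself. Lifting the extension of $M^+\contract e$ back through the contraction is left as a plan, and for extensions defined by arbitrary modular cuts there is no reason it should work. The missing idea is to use only \emph{relatively free} extensions in a bispan (\Cref{thm:GGWadde}). Take the bispan $(A,B)$ at the first branching node of $T$. The identity $|E(T_{A\cup X})|+|E(T_{B\cup X})|=|E(T)|+|E(T_X)|$ then forces $(A\cup X,B\cup X)$ to be a modular pair for every tight $X$, so $e\in\cl(X)$ (\Cref{lem:kklm-to-csd}). Moreover, a relatively free extension of $M_1\contract Y$ lifts to a relatively free extension of $M_1$ (\Cref{lem:commutesame}). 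Keeping the invariant that $M'$ arises from $M$ by relatively free extensions makes the induction close; this is how \Cref{thm:csclosure} is proved. The paper thus proves $k=\csd(M)-1$ and $\ell=\csd(M)$ separately, with c$^*$-depth as the bridge.
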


Note that `$M$ is of positive rank and consists of only loops and coloops' is the negation of
`some element of $M$ is neither a loop nor a coloop, or all elements of $M$ are loops'.
We complement \Cref{thm:contrclosure} with \Cref{lem:extcd-to-csd} and the following natural lemma.

\begin{lemma}\label{lem:csd-to-kklm}
Let $M$ be a matroid and $k$ be the contraction$^*$-depth of $M$.
If some element of $M$ is neither a loop nor a coloop, or all elements of $M$ are loops, then~$k\leq\csd(M)-1$.
\end{lemma}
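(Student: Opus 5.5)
Let $k$ denote the contraction$^*$-depth of $M$. We prove $k\le\csd(M)-1$ by structural induction along \Cref{def:eight-parameters}, building a contraction$^*$-depth decomposition $(T,f)$ of height at most $\csd(M)$. The induction is easier with a slightly stronger statement that also covers the excluded case: for every matroid $M$ there is a rooted tree $T$ with $|E(T)|=\rnk(M)$, satisfying the rank condition of \Cref{def:csd-general}, and of height at most $\csd(M)$, except that when $M$ is of positive rank and all elements are loops or coloops we only require height at most $\csd(M)+1$. Call the decompositions meeting the unrelaxed bound \emph{good}.

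\emph{Base case.} If $|E(M)|=1$, then $\csd(M)=1$. When the element is a loop, the single-vertex tree (height $1$) works. When it is a coloop, we need an edge, so height $2$, which is allowed since this case is excluded from the lemma.

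\emph{Disconnected case.} Take decompositions of the components $M_1,\dots,M_t$ and identify their roots. The edge count adds up because rank is additive over components. For the rank condition, split $X$ as $\bigcup X_i$; then $T_X$ is the union of the $T^i_{X_i}$ sharing only the root, so $\rnk(X)=\sum\rnk(X_i)\le|E(T_X)|$. The height is the maximum of the component heights. Components with some element that is neither a loop nor a coloop are good, as are all-loop components. A component consisting of a single coloop contributes a single edge, i.e.\ height $2\le\csd(M)+1$. If $M$ is not all loops/coloops, some component is connected with at least $2$ elements, so that component has $\csd\ge2$ and dominates the height. Hence the glued tree has height at most $\csd(M)$.

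\emph{Connected case with $|E(M)|\ge2$.} Here every element is neither a loop nor a coloop. We have $\csd(M)=1+\csd(M')$, where $M'=M^+/e$ and $M=M^+\setminus e$, with $e$ neither a loop nor a coloop of $M^+$. Then $\rnk(M')=\rnk(M)-1$. By induction $M'$ has a decomposition $(T',f')$ of height at most $\csd(M')+1$ in every case. Add a new root $r$ adjacent to the old root, so $|E(T)|=\rnk(M)$ and the height is at most $\csd(M')+2=\csd(M)+1$. For the rank condition, $\rnk_M(X)\le\rnk_{M^+}(X\cup e)=1+\rnk_{M'}(X)\le 1+|E(T'_X)|=|E(T_X)|$ for nonempty $X$.

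\emph{Main obstacle.} The construction above is off by one when $M'$ is only in the relaxed case, and this bookkeeping is the real difficulty. If $M'$ is good, the height bound $\csd(M')+1+1$ is already too large by one, so the plan needs refining: rather than adding a new root edge, the natural move is to attach the new edge to the child subtrees of the root of $T'$. I expect the cleaner route is to strengthen the induction invariant to count height in edges. The statement to prove is then: $T$ has at most $\csd(M)-1$ edges on every root-to-leaf path, or at most $\csd(M)$ in the relaxed case. Under this invariant the connected step adds exactly one edge, giving $\csd(M')-1+1=\csd(M)-1$ when $M'$ is good.

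If $M'$ is in the relaxed case, $M'$ consists of loops and coloops with positive rank, so it is a disjoint union of single-element components and $\csd(M')=1$. Then $M$ has $\csd(M)=2$, and we must build a decomposition directly. Take a star with $\rnk(M)$ edges and map all elements to leaves so that every $X$ reaches at least $\rnk(X)$ of them. Because $M'$ consists of loops and coloops, $M$ is a rank-$r$ matroid where each element lies in the closure of $e$ together with one coloop of $M'$, which makes this star assignment work. This case analysis, together with checking that the edge-counting invariant matches $\hght(T)-1$ in \Cref{def:csd-general}, is the step I expect to require the most care.
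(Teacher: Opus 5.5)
Your argument is correct once the bookkeeping is cleaned up, but it is not the paper's route, and the ``main obstacle'' you identify is not real.

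\textbf{Comparison with the paper.} The paper does not follow the recursion step by step. It invokes \Cref{lem:startingguts} to get a bipartition $(A,B)$ with $\max\{\csd(M\contract A),\csd(M\contract B)\}\le\csd(M)-\lambda_M(A)$. It then decomposes $M\contract A$ and $M\contract B$ inductively, identifies their roots, and hangs them below a path of length $\lambda_M(A)$. The rank condition comes from the submodularity estimate $\rnk_M(X)\le\rnk_{M\contract A}(X\cap B)+\rnk_{M\contract B}(X\cap A)+\lambda_M(A)$, and coloops are handled separately by taking $A$ to be the set of coloops.

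You instead add one root edge per c$^*$-transformation, using only $\rnk_M(X)\le\rnk_{M^+}(X\cup\{e\})=\rnk_{M'}(X)+1$, and you glue roots for components. This is more elementary and self-contained. It needs neither \Cref{lem:startingguts} nor, through it, \Cref{lem:startingseq} and the minor-monotonicity \Cref{lem:extcd-to-csd}. The paper's version reuses machinery it needs anyway for \Cref{cor:csdmatrixeq} and \Cref{thm:csclosure}, and its decompositions are organised along the same bipartitions that drive those results.

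\textbf{The supposed obstacle.} First, there is no off-by-one. If $M'$ is good, then $\hght(T')\le\csd(M')$, so $\hght(T)=\hght(T')+1\le\csd(M)$ directly. Your edge-counting reformulation is the same statement, not a refinement.

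Second, the relaxed case for $M'$ never occurs in the connected step. Since $(M^+\contract e)^*=(M^+)^*\setminus e$, the coloops of $M^+\contract e$ are exactly the coloops of $M^+$ other than $e$. Any such coloop is a loop of $(M^+)^*$, hence of $(M^+)^*\contract e=M^*$, hence a coloop of $M$. But $M$ is connected with at least two elements, so it has no coloops. Therefore $M'$ has no coloops either. An $M'$ consisting only of loops and coloops is then all loops, of rank $0$, which is the good case. So you can drop the star construction entirely; its justification was only sketched anyway.

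\textbf{A minor technicality.} When you glue an all-loop component (a single-vertex tree) at the root, its loops land on a vertex that may no longer be a leaf. Simply map loops to an arbitrary leaf. This is harmless because loops affect neither rank nor the inequality $\rnk_M(X)\le|E(T_X)|$.
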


\begin{proof}
If all elements of $M$ are loops, then $k=0$ and $\csd(M)=1$ by definition.
Otherwise, $M$ has at least two elements (since one-element matroid is either a loop or a coloop) which are not all loops.
We proceed in the proof by induction on $|E(M)|$.

By \Cref{lem:startingguts} there is a bipartition $(A,B)$ of $E(M)$ such that $\max\{\csd(M\contract A),\csd(M\contract B)\}\leq\csd(M)-\lambda_{M}(A)$.
Moreover, if $M$ contained a coloop, we could as well assume that $A$ is the set of all coloops of~$M$ (then $\lambda_{M}(A)=0$),
and we leave this special case to the end of the proof.
Therefore, both matroids $M_1=M\contract A$ and $M_2=M\contract B$ satisfy the assumptions of our lemma and, by induction,
there exist contraction$^*$-depth decompositions $(T^i,f_i)$ of $M_i$ for $i\in[2]$ such that $\hght(T^i)\leq\csd(M_i)-1$.
We construct a tree $T$ by first identifying the roots of $T^1$ and $T^2$, and then adding a path of length $\lambda_{M}(A)$ to the new root of~$T$.
The mapping $f$ of the new decomposition $(T,f)$ of $M$ is simply a union of $f_1$ and $f_2$.
Clearly, $\hght(T)\leq\csd(M_i)-1+\lambda_{M}(A)\leq\csd(M)-1$ for each $i \in [2]$.

It remains to prove that $(T,f)$ is a valid contraction$^*$-depth decomposition (recall \Cref{def:csd-general}).
We have $|E(T)|=|E(T^1)|+|E(T^2)|+\lambda_{M}(A)=\rnk(M_1)+\rnk(M_2)+\big(\rnk_M(A)+\rnk_M(B)-\rnk(M)\big)=
  \big(\rnk(M_1)+\rnk_M(A)\big)+\big(\rnk(M_2)+\rnk_M(B)\big)-\rnk(M)=2\rnk(M)-\rnk(M)=\rnk(M)$.
Moreover, for any $X\subseteq E(M)$, we get by submodularity that $\rnk_M(X)\leq \rnk_M(X\cup A)+\rnk_M(X\cup B)-\rnk(M)$, and
since $M_1=M\contract A$, we have $\rnk_{M_1}(X\cap B)=\rnk_M(X\cup A)-\rnk_M(A)$.
Symmetrically, $\rnk_{M_2}(X\cap A)=\rnk_M(X\cup B)-\rnk_M(B)$, and we altogether derive
\begin{align*}
\rnk_M(X) &\leq \big(\rnk_{M_1}(X\cap B)+\rnk_M(A)\big)+\big(\rnk_{M_2}(X\cap A)+\rnk_M(B)\big)-\rnk(M)
\\  &= \rnk_{M_1}(X\cap B)+\rnk_{M_2}(X\cap A)+\lambda_{M}(A)
.\end{align*}
By induction, $\rnk_{M_1}(X\cap B)\leq|E(T^1_{X\cap B})|$ and $\rnk_{M_2}(X\cap A)\leq|E(T^2_{X\cap A})|$,
and hence, as required, $\rnk_M(X)\leq|E(T^1_{X\cap B})|+|E(T^2_{X\cap A})|+\lambda_{M}(A)=|E(T_X)|$.

Finally, in the special case of $A$ formed by all coloops of~$M$, we get that some element of $B$ is not a loop.
We can hence again get, by induction, a decomposition $(T_1,f_1)$ of $M_1=M\contract A$, where $\hght(T_1)\geq1$.
A valid contraction$^*$-depth decomposition $(T,f)$ of $M$ is then simply constructed by adding a new leaf to the root of $T$ for every element (coloop) of~$A$,
and we get $\hght(T)=\hght(T_1)\leq\csd(M_1)-1=\csd(M)-1$.
\end{proof}

\Cref{thm:contrclosure} with \Cref{lem:extcd-to-csd} and \Cref{lem:csd-to-kklm} now easily imply \Cref{thm:contractionstarx}.

\subsection{Comparing the parameters}

The goal of this section is to compare the strengths of the parameters from \Cref{sec:depths}, and in particular that of the eight variants of \Cref{def:eight-parameters}.
This comparison is illustrated in \Cref{fig:hasse-csdsd}; references and formal proofs of the mutual relations are provided later in the section.
However, note that we do not yet make any claims about the new parameter matroid tree-depth -- this notion is studied later in \Cref{sec:mtreedepth}.
 
As a first step, we summarise some simple and useful facts.

\begin{observation}\label{obs:depthsduality}
Let $M$ be a matroid and $M^*$ denote its dual matroid.

a) The measures of branch-depth, cd-depth, and c$^*$d$^*$-depth are all self-dual, that is, 
$\bd(M)=\bd(M^*)$, $\cdd(M)=\cdd(M^*)$, and $\csdsd(M)=\csdsd(M^*)$.

b) The three pairs of measures c-depth and d-depth, c$^*$-depth and d$^*$-depth, and c$^*$d-depth and cd$^*$-depth are
dual to each other, that is, $\cd(M)=\dd(M^*)$, $\csd(M)=\dsd(M^*)$, and $\csdd(M)=\cdsd(M^*)$.
\end{observation}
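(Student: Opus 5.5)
The plan is to prove both parts by structural induction on the recursion of \Cref{def:eight-parameters}. The inductive step will be driven by three elementary duality facts: $(M^*)^*=M$, $(M\setminus e)^*=M^*\contract e$, and $(M\contract e)^*=M^*\setminus e$. I will also use two standard facts, that $M$ and $M^*$ have the same components, and that $M^*$ is connected if and only if $M$ is. Branch-depth is handled separately, through the self-duality of the connectivity function.

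First I set up a correspondence between transformations. A matroid $M'$ is c-transformed from $M$ (namely $M'=M\contract e$) if and only if $M'^*=M^*\setminus e$ is d-transformed from $M^*$. For the starred operations, suppose $M'$ is a c$^*$-transformation of $M$ via $M^+$ and $f$. Then $(M^+)^*$ and $f$ witness that $M'^*$ is a d$^*$-transformation of $M^*$. Indeed, $f$ is neither a loop nor a coloop in $(M^+)^*$, because dualising swaps loops and coloops, and
\[
(M^+)^*\contract f=(M^+\setminus f)^*=M^*, \qquad (M^+)^*\setminus f=(M^+\contract f)^*=M'^*.
\]
The converse direction is the same argument applied to the duals. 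Consequently, for every admissible pair $(\gamma,\delta)$, dualisation is a bijection from the matroids that are $\gamma\delta$-transformed from $M$ onto the matroids that are $\bar\delta\bar\gamma$-transformed from $M^*$. Here the bar swaps c with d and c$^*$ with d$^*$.

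Next I prove, for every pair $(\gamma,\delta)$, the identity $\gamma\delta$-depth$(M)=\bar\delta\bar\gamma$-depth$(M^*)$. The induction is on $|E(M)|$, with the well-foundedness of the recursion as in \Cref{rem:well-defined}.
\begin{itemize}
\item If $|E(M)|\le1$, both values equal $1$.
\item If $M$ is disconnected, its components $M_1,\dots,M_k$ give the components $M_1^*,\dots,M_k^*$ of $M^*$, and taking maxima transfers the identity.
\item If $M$ is connected, so is $M^*$, and the bijection from the first step matches the two minima in item (iii).
\end{itemize}
The connected case needs care, because a c$^*$-transformed matroid has the same ground set as $M$, so $|E|$ does not drop. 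I will therefore induct on the recursion itself, which is how the values are defined. Concretely, I prove that $\gamma\delta$-depth$(M)\le k$ implies $\bar\delta\bar\gamma$-depth$(M^*)\le k$, by induction on $k$, and then apply the same statement to $M^*$ to get the reverse inequality. Specialising the identity to the pairs $(c,\epsilon)$, $(c^*,\epsilon)$ and $(c^*,d)$ gives part b). The self-dual pairs $(c,d)$ and $(c^*,d^*)$ give the cd-depth and c$^*$d$^*$-depth cases of part a).

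For branch-depth I use $\lambda_{M^*}(X)=\lambda_M(X)$. This follows from $\rnk_{M^*}(X)=|X|+\rnk_M(E-X)-\rnk(M)$ by a direct computation. Hence $\nu_{M^*}=\nu_M$ on every partition, so any $(k,k)$-decomposition of $M$ is also one of $M^*$, and the cases $|E(M)|\le1$ agree trivially. The only step I expect to require attention is the well-foundedness of the induction in the starred case, which is settled by inducting on the depth value $k$ as described.
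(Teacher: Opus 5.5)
Your proposal is correct and is the routine argument the paper leaves implicit by stating this as an observation. The key ingredients are all present: dualising swaps contraction with deletion, and swaps c$^*$- with d$^*$-transformations (with $(M^+)^*$ as the witness); it preserves components and connectivity; and $\lambda_{M^*}=\lambda_M$ gives branch-depth. Your handling of well-foundedness is also the right way to make it rigorous: induct on the bound $k$, with an inner induction on $|E(M)|$ for the disconnected case.
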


For a matroid $M$, let $u(M)$ (as `circUmference') denote the size of the longest circuit in~$M$; in case $M$ has no circuit, 
we set $u(M) = 1$. Likewise, let $u^*(M):=u(M^*)$ denote the size of the longest cocircuit in~$M$,
and set $u^*(M) = 1$ if $M$ is of rank zero.
DeVos, Kwon, and Oum~\cite{DeVKO20} showed that the d-depth and the c-depth of matroids are
functionally equivalent to $u^*(M)$ and $u(M)$, respectively. Formally:

\begin{theorem}[\cite{DeVKO20}]\label{thm:cd_circuits}
    If $M$ is a matroid, then $\log_2(u^*(M)) \leq\> \dd(M) \>\leq u^*(M)(u^*(M)+1)/2$, and dually, $\log_2(u(M)) \leq\> \cd(M) \>\leq u(M)(u(M)+1)/2$.
\end{theorem}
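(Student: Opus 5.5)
By \Cref{obs:depthsduality}b) we have $\dd(M)=\cd(M^*)$ and $u^*(M)=u(M^*)$, so it suffices to prove the two bounds for c-depth; the d-depth statement then follows by applying them to $M^*$. Both bounds go by structural induction on \Cref{def:eight-parameters}. Components are harmless throughout: every circuit lies inside one component, so $u(M)$ is the maximum of $u$ over the components. Hence the only interesting case is a connected $M$ with at least two elements.

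\emph{Lower bound $\log_2 u(M)\le\cd(M)$.} The base case is a single element, where $u=1$ and the bound reads $0\le 1$. For a connected $M$ I will show that for every $e\in E(M)$ we have $u(M\contract e)\ge u(M)/2$. Then induction gives $\cd(M)=1+\min_e\cd(M\contract e)\ge 1+\log_2(u(M)/2)=\log_2 u(M)$. Let $C$ be a longest circuit of $M$, and split into three cases.
\begin{itemize}
\item If $e\in C$, then $C-e$ is a circuit of $M\contract e$ of size $|C|-1\ge |C|/2$ (for $|C|\ge 2$; the small cases are trivial).
\item If $e\notin\cl_M(C)$, then $C$ stays a circuit of $M\contract e$.
\item If $e\in\cl_M(C)-C$ and $e$ is not a loop, then $C\cup\{e\}$ has corank two. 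By circuit elimination it is the union of two circuits $C_1,C_2$, both containing $e$, with $C_1\cap C_2=\{e\}$. Each $C_i-e$ is a circuit of $M\contract e$, and the larger of them has size at least $|C|/2$.
\end{itemize}

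\emph{Upper bound $\cd(M)\le u(u+1)/2$.} I induct on $u=u(M)$. First I need minor-monotonicity of c-depth, proved exactly as \Cref{lem:extcd-to-csd} but with genuine contractions in place of c$^*$-transformations. With monotonicity in hand, contracting the elements of a fixed set $C$ one at a time gives $\cd(M)\le |C|+\cd(M\contract C)$. This holds even if intermediate matroids become disconnected, since then the recursion takes a maximum over components and monotonicity still applies. I take $C$ to be a longest circuit of a connected $M$ and aim to show $u(M\contract C)\le u-1$, which yields $\cd(M)\le u+(u-1)u/2=u(u+1)/2$.

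Now take a circuit $D$ of $M\contract C$. Then $D\cup C'$ is a circuit of $M$ for some $C'\subseteq C$. If $C'\ne\emptyset$, then $|D|+1\le u$ immediately. The remaining case, $C'=\emptyset$, means $D$ is a circuit of $M$ disjoint from $C$, and I must rule out $|D|=u$. This is the main obstacle. I will use the classical fact that in a connected matroid any two longest circuits intersect. To prove it, use connectivity to pick a circuit $Z$ meeting both $C$ and $D$, chosen to minimise $|Z-(C\cup D)|$. Then use circuit elimination to route through the larger arcs of $C$ and of $D$ between the points where $Z$ enters and leaves, producing a circuit of size greater than $u$, a contradiction. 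If this exchange argument gets messy in the general (non-graphic) setting, a fallback is to contract $C$ only after first contracting one element of each longest circuit disjoint from $C$. That would cost a somewhat worse constant, but it still gives a bound polynomial in $u$; the exact constant $u(u+1)/2$, however, relies on the intersecting-circuits lemma.
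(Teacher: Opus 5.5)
The paper gives no proof of this theorem; it quotes it from DeVos, Kwon and Oum. So I am judging your argument on its own. The duality and component reductions are fine, and so is the overall shape of both bounds (halving under one contraction for the lower bound; contracting a longest circuit for the upper bound). The upper bound, however, rests on a false lemma.

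\textbf{The gap.} The ``classical fact'' that two longest circuits of a connected matroid intersect is false. $M(K_5)$ is connected with circumference $5$, and the Hamiltonian cycles $12345$ and $13524$ are edge-disjoint. The graph theorem is about shared \emph{vertices}, not edges, so no exchange argument can prove your claim. What your reduction actually needs is weaker, and true. Suppose $D$ is a circuit of both $M$ and $M\contract C$. Independence of $D-x$ in $M\contract C$ gives $\rnk_M(C\cup D)=\rnk_M(C)+\rnk_M(D)$, i.e.\ $M|(C\cup D)=M|C\oplus M|D$. In the $K_5$ example $D\subseteq\cl_M(C)$, so $D$ becomes loops in $M\contract C$ and is harmless. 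You must therefore show that, in a connected matroid, a circuit \emph{skew} to a largest circuit $C$ has fewer than $|C|$ elements. Skewness is exactly what produces the strict $+1$. No circuit of $M|(C\cup D)$ meets both $C$ and $D$. Hence any circuit $X\subseteq C\cup D$ of a minor $N=M\contract S\setminus T$ that meets both lifts to a circuit $X\cup S'$ of $M$ with $\emptyset\neq S'\subseteq S$.

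One way to finish is as follows. Tutte's linking theorem gives such an $N$ on $C\cup D$ with $N|C=M|C$, $N|D=M|D$ and $\lambda_N(C)\geq1$. Moving elements of $S$ to $T$ one at a time keeps $C$ and $D$ circuits and changes $\lambda_N(C)$ by at most one. It ends at $\lambda=0$, so you may assume $\lambda_N(C)=1$. Now take a relatively free extension $e$ of $N$ in $(C,D)$ (\Cref{thm:GGWadde}). Using your lower-bound argument, pick circuits $C_1\subseteq C\cup e$ and $D_1\subseteq D\cup e$ through $e$ with $|C_1-e|,|D_1-e|\ge u/2$. The relatively-free property then shows that $(C_1-e)\cup(D_1-e)$ is a circuit of $N$; its lift to $M$ has at least $u+1$ elements, a contradiction. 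Your fallback does not rescue the bound either. The number of longest circuits disjoint from $C$ is not bounded in terms of $u$: in $M(K_{2,n})$ we have $u=4$, but there are about $n/2$ pairwise disjoint $4$-circuits avoiding a given one. So that route gives no bound in $u$ at all.

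\textbf{Smaller issues.} First, c-depth is \emph{not} minor-monotone, as the paper itself notes: $\cd(U_{3,4})=4$, while the $4$-cycle with a chord has c-depth $3$. The proof of \Cref{lem:extcd-to-csd} breaks precisely when the element chosen by the recursion for $M'$ is one that is deleted to obtain $M$. You only need $\cd(N)\le 1+\cd(N\contract e)$ for arbitrary $N$, and this follows directly from the component clause of the definition. Second, in the third case of the lower bound, $C_1\cap C_2=\{e\}$ is false. In $U_{2,4}$ with $C=\{a,b,c\}$, all circuits through $e$ have three elements and pairwise share two. What holds, and suffices, is that $C\cup e$ is the union of two circuits through $e$. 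The elements of $C-C_1$ are pairwise in series in $M|(C\cup e)$. Hence the circuit through $e$ obtained by strong elimination, which contains one of them, contains all of them.
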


Analogously (and independently of \cite{DeVKO20}), Kardo\v{s}, Kr{\'{a}}l', Liebenau, and Mach~\cite{KarKLM17} proved the following relationship for c$^*$-depth.

\begin{theorem}[\cite{KarKLM17} via \Cref{thm:contractionstarx}]\label{thm:csd_circuits}
    Let $M$ be a matroid. Then 
    \begin{align*}
        \log_2(u(M)) \leq \csd(M) \leq u(M)^2+1 .
    \end{align*}
\end{theorem}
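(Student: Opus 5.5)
The statement is labelled ``\cite{KarKLM17} via \Cref{thm:contractionstarx}'', so I will not reprove the circumference bounds. I will take the Kardo\v{s}--Kr\'al'--Liebenau--Mach bounds for contraction$^*$-depth and translate them with \Cref{thm:contractionstarx}. Let $k$ denote the contraction$^*$-depth of $M$ from \Cref{def:csd-decomposition}. I assume the result of \cite{KarKLM17} has the form $\log_2 u(M)\le k+1$ and $k\le u(M)^2$, possibly with slightly different additive constants that the stated inequalities have enough slack to absorb. I would check the exact form in their paper; the lower bound comes from a long circuit needing a deep root-to-leaf path in the decomposition.

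\emph{Generic case.} Assume $M$ is not of positive rank with only loops and coloops. Then \Cref{thm:contractionstarx} gives $\csd(M)=k+1$. Hence $\csd(M)=k+1\le u(M)^2+1$. For the lower bound, $\log_2 u(M)\le \csd(M)$ follows as long as the KKLM lower bound is at least as strong as $\log_2 u(M)\le k+1$.

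\emph{Degenerate case.} Here $M$ has positive rank and all elements are loops or coloops, so $\csd(M)=1$. Every circuit is a single loop, so $u(M)=1$ whether or not loops exist. Both inequalities then read $0\le 1\le 2$.

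\emph{Safeguard.} The main obstacle is matching constants, since definitions of height and circumference conventions differ across papers. If the cited lower bound were off by one, I would prove $\log_2 u(M)\le\csd(M)$ directly by induction on \Cref{def:eight-parameters}. Let $C$ be a longest circuit. By \Cref{lem:extcd-to-csd}, $\csd(M)\ge\csd(M|C)$, so it suffices to show $\csd(U_{n-1,n})\ge\log_2 n$. Any c$^*$-transformation of a circuit of length $n$ is a matroid $M^+\contract f$ of rank $n-2$ on the same $n$ elements. It has at most two components unless something degenerates. One of these components contains at least $n/2$ elements and contains a circuit of length at least $n/2$: in $M^+$, each component of $M^+\contract f$ together with $f$ carries part of $C$ through a circuit using $f$. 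Iterating this gives depth at least $\log_2 n$, with base case $\csd=1\ge\log_2 1$. The upper bound $u^2+1$ I would take solely from \cite{KarKLM17}.
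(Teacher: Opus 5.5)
Your proposal is correct and follows the paper's route. The statement is just the Kardo\v{s}--Kr\'al'--Liebenau--Mach bounds $\log_2 u(M)\le k\le u(M)^2$ on contraction$^*$-depth, converted via \Cref{thm:contractionstarx}: $\csd(M)=k+1$ in the generic case, and $u(M)=\csd(M)=1$ in the loops/coloops case, exactly as you argue. Only the lower bound has slack for a different additive constant, not the upper bound $u(M)^2+1$; since the cited bound is exactly $k\le u(M)^2$, your safeguard is unnecessary, though it is a sound direct proof of the lower bound via \Cref{lem:extcd-to-csd} and the circuit-halving fact used in \Cref{lem:fat-cycles}.
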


Comparing the latter two results and using duality, we can immediately conclude that the following two pairs
of depth parameters are functionally equivalent:

\begin{corollary}\label{cor:xd_circuits}
The following relations hold; $\cd \funeq \csd$ and $\dd \funeq \dsd$.
\end{corollary}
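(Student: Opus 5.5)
The plan is to route both equivalences through the circumference $u(M)$, since \Cref{thm:cd_circuits} and \Cref{thm:csd_circuits} already sandwich $\cd(M)$ and $\csd(M)$ between functions of $u(M)$. For $\cd\funeq\csd$, I first obtain $\csd(M)\le\cd(M)$, either from \Cref{rem:starstronger} or by combining the two theorems. For the reverse bound, \Cref{thm:csd_circuits} gives $\log_2(u(M))\le\csd(M)$, hence $u(M)\le 2^{\csd(M)}$. Plugging this into the upper bound of \Cref{thm:cd_circuits} yields
\[
\cd(M)\le u(M)\bigl(u(M)+1\bigr)/2\le 2^{\csd(M)}\bigl(2^{\csd(M)}+1\bigr)/2 .
\]
The right-hand side is a computable function of $\csd(M)$, so $\cd\funle\csd$. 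Similarly, $\csd(M)\le u(M)^2+1\le 2^{2\cd(M)}+1$ gives $\csd\funle\cd$ even without the remark.

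For $\dd\funeq\dsd$, I apply duality via \Cref{obs:depthsduality}(b): $\dd(M)=\cd(M^*)$ and $\dsd(M)=\csd(M^*)$. The bounds from the first part hold for every matroid, in particular for $M^*$, so the same computable functions witness $\dd\funle\dsd$ and $\dsd\funle\dd$. Alternatively, one can use the $u^*$ version in \Cref{thm:cd_circuits} together with \Cref{thm:csd_circuits} applied to $M^*$, recalling that $u^*(M)=u(M^*)$.

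The only point requiring care is the conventions. We need $u(M)=1$ when $M$ has no circuit, so that the logarithm is defined. We also need that \Cref{thm:csd_circuits} refers to our $\csd$, with its ``$+1$'' normalisation, and not to the contraction$^*$-depth; this is handled by \Cref{thm:contractionstarx}, as already built into the statement. Beyond this bookkeeping there is no real obstacle: the argument is a composition of the stated bounds.
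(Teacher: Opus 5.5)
Your proposal is correct and follows the same route as the paper. You compose the circumference bounds of \Cref{thm:cd_circuits} and \Cref{thm:csd_circuits} (for instance, $u(M)\le 2^{\csd(M)}$ yields $\cd(M)\le 2^{\csd(M)}(2^{\csd(M)}+1)/2$), and then transfer the result to $\dd$ and $\dsd$ via the duality in \Cref{obs:depthsduality}; the paper states this in one line, and your explicit bounding functions spell out what it leaves implicit.
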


Observe that we trivially have $\cd(M)\leq\cdd(M)$ and $\dd(M)\leq\cdd(M)$ from the definition.
Next, we compare the non-starred and starred variants of the remaining measures from \Cref{def:eight-parameters},
which is easy but not directly trivial:

\begin{lemma}\label{lem:starredineq}
For every matroid $M$, we have $\csdsd(M)\leq\csdd(M)\leq\cdd(M)$ and $\csdsd(M)\leq\cdsd(M)\leq\cdd(M)$.
\end{lemma}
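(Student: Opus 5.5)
By the duality of \Cref{obs:depthsduality}, $\csdd(M^*)=\cdsd(M)$, $\cdd(M^*)=\cdd(M)$ and $\csdsd(M^*)=\csdsd(M)$. So the second chain $\csdsd(M)\leq\cdsd(M)\leq\cdd(M)$ is the first chain applied to $M^*$. It therefore suffices to prove $\csdd(M)\leq\cdd(M)$ and $\csdsd(M)\leq\csdd(M)$ for all matroids $M$.

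Both inequalities have the form $\mu'(M)\leq\mu(M)$, where every transformation allowed for $\mu$ is either also allowed for $\mu'$ or can be simulated by a $\mu'$-transformation. The simulation produces the same matroid plus one extra element, and that element is a loop or a coloop. In the first inequality, contraction is simulated by a c$^*$-transformation; in the second, deletion is simulated by a d$^*$-transformation.

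I will argue by induction on the value $\mu(M)$, i.e.\ on the depth of the recursion in \Cref{def:eight-parameters}. Induction on $|E(M)|$ does not work, because a c$^*$- or d$^*$-transformation keeps the ground set. The cases \ref{it:one-element} and \ref{it:bycomponents} are identical for both measures and pass directly to the components. In case \ref{it:oneredstep}, $M$ is connected with at least two elements, so no element of $M$ is a loop or a coloop, and $\mu(M)=1+\mu(M')$ for an optimal transformed matroid $M'$.

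If the transformation producing $M'$ is also allowed for $\mu'$, then induction gives $\mu'(M')\leq\mu(M')$ and we are done. The remaining case for $\csdd\leq\cdd$ is $M'=M\contract e$.
\begin{itemize}
\item Let $M^+$ be $M$ with a new element $f$ added in parallel to $e$. Since $e$ is neither a loop nor a coloop, neither is $f$. Also $M^+\setminus f=M$, so $M''=M^+\contract f$ is a c$^*$-transformation of $M$.
\item In $M''$ the element $e$ is a loop, and $M''\setminus e=M\contract e=M'$.
\item Because $|E(M')|\geq1$, \Cref{def:eight-parameters}\ref{it:bycomponents} gives $\mu'(M'')=\max\{1,\mu'(M')\}=\mu'(M')$. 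The loop $\{e\}$ is a separate component of depth $1$, and every other component of $M''$ is a component of $M'$.
\end{itemize}
Hence $\mu'(M)\leq1+\mu'(M')\leq1+\mu(M')=\mu(M)$.

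For $\csdsd\leq\csdd$ the argument is the exact dual. A deletion $M\setminus e$ is simulated by coextending $M$ with an element $f$ in series with $e$ and then deleting $f$. This is a d$^*$-transformation, and it leaves $e$ as a coloop. A coloop is again a one-element component, so it does not change the maximum over components. Contractions are allowed in both measures, since both have the symbol c$^*$, and are handled by induction directly.

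The main point needing care is the component bookkeeping in this last step. I will check three things: the added element $f$ is neither a loop nor a coloop, which uses the connectivity of $M$ with at least two elements; the isolated loop or coloop really forms its own component; and $M'$ is nonempty, so that the value $1$ of that component does not dominate the maximum. These are exactly the concerns raised in \Cref{rem:starstronger}.
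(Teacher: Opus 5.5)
Your proof is correct and uses the same construction as the paper. To simulate a contraction, you contract a new element parallel to $e$, so that $e$ becomes a loop that vanishes in the maximum over components; dually, a deletion is simulated with a series element and a coloop. The remaining differences are bookkeeping and favour your version. The duality reduction to two inequalities is valid. Inducting on the value of the larger measure is actually needed for $\csdsd\le\csdd$ and $\csdsd\le\cdsd$, where a ground-set-preserving c$^*$- or d$^*$-step on the right-hand measure is not covered by the paper's induction on $|E(M)|$, which the paper writes out only for $\csdd\le\cdd$. Your induction still needs a secondary induction on $|E(M)|$ for the disconnected case, since a component can have the same value; your phrase ``depth of the recursion'' covers this.
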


\begin{proof}
Let $M$ be a matroid.
The proofs for all four claimed inequalities are very similar, so we demonstrate only the proof for $\csdd(M)\leq\cdd(M)$.
We proceed by induction on $|E(M)|$. 
If $|E(M)| \le 1$, then $\csdd(M) = 1 = \cdd(M)$ by \Cref{def:eight-parameters}, \Cref{it:one-element}.
From now on, suppose that $|E(M)| > 1$. 

First, suppose that $M$ is disconnected.
By \Cref{def:eight-parameters}, \Cref{it:bycomponents}, there is a connected component $M'$ of $M$ such that $\csdd(M) = \csdd(M')$.
Now observe that $\csdd(M') \le \cdd(M') \le \cdd(M)$ as required: the first inequality follows by induction hypothesis, and the second one again by \Cref{it:bycomponents}.

Now suppose that $M$ is connected.
Let $M'$ be a matroid such that $M'$ is cd-transformed from $M$ and $\cdd(M) = \cdd(M') + 1$, see \Cref{def:eight-parameters}, \Cref{it:oneredstep}.
If $M' = M \setminus e$ for some $e \in E(M)$, then $M'$ is c$^*$d-transformed from $M$, and $\csdd(M) \le \csdd(M') + 1 \le \cdd(M') + 1$ as required:
the first inequality follows by \Cref{it:oneredstep}(b) and the second one by induction~hypothesis.

The other possible case is $M' = M \contract e$ for some $e \in E(M)$.
Let $M^+$ be a matroid with $f \in E(M^+)$ such that $M^+ \setminus f = M$ and $f$ is parallel to $e$ in $M^+$.
Observe that $e$ is a loop in the matroid $M^- := M^+ \contract f$, and $M^-\setminus e=M'$.
By \Cref{def:eight-parameters}, \Cref{it:bycomponents}, we get $\csdd(M^-)=\csdd(M^-\setminus e)$ and $\cdd(M^-)=\cdd(M^-\setminus e)=\cdd(M')$,
and $\csdd(M^- \setminus e) \le \cdd(M^- \setminus e)$ holds by induction.
Moreover, $\csdd(M) \le \csdd(M^-) + 1$ because $M^-$ is c$^*$d-transformed from $M$, and hence
$\csdd(M) \le \csdd(M^-) + 1 \le \cdd(M^-) + 1 = \cdd(M') +1 = \cdd(M)$.
\end{proof}

Very recently, Bria\'nski, Hlin\v{e}n\'y, Kr{\'{a}}l', and Pek{\'{a}}rkov{\'{a}}~\cite{DBLP:journals/corr/abs-2402-16215prep} proved the following,
in fact quite nontrivial, inequality.

\begin{theorem}[\cite{DBLP:journals/corr/abs-2402-16215prep}]\label{thm:bd-csdsd}
For every matroid $M$, we have $\bd(M)\leq\csdsd(M)\leq 2\bd(M)^2+1$.
\end{theorem}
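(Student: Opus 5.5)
The statement has two inequalities, which I would prove separately: the lower bound by induction along the recursion of \Cref{def:eight-parameters}, and the upper bound by turning a $(k,k)$-decomposition into a recursive c$^*$d$^*$-certificate.

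\emph{Lower bound $\bd(M)\le\csdsd(M)$.} I would prove a slightly stronger claim by induction on the recursion of \Cref{def:eight-parameters}. The claim is that $M$ has a branch-depth decomposition $(T,\sigma)$ with a centre $r$ such that every leaf is within distance $\csdsd(M)$ of $r$ and every inner node has bd-width at most $\csdsd(M)$.

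The key technical fact concerns a c$^*$- or d$^*$-transformation $M'$ of $M$ via $M^+$ and $f$. For every $X\subseteq E(M)$ we have $|\lambda_{M}(X)-\lambda_{M'}(X)|\le1$, since $M$ and $M'$ are single-element deletion and contraction of the same $M^+$.

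For connected $M$, I would untangle the recursion as in the proof of \Cref{lem:startingguts}. This gives a sequence $M=M_0,M_1,\dots,M_\ell$ of c$^*$/d$^*$-transformations with $M_\ell$ disconnected and $\csdsd(M)=\ell+\csdsd(M_\ell)$. Let $N_1,\dots,N_p$ be the components of $M_\ell$. By induction each $N_j$ has a decomposition $T_j$ with bounds $\csdsd(N_j)\le\csdsd(M)-\ell$. I would glue these trees at a new root $r$, and the bd-width at $r$ is then handled as follows. Any union $X$ of parts has $\lambda_{M_\ell}(X)=0$, hence $\lambda_M(X)\le\ell\le\csdsd(M)$. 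For an inner node $v$ inside some $T_j$, a union $X$ of parts at $v$ either contains $E(M)-E(N_j)$ or avoids it. In either case $\lambda_{M_\ell}(X)=\lambda_{N_j}(X\cap E(N_j))\le\csdsd(N_j)$, so $\lambda_M(X)\le\ell+\csdsd(N_j)\le\csdsd(M)$. The radius grows by one per level while the depth budget drops by $\ell\ge1$, so the radius stays at most $\csdsd(M)$. The cases of one element and of disconnected $M$ (glue component trees at a root) are routine.

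\emph{Upper bound $\csdsd(M)\le2\bd(M)^2+1$.} Take a $(k,k)$-decomposition with $k=\bd(M)$, rooted at its centre $r$, so every root-to-leaf path has at most $k$ edges. Let $\ca L_r=\{P_1,\dots,P_m\}$ be the partition of $E(M)$ at $r$, where every coarsening bipartition has connectivity at most $k$. The core lemma I would aim for is: \emph{if $\nu_M(\ca L)\le k$ for a partition $\ca L$, then at most $2k$ c$^*$/d$^*$-transformations produce a matroid $M'$ in which every part of $\ca L$ is a union of components.} Moreover, I would want $M'$ restricted to each $P_i$ to be a minor of $M$ (of the form $M\contract Y\setminus Z$ restricted to $P_i$). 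With that lemma, each subtree decomposition of $P_i$ induces a $(k,k-1)$-decomposition of that minor, because connectivity does not increase under minors. Iterating over the $k$ levels costs at most $2k\cdot k$ transformations, plus $1$ for the single-element base case, giving $2k^2+1$.

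For the core lemma, I would use the relatively free extensions of Geelen, Gerards and Whittle (\Cref{thm:GGWadde}). Pick a bipartition $(X,Y)$ coarsening $\ca L$ with $\lambda(X)>0$, add a relatively free element in the bispan $(X,Y)$, and contract it; this is one c$^*$-transformation lowering $\lambda(X)$ by one. The dual operation, a d$^*$-transformation with a relatively free coextension, lowers the "dual side". The intended potential argument splits the work into two halves. The c$^*$-steps remove the common span (guts) of the parts, using at most $k$ steps by arguing via rank that the shared subspaces $\bigl(\bigcap_i\clo(E-P_i)$-type$\bigr)$ have total rank at most $k$. The d$^*$-steps remove the remaining cospan in the dual, again at most $k$ steps.

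\emph{Main obstacle.} The hardest step is showing that a single transformation decreases a suitable potential simultaneously for all coarsening bipartitions, rather than only the chosen one; $\nu$ is a maximum over exponentially many sets. I would first try the potential $\sum_i\lambda_M(P_i)$ restricted to a chain, or the rank of the "total guts" $\rnk(M)-\rnk$ of the direct sum of the $M\contract(E-P_i)$. I would try to prove it drops under a relatively free contraction chosen in the guts of the partition, and dually for the d$^*$-part. This is where I expect the factor $2$ (one $k$ for each of c$^*$ and d$^*$) to arise.
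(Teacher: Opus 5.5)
\textbf{The lower bound is fine in substance, but the upper bound has a genuine gap.}

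Your core lemma says that a partition $\mathcal L=\{P_1,\dots,P_m\}$ with $\nu_M(\mathcal L)\le k$ can be split into unions of components by at most $2k$ c$^*$/d$^*$-steps, with every resulting piece a minor of $M$. This lemma is essentially the whole nontrivial content of $\csdsd(M)\le 2\bd(M)^2+1$. The paper does not prove it either; it quotes the inequality from \cite{DBLP:journals/corr/abs-2402-16215prep}. You only list potentials you would try, and they cannot work.

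Take the fat cycle $M(C_{n,j})$ with $\mathcal L$ its $n$ parallel classes. This is the root partition of a decomposition of width $1$ and radius $2$, and $\nu_M(\mathcal L)=1$. Yet every $E-P_i$ is spanning, so $\bigcap_i\cl(E-P_i)=E$ has rank $n-1$. Your ``total guts'' $\rnk(M)-\rnk\bigl(\bigoplus_i M\contract(E-P_i)\bigr)$ equals $\omega_M(P_1,\dots,P_n)=\sum_i\rnk(E-P_i)-(n-1)\rnk(M)=n-1$. A c$^*$-step lowers $\omega_M$ by at most one (\Cref{lem:omegaineq}\,a)), and $\omega_M=0$ once all parts are separated. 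Hence any separation of $\mathcal L$ by c$^*$-steps alone needs $n-1$ steps. For instance, contracting a relatively free element in $(P_1,E-P_1)$ only turns $P_1$ into loops and leaves the fat cycle $C_{n-1,j}$. Here a single d$^*$-step is what works. So a ``c$^*$-phase of at most $k$ steps'' cannot be justified by these rank quantities.

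More generally, you are missing a rule that decides, at each step, between a c$^*$- and a d$^*$-step and where to put the new element. You also need a potential bounded in terms of $k$ that provably drops. A relatively free element in one bispan $(X,Y)$ typically lies in neither $\cl(X')$ nor $\cl(E-X')$ for crossing coarsenings $X'$, and then $\lambda(X')$ increases by one. Handling one bipartition at a time therefore does not bound the number of steps.

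The second half of your lemma, that the separated pieces are minors of $M$, is also asserted without proof, and the stated bound depends on it. Arbitrary c$^*$/d$^*$-steps can raise connectivities. Without the minor property, the widths at the next level can exceed $k$ by up to $2k$, and the level-by-level count no longer gives $2k^2+1$.

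For the lower bound, one bookkeeping repair is needed. With the invariant ``radius at most $\csdsd(M)$'', gluing the component trees of a disconnected $M$ at a fresh root gives radius $\csdsd(M)+1$. Prove instead that a connected $M$ on at least two elements has a decomposition centred at an inner node with width at most $\csdsd(M)$ and radius at most $\csdsd(M)-1$. Your connected-case gluing yields exactly this:
\begin{itemize}
\item $\ell\ge1$;
\item single-element components hang at distance $1\le\csdsd(M)-1$;
\item every other component of $M_\ell$ is connected, so the stronger invariant applies to it inductively.
\end{itemize}
The disconnected case then gives $\bd(M)\le\csdsd(M)$.
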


Now we proceed to prove incomparability results for pairs of measures.
To start, we observe that c-depth and d-depth are functionally incomparable.

\begin{observation}\label{lem:incom-cddd}
$\cd \funnleq \dd$ and $\dd \funnleq \cd$.
\end{observation}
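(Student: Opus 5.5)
We will use the uniform matroids $U_{n-1,n}$ (a single circuit of length $n$) and their duals $U_{1,n}$ (a single cocircuit of length $n$, i.e.\ $n$ parallel elements) as witnesses. By \Cref{thm:cd_circuits} it suffices to control the circumference $u$ and the cocircumference $u^*$ of these matroids. It is enough to establish one of the two non-relations directly; the other then follows by duality via \Cref{obs:depthsduality}(b), since $\cd(M)=\dd(M^*)$.

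\textbf{Showing $\cd\funnleq\dd$.} Take $M_n=U_{n-1,n}$ for $n\ge 2$. This matroid is connected and is itself a circuit of size $n$, so $u(M_n)=n$ and \Cref{thm:cd_circuits} yields $\cd(M_n)\ge\log_2 n$, which is unbounded as $n\to\infty$. For the d-depth we compute directly from \Cref{def:eight-parameters}. Deleting any element $e$ leaves $M_n\setminus e\cong U_{n-1,n-1}$, a free matroid. A free matroid is a direct sum of coloops, so each component has one element and its d-depth is $1$. Since $M_n$ is connected, we obtain $\dd(M_n)\le 1+1=2$. The same bound also follows from \Cref{thm:cd_circuits}, because every cocircuit of $U_{n-1,n}$ has size $2$, so $u^*(M_n)=2$ and $\dd(M_n)\le 3$. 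Thus $\dd$ stays bounded on $\{M_n\}$ while $\cd$ does not, so no function $f$ satisfies $\cd(M)\le f(\dd(M))$.

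\textbf{Showing $\dd\funnleq\cd$.} Apply duality: $M_n^*=U_{1,n}$ satisfies $\dd(M_n^*)=\cd(M_n)\ge\log_2 n$ and $\cd(M_n^*)=\dd(M_n)\le 2$. This can also be seen directly, since contracting one element of $n$ parallel non-loops leaves $n-1$ loops, each a one-element component.

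There is essentially no obstacle here. The only point needing care is the base case: by \Cref{def:eight-parameters}, a one-element matroid has depth $1$ regardless of whether it is a loop or a coloop, so the disconnected matroid obtained after the single deletion (or contraction) indeed has depth $1$.
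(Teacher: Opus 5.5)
Your proof is correct and is essentially the paper's argument. The paper uses the cycle matroid $M(C_{2^i})\cong U_{2^i-1,2^i}$, bounds its c-depth from below via \Cref{thm:cd_circuits}, notes that deleting any element leaves an independent matroid so the d-depth is at most $2$, and obtains the reverse non-relation by duality.
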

\begin{proof}
We prove $\cd \funnleq \dd$ by constructing a sequence of matroids $M_1, M_2, \ldots$ such that, for all $i$, $\dd(M_i) \le 2$ but $\cd(M_i) \ge i$. Let $M_i:=M(C_{2^i})$, where $C_i$ is the cycle of length $i$. It follows from \Cref{thm:cd_circuits} that $\cd(M_i) \ge \log_2(2^i)=i$. Furthermore, deletion of any element from $M_i$ removes the only circuit, hence $\dd(M_i) \le 2$. It follows from duality that $\dd \funnleq \cd$.
\end{proof}

To show that \csddepth and \cdsdepth are functionally incomparable, we construct a class of matroids with unbounded \csddepth and constant \cdsdepth.
Let $C_{i,j}$ be the graph obtained from the cycle of length $i$ by replacing each edge with $j$ parallel edges, and let $D_{i,j}$ be the graph obtained from the cycle of length $i + 1$ by replacing each edge except for one with $j$ parallel edges; see \Cref{fig:fat-cycle} for an illustration.

\begin{figure}[t]
    \centering
\hspace*{7em}
\begin{minipage}{0.45\textwidth}
\scalebox{0.75}{
    \begin{tikzpicture}
\def\n{6}
  \def\r{2cm}
  \foreach \i in {1,...,\n}{
    \node[draw, circle, fill=gray] (v\i) at ({360/(\n)*(\i-1)}:\r) {};
  }
  \foreach \i in {1,...,\n}{
    \pgfmathtruncatemacro{\j}{mod(\i,\n)+1}
    \draw[bend left=12] (v\i) to (v\j);
    \draw (v\i) to (v\j);
    \draw[bend right=12] (v\i) to (v\j);
    \draw[bend right=24] (v\i) to (v\j);
    \draw[bend left=24] (v\i) to (v\j);
  }
\end{tikzpicture}}
\end{minipage} 
\hspace*{-4em}
\begin{minipage}{0.45\textwidth}
  \scalebox{0.75}{  \begin{tikzpicture}
\def\n{6}
  \def\r{2cm}
  \foreach \i in {1,...,\n}{
    \node[draw, circle, fill=gray] (v\i) at ({360/(\n)*(\i-1)}:\r) {};
  }
  \foreach \i in {1,3,4,5,6}{
    \pgfmathtruncatemacro{\j}{mod(\i,\n)+1}
    \draw[bend left=12] (v\i) to (v\j);
    \draw (v\i) to (v\j);
    \draw[bend right=12] (v\i) to (v\j);
    \draw[bend right=24] (v\i) to (v\j);
    \draw[bend left=24] (v\i) to (v\j);
  }
  \draw (v2)--(v3);
\end{tikzpicture}}
\end{minipage} 
    \caption{\textbf{Left:} The ``fat cycle'' $C_{6, 5}$. \textbf{Right:} The graph $D_{5, 5}$. Crucially, by contracting a single edge, we obtain $C_{5, 5}$.}
    \label{fig:fat-cycle}
\end{figure}   

It is easy to see that the matroid $M(C_{i, j})$ has small \cdsdepth.

\begin{observation}\label{obs:destroy-fat}
For any positive integers~$i,j$, $\>\cdd(M(D_{i,j})) \le 3$ and \mbox{$\cdsd(M(C_{i, j})) \le 3$}.
\end{observation}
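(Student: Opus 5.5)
We prove the two bounds separately, in each case by exhibiting an explicit recursive evaluation along \Cref{def:eight-parameters}.

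\textbf{The bound on $\cdd(M(D_{i,j}))$.} Let $e$ be the unique single edge of $D_{i,j}$, lying on the cycle of length $i+1$, and let $P$ denote the union of the $i$ parallel classes. Since $D_{i,j}$ is $2$-connected (it is a cycle with multiplied edges), $M(D_{i,j})$ is connected, and we delete $e$. Then $D_{i,j}\setminus e$ is a path of $i$ fat edges, that is, a tree with every edge replaced by $j$ parallel copies. Its cycle matroid is the direct sum of the $i$ parallel classes, each a copy of $U_{1,j}$. Each $U_{1,j}$ with $j\ge 2$ is connected, and contracting any one of its elements turns the remaining ones into loops. This leaves a matroid whose components are single elements, so its cd-depth is $1$, and hence $\cdd(U_{1,j})\le 2$ (trivially also when $j=1$). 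Summing the steps gives $\cdd(M(D_{i,j}))\le 1+2=3$. The degenerate cases $i=1$ (a single fat edge together with a loop-free parallel edge, i.e.\ $U_{1,j+1}$) and small $j$ are covered by the same count.

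\textbf{The bound on $\cdsd(M(C_{i,j}))$.} By \Cref{def:eight-parameters}(d) it suffices to find a d$^*$-transformation of $M=M(C_{i,j})$ into $M(D_{i-1,j})$ with the parallel classes matched. Then $\cdsd(M)\le 1+\cdsd(M(D_{i-1,j}))\le 1+\cdd(M(D_{i-1,j}))$, using \Cref{rem:starstronger}, or more directly a one-step version of the argument behind \Cref{lem:starredineq}. The bound $\cdd(M(D_{i-1,j}))\le 3$ from the first part then gives $4$, which is one too many.

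To reach $3$, we instead choose the d$^*$-transformation more cleverly, so that $M'=M^+\setminus f$ is already disconnected or directly a direct sum of $U_{1,j}$'s. Concretely, take $M^+=M(D_{i,j})$ with $f$ the single edge. Then $M^+\contract f=M(C_{i,j})=M$, and $M^+\setminus f$ is the fat path, a direct sum of copies of $U_{1,j}$. Moreover $f$ is neither a loop nor a coloop in $M^+$, since it lies on a cycle and $j\ge 1$ gives other edges in its cycle. Hence $M^+\setminus f$ is a d$^*$-transformation of $M$, and
\[
\cdsd(M)\le 1+\max\cdsd(U_{1,j})\le 1+2=3,
\]
where $\cdsd(U_{1,j})\le 2$ is obtained by contracting one element as before.

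\textbf{Edge cases.} The main point needing care is the connectivity and edge-case check. If $M$ is disconnected or a single element, the bound is immediate. For $i=1$, $C_{1,j}$ consists of $j$ loops, whose components are single elements, so the bound holds trivially. For $j=1$ or $i=2$ the same construction still works, since $f$ remains a non-loop non-coloop in $D_{i,j}$.
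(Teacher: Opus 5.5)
Your argument is correct and is essentially the paper's proof. Take $M^+=M(D_{i,j})$ with $f$ the single edge, so that $M^+\contract f=M(C_{i,j})$ and $M^+\setminus f$ is a direct sum of copies of $U_{1,j}$, each of depth at most $2$ after contracting one element; this one construction gives both bounds at once. Drop the abandoned first attempt via $M(D_{i-1,j})$, which is not even well-posed since the ground sets differ in size unless $j=1$. Your check that $f$ is neither a loop nor a coloop is exactly the condition needed; the paper's proof loosely calls this edge a coloop, but it lies on a cycle, so it is not one.
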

\begin{proof}
Let $M^C = M(C_{i, j})$ and $M^D = M(D_{i,j})$.
Let $N$ be the matroid obtained from $M^D$ by deleting the only coloop, i.e., the element corresponding to the unique edge of $D_{i,j}$ that was not replaced by $j$ parallel edges.
Observe that $N$ is a d$^*$-transformation of $M^C$ and that each component $N'$ of $N$ consists of $j$ parallel elements.
Now if we contract an arbitrary element of $N'$, all remaining elements become loops, which implies $\cdd(M^D) \le 3$ and $\cdsd(M^C) \le 3$.
\end{proof}

Before we show that $M(C_{i, j})$ has large \csddepth, we will show that \csddepth is monotone under taking restriction.

\begin{lemma}\label{lem:csdd-monotone-restrictions}
If $M$ is a matroid, $D \seq E(M)$, and $N = M \setminus D$, then $\csdd(N) \le \csdd(M)$.
\end{lemma}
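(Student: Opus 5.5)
We prove the statement by structural induction on \Cref{def:eight-parameters} applied to $\csdd(M)$, closely following the proof of \Cref{lem:extcd-to-csd}. It suffices to treat a single deleted element, $D=\{d\}$, and then iterate. Formally, we induct on $|E(M)|$ and prove the statement for an arbitrary $D$ directly, since that is no harder. The base case $|E(M)|\le1$ is trivial: then $|E(N)|\le 1$, and both depths equal~$1$.

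Suppose first that $M$ is disconnected, with components $M_1,\ldots,M_k$. Each component of $N=M\setminus D$ is a restriction of some component $M_j$, namely a component of $M_j\setminus(D\cap E(M_j))$. Each $M_j$ has fewer elements than $M$, so by induction $\csdd$ of every component of $M_j\setminus (D\cap E(M_j))$ is at most $\csdd(M_j)$. We take maxima over components, using \Cref{def:eight-parameters},~\Cref{it:bycomponents} on both sides. There is also a degenerate subcase where $N$ has at most one element; then $\csdd(N)=1\le\csdd(M)$ trivially.

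Now suppose $M$ is connected, and let $M'$ be the minimising $\gamma\delta$-transform, so that $\csdd(M)=1+\csdd(M')$. There are two cases.

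In the deletion case, $M'=M\setminus e$. If $e\in D$, then $N$ is a restriction of $M'$, and induction gives $\csdd(N)\le\csdd(M')<\csdd(M)$. If $e\notin D$ and $N$ is connected, then $N\setminus e$ is $\gamma\delta$-transformed from $N$ and is a restriction of $M'$. Induction on $M'$ then gives $\csdd(N)\le 1+\csdd(N\setminus e)\le 1+\csdd(M')=\csdd(M)$.

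In the c$^*$ case, $M=M^+\setminus f$ and $M'=M^+\contract f$, where $f$ is neither a loop nor a coloop of $M^+$. Put $N^+=M^+\setminus D$. Then $N^+\setminus f=N$, and $N^+\contract f=M'\setminus D$ is a restriction of $M'$. Deletions and contractions of distinct elements commute, so this equality is routine.

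The main obstacle in the c$^*$ case is checking that $f$ is still neither a loop nor a coloop in $N^+$. It is certainly not a loop. However, $f$ may become a coloop after deleting $D$. In that situation we have $N^+\contract f=N^+\setminus f=N$. Hence $N$ is itself a restriction of $M'$, and induction gives $\csdd(N)\le\csdd(M')<\csdd(M)$ directly, without performing any transformation of $N$.

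Otherwise, the step from $N$ to $N^+\contract f$ is a valid c$^*$-transformation. Provided $N$ is connected, induction on $M'$ yields $\csdd(N)\le1+\csdd(M'\setminus D)\le1+\csdd(M')=\csdd(M)$.

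Finally, in both the deletion case and the c$^*$ case, $N$ might be disconnected. Then we apply the bound obtained above to each component of $N$, each of which is again a restriction of $M$, and use the maximum over components. Alternatively, we can strengthen the inductive claim to "every restriction", which handles this uniformly because each component of $N$ is itself a restriction of $M$ with fewer elements. The components must be handled this way because the transformation bound applies only when the matroid being transformed is connected.
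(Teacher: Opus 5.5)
Your case analysis matches the paper's: components in the disconnected case, the deletion case split by $e\in D$ versus $e\notin D$, and the c$^*$ case via $N^+=M^+\setminus D$. Your separate treatment of the subcase where $f$ becomes a coloop of $N^+$ covers a point the paper's write-up skips. The paper simply asserts that $N^+\contract e$ is a c$^*$-transformation of $N$. In the coloop subcase you correctly note $N^+\contract f=N$, so $N$ is already a restriction of $M'$ and the bound follows directly.

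There is a concrete flaw in the induction you formally declare. You say you induct on $|E(M)|$, but a c$^*$-transformation does not shrink the ground set: $E(M')=E(M)$, and only the rank drops. So in the c$^*$ case, both places where you apply the induction hypothesis to $M'$ (the coloop subcase and the main subcase) are not covered by induction on $|E(M)|$. To repair this, make $\csdd(M)$ the primary induction parameter. Keep $|E(M)|$ as a tie-breaker for the disconnected case, where components have fewer elements but possibly the same depth. This is what your opening phrase ``structural induction on the definition'' should mean, and it is why the paper inducts on $|E(M)|+|E(N)|+\csdd(M)$.

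The same issue affects your ``alternative'' handling of a disconnected $N$, which relies on components of $N$ being restrictions of $M$ with fewer elements. That works only if $|E(N)|$ is part of the measure, as in the paper. Your first option needs no such change: rerun the connected-$N$ argument with $D'=E(M)-E(C)$ for each component $C$ of $N$.
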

\begin{proof}
We proceed by induction on $|E(M)| + |E(N)| + k$, where $k := \csdd(M)$.
If $k = 1$ or $|E(N)| \le 1$, then $\csdd(N) = 1 \le k$, so we may assume that $k > 1$ and $|E(N)| > 1$.
If $M$ or $N$ is disconnected, then the statement follows immediately from the induction hypothesis because each component of $N$ is a restriction of a component of $M$.
Hence, assume that $M$ and $N$ are both connected.
By \Cref{def:eight-parameters}, there is a matroid $M'$ such that $\csdd(M') = k - 1$ and either $M' = M \setminus e$ for some $e \in E(M)$ or $M'$ is a c$^*$-transformation of $M$.

First, we consider the case $M' = M \setminus e$ and let $N' = M \setminus (D \cup \{e\})$.
If $e \in D$, then $N' = N = M' \setminus (D - \{e\})$.
By induction hypothesis, $\csdd(N) \le \csdd(M') < k$, and we are done.
Hence, assume that $e \notin D$.
In this case, $N' = N \setminus e$.
Since $N$ is connected and $|E(N)| > 1$, we obtain $\csdd(N) \le \csdd(N') + 1$, see \Cref{def:eight-parameters}, \Cref{it:oneredstep}(b).
By induction hypothesis, $\csdd(N') \le \csdd(M')$.
Hence, $\csdd(N) \le \csdd(M') + 1 = k$, as desired.

Second, assume that $M'$ is a c$^*$-transformation of $M$, i.e., there is a matroid $M^+$ such that $M = M^+ \setminus e$ and $M' = M^+ \contract e$ for some $e \in E(M^+)$.
Let $N^+ = M^+ \setminus D$, $N' = M' \setminus D$, and observe that $N = N^+ \setminus e$ and $N' = N^+ \contract e$.
Hence, $N'$ is a c$^*$-transformation of $N$, and as in the previous paragraph, we obtain $\csdd(N) \le \csdd(N') + 1 \le \csdd(M') + 1 = k$. 
\end{proof}

\begin{lemma}\label{lem:fat-cycles}
For any positive integer~$i$, if $j \ge 2^i$ and $M := M(C_{j, i})$, then $\csdd(M) \ge i$.
\end{lemma}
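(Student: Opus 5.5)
The plan is induction on $i$, with the induction hypothesis applied to a smaller fat cycle sitting inside the matroid obtained after one reduction step. Restrictions are handled by \Cref{lem:csdd-monotone-restrictions}. For $i=1$ there is nothing to prove, since every depth value is at least $1$.

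For the induction step, let $i\ge2$, $j\ge 2^i$ and $M=M(C_{j,i})$. Write $P_1,\dots,P_j$ for the parallel classes in cyclic order, each of size $i$. Since $j\ge2$, $M$ is connected and has more than one element, so by \Cref{def:eight-parameters}, \Cref{it:oneredstep}, we have $\csdd(M)=1+\csdd(M')$ where either $M'=M\setminus e$ or $M'=M^+\contract f$ is a c$^*$-transformation of $M$. It therefore suffices to show $\csdd(M')\ge i-1$ in both cases.

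\textbf{Deletion case.} If $M'=M\setminus e$, then deleting one further element from each class not containing $e$ leaves a restriction of $M'$ isomorphic to $M(C_{j,i-1})$. Since $j\ge2^{i-1}$, \Cref{lem:csdd-monotone-restrictions} together with induction gives $\csdd(M')\ge\csdd(M(C_{j,i-1}))\ge i-1$.

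\textbf{c$^*$-transformation case.} Here $M'=M^+\contract f$, where $f$ is neither a loop nor a coloop of $M^+$. I first split the cycle into two disjoint arcs $X=P_1\cup\dots\cup P_a$ and $Y=P_{a+1}\cup\dots\cup P_{j-1}$, with $a=\lceil (j-1)/2\rceil$, leaving out the single class $P_j$. Since $X\cup Y$ misses a class, it is independent as a fat path, so $\rnk(X)+\rnk(Y)=\rnk(X\cup Y)$. By submodularity, a non-loop $f$ cannot lie in both $\clo_{M^+}(X)$ and $\clo_{M^+}(Y)$: otherwise
$\rnk(X)+\rnk(Y)=\rnk(X\cup f)+\rnk(Y\cup f)\ge\rnk(X\cup Y\cup f)+1$, which is a contradiction. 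Say $f\notin\clo_{M^+}(X)$. Then $M'|X=M|X$, and $X$ contains at least $\lceil (j-1)/2\rceil\ge 2^{i-1}$ consecutive classes, each still of full size $i$.

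\textbf{Main obstacle.} The hard step is to close this undamaged arc back into a fat cycle inside $M'$, as a restriction of $M'$, with multiplicity $i-1$ and length $j'\ge2^{i-1}$. The contraction of $f$ may have identified or collapsed the elements outside $X$ arbitrarily. What I would try first is this. The rank of $E(M')$ drops by exactly one, and every set whose rank drops spans $f$. From this I would argue that $X$ together with a suitable choice of elements from the classes outside $X$ still forms, in $M'$, a connected structure in which every class of $X$ lies on a common circuit. The single rank drop should destroy at most one ``link'' of the cycle, and sacrificing one element per class (hence multiplicity $i-1$ instead of $i$) should pay for rerouting through a parallel copy. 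Then \Cref{lem:csdd-monotone-restrictions} and the induction hypothesis give $\csdd(M')\ge i-1$. The slack in $j\ge2^i$, which is halved at each step, is exactly what this halving of the length requires.
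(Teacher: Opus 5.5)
Your induction frame and the deletion case are correct and coincide with the paper's. The c$^*$-transformation case, however, is not proved. The paragraph you label ``Main obstacle'' is exactly where all the content of the lemma lies, and it offers only a plan.

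The arc device is also a dead end. From $f\notin\clo_{M^+}(X)$ you only learn $M'|X=M|X$, which is a direct sum of $a$ parallel classes and carries no cycle information. The claim you intend to prove, that the classes of $X$ lie on a common circuit of $M'$, is false in general:
\begin{itemize}
\item Pick $x_k\in P_k$, let $D_1=\{x_1,x_{a+1}\}$ and $D_2=\{x_1,\dots,x_j\}-D_1$, and extend $M$ by $f$ along the modular cut of all flats of $M$ containing $D_1$ or $D_2$ (a routine check shows this is a modular cut).
\item Then $f$ is neither a loop nor a coloop, and it lies in the closure of neither $X$ nor $Y$.
\item Yet $\rnk_{M'}(P_1\cup P_{a+1})=1$ and $\rnk_{M'}(E(M)-P_1-P_{a+1})=j-3$, while $\rnk(M')=j-2$. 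So $P_1\cup P_{a+1}$ is separated from the rest of $M'$, and no circuit of $M'$ meets both $P_1$ and $P_2$.
\end{itemize}
The lemma survives in this example only because the other side is a fat cycle of length $j-2$ running through classes outside $X$.

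``Rerouting through a parallel copy'' cannot help either. Parallel elements $x,x'$ of $M^+$ satisfy $\clo_{M^+}(Z\cup\{x\})=\clo_{M^+}(Z\cup\{x'\})$ for every $Z$. Losing multiplicity is needed only in the deletion case. Also, ``say $f\notin\clo_{M^+}(X)$'' is not without loss of generality: for $j=2^i$ the arc $Y$ has only $2^{i-1}-1$ classes.

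The missing idea, which the paper uses, is to drop the arcs and work with a whole transversal $C_0=\{x_1,\dots,x_j\}$. This is a circuit of $M$ and hence of $M^+$. Contracting the single element $f$ leaves a circuit $C\subseteq C_0$ of $M'$ with $|C|\ge j/2$. The paper cites \cite[Lemma~5.6]{DeVKO20} for this. Directly:
\begin{itemize}
\item If $f\notin\clo_{M^+}(C_0)$, then $C_0$ stays a circuit.
\item Otherwise $M'|C_0$ has rank $j-2$ and no coloops, because $\rnk_{M'}(C_0-\{x\})\ge\rnk_{M}(C_0-\{x\})-1=j-2=\rnk_{M'}(C_0)$.
\item Hence $(M'|C_0)^*$ is a loopless rank-$2$ matroid with at least two parallel classes. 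The circuits of $M'|C_0$ are the complements of these classes, and the complement of the smallest class has at least $j/2$ elements.
\end{itemize}
Since $|C|\ge2$, no element of $C$ is a loop of $M'$, so each keeps its $i-1$ original parallel copies. Thus $C$ together with the classes of its elements induces a restriction of $M'$ isomorphic to $M(C_{k,i})$ with $k\ge2^{i-1}$. This restriction contains $M(C_{k,i-1})$, so \Cref{lem:csdd-monotone-restrictions} and the induction hypothesis give $\csdd(M')\ge i-1$.
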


\begin{proof}
We proceed by induction on $i$.
Observe that $\csdd(M) \ge 1$ holds trivially and suppose that $i > 1$.
By \Cref{def:eight-parameters}, there is a matroid $M'$ such that $\csdd(M') = \csdd(M) - 1$ and either $M' = M \setminus e$ for some $e \in E(M)$ or $M'$ is a c$^*$-transformation of $M$.
In the former case, it can be easily seen that $M_{i-1} := M(C_{j, i-1})$ 
is a restriction of $M'$.
Hence, by induction hypothesis and \Cref{lem:csdd-monotone-restrictions}, $i-1 \le \csdd(M_{i-1}) \le \csdd(M') = \csdd(M) - 1 \text{ as desired.}$

Now suppose that $M'$ is a c$^*$-transformation of $M$, i.e., there is a matroid $M^+$ such that $M = M^+ \setminus e$ and $M' = M^+ \contract e$ for some $e \in E(M^+)$.
Let $C_0$ be a circuit of size $j$ 
in $M$ and observe that $C_0$ is a circuit also in $M^+$.
It can be easily seen that there is a circuit $C \seq C_0$ of size $k$ such that $k \ge j/2 \ge 2^{i-1}$ in $M'$, see \cite[Lemma 5.6]{DeVKO20} for detailed argumentation.
Let $S = C \cup \{e' \in E(M')\sep e'$ is parallel to some element of $C$ in $M'\}$ and observe that $N := M' \upharpoonright S$ is a matroid isomorphic to $M(C_{k, i})$.
By induction hypothesis and \Cref{lem:csdd-monotone-restrictions}, we obtain $i-1 \le \csdd(N) \le \csdd(M') = \csdd(M) - 1 \text{ as desired.}$
\end{proof}

Using \Cref{lem:fat-cycles}, it is easy to show the following.

\vbox{%
\begin{proposition}\label{pro:allstrict}
~
\begin{enumerate}[(a)]
\item $\cdsd\funnleq\csdd$ and $\csdd\funnleq\cdsd$.
\item All arrow relations depicted in \Cref{fig:hasse-csdsd} are strict.
\end{enumerate}
\end{proposition}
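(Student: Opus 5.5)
Both parts follow from the fat-cycle family $M(C_{j,i})$ with $j\ge 2^i$, its dual, and the long-circuit example of \Cref{lem:incom-cddd}. Throughout, write $M_i:=M(C_{j,i})$ with $j\geq 2^i$. By \Cref{lem:fat-cycles} we have $\csdd(M_i)\ge i$. By \Cref{obs:destroy-fat} we have $\cdsd(M_i)\le 3$. Dualising with \Cref{obs:depthsduality}(b) gives $\cdsd(M_i^*)=\csdd(M_i)\ge i$ and $\csdd(M_i^*)=\cdsd(M_i)\le 3$.

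For part (a), the family $(M_i)_i$ has $\cdsd$ bounded by $3$ and $\csdd$ unbounded. Hence no computable $f$ can satisfy $\csdd\le f(\cdsd)$, i.e., $\csdd\funnleq\cdsd$. The dual family $(M_i^*)_i$ has $\csdd$ bounded and $\cdsd$ unbounded, which gives $\cdsd\funnleq\csdd$.

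For part (b), the non-strict parts of every arrow are already available. \Cref{lem:starredineq} gives $\csdsd\le\csdd\le\cdd$ and $\csdsd\le\cdsd\le\cdd$. The definition gives $\cd\le\cdd$ and $\dd\le\cdd$, and \Cref{cor:xd_circuits} supplies the equivalences inside the bottom boxes. It remains to exhibit, for each arrow, a family bounded in the smaller parameter and unbounded in the larger one.
\begin{enumerate}[(1)]
\item For $\csdsd$ versus $\csdd$, use $M_i$: we have $\csdsd(M_i)\le\cdsd(M_i)\le3$ while $\csdd(M_i)\ge i$.
\item For $\csdsd$ versus $\cdsd$, use $M_i^*$ symmetrically.
\item For $\csdd$ versus $\cdd$, use $M_i^*$: we have $\csdd(M_i^*)\le 3$ while $\cdd(M_i^*)\ge\cdsd(M_i^*)\ge i$.
\item For $\cdsd$ versus $\cdd$, use $M_i$ symmetrically.
\item For $\cdd$ versus $\cd$, take the duals of long circuits $M(C_{2^i})^*$, i.e., $U_{2^i-1,2^i}$. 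Their c-depth equals the d-depth of a circuit, which is at most $2$ by \Cref{lem:incom-cddd}. Their cd-depth is at least their d-depth, which equals $\cd(M(C_{2^i}))\ge i$.
\item For $\cdd$ versus $\dd$, use $M(C_{2^i})$ itself.
\end{enumerate}
The arrows in \Cref{fig:hasse-csdsd} involving branch-depth and matroid tree-depth transfer through the stated equivalences: \Cref{thm:bd-csdsd} for branch-depth, and later \Cref{thm:mtreedepth-rel} for matroid tree-depth, whose proof comes after this proposition.

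The only genuinely nontrivial ingredient is the lower bound of \Cref{lem:fat-cycles}, which is already available. So the remaining care is bookkeeping: matching each arrow to the correct family or its dual, and reading each ``$\funnleq$'' as ``some family is bounded in one parameter and unbounded in the other''.
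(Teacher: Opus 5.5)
Part (a) and your items (1)--(4) are correct. They are the explicit-witness version of the paper's argument, which instead proves (b) by contradiction through duality: for example, $\csdd\funle\csdsd$ would give $\cdsd\funle\csdsd$ by \Cref{obs:depthsduality}, hence $\csdd\funeq\cdsd$, contradicting (a).

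The genuine problem is in items (5) and (6). You use $\cd\le\cdd$ and $\dd\le\cdd$, but the correct direction is $\cdd\le\cd$ and $\cdd\le\dd$. The reason is that cd-depth minimises over a superset of the transformations allowed for c-depth or d-depth. This is also what the arrows $\cd\to\cdd$ and $\dd\to\cdd$ in \Cref{fig:hasse-csdsd} express, as does the chain $\csdsd\le\csdd\le\cdd\le\cd$ from the overview; the sentence in the paper stating $\cd(M)\le\cdd(M)$ is a slip. Consequently the bounds you claim in (5) and (6) are false:
\begin{itemize}
\item $M(C_{2^i})^*$ is $U_{1,2^i}$, not $U_{2^i-1,2^i}$. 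Contracting any of its elements leaves only loops, so $\cdd(U_{1,2^i})=2$, not $\ge i$. Your step ``cd-depth is at least d-depth'' fails.
\item Likewise $\cdd(M(C_{2^i}))\le\dd(M(C_{2^i}))\le 2$, so the unboundedness claimed in (6) also fails.
\end{itemize}
Moreover, what these two arrows require is $\cd\funnleq\cdd$ and $\dd\funnleq\cdd$. That means families with \emph{bounded} cd-depth and unbounded c-depth (resp.\ d-depth), not the reverse.

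The repair is to swap your two witnesses:
\begin{itemize}
\item $M(C_{2^i})=U_{2^i-1,2^i}$ satisfies $\cdd\le\dd\le 2$ and $\cd\ge i$ by \Cref{thm:cd_circuits}, giving $\cd\funnleq\cdd$.
\item Its dual $U_{1,2^i}$ satisfies $\cdd\le\cd\le 2$ and $\dd\ge i$, giving $\dd\funnleq\cdd$.
\end{itemize}
Alternatively, follow the paper: $\cd\funle\cdd$ would imply $\dd\funle\cdd$ by duality, and then $\cd\funeq\cdd\funeq\dd$, contradicting \Cref{lem:incom-cddd}. With this correction your proof is complete. Your per-arrow approach has the advantage of exhibiting explicit separating families. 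The paper's contradiction argument avoids computing any depths beyond (a) and \Cref{lem:incom-cddd}.
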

}
\begin{proof}
We begin with (a).
Let $N_i = M(C_{2^i, i})$, where $C_{i,j}$ is the graph introduced before \Cref{obs:destroy-fat}.
By \Cref{lem:fat-cycles}, $\csdd(N_i) \ge i$, and by \Cref{obs:destroy-fat}, $\cdsd(N_i) \le 3$.
Hence, $\csdd\funnleq\cdsd$, and dually, $\cdsd\funnleq\csdd$.

Now we prove (b).
Suppose, for the sake of a contradiction, that $\cd \funle \cdd$. Then, it follows from duality that $\dd \funle \cdd$.
Since $\cdd \funle \cd$ and $\cdd \funle \dd$ hold trivially, we obtain $\cd\funeq\cdd\funeq\dd$, 
which contradicts \Cref{lem:incom-cddd}.
Hence, $\cdd\funnleq\cd$ and dually, $\cdd\funnleq\dd$.
The proofs of $\cdd \funnleq \csdd$, $\cdd \funnleq \cdsd$, $\csdd\funnleq\csdsd$, and $\cdsd\funnleq\csdsd$ are analogous to the proof of $\cdd\funnleq\cd$ and $\cdd\funnleq\dd$ (using item (a) of this proposition instead of \Cref{lem:incom-cddd}), so we omit them.
\end{proof}

We are now ready to summarise the relations between the parameters.

\begin{proposition}\label{pro:allvalid}
All relations depicted in \Cref{fig:hasse-csdsd} between the branch-depth and the eight depth parameter variants of \Cref{def:eight-parameters}
are valid in the class of all matroids.
\end{proposition}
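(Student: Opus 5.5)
Proposition~\ref{pro:allvalid} is an assembly of results already established; the plan is to go through the diagram in \Cref{fig:hasse-csdsd} edge by edge and cite the corresponding statement. First, the two equivalences inside the boxes (ignoring matroid tree-depth, which is deferred to \Cref{sec:mtreedepth}): $\cd\funeq\csd$ and $\dd\funeq\dsd$ are exactly \Cref{cor:xd_circuits}, and $\csdsd\funeq\bd$ is \Cref{thm:bd-csdsd}, since $\bd(M)\le\csdsd(M)\le 2\bd(M)^2+1$ gives both directions with computable functions.

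Next, the ``at most'' parts of the arrows. Each arrow $p\to_{\rm f} q$ requires $q\funle p$, and all of these hold with the identity function. The arrows $\csdsd\leftarrow\csdd\leftarrow\cdd$ and $\csdsd\leftarrow\cdsd\leftarrow\cdd$ follow from \Cref{lem:starredineq}. The arrows from the bottom boxes to cd-depth, namely $\cdd(M)\le\cd(M)$ and $\cdd(M)\le\dd(M)$, follow directly from \Cref{def:eight-parameters}. Any c-transformation (respectively d-transformation) is also a cd-transformation, so a straightforward induction on $|E(M)|$ via the three items of the definition gives the inequality; this is the ``trivial from the definition'' remark made before \Cref{lem:starredineq}. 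Since the boxes are equivalence classes, these bounds transfer to every member via transitivity of $\funle$. For instance, $\cdd\funle\csd$ follows from $\cdd\le\cd$ together with $\cd\funle\csd$.

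Finally, the strictness of each arrow and the stated incomparabilities are supplied by \Cref{pro:allstrict} and \Cref{lem:incom-cddd}. The duality annotations are \Cref{obs:depthsduality}. The only point that needs a moment of care is to confirm that every nonstrict comparison in the figure is indeed covered by one of these chains, composed through the equivalences; otherwise no real obstacle is expected.
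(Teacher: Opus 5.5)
Your proposal is correct and follows the paper's proof, which likewise assembles \Cref{obs:depthsduality}, \Cref{cor:xd_circuits}, \Cref{lem:starredineq}, \Cref{thm:bd-csdsd}, \Cref{lem:incom-cddd} and \Cref{pro:allstrict}; it also cites \Cref{rem:starstronger}, which the equivalences already make redundant. Your short induction giving $\cdd(M)\le\cd(M)$ and $\cdd(M)\le\dd(M)$ has the right direction; the sentence before \Cref{lem:starredineq} states these inequalities reversed, apparently a typo.
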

\begin{proof}
The statement follows immediately from \Cref{rem:starstronger}, \Cref{obs:depthsduality}, \Cref{cor:xd_circuits}, \Cref{lem:starredineq}, \Cref{thm:bd-csdsd}, \Cref{lem:incom-cddd}, and \Cref{pro:allstrict}.
\end{proof}

Besides mutual comparison, we also briefly compare how our depth parameters behave when considering minors.
The established parameter branch-depth is known to be minor-monotone \cite{DeVKO20},
and for the new parameter matroid tree-depth, this can be easily shown from \cite{matroid-tree-width}.
Regarding the new measure of \Cref{def:eight-parameters}, we summarise the following results.

\begin{proposition}[\cite{DBLP:journals/corr/abs-2402-16215prep}]\label{thm:minor-csdsd}
The parameters c$^*$-depth, d$^*$-depth and c$^*$d$^*$-depth are all minor-monotone.
That is, for every matroid $M$ and every minor $N$ of $M$, we have $\csd(N)\leq\csd(M)$, $\dsd(N)\leq\dsd(M)$ and $\csdsd(N)\leq\csdsd(M)$.
\end{proposition}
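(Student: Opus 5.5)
The plan is to treat the three parameters separately but with a common scheme. The c$^*$-depth case is already done by \Cref{lem:extcd-to-csd}, which shows $\csd(N)\leq\csd(M)$ for every minor $N$ of $M$. For d$^*$-depth we would use duality. By \Cref{obs:depthsduality}, $\dsd(N)=\csd(N^*)$ and $\dsd(M)=\csd(M^*)$. Moreover, $N^*$ is a minor of $M^*$, since $M\setminus Y\contract X$ dualises to $M^*\contract Y\setminus X$. Applying \Cref{lem:extcd-to-csd} to $M^*$ then gives the d$^*$ claim immediately.

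The substantive case is c$^*$d$^*$-depth. We would prove it by structural induction on \Cref{def:eight-parameters} applied to $M$, mimicking the proof of \Cref{lem:extcd-to-csd}. Write $N=M\contract X\setminus Y$.
\begin{itemize}
\item If $|E(M)|\leq1$, the claim is trivial.
\item If $M$ is disconnected, every component of $N$ is a minor of some component of $M$, and we conclude by induction together with \Cref{it:bycomponents}.
\item If $M$ is connected and $\csdsd(M)=1+\csdsd(M')$ with $M'$ a c$^*$-transformation, we argue exactly as in \Cref{lem:extcd-to-csd}. Take $M=M^+\setminus f$ and $M'=M^+\contract f$, and set $N^+=M^+\contract X\setminus Y$. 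Then $N=N^+\setminus f$, and $N^+\contract f$ is a minor of $M'$, so induction bounds $\csdsd(N^+\contract f)\leq\csdsd(M')$.
\item The d$^*$-transformation step is symmetric: here $M=M^+\contract f$ and $M'=M^+\setminus f$. We take the same $N^+$, so that $N=N^+\contract f$ and $N^+\setminus f$ is a minor of $M'$.
\end{itemize}

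The obstacle, both in the last two cases and implicitly in \Cref{lem:extcd-to-csd}, is that the element $f$ may become a loop or a coloop in $N^+$. Then $N^+\contract f$ or $N^+\setminus f$ is not a legitimate $\ast$-transformation of $N$, since the definition requires $f$ to be neither. We would handle this directly.
\begin{itemize}
\item Suppose that in the c$^*$ case $f$ is a loop of $N^+$. Then $N^+\contract f=N^+\setminus f=N$, so $N$ itself is a minor of $M'$, and induction gives $\csdsd(N)\leq\csdsd(M')<\csdsd(M)$.
\item Suppose that in the c$^*$ case $f$ is a coloop of $N^+$. Then again $N^+\contract f=N^+\setminus f=N$, with the same conclusion.
\item The d$^*$ case is symmetric.
\end{itemize}
So in the degenerate situations we even gain one unit. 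In the nondegenerate situation we use \Cref{it:oneredstep} for $N$ when $N$ is connected and has more than one element. If $N$ is disconnected, we instead bound each of its components by the same transformation-restricted argument, or simply note that it suffices to bound $\csdsd$ of each component, which is again a minor of $M$.

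Finally, the induction must be organised so that the hypothesis applies to the pair $(M',\,\text{minor})$. We would induct on $\csdsd(M)$, with $|E(M)|$ as a secondary parameter. This is well founded because $\csdsd(M')<\csdsd(M)$ in the connected case, and components have fewer elements in the disconnected case. The main care point is thus the loop/coloop degeneracy of $f$ after taking the minor, together with the connectivity bookkeeping for $N$; everything else is a routine commutation of minor operations.
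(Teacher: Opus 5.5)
Your proposal is correct and follows the paper's route: c$^*$-depth via \Cref{lem:extcd-to-csd}, d$^*$-depth by duality, and c$^*$d$^*$-depth by rerunning the proof of \Cref{lem:extcd-to-csd} with the d$^*$-transformation case added (the paper cites the external proof for this last part and remarks that it is practically the same as \Cref{lem:extcd-to-csd}). Your explicit handling of the case where $f$ becomes a loop or coloop in $N^+$, and your reduction to connected $N$, supply details that the paper's proof of \Cref{lem:extcd-to-csd} leaves implicit.
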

\begin{proof}
The first inequality, and the second one by duality, follow from \Cref{lem:extcd-to-csd}.
The inequality $\csdsd(N)\leq\csdsd(M)$ is proved in \cite{DBLP:journals/corr/abs-2402-16215prep},
and we remark that its proof is practically the same as the proof of \Cref{lem:extcd-to-csd}.
\end{proof}

Exact minor-monotonicity does not hold for the remaining five parameters of \Cref{def:eight-parameters};
for c-depth, d-depth, and cd-depth, this was already observed by~\cite{DeVKO20}.
For example, the c-depth of the cycle matroid of a cycle with a chord increases when the chord is deleted.
We conclude this section by observing that c-depth and d-depth are ``functionally minor-monotone'' and that \csddepth and \cdsdepth are not minor monotone (the construction also yields the result for cd-depth).

\begin{proposition}\label{pro:nonminorcl}
\begin{enumerate}[(a)]
\item There exists a computable function $f$ such that for every matroid $M$ and $N$ a minor of $M$,
we have $\cd(N)\leq f(\cd(M))$ and $\dd(N)\leq f(\dd(M))$.
\item For every parameter $p \in \{\cdd, \csdd, \cdsd\}$ and every positive integer $i$,
there exists a matroid $M_i$ and a minor $N_i$ of $M_i$, such that $p(M_i) \le 3$ and $p(N_i) \geq i$.
\end{enumerate}
\end{proposition}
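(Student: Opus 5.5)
Part (a) follows from \Cref{thm:cd_circuits} and the fact that circumference is minor-monotone. If $N$ is a minor of $M$, every circuit of $N$ extends to a circuit of $M$: a circuit $C$ of $M\contract Y\setminus X$ satisfies $C\subseteq C'\subseteq C\cup Y$ for some circuit $C'$ of $M$. Hence $u(N)\le u(M)$, and dually, since $N^*$ is a minor of $M^*$, $u^*(N)\le u^*(M)$. Combining with \Cref{thm:cd_circuits} gives
\[\cd(N)\le u(N)(u(N)+1)/2\le u(M)(u(M)+1)/2\le 2^{\cd(M)}(2^{\cd(M)}+1)/2,\]
because $\log_2 u(M)\le\cd(M)$. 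The same computation for $\dd$ uses $u^*$. So $f(k)=2^k(2^k+1)/2$ works.

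For (b), I would use the graphs $C_{i,j}$ and $D_{i,j}$ from \Cref{fig:fat-cycle}. By \Cref{obs:destroy-fat}, $\cdd(M(D_{i,j}))\le 3$. The edge of $D_{i,j}$ not replaced by $j$ parallel edges is a coloop. Contracting it gives $N=M(D_{i,j})\contract e=M(C_{i,j})$. With $i=2^k$ and $j=k$, \Cref{lem:fat-cycles} gives $\csdd(M(C_{2^k,k}))\ge k$. By \Cref{lem:starredineq}, $\cdd\ge\csdd$, so also $\cdd(N)\ge k$. This handles $p=\cdd$.

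For $p=\csdd$, I take the same pair $M_k=M(D_{2^k,k})$ and $N_k=M(C_{2^k,k})$. Then $\csdd(M_k)\le\cdd(M_k)\le 3$ by \Cref{lem:starredineq}, while $\csdd(N_k)\ge k$ by \Cref{lem:fat-cycles}.

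For $p=\cdsd$, I dualize. I set $M_k=M(D_{2^k,k})^*$ and $N_k=N^*$, which is a minor of $M_k$. Then $\cdsd(M_k)=\csdd(M(D_{2^k,k}))\le 3$ and $\cdsd(N_k)=\csdd(M(C_{2^k,k}))\ge k$, both by \Cref{obs:depthsduality}. Taking $k=i$ finishes the proof.

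The only point needing care is that contracting the coloop yields exactly $M(C_{i,j})$, so that \Cref{lem:fat-cycles} applies. This is immediate because contracting a coloop of a graphic matroid corresponds to contracting that edge in the graph.
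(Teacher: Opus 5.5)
Your proof is correct. Part (a) follows a different and more elementary route than the paper, and part (b) is the paper's argument, apart from one false side remark.

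\textbf{Part (a).} The paper combines \Cref{cor:xd_circuits} with the exact minor-monotonicity of $\csd$ (\Cref{thm:minor-csdsd}, i.e.\ \Cref{lem:extcd-to-csd}). Using $\csd\le\cd$, this gives $\cd(N)\le g(\csd(N))\le g(\csd(M))\le g(\cd(M))$, and $\dd$ follows by duality.

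You instead observe that circumference and cocircumference are minor-monotone, and plug this straight into \Cref{thm:cd_circuits}. Your argument needs only the DeVos--Kwon--Oum bounds and the standard description of circuits of a contraction. It avoids the Kardo\v{s} et al.\ lower bound $\log_2 u\le\csd$ behind \Cref{cor:xd_circuits}, which in the paper passes through \Cref{thm:contractionstarx}. It also yields the explicit function $f(k)=2^{k-1}(2^k+1)$.

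The paper's version is a one-liner given its machinery. It presents the functional minor-monotonicity of $\cd$ and $\dd$ as inherited from the genuinely minor-monotone $\csd$ and $\dsd$.

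\textbf{Part (b).} This matches the paper:
\begin{itemize}
\item $M_i=M(D_{2^i,i})$ and $N_i=M(C_{2^i,i})$;
\item the upper bound comes from \Cref{obs:destroy-fat} together with $\csdd\le\cdd$;
\item the lower bound comes from \Cref{lem:fat-cycles} together with $\cdd\ge\csdd$;
\item the case $p=\cdsd$ follows by duality.
\end{itemize}

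\textbf{Correction.} The single edge $e$ of $D_{i,j}$ is \emph{not} a coloop: it lies on a cycle formed with one edge from each parallel class. If it were a coloop, $M(D_{i,j})\contract e$ would equal $M(D_{i,j})\setminus e$. That is a direct sum of parallel classes, not the connected matroid $M(C_{i,j})$. Also, \Cref{obs:destroy-fat} needs $e$ to be neither a loop nor a coloop, so that $M(D_{i,j})\setminus e$ is a d$^*$-transformation of $M(C_{i,j})$.

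The fact you actually use is only $M(G)\contract e=M(G\contract e)$ for any non-loop edge $e$. So the step stands once the word ``coloop'' is dropped. The same slip appears in the wording of \Cref{obs:destroy-fat}.
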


\begin{proof}
First, observe that (a) follows directly from the facts that c-depth is functionally equivalent to \csdepth (\Cref{cor:xd_circuits}) and that \csdepth is minor monotone (\Cref{thm:minor-csdsd}).

Now we prove (b).
Recall the definition of the graphs $C_{i,j}$ and $D_{i, j}$ introduced before \Cref{obs:destroy-fat}.
Let us define $M_i = M(D_{2^i, i})$ and $N_i = M(C_{2^i, i})$.
By definition, $N_i$ is a minor of $M_i$.
By \Cref{obs:destroy-fat}, $\csdd(M_i) \le \cdd(M_i) \le 3$, and by \Cref{lem:fat-cycles}, $\cdd(N_i) \ge \csdd(N_i) \ge i$.
Hence, we have proven the statement for $p \in \{\cdd, \csdd\}$, and for $p = \cdsd$, it follows by duality.
\end{proof}

\subsection{Relation to matroid tree-depth}\label{sec:mtreedepth}

In this section, we study the new notion of matroid tree-depth (defined in \Cref{sec:treede})
and compare it to other measures studied in this paper, in particular to c$^*$-depth. 
This completes the picture of relations between parameters depicted in \Cref{fig:hasse-csdsd}.

Our main result in this direction is the following theorem.
\begin{theorem}\label{thm:mtreedepth-rel}
For every matroid $M$, $\>\mtd(M)\leq\csd(M)\leq\mtd(M)^2+1$.
\end{theorem}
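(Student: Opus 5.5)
The plan is to prove the two inequalities separately. The lower bound $\mtd(M)\le\csd(M)$ goes through contraction$^*$-depth decompositions. The upper bound goes through a recursive splitting argument in the style of \Cref{lem:startingguts}, using relatively free extensions (\Cref{thm:GGWadde}).

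\emph{Lower bound.} Let $k$ be the contraction$^*$-depth of $M$. By \Cref{thm:contractionstarx}, $k\le\csd(M)$ in all cases. Take a contraction$^*$-depth decomposition $(T,f)$ of height $k+1$, and use it directly as a tree-decomposition $(T,\tau)$ with $\tau=f$. Rooted at its root, $T$ has radius at most $k$.

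Now fix a node $u$ at depth $h_u$, with child subtrees carrying element sets $F_1,\dots,F_{d'}$ and parent side $F_P$. Let $e_i$ be the number of edges of the $i$-th child subtree, including the edge to~$u$.
\begin{itemize}
\item The set $E(M)-F_i$ maps to leaves outside that subtree. Using $|E(T)|=\rnk(M)$ and the rank condition, this gives $\rnk_M(E(M)-F_i)\le\rnk(M)-e_i$.
\item The set $E(M)-F_P$ lies below $u$, so $\rnk_M(E(M)-F_P)\le h_u+\sum_i e_i$.
\end{itemize}
Summing, all $e_i$ cancel and $\omega_M(F_1,\dots,F_{d'},F_P)\le h_u\le k$. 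At the root there is no parent side and we get $\omega\le 0$. Hence the width and the radius are both at most $k$, so $\mtd(M)\le k\le\csd(M)$.

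\emph{Upper bound.} I first note that a purely circumference-based route through \Cref{thm:csd_circuits} does not give a quadratic bound. On a single circuit, a bounded-width, bounded-radius decomposition can still carry exponentially many elements. So I would instead argue by induction on $k=\mtd(M)$, or more precisely on the radius $\rho\le k$ of a decomposition of width $\le k$. The claim to prove is $\csd(M)\le \rho(k+1)+1$ or a similar bound, which gives $\le k^2+1$ after a little care; for instance, the centre level may cost at most $k$.

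Take a centre $u$ of $T$, let $F_1,\dots,F_d$ be the branch sets and $U=\tau^{-1}(u)$. The goal is to reach, by at most about $k$ c$^*$-transformations, a matroid in which every $F_i$, and every element of $U$, lies in its own union of components. Each $F_i$ inherits the tree-decomposition given by its branch, re-rooted at the neighbour of $u$, with radius $\rho-1$. Width does not increase under such restriction/contraction, and I would check this via the same submodularity computation as in \Cref{lem:omegaMzero}. Induction together with \Cref{lem:extcd-to-csd} then finishes.

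To perform the separation, I would iterate the construction from the sketch of \Cref{thm:csclosurexx}. Repeatedly take a relatively free extension in a connected bispan separating one $F_i$ from the rest and contract it; each step lowers the relevant connectivity by one. The potential to track is $\omega_M(F_1,\dots,F_d)$ together with the contribution of $U$. The key computation is that contracting a point in the guts lowers this potential by exactly one, and that the potential reaching $0$ forces $E(M)-U=F_1\cup\dots\cup F_d$ to be a direct sum in the resulting matroid. Elements of $U$ that survive as non-loops are then handled by contracting their span, which I also expect to be charged to $\omega$ because they contribute to $\rnk(E(M)-F_i)$ for every~$i$.

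\emph{Main obstacle.} The hardest step will be this multi-way separation: showing that exactly $\omega_M(F_1,\dots,F_d)$ relatively free c$^*$-transformations suffice to disconnect all branches simultaneously, rather than paying $\lambda$ separately for each of the $d$ branches. If a direct argument fails, I would first try splitting off one branch at a time and bounding the sum of the successive $\lambda$'s by $\omega$ via submodularity.
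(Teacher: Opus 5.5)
\textbf{Lower bound.} Your lower bound is correct, and it takes a different route from the paper. You use a contraction$^*$-depth decomposition directly as a tree-decomposition. The two edge counts, $\rnk_M(E(M)-F_i)\le\rnk(M)-e_i$ and $\rnk_M(E(M)-F_P)\le h_u+\sum_i e_i$, do give td-width at most the depth of $u$. Hence $\mtd(M)$ is at most the contraction$^*$-depth. You only need the easy half of \Cref{thm:contractionstarx}, essentially \Cref{lem:csd-to-kklm}, which is proved inside the paper. Outside the loops-and-coloops case this even gives the sharper $\mtd(M)\le\csd(M)-1$. The paper instead inducts on the recursive definition of $\csd$:
\begin{itemize}
\item for disconnected $M$ it joins the components' decompositions at a new root;
\item for connected $M$ it reuses the decomposition of an optimal c$^*$-transformation, losing at most one unit of width by \Cref{lem:omegaineq}(a).
\end{itemize}
The paper's route is self-contained; yours is more direct and slightly stronger.

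\textbf{Upper bound: the separation step.} Here there is a genuine gap. Your architecture matches \Cref{lem:mtreedepth-geq}: a centre, relatively free extensions, the potential $\omega$, and recursion into branches of radius one less. However, the step you flag as the main obstacle is exactly the content of that proof, and you leave it open. The missing fact is \Cref{lem:omegaineq}(b): if the new point of a c$^*$-transformation lies in $\cl_{M^+}(E(M)-X_i)$ for every $i$, then $\omega_M(X_1,\dots,X_k)$ drops by exactly one. Both kinds of step satisfy this:
\begin{itemize}
\item a parallel copy of a non-loop of the centre bag, since the bag lies in every $E(M)-X_i$;
\item a relatively free point in $(X_j,E(M)-X_j)$, since it lies in $\cl(X_j)\subseteq\cl(E(M)-X_{j'})$ for $j'\neq j$.
\end{itemize}
So the bag is already counted inside $\omega$, and the naive one-branch-at-a-time process suffices. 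Any state with positive bag rank or some $\lambda(X_j)>0$ admits such a step. Moreover $\omega\ge 0$ always (\Cref{lem:omegaMzero}), and $\omega=0$ once all of these quantities vanish. Hence exactly $\omega\le t$ steps are needed, with no simultaneous multi-way separation. The same closure fact shows that the piece left on $X_j$ is $M/(E(M)-X_j)$, which is what controls the inherited widths (the paper phrases this via \Cref{lem:omegaineq}(c)).

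\textbf{Upper bound: the constant.} You also do not reach $k^2+1$. Your inductive claim $\csd(M)\le\rho(k+1)+1$ already fails at $\rho=0$: $U_{1,2}$ has a one-node decomposition of width $1$ but $\csd=2$. The honest level count (at most $t$ per centre, plus $\rnk+1\le t+1$ at a single node) gives only $t(d+1)+1$, i.e.\ $k^2+k+1$.

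Be aware that the paper's \Cref{lem:mtreedepth-geq} has the same slip. Its $d=0$ case actually yields $t+1$, not $t\cdot 0+1$, so the stated bound $t\cdot d+1$ is false. For instance, $U_{5,6}$ with a star decomposition whose three leaves carry two elements each has width $2$, radius $1$ and $\csd=4$.

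To get exactly $k^2+1$ you must save one unit per level. Let $v$ be a non-centre node and $B$ its branch, i.e.\ the elements in the subtree of $v$ away from the centre. The width of $v$ in $M$ equals its width in the inherited decomposition of $M/(E(M)-B)$ plus $\lambda_M(B)$, and $\lambda_M(B)\ge 1$ whenever $M$ is connected and $\emptyset\neq B\neq E(M)$. Reduce to components first, then sum along root-to-leaf paths. This gives $\csd(M)\le (t-1)d+t+1$, which equals $k^2+1$ when $t=d=k$.
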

\Cref{thm:mtreedepth-rel} follows from Lemmas~\ref{lem:mtreedepth-leq} and~\ref{lem:mtreedepth-geq}, which will be established later in this section.

Recalling the function $\omega_M$ of a matroid $M$ from \Cref{sec:treede}, 
defined by $\omega_M(X_1,\ldots,X_k)=\sum_{i=1}^k \rnk_M(E(M)-X_i)\>-(k-1)\rnk(M)$, we first prove:

\begin{lemma}\label{lem:omegaineq}
Let $M$ be a matroid and $X_1,\ldots,X_k\subseteq E(M)$ a collection of $k\geq1$ pairwise disjoint sets.
Assume that a matroid $M'\not=M$ is a c$^*$-transformation of~$M$, that is, $M=M^+\setminus e$ and $M'=M^+\contract e$ for some matroid $M^+$
and $e\in E(M^+)$ which is not a loop nor coloop.
Then the following hold:

a) $\omega_M(X_1,\ldots,X_k)\leq\omega_{M'}(X_1,\ldots,X_k)+1$;

b) if $e\in\cl_{M^+}(E(M)-X_i)$ for all $i\in[k]$, then $\omega_M(X_1,\ldots,X_k)=\omega_{M'}(X_1,\ldots,X_k)+1$;

c) if $e\in\cl_{M^+}(X_i)$ for some $i\in[k]$, then $\omega_M(X_1,\ldots,X_k)\geq\omega_{M'}(X_1,\ldots,X_k)$.
\end{lemma}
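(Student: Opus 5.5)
The plan is to express everything through ranks in $M^+$, writing $E=E(M)$ and $Y_i=E-X_i$. Since $e$ is not a coloop of $M^+$, we have $\rnk(M)=\rnk_{M^+}(E)=\rnk(M^+)$. Since $e$ is not a loop, the contraction $M'=M^+\contract e$ has rank function
\[
\rnk_{M'}(Z)=\rnk_{M^+}(Z\cup\{e\})-1
\]
for $Z\subseteq E$. In particular $\rnk(M')=\rnk(M)-1$, because $\rnk_{M^+}(E\cup\{e\})=\rnk_{M^+}(E)$.

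Set $\delta_i:=\rnk_M(Y_i)-\rnk_{M'}(Y_i)=1-\bigl(\rnk_{M^+}(Y_i\cup\{e\})-\rnk_{M^+}(Y_i)\bigr)$. Then $\delta_i\in\{0,1\}$, and $\delta_i=1$ exactly when $e\in\cl_{M^+}(Y_i)$. Substituting into the definition of $\omega$ gives the identity
\[
\omega_M(X_1,\ldots,X_k)-\omega_{M'}(X_1,\ldots,X_k)=\sum_{i=1}^k\delta_i-(k-1)\bigl(\rnk(M)-\rnk(M')\bigr)=\sum_{i=1}^k\delta_i-(k-1).
\]
All three parts will be read off from this identity.

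For (b), the hypothesis says every $\delta_i=1$, so the difference equals $k-(k-1)=1$.

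For (c), suppose $e\in\cl_{M^+}(X_i)$. Since the $X_j$ are pairwise disjoint, $X_i\subseteq Y_j$ for every $j\neq i$, hence $e\in\cl_{M^+}(Y_j)$ and $\delta_j=1$ for all $j\neq i$. The sum is then at least $k-1$, so the difference is at least $0$.

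For (a) we must show that at most one index $i$ has $e\notin\cl_{M^+}(Y_i)$; this is the only step with real content. Suppose instead that $\delta_i=\delta_j=0$ for some $i\neq j$. Because $X_i\cap X_j=\emptyset$, we have $Y_i\cup Y_j=E$, so by submodularity in $M^+$,
\[
\rnk_{M^+}(Y_i\cup\{e\})+\rnk_{M^+}(Y_j\cup\{e\})\geq\rnk_{M^+}(E\cup\{e\})+\rnk_{M^+}\bigl((Y_i\cap Y_j)\cup\{e\}\bigr).
\]
I would like this to yield a contradiction directly, but one extra ingredient seems needed: the lower-order term $\rnk_{M^+}((Y_i\cap Y_j)\cup\{e\})$ interacts badly with the bound. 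So, as the first attempt, I would instead show that the difference for $k$ sets is at most the difference for the coarsened tuple $(X_i\cup X_j,\ldots)$, mimicking the merging step in the proof of \Cref{lem:omegaMzero}, and then induct on $k$.

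Concretely, set $\omega^\Delta(X_1,\ldots,X_k):=\omega_M-\omega_{M'}$. For $k=1$ we get $\omega^\Delta=\delta_1\leq1$. For $k\geq2$, compare $\omega^\Delta(X_1,\ldots,X_k)$ with $\omega^\Delta(X_1\cup X_2,X_3,\ldots,X_k)$; the two differ by $\delta_1+\delta_2-1-\delta_{12}$, where $\delta_{12}$ refers to the set $Y_1\cap Y_2$. It then suffices to show $\delta_1+\delta_2\leq1+\delta_{12}$, that is, if $e\in\cl(Y_1)$ and $e\in\cl(Y_2)$ then $e\in\cl(Y_1\cap Y_2)$. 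This is false in general: closure does not commute with intersection unless $(Y_1,Y_2)$ is a modular pair. So this route must be replaced by a submodular-type inequality on the function $Z\mapsto\rnk_{M^+}(Z\cup\{e\})-\rnk_{M^+}(Z)$.

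I expect this merging inequality for (a) to be the main obstacle. If it fails, I would fall back on working directly with $\omega_M$ via \Cref{lem:omegaMzero} applied in $M^+$ to the tuple $(X_1,\ldots,X_k,\{e\})$, to extract the bound $\omega_M\leq\omega_{M'}+1$.
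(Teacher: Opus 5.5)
Your identity $\omega_M(X_1,\ldots,X_k)-\omega_{M'}(X_1,\ldots,X_k)=\sum_{i=1}^k\delta_i-(k-1)$ and your arguments for (b) and (c) are correct and coincide with the paper's. The gap is in (a), which you leave unproved because you reversed the direction of the inequality. Part (a) is the \emph{upper} bound $\omega_M-\omega_{M'}\leq 1$. It follows at once from your own observation that $\delta_i\in\{0,1\}$: then $\sum_i\delta_i\leq k$, so the difference is at most $k-(k-1)=1$. This one line is the paper's entire proof of (a).

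Your reformulation, ``at most one index $i$ has $e\notin\cl_{M^+}(Y_i)$'', is equivalent to $\sum_i\delta_i\geq k-1$. That is the \emph{lower} bound $\omega_M\geq\omega_{M'}$, the conclusion of (c), and it is false without the hypothesis of (c). For a counterexample, take $M^+=U_{2,3}$ on $\{a,b,e\}$, so $M=M^+\setminus e$ is free of rank $2$ and $M'=M^+\contract e$ is a parallel pair. Let $X_1=\{a\}$ and $X_2=\{b\}$. Then $e\notin\cl_{M^+}(\{a\})\cup\cl_{M^+}(\{b\})$, so $\delta_1=\delta_2=0$. Indeed $\omega_M(X_1,X_2)=1+1-2=0$ while $\omega_{M'}(X_1,X_2)=1+1-1=1$.

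Your merging induction does aim at the correct upper bound. But, as you noticed yourself, it needs $\delta_1+\delta_2\leq 1+\delta_{12}$, which fails in general. Neither that route, nor a submodularity argument, nor the fallback via \Cref{lem:omegaMzero} is needed: (a) uses nothing beyond $\delta_i\leq 1$ for each $i$.
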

\begin{proof}
a) If $M'$ is a c$^*$-transformation of $M$, then since $e$ is not a loop nor coloop, we have $\rnk(M)=\rnk(M^+)=\rnk(M') + 1$.
Moreover, for all $i\in[k]$, $\>\rnk_M(E(M)-X_i)=\rnk_{M^+}(E(M)-X_i)$, and $\big(\rnk_{M^+}(E(M)-X_i)-\rnk_{M'}(E(M)-X_i)\big)\in\{0,1\}$ since $M'=M^+\contract e$.
Therefore, $\omega_M(X_1,\ldots,X_k)-\omega_{M'}(X_1,\ldots,X_k)\leq k-(k-1)=1$.

b) In this case, $\rnk_M(E(M)-X_i)=\rnk_{M^+}(E(M)-X_i)=\rnk_{M'}(E(M)-X_i)+1$ for all $i\in[k]$, and so
$\omega_M(X_1,\ldots,X_k)-\omega_{M'}(X_1,\ldots,X_k)=\sum_{i=1}^k\big(\rnk_M(E(M)-X_i)-\rnk_{M'}(E(M)-X_i)\big)-(k-1)\big(\rnk(M)-\rnk(M')\big)=k-(k-1)=1$.

c) Since $e\in\cl_{M^+}(X_i)$ and the sets $X_1,\ldots,X_k$ are disjoint, for all $j\in[k]$, $j\not=i$, we have $e\in\cl_{M^+}(E(M)-X_j)$.
Consequently, $\rnk_M(E(M)-X_j)=\rnk_{M^+}(E(M)-X_j)=\rnk_{M'}(E(M)-X_j)+1$, and we conclude
$\omega_M(X_1,\ldots,X_k)-\omega_{M'}(X_1,\ldots,X_k)\geq(k-1)-(k-1)=0$.
\end{proof}

Before proceeding, we recall the following result, which will be used in the proofs.

\begin{theorem}[Geelen, Gerards, and Whittle~{\cite[Theorem 6.1 and Claim 6.3.1]{DBLP:journals/siamdm/GeelenGW06}}]
\label{thm:GGWadde}
Let $M$ be a matroid and let $(A,B)$ be a connected bispan in $M$. 
Then there exists a matroid $N$ on the ground set $E(M)\cup\{e\}$ which is a relatively free extension of $M$ by $e$ in $(A,B)$.
In particular, $e$ is not a loop and $e\in\clo_{N}(A)\cap\clo_{N}(B)$.
\end{theorem}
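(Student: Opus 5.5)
The plan is to build the extension through Crapo's correspondence between single-element extensions of $M$ and modular cuts of flats of~$M$. Recall that a modular cut is a family $\mathcal M$ of flats that is closed upward and is closed under intersecting modular pairs of its members. Every such $\mathcal M$ gives an extension $N$ by a new element $e$ with $e\in\clo_N(Z)$ if and only if $\clo_M(Z)\in\mathcal M$. The element $e$ is a loop exactly when $\clo_M(\emptyset)\in\mathcal M$.

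For $Z\subseteq E(M)$ define
\[
g(Z)=\rnk_M(A\cup Z)+\rnk_M(B\cup Z)-\rnk_M(Z)-\rnk(M).
\]
Since $(A\cup Z)\cup(B\cup Z)=E(M)$ and $(A\cup Z)\cap(B\cup Z)=Z$ (as $A\cap B=\emptyset$), submodularity gives $g(Z)\ge0$. Moreover, $g(Z)=0$ exactly when $(A\cup Z,B\cup Z)$ is a modular pair. So the target is the family $\mathcal M$ of flats $F$ with $g(F)=0$, together with a check that it behaves correctly on non-closed sets. The last part of the theorem is then quick. First, $g(\clo_M(\emptyset))=\rnk_M(A)+\rnk_M(B)-\rnk(M)>0$ because the bispan is connected, so $e$ is not a loop. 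Second, $g(\clo_M(A))=\rnk(M)+\rnk_M(B\cup A)-\rnk_M(A)-\rnk(M)=0$, using $\clo_M(A)\cup B\supseteq\clo_M(A)\cup\clo_M(B)$ together with the bispan property. Hence $e\in\clo_N(A)$, and symmetrically $e\in\clo_N(B)$.

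The verification proceeds in three steps.
\begin{enumerate}
\item Upward closure: if $g(Z)=0$ and $Z\subseteq Z'$, then $g(Z')=0$. Adding an element $x$ raises $\rnk_M(A\cup Z)+\rnk_M(B\cup Z)$ by at most the increase of $\rnk_M(Z)$ plus the increase of $\rnk_M(A\cup B\cup Z)=\rnk(M)$, which is zero. I would prove this via submodularity applied to the pairs $(A\cup Z,\,Z\cup x)$ and $(B\cup Z,\,Z\cup x)$, combined with $g\ge0$.
\item Closure-invariance of the zero set: $g(Z)=0$ if and only if $g(\clo_M(Z))=0$. Upward closure gives one direction. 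For the other, use the bispan hypothesis: every $x\in\clo_M(Z)-Z$ lies in $\clo_M(A)$ or $\clo_M(B)$, so it can raise at most one of the two rank terms. I would then argue that a strict raise contradicts $g(Z)=0$ only if we already know something about $Z$, and reduce this to the inequality from the first step. This is exactly what makes the extension \emph{relatively free} for arbitrary sets $Z$ rather than only for flats.
\item Modular-cut intersection axiom: if $F_1,F_2\in\mathcal M$ form a modular pair, then $g(F_1\cap F_2)=0$. Write down $g(F_1)+g(F_2)=0$ and apply submodularity to $(A\cup F_1,A\cup F_2)$ and to $(B\cup F_1,B\cup F_2)$. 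This bounds $\rnk_M(A\cup(F_1\cap F_2))+\rnk_M(B\cup(F_1\cap F_2))$ from above. Modularity of $(F_1,F_2)$ then turns the rank terms of $F_1$ and $F_2$ into $\rnk_M(F_1\cap F_2)+\rnk_M(F_1\cup F_2)$. Finally, $F_1\cup F_2$ has $g=0$ by upward closure, which should yield $g(F_1\cap F_2)\le0$.
\end{enumerate}

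I expect the main obstacle to be the third step, the intersection axiom. The submodular chain has to be arranged carefully, since the terms $\rnk_M(A\cup F_1\cup F_2)$ and $\rnk_M(B\cup F_1\cup F_2)$ must cancel against the $g(F_1\cup F_2)=0$ identity. If a direct chain does not close, my fallback is to follow Geelen--Gerards--Whittle. They characterise $\mathcal M$ as the flats containing a common ``guts'' structure. In that view $\mathcal M$ is generated by the flats of the form $\clo_M(A)\cap\clo_M(B\cup Z)$, and generated families of this kind are known to be modular cuts.
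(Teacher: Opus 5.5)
Your overall route is sound: Crapo's correspondence between single-element extensions and modular cuts, with $\mathcal M=\{F\text{ flat}: g(F)=0\}$. The paper gives no proof of this statement and only cites Geelen--Gerards--Whittle, so the comparison is against correctness alone.

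\textbf{The gap is in Step~1.} As you propose to justify it, Step~1 does not go through, and it is the one place where the full bispan hypothesis is really needed. Submodularity on $(A\cup Z,Z\cup x)$ and $(B\cup Z,Z\cup x)$ only shows that each of $\rnk_M(A\cup Z)$ and $\rnk_M(B\cup Z)$ rises by at most the rise $\delta\in\{0,1\}$ of $\rnk_M(Z)$. That gives $g(Z\cup x)\le g(Z)+\delta$, and adding $g\ge0$ does not improve it. Your claimed bound (the rise of the sum is at most the rise of $\rnk_M(Z)$ plus that of $\rnk_M(A\cup B\cup Z)$) is not a consequence of submodularity. For example, take $U_{2,3}$ on $\{a,b,x\}$ with $A=\{a\}$ and $B=\{b\}$. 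Here $A\cup B$ is spanning and $g\ge0$, yet $g(\emptyset)=0<1=g(\{x\})$, so the zero set is not closed upward.

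The missing point is to apply the bispan property to the \emph{added} element. Since $x\in\cl_M(A)\cup\cl_M(B)$, say $x\in\cl_M(A)$, we get $\rnk_M(A\cup Z\cup x)=\rnk_M(A\cup Z)$. Then $g(Z\cup x)-g(Z)=\big(\rnk_M(B\cup Z\cup x)-\rnk_M(B\cup Z)\big)-\big(\rnk_M(Z\cup x)-\rnk_M(Z)\big)\le0$ by submodularity. So $g$ is non-increasing, and together with $g\ge0$ its zero set is closed upward. This is exactly the argument you placed in Step~2, where it plays no role.

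\textbf{Step~2} is in fact immediate. We have $\cl_M(Z)\subseteq\cl_M(A\cup Z)\cap\cl_M(B\cup Z)$ and $\rnk_M(\cl_M(Z))=\rnk_M(Z)$, so $g(\cl_M(Z))=g(Z)$ for every $Z$.

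\textbf{Step~3}, which you feared, closes exactly as you sketched. Put $F=F_1\cap F_2$ and $U=F_1\cup F_2$. Then
\begin{align*}
\rnk_M(A\cup F)+\rnk_M(B\cup F)&\le\sum_{i=1}^{2}\big(\rnk_M(A\cup F_i)+\rnk_M(B\cup F_i)\big)-\rnk_M(A\cup U)-\rnk_M(B\cup U)\\
&=\rnk_M(F_1)+\rnk_M(F_2)+2\rnk(M)-\rnk_M(U)-\rnk(M)=\rnk_M(F)+\rnk(M).
\end{align*}
This uses $g(F_1)=g(F_2)=0$, $g(U)=0$ (from the repaired Step~1, which applies to the non-flat $U$ because it is stated for arbitrary sets), and modularity of $(F_1,F_2)$. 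Hence $g(F)=0$.

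Since $g(E(M))=0$, the cut is nonempty. The rest of your argument then works: $e$ is not a loop, $e\in\cl_N(A)\cap\cl_N(B)$, and the characterisation extends to non-closed $Z$ via $\cl_N(Z)=\cl_N(\cl_M(Z))$.

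Two small slips:
\begin{itemize}
\item $(A\cup Z)\cup(B\cup Z)$ only spans $E(M)$ rather than equalling it; only its rank is used, so nothing breaks.
\item In your computation of $g(\cl_M(A))$, the first term should be $\rnk_M(A\cup\cl_M(A))=\rnk_M(A)$, with $\rnk_M(B\cup\cl_M(A))=\rnk(M)$. As written, your expression evaluates to $\rnk(M)-\rnk_M(A)$.
\end{itemize}
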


\begin{lemma}\label{lem:mtreedepth-leq}
For every matroid $M$, $\>\mtd(M)\leq\csd(M)$.
\end{lemma}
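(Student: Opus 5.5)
Induction on $|E(M)|$ along the recursive definition of $\csd(M)$ (\Cref{def:eight-parameters}). We build a tree-decomposition $(T,\tau)$ of $M$ of width at most $\csd(M)$ and radius at most $\csd(M)-1$. We will in fact prove the stronger statement: there is such a decomposition in which $T$ is rooted at some node $r$, every node has depth (distance from $r$) at most $\csd(M)-1$, and every node has td-width at most $\csd(M)$. Radius is at most the depth bound, so this suffices.

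\textbf{Base and disconnected cases.} If $|E(M)|\le1$, take a single node; its td-width is $\omega_M()$ over zero sets, or $\omega_M(\emptyset)=\rnk(M)\le1$ by convention, so it is at most $1=\csd(M)$. If $M$ is disconnected with components $M_1,\dots,M_t$, take decompositions $(T_j,\tau_j)$ rooted at $r_j$ and glue them by identifying all the roots $r_j$ into a single root $r$. Since $\rnk_M$ is additive over components, for any node the value $\omega_M$ of its partition equals the sum of the corresponding $\omega_{M_j}$ values. At a non-root node of $T_j$, the other components only add full components to one branch, and a direct computation gives the same value as in $M_j$. At the identified root, the value is $\sum_j\omega_{M_j}(\text{partition at }r_j)$. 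This sum can exceed $\max_j$, which is a problem. Instead, I would attach the $T_j$ as children of a new root to which no element is mapped. A partition whose parts are unions of whole components has $\omega=0$ by additivity: each part complement has full rank on the other components. So the new root has width $0$, but the depth grows by one.

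To avoid this growth, I would strengthen the invariant further: the root $r$ has no elements mapped to it, and the partition at $r$ consists of unions of components, so its width is $0$. Then in the disconnected case the roots can be identified, since the identified root's branches are still unions of components and have width $0$.

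\textbf{Connected case.} Let $M'$ be a c$^*$-transformation of $M$ with $\csd(M')=\csd(M)-1$, via $M^+$ and $e$. By induction, $M'$ has a rooted decomposition $(T',\tau)$ with depth at most $\csd(M)-2$ and width at most $\csd(M)-1$. Using the same tree and mapping for $M$, \Cref{lem:omegaineq}(a) shows every node width rises by at most $1$, so it is at most $\csd(M)$. However, the root width, previously $0$, may now be $1$, which breaks the strengthened invariant. To restore it, add a new root $r^*$ above $r$ with nothing mapped to it. Its only branch is everything, so $\omega_M(E(M))=\rnk(\emptyset)=0$. Depth grows by one, giving at most $\csd(M)-1$ as required. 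The old root $r$ gains an extra empty branch, and by the definition of $\omega$ an extra empty part adds $\rnk(M)-\rnk(M)=0$, so its width is unchanged.

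\textbf{Main obstacle.} The main obstacle is bookkeeping the root invariant across both cases, together with the one-element base convention. I expect the depth count to work out as $\csd(M)-1\le\csd(M)$, giving $\mtd(M)\le\csd(M)$.
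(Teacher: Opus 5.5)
\textbf{Gap: the strengthened invariant fails at the base case and is false as stated.}

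You require the root to carry no element and every node to have depth at most $\csd(M)-1$. For a one-element matroid, $\csd(M)=1$ forces a single-node tree, so the element must sit on the root. Your base case does put it there, and then the disconnected step fails in exactly the way you were worried about. Take $M=U_{n,n}$: every component is a single coloop, so identifying the roots puts all $n$ elements on one node of degree $0$. That node has td-width $\rnk(M)=n$, while $\csd(M)=1$.

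Even your weaker claim (depth at most $\csd(M)-1$, with no condition on the root) fails for disconnected matroids. For example, $\csd(U_{2,2})=1$, but a decomposition of radius $0$ is a single node, whose width is $\rnk(U_{2,2})=2$.

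\textbf{Fix.} Prove only what the lemma needs, namely depth at most $\csd(M)$. In the base case, use an empty root with one child carrying the element. The root then has width $\omega_M(E(M))=0$, the child has width $\rnk(M)\le1$, and the depth is $1$.

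With this base case, your other two steps go through as written and give radius at most $\csd(M)$. In the disconnected step you identify empty roots whose branches are unions of components, so the merged root has width $0$ and all other widths are unchanged. In the connected step you apply part (a) of \Cref{lem:omegaineq} and then add a new empty root of width $0$.

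The induction must also be on $\csd(M)$, or lexicographically on $(\csd(M),|E(M)|)$, rather than on $|E(M)|$ alone. A c$^*$-transformation keeps the ground set, so $|E(M)|$ does not decrease in the connected step.

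\textbf{Comparison with the paper.} The paper avoids a rooted invariant. Writing $t=\csd(M)$, it proves radius at most $t-1$ for connected $M$ and at most $t$ for disconnected $M$. In the disconnected case it joins the centres of the components' decompositions to a fresh vertex of width $0$, which is the unrooted analogue of your empty root. In the connected step it reuses the tree of $M'$ unchanged via part (a) of \Cref{lem:omegaineq}; the extra level is absorbed because $\csd(M')=t-1$.
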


\begin{proof}
For this proof, we say that a matroid tree-decomposition $(T, \tau)$ is a $(t,d)$-decomposition if its width 
is at most $t$ and the radius of $T$ is at most~$d$.
We are going to prove, by induction, that every matroid $M$ admits a $(t,d)$-decomposition for $t=\csd(M)$ and $d=t-1$ if $M$ is connected,
while $d=t$ if $M$ is disconnected.

The claim trivially holds if $\csd(M)=1$; then we have $t=\csd(M)=1$ and~$d=0$ (if $M$ connected, i.e., $M$ has at most one element) or~$d=1$ (otherwise).

Assume that the claim holds for all matroids of c$^*$-depth less than $t=\csd(M)$, as well as for all matroids of c$^*$-depth equal to~$t$ which are smaller than~$M$.
If $M$ is not connected, then we inductively obtain $(t,t-1)$-decompositions of the components of $M$, 
pick a root in each one which has distance at most $t-1$ to every vertex in it, and construct a $(t,t)$-decomposition of~$M$ by joining the said roots to a common new vertex.
Otherwise, by \Cref{def:eight-parameters}, let $M'$ be a c$^*$-transformation of the matroid $M$ such that $\cd(M')=\cd(M)-1=t-1$.
Then, by induction, we obtain a $(t-1,t-1)$-decomposition $(T, \tau)$ of~$M'$, and we have that $(T, \tau)$ is at the same time 
a $(t,t-1)$-decomposition of~$M$ by \Cref{def:mtreewidth} and \Cref{lem:omegaineq}\,a).
\end{proof}

\begin{lemma}\label{lem:mtreedepth-geq}
If a matroid $M$ has a matroid tree-decomposition of width $t$ and radius~$d$, then $\csd(M)\leq t\cdot d+1$.
\end{lemma}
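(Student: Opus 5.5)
The plan is an induction on the radius $d$: spend at most $t$ c$^*$-transformations at a centre of $T$ to split $M$ into direct summands, then recurse into the branches, which have radius at most $d-1$. For $d=0$ the tree $T$ is a single node, and we use the convention $\csd\le1$ only when $|E(M)|\le 1$; otherwise we fold the base case into the general step, whose argument does not use $d\geq1$. The target bound is $t\cdot d+1$, so we aim to show that one "round" at the centre costs at most $t$, that each branch behaves like a decomposition of radius $d-1$, and that by \Cref{def:eight-parameters}\,(\ref{it:bycomponents}) the disconnected result only pays the maximum over its parts.

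\textbf{Step 1 (normalisation at the centre).} Pick a centre $u$ of $T$ at distance at most $d$ from all nodes. Let $F_1,\dots,F_k$ be the sets of elements mapped into the components of $T-u$, and let $F_0=\tau^{-1}(u)$. We refine the partition by treating every $x\in F_0$ as its own part $\{x\}$. Adding a singleton part changes $\omega$ by $\rnk_M(E(M)-x)-\rnk(M)\le0$, so the refined partition $\mathcal P$ of $E(M)$ still satisfies $\omega_M(\mathcal P)\le t$. Singleton parts will have c$^*$-depth $1$ and need no further radius.

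\textbf{Step 2 (killing $\omega$ with at most $t$ c$^*$-transformations).} While $\omega_M(\mathcal P)>0$, we add an element $e$ lying in $\cl(E(M)-P)$ for every part $P\in\mathcal P$ and contract it. By \Cref{lem:omegaineq}\,b) this lowers $\omega$ by exactly one. For the existence of such an $e$ we use \Cref{thm:GGWadde}. When $\omega>0$ we should be able to find a connected bispan among the sets $E-P$: we group the parts into two sides $(A,B)$ with $\lambda(A)>0$, and we choose the grouping so that the relatively free extension in $(A,B)$ also lies in the closure of every $E-P$ (note that $A\cup B=E$). If this fails, we take the extensions iteratively over pairs, as in the proof sketch of \Cref{thm:csclosurexx}. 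After $\ell\le t$ steps we reach $M_1$ with $\omega_{M_1}(\mathcal P)=0$. Unrolling the submodularity chain of \Cref{lem:omegaMzero}, all inequalities there are then tight, and from this we derive that $M_1$ is the direct sum of the restrictions $M_1|P$, $P\in\mathcal P$.

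\textbf{Step 3 (recursing).} By \Cref{def:eight-parameters}, $\csd(M)\le \ell+\max_P\csd(M_1|P)$, where the singleton parts contribute $1$. For a branch $F_i$ we restrict $\tau$ to the component $T_i$ of $T-u$. This yields a tree-decomposition of $M_1|F_i$ of radius at most $d-1$, measured from the neighbour of $u$ in $T_i$. We then invoke induction to get $\csd(M_1|F_i)\le t(d-1)+1$, and hence $\csd(M)\le t+t(d-1)+1=td+1$.

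\textbf{Main obstacle.} The main obstacle is verifying that this restricted decomposition still has width at most $t$ in $M_1|F_i$ rather than in $M$. Two effects must be controlled: restriction to $F_i$ (nodes of $T_i$ now see the rest of $E-F_i$ merged into the part towards $u$), and the $\ell$ contractions. For the restriction I would use submodularity in the style of \Cref{lem:omegaMzero}, showing that merging parts and deleting a separated block does not increase $\omega$. For the contractions I would use \Cref{lem:omegaineq}\,c). Because each added $e$ was taken relatively free with respect to the split at $u$, it lies in $\cl(E-F_i)$, and this should be exactly the closure condition needed there. If this is hard to control, the fallback is to prove only $\csd(M_1|F_i)\le\csd(M|F_i)$ via minor-monotonicity arguments (\Cref{lem:extcd-to-csd}) and apply induction to $M|F_i$ instead.
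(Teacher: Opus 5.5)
\paragraph{Where the proof breaks.} Your route is the paper's own: drive $\omega$ at the centre to $0$ with at most $t$ c$^*$-transformations built from \Cref{thm:GGWadde} and \Cref{lem:omegaineq}\,b), then recurse into the branches using \Cref{lem:omegaineq}\,c). The part you flag as the main obstacle is actually fine:
\begin{itemize}
\item In Step~2, if $\omega>0$ then some part $P$ is not a separator, since otherwise $M$ is the direct sum of the parts and $\omega=0$.
\item For any grouping $(A,B)$, the side not containing a given part lies in that part's complement. So the relatively free element lies in $\cl(E-P)$ for every part $P$.
\item In Step~3, $F_i$ is a separator of $M_1$, so restricting to $F_i$ preserves $\omega$ exactly.
\item $e\in\cl(E-F_i)$ puts $e$ in the closure of the part on the side of $u$, which is exactly the hypothesis of \Cref{lem:omegaineq}\,c).
\end{itemize}
The genuine gap is the bottom of the induction. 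Folding $d=0$ into the general step makes all elements singleton parts with $t=\rnk(M)$, and this gives only $\csd(M)\le\ell+1\le t+1$, not $t\cdot 0+1=1$. So for $d=1$, the hypothesis $\csd(M_1|F_i)\le t(d-1)+1=1$ that Step~3 invokes was never established, and the computation $t+t(d-1)+1=td+1$ has nothing to stand on. If you start the induction correctly from $\csd\le t+1$ at radius $0$, your argument proves $\csd(M)\le t(d+1)+1$.

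\paragraph{The statement is false as written.} This cannot be patched. For $d=0$, take $U_{1,2}$ on a one-node tree: the width is $\rnk(M)=1$ (the value the paper's base case assigns), but $\csd(U_{1,2})=2$. For $d=1$, independently of the degree-$0$ convention, take $M=U_{4,5}$ and a star whose three leaves carry $\{1,2\}$, $\{3,4\}$, $\{5\}$. The centre has td-width $3+3+4-2\cdot 4=2$ and the leaves have td-widths $2,2,1$. So $t=2$, $d=1$, and the claimed bound is $3$.

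Yet $\csd(M)\ge 4$. First, a connected matroid on at least two elements has c$^*$-depth $2$ only if it has rank $1$. Indeed, if $M^+\contract f$ has only loops and coloops, the loops are parallel to $f$ and the set $C$ of coloops satisfies $\rnk(M)=|C|+1$, so every circuit is a parallel pair. Hence a c$^*$-transformation $M'=M^+\contract f$ of $U_{4,5}$ with $\csd(M')\le 2$ would have all components of rank at most $1$.
\begin{itemize}
\item $M'$ has no loop $x$: such an $x$ would be parallel to $f$, and then $M'\setminus x\cong M\contract x\cong U_{3,4}$, which is connected of rank $3$.
\item Since $\rnk(M')=3$, the five elements would therefore form three parallel classes.
\item A class of size $3$ is impossible, since one contraction lowers the rank of an independent triple by at most one.
\item Two classes $\{a,b\},\{c,d\}$ give $\rnk_{M^+}(\{a,b,f\})+\rnk_{M^+}(\{c,d,f\})=4<5\le\rnk_{M^+}(\{a,b,c,d,f\})+\rnk_{M^+}(\{f\})$, contradicting submodularity.
\end{itemize}

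The paper's proof contains the identical slip: its base case derives $\csd(M)\le\rnk(M)+1=t\cdot 1+1$, which is not $t\cdot 0+1$. The bound that both arguments actually establish is $\csd(M)\le t(d+1)+1$. In \Cref{thm:mtreedepth-rel} this gives $\csd(M)\le\mtd(M)^2+\mtd(M)+1$, which is still enough for the functional equivalence.
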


\begin{proof}
Let $(T, \tau)$ denote the assumed decomposition, and let $r\in V(T)$ be a vertex with distance at most $d$ from every vertex of $T$.
We proceed by induction on the radius $d$ of~$T$.
If $d=0$, then, trivially, $t=\rnk(M)$ and $\csd(M)\leq \rnk(M)+1=t\cdot1+1$.
We hence assume that $d>0$ and the vertex $r$ is of degree~$k$,
and we denote by $X_1,\ldots,X_k$ the (disjoint) subsets of $E(M)$ mapped by $\tau$ to the vertices of the $k$ components of~$T-r$.
Let $X_0=\tau^{-1}(r)=E(M)-(X_1\cup\ldots\cup X_k)$.

Let $a=\omega_M(X_1,\ldots,X_k)\leq t$ and $M_0=M$.
For $i\geq1$, we inductively construct a matroid $M_i$ as a c$^*$-transformation of $M_{i-1}$ such that $\omega_{M_i}(X_1,\ldots,X_k)=a-i$, as follows.
If $\rnk_{M_{i-1}}(X_0)>0$, then we construct $M^+_i$ by adding an element $e_i$ in parallel to any non-loop element of~$X_0$, and $M_i=M^+_i\contract e_i$.
Otherwise, if there is $j\in[k]$ such that $\lambda_{M_{i-1}}(X_j)>0$, we do the following.
We construct $M^+_i$ as a relatively free extension of $M_{i-1}$ by $e_i$ in $(X_j,E(M)-X_j)$, cf.~\Cref{thm:GGWadde}, and again $M_i=M^+_i\contract e_i$.
In both cases, we conclude with $\omega_{M_{i}}(X_1,\ldots,X_k)=\omega_{M_{i-1}}(X_1,\ldots,X_k)-1$ by \Cref{lem:omegaineq}\,b).

We stop when $\rnk_{M_{i}}(X_0)=0$ and $\lambda_{M_{i-1}}(X_j)=0$ for $j\in[k]$.
In such case $\rnk_{M_{i}}(X_0)=0$ implies $\sum_{j=1}^k \rnk_{M_{i}}(X_j)\geq\rnk(M_{i})$,
and so we get $\omega_{M_{i}}(X_1,\ldots,X_k)= \sum_{j=1}^k \rnk_{M_{i}}(E(M)-X_j)
 -(k-1)\rnk(M_{i})\leq \sum_{j=1}^k \rnk_{M_{i}}(E(M)-X_j)+\sum_{j=1}^k \rnk_{M_{i}}(X_j)-k\rnk(M_{i}) \leq \sum_{j=1}^k \lambda_{M_{i}}(X_j)=0$.
Thus, by \Cref{lem:omegaMzero}, $\omega_{M_{i}}(X_1,\ldots,X_k)=0$ and so $i=a\leq t$, which means
that $\csd(M)\leq\csd(M_a)+t$ by \Cref{def:eight-parameters}.

For each $j\in[k]$, we denote by $N_j=M_a\restriction\!X_j$ the restriction of $M_a$ to $X_j$.
The corresponding component $T_j$ of $T-r$, and the restriction $\tau_j$ of $\tau$ to $X_j$, define a tree-decomposition $(T_j,\tau_j)$ of $N_j$.
The radius of $T_j$ is at most~$d-1$, and by repeated application of \Cref{lem:omegaineq}\,c) to the construction of $M_a$, 
we get that the width of $(T_j,\tau_j)$ is at most~$t$.
Thus, by induction, $\csd(N_j)\leq t(d-1)+1$ for all $j\in[k]$, then $\csd(M_a)\leq t(d-1)+1$, and we conclude that
$\csd(M)\leq\csd(M_a)+t\leq t(d-1)+1+t=t\cdot d+1$.
\end{proof}

\subsection{Depth parameters of matrices}

In \Cref{sec:depths}, we have seen several definitions of depth measures that are based on a particular matrix representation of a matroid.
Although these definitions appear analogous to the other presented definitions (namely \Cref{def:eight-parameters}), their relation
to the corresponding matroid-based definitions is not clear due to a different domain of the definition(s).

For instance, recall from \Cref{sec:recursivemeas} that it is not at all clear from the definition whether 
\Cref{def:csd-representable} is invariant on the choice of a matrix representation,
that is, whether the represented c$^*$-depth $\csd(\veAm)$ is a property of the matrix representation $\ve A$ or of the matroid $M(\ve A)$.
We answer this interesting question affirmatively by proving the following.

\begin{proposition}\label{cor:csdmatrixeq}
Let $\veAm$ be a matrix over a field $\FF$ and $M=M(\ve A)$. Then $\csd(\ve A)=\csd(M)$ and $\dsd(\ve A)=\dsd(M)$.
\end{proposition}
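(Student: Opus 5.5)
We prove $\csd(\ve A)=\csd(M)$ by showing both inequalities, each by induction on $|E(M)|+\csd(\cdot)$ along the recursion of \Cref{def:csd-representable} and \Cref{def:eight-parameters}. The second claim, $\dsd(\ve A)=\dsd(M)$, then follows by duality (\Cref{obs:depthsduality}): row-adding to $\ve A$ corresponds to adding a column to a representation of $M^*$ over the same field and contracting it. So d$^*$-depth of $\ve A$ equals c$^*$-depth of a representation of $M^*$, and this equals $\csd(M^*)=\dsd(M)$.

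The inequality $\csd(M)\le\csd(\ve A)$ is routine. One-element and disconnected cases match exactly, since components of $\ve A$ represent components of $M$. In the connected case, let $\csd(\ve A)=1+\csd(\ve A\extcontr\vev)$. If $\vev$ is zero or a coloop of $[\ve A\,|\,\vev]$, this step does nothing useful; otherwise $M([\ve A\,|\,\vev])$ is a matroid $M^+$ with $M=M^+\setminus f$ and $M(\ve A\extcontr\vev)=M^+\contract f$, so this is a matroid c$^*$-transformation. Degenerate choices of $\vev$ (zero, or outside the column span) give $\ve A\extcontr\vev$ representing $M$ itself. This yields no smaller value, so we may ignore them. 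Induction then applies.

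The main work is $\csd(\ve A)\le\csd(M)$, again by induction, the connected case being the only nontrivial one. Apply \Cref{lem:startingguts} to get a bipartition $(A,B)$ of $E(M)$ with both parts nonempty, $\ell:=\lambda_M(A)>0$ (as $M$ is connected), and $\max\{\csd(M\contract A),\csd(M\contract B)\}\le\csd(M)-\ell$. In the column space of $\ve A$, let $W=\langle A\rangle\cap\langle B\rangle$, which has dimension $\rnk(A)+\rnk(B)-\rnk(M)=\ell$. Pick a basis $\vev_1,\dots,\vev_\ell$ of $W$ and iteratively contract these vectors, i.e. form $\ve A_i=\ve A_{i-1}\extcontr\vev_i'$ with $\vev_i'$ the image of $\vev_i$. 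This amounts to projecting modulo $W$, giving $\ve A_\ell$.

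We must verify that $M(\ve A_\ell)$ is the direct sum $(M\contract B)\oplus(M\contract A)$, i.e. $M(\ve A_\ell)\restriction A=M\contract B$. This follows from linear algebra. Modulo $W$, the spaces $\langle A\rangle/W$ and $\langle B\rangle/W$ intersect trivially, so $M(\ve A_\ell)$ separates into $A$ and $B$. Moreover, $\langle A\rangle\cap(\langle B\rangle)=W$ means that projecting $A$ modulo $W$ is the same as projecting modulo $\langle B\rangle$ as far as $A$ is concerned, so $M(\ve A_\ell)\restriction A=M\contract B$, and symmetrically for $B$. Each step reduces the depth by at most one, and intermediate matrices may become disconnected earlier; in that case the recursion only helps, because the c$^*$-depth of a disconnected matrix is a max over components and contracting within the whole matrix restricts to contracting within components. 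So we get $\csd(\ve A)\le\ell+\csd(\ve A_\ell)$. To make this rigorous, I would prove the auxiliary monotonicity claim that matrix c$^*$-depth of a disconnected matrix is at most that obtained by continuing contractions, the matrix analogue of \Cref{lem:extcd-to-csd}, by the same induction.

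Finally, $\ve A_\ell$ is disconnected with components representing $M\contract B$ and $M\contract A$ over $\FF$. These have fewer elements than $M$, so by induction their matrix depths equal their matroid depths, giving $\csd(\ve A_\ell)\le\csd(M)-\ell$ and hence $\csd(\ve A)\le\csd(M)$. I expect the main obstacle to be handling connectivity changes in the intermediate matrices $\ve A_i$, together with the monotonicity lemma for matrices. The key point of the approach is that \Cref{lem:startingguts} replaces an arbitrary, possibly non-representable, abstract c$^*$-sequence by a canonical one that always has an $\FF$-representation, namely the guts $\langle A\rangle\cap\langle B\rangle$.
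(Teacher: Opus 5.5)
Your proposal is correct and follows the paper's proof. The easy direction uses the observation that each matrix step $\ve A\extcontr\vev$ is a matroid c$^*$-transformation. The hard direction applies \Cref{lem:startingguts} and contracts a basis of $\langle A\rangle\cap\langle B\rangle$ to obtain a representation of $(M\contract A)\oplus(M\contract B)$, followed by induction, and the d$^*$ case is handled by duality.

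Three of your extra points only make explicit what the paper leaves implicit, without changing the argument. These are the degenerate choices of $\vev$, the possibility that intermediate matrices $\ve A_i$ become disconnected (the paper covers this only with ``by our construction''), and your induction measure $|E(M)|+\csd(\cdot)$, which also covers the disconnected case.
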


\begin{proof}
Observe that for every column vector $\ve v$ considered in \Cref{def:dsd-representable},
the matroid $M(\veAm \extcontr \vev)$ is a c$^*$-transformation of the matroid $M(\ve A)$.
So, immediately, $\csd(M)\leq\csd(\ve A)$.

In the other direction of the inequality, $\csd(\ve A)\leq\csd(M)$, we proceed by induction on $\csd(M)$. 
If $|E(M)|=1$, then the claim trivially holds.
Otherwise, by \Cref{lem:startingguts}, there exists a bipartition $(A,B)$ of $E(M)$ such that
$\max\{\csd(M\contract A),\csd(M\contract B)\}\leq\csd(M)-\lambda_{M}(A)$.
Considering $(A,B)$ as a bipartition of the columns of the matrix $\ve A$, we may pick an arbitrary basis of the subspace
$\langle A\rangle\cap\langle B\rangle$ (which is of cardinality~$\lambda_{M}(A)$ and can be empty if $\lambda_{M}(A)=0$) 
and iteratively add and contract its vectors in $\ve A$.
The resulting matrix $\ve A'$ obviously represents the direct sum $M'$ of $M\contract A$ and $M\contract B$.
By our construction, $\csd(\ve A)\leq\csd(\ve A')+\lambda_{M}(A)$, and by our induction assumption,
$\csd(\ve A')\leq\csd(M')=\max\{\csd(M\contract A),\csd(M\contract B)\}\leq\csd(M)-\lambda_{M}(A)$.
Together, $\csd(\ve A)\leq\csd(M)$, as desired.

The equality $\dsd(\ve A)=\dsd(M)$ now follows by applying the previous proof to the dual matroid~$M^*$ (cf.~\Cref{obs:depthsduality}).
\end{proof}

The same question can be asked about the determinacy of the depth measure $\csdd(\ve A)$ 
(\Cref{def:csdd-representable}) by the underlying matroid $M(\ve A)$ represented by~$\ve A$.
We give an analogous answer using the following technical generalisation of \Cref{lem:startingguts}.

\begin{lemma}\label{lem:startinggutsd}
Let $M$ be a matroid on more than one element.
Then there exists a tripartition $(A,B,D)$ of $E(M)$ such that $A\not=\emptyset\not=B$ and 
$$\max\{\csdd(M\setminus D\contract A),\> \csdd(M\setminus D\contract B)\}\leq \csdd(M)-\lambda_{M}(A)-|D| .$$
\end{lemma}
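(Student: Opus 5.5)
The plan is to first prove a weaker version of the statement, in which $\lambda_{M\setminus D}(A)$ stands in place of $\lambda_M(A)$, by the same untangling argument as in \Cref{lem:startingguts}. Then I will repair the triple so that $\lambda_M(A)=\lambda_{M\setminus D}(A)$. The point is that whenever $D\subseteq\cl_M(B)$ we have $\rnk_M(B\cup D)=\rnk_M(B)$ and $\rnk(M)=\rnk(M\setminus D)$, hence $\lambda_M(A)=\lambda_{M\setminus D}(A)$. So the actual statement follows once the deleted set lies in the closure of one side.

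\emph{Step 1 (the weak form).} If $M$ is disconnected, take $A$ to be a component, $B=E(M)-A$ and $D=\emptyset$, as in \Cref{lem:startingguts}. Otherwise, untangle \Cref{def:eight-parameters}. This gives a sequence $M_0=M,\dots,M_\ell$ in which every step is a deletion or a c$^*$-transformation, all intermediate matroids are connected, $M_\ell$ is disconnected, and $\csdd(M)=\csdd(M_\ell)+\ell$. Let $D$ be the set of deleted elements. Choose $A$ to be the ground set of a component of $M_\ell$, and let $B=E(M_\ell)-A$. Since c$^*$-transformations commute with deleting original elements, as in the proof of \Cref{lem:csdd-monotone-restrictions}, I replay only the $s=\ell-|D|$ c$^*$-steps on $M\setminus D$, which yields $M_\ell$. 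I then run the argument of \Cref{lem:startingseq} on $M\setminus D$ with $C=A$ and with $C=B$. At least $\lambda_{M\setminus D}(A)$ of the added elements become loops after contracting $C$, so they can be skipped. Using \Cref{lem:csdd-monotone-restrictions} together with the fact that $M_\ell\contract A=M_\ell\restriction B$ (a restriction, because $M_\ell$ is a direct sum), this gives
\[\csdd(M\setminus D\contract A),\ \csdd(M\setminus D\contract B)\le \csdd(M_\ell)+s-\lambda_{M\setminus D}(A)=\csdd(M)-|D|-\lambda_{M\setminus D}(A).\]

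\emph{Step 2 (moving deleted elements back).} Among all triples satisfying the weak inequality, choose one minimising $|D|$. Suppose $d\in D$ with $d\in\cl_{M\setminus(D-d)}(A)$. Then I move $d$ into $A$. The side $M\setminus(D-d)\contract(A\cup d)$ equals $M\setminus D\contract A$, because $d$ becomes a loop. The other side $M\setminus(D-d)\contract B$ has $M\setminus D\contract B$ as a single-element deletion, so its c$^*$d-depth is at most one larger. To justify this I need a small auxiliary fact: $\csdd(N)\le\csdd(N\setminus d)+1$. This follows by induction via \Cref{it:bycomponents} and \Cref{it:oneredstep}(b). Finally, $\lambda$ is unchanged by the move, because $\rnk(A\cup d)=\rnk(A)$ and the total rank is unchanged. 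Hence the weak bound is preserved while $|D|$ drops, contradicting minimality. The symmetric move handles $d\in\cl_{M\setminus(D-d)}(B)$. If $d$ is a coloop of $M\setminus(D-d)$, it can be moved to either side: $\lambda$ is unchanged, and $d$ is a separate component, so neither side gets worse.

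\emph{Step 3 (the hard case).} The obstacle is an element $d\in D$ that lies in neither closure but is spanned by $A\cup B$. Moving such a $d$ into $A$ raises $\lambda$ by exactly one, which exactly offsets the decrease of $|D|$. However, the side $M\setminus(D-d)\contract(A\cup d)$ is then a contraction of $M\setminus D\contract A$ by a non-loop. Since c$^*$d-depth is not contraction-monotone, I cannot compare depths directly. My first attempt will be to observe that this contraction is itself a c$^*$-type step on the original sequence: in the replayed sequence, one of the $\lambda$-reducing c$^*$-steps can be chosen to be "contract $d$", because $d\in\cl(A\cup B)$ lies in $\cl(A)\cap\cl(B)$ after the other guts-elements have been contracted. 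I would then redo Step 1 with $d$ undeleted and $\ell$ unchanged. If this works, then after Step 2 and Step 3 no deleted element lies outside $\cl_M(B)$ except in the trivial cases handled above. This gives $D\subseteq\cl_M(B)$ up to symmetry, and therefore $\lambda_M(A)=\lambda_{M\setminus D}(A)$, which proves the statement. I expect Step 3 to be the main difficulty.
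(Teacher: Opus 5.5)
\textbf{Step 1 is the paper's proof, and it already finishes the lemma as the paper proves it; Steps 2--3 target a stronger reading that is false, so Step 3 cannot be completed.}

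\paragraph{Step 1.} This follows the paper's argument: untangle the definition, replay only the c$^*$-steps on $M\setminus D$, and run the argument of \Cref{lem:startingseq} with $C=A$ and $C=B$. You bound directly along the replayed sequence. The paper instead first shows $\csdd(M)=\csdd(M\setminus D)+|D|$ and treats a disconnected $M\setminus D$ separately. Your shortcut is fine.

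What Step 1 yields is the inequality with $\lambda_{M\setminus D}(A)$, and that is exactly what the paper's own proof yields. It ends with $\csdd(M)-|D|-\lambda_{N_0}(A)$ for $N_0=M\setminus D$. Its first case likewise equates $\lambda_{M\setminus D}(A)=0$ with the $\lambda$ of the statement. Its applications (\Cref{cor:csxxmatrixeq}, \Cref{lem:gddepth}) use $\lambda_{M'}(A)$ with $M'=M\setminus D$. So $\lambda_M(A)$ in the statement should be read as the connectivity of the bipartition $(A,B)$ of $E(M\setminus D)$, and your proof is complete after Step 1.

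\paragraph{Steps 2--3.} These aim at the literal reading $\lambda_M(A)=\rnk_M(A)+\rnk_M(B\cup D)-\rnk(M)$. That reading is false, so there is a gap here that cannot be closed.

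Take $M=U_{2,3}$ on $\{a,b,d\}$. It is connected, and $M\setminus d$ consists of two coloops, so $\csdd(M)=2$.
\begin{itemize}
\item If $D=\emptyset$, then $\lambda_M(A)=1$, so the bound is $1$. Whichever of $M\contract A$, $M\contract B$ has two elements is a parallel pair, of c$^*$d-depth $2>1$.
\item If $|D|=1$, then $A$ and $B$ are singletons and $\lambda_M(A)=1+2-2=1$. The right-hand side is then $0$, which no matroid meets.
\end{itemize}

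This is precisely your Step 3 configuration: $d\notin\cl(A)\cup\cl(B)$ but $d\in\cl(A\cup B)$. Step 1 is forced into it, because every optimal first move in $U_{2,3}$ is a deletion; a c$^*$-step reaching depth $1$ would make two elements parallel.

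Your Step 3 analysis also misses where the move actually breaks. Moving $d$ into $A$ reinserts $d$ on the other side, $M\setminus(D-d)\contract B$. Its depth can grow by one while the bound stays fixed, since the $+1$ in $\lambda$ cancels the $-1$ in $|D|$. Here $M\contract b$ is a parallel pair of depth $2$ against a bound of $1$.

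The proposed re-choice of a c$^*$-step as ``contract $d$'' is also unavailable. The optimal sequence has no c$^*$-steps, and $M\contract d$ is connected.

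So drop Steps 2--3 rather than trying to complete them.
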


\begin{proof}
We start similarly as in the proof of \Cref{lem:startingguts}.
If there is a set $D\subseteq E(M)$ (possibly $D=\emptyset$) such that $M'_0=M\setminus D$ is not connected, and $\csdd(M)=\csdd(M'_0)+|D|$,
then we choose $\emptyset\not=A\subsetneq E(M)-D$ as the ground set of any component of $M'_0$, and set \mbox{$B=E(M)-D-A$}.
Then $M'_0$ is a direct sum of $M'_0\restriction\! A=M'_0\contract B$ and of $M'_0\restriction\! B=M'_0\contract A$,
and so $\csdd(M'_0)=\max\{\csdd(M'_0\contract A),$ $\csdd(M'_0\contract B)\}$.
Moreover, $\lambda_{M'_0}(A)=0$, and hence $\csdd(M'_0)\leq\csdd(M'_0)-\lambda_{M'_0}(A)=\csdd(M)-\lambda_{M}(A)-|D|$ holds trivially.

We may thus assume that $M$ is connected.
Again, by ``untangling'' \Cref{def:eight-parameters} as applied to $\csdd(M)$, we get that there exist an integer $k\geq1$
and a sequence of $1+k$ matroids $M_0=M$ and $M_1,\ldots,M_k$ which satisfies the following;
for each $i\in[k]$, the matroid $M_i$ is a c$^*$-transformation of $M_{i-1}$ or $M_i=M_{i-1}\setminus f$ for some $f\in E(M)$,
and the matroid $M_k$ is disconnected.
Furthermore, due to \Cref{def:eight-parameters}, $\csdd(M_{i-1})=\csdd(M_i)+1$ for $i\in[k]$, and so $\csdd(M_0)=\csdd(M_{k})+k$.

Let $D=E(M)-E(M_k)$, i.e., $D$ is the set of elements which are deleted along the sequence.
We define a new sequence $M'_0,M'_1,\ldots,M'_k$ by $M'_i=M_i\setminus D$ for $0\leq i\leq k$.
Obviously, $M'_k=M_k$ and for $|D|$ values $i\in[k]$ such that $M_i=M_{i-1}\setminus f$ where $f\in D$ we have $M'_i=M'_{i-1}$.
For the remaining $k-|D|$ indices $i\in[k]$ (such that $M'_i\not=M'_{i-1}$), we have that $M'_i$ is a c$^*$-transformation of $M'_{i-1}$,
and so $\csdd(M'_{i-1})\leq\csdd(M'_i)+1$.
Since $\csdd(M_0)=\csdd(M_{k})+k=\csdd(M'_{k})+k$, and we have previously got $\csdd(M_0)\leq\csdd(M'_0)+|D|$ and $\csdd(M'_0)\leq\csdd(M'_k)+(k-|D|)$,
this chain of inequalities must hold as equalities everywhere.
Thus, $\csdd(M_0)=\csdd(M'_0)+|D|$ where $M'_0=M\setminus D$, and $\csdd(M'_{i-1})=\csdd(M'_i)+1$ for all $i\in[k]$ such that $M'_i\not=M'_{i-1}$.

Now, if $M'_0=M\setminus D$ is disconnected, then the situation is as solved in the first paragraph.

We may thus assume that $M\setminus D$ is connected, and recall that $M'_k=M_k$ is disconnected.
Then, by skipping repeated matroids in the sequence $M'_0,\ldots,M'_k$ and stopping at the first disconnected member of it,
we get an integer $1\leq\ell\leq k-|D|$ and a new sequence of matroids $N_0=M'_0=M\setminus D$ and $N_1,\ldots,N_\ell$ 
such that $N_0,\ldots,N_{\ell-1}$ are connected, $N_\ell$ is disconnected, and $N_i$ is a c$^*$-transformation of $N_{i-1}$ for all $i\in[\ell]$.
Furthermore, by the previous arguments, $\csdd(N_{i-1})=\csdd(N_i)+1$ for all $i\in[\ell]$, and so $\csdd(N_0)=\csdd(N_\ell)+\ell$.

From this point, we again continue as in the proof \Cref{lem:startingguts}.
We choose $\emptyset\not=A\subsetneq E(M)$ as the ground set of any component of $N_\ell$ and $B=E(M)-A$.
So, $\lambda_{N_0}(A)>0$ and $\lambda_{N_\ell}(A)=\lambda_{N_\ell}(B)=0$, and
$N_\ell$ is a direct sum of $N_\ell\contract B$ and of $N_\ell\contract A$.

Applying \Cref{lem:startingseq} with the matroid $N_0$ and $C=A$, we get a sequence of matroids $N'_0=N_0\contract A$ and 
$N'_1,\ldots,N'_m=N_\ell\contract A$ where $m\leq\ell-\lambda_{N_0}(A)$, satisfying the conditions (i) and (ii) of \Cref{lem:startingseq}.
This sequence, by \Cref{def:eight-parameters}, certifies that 
\begin{align*}
\csdd(N_0\contract A)&\leq\csdd(N_\ell\contract A)+m \leq\csdd(N_\ell)+m \leq\csdd(N_0)-\ell+m
\\ &=\csdd(M_0)-|D|-\ell+m \leq \csdd(M_0)-|D|-\ell+\ell-\lambda_{N_0}(A)
\\ &=\csdd(M)-|D|-\lambda_{N_0}(A)
.\end{align*}
Together with the symmetric inequality obtained for $C=B$, and with $N_0=M\setminus D$, we finish the proof.
\end{proof}

The following proposition is an easy corollary of \Cref{lem:startinggutsd}.

\begin{proposition}\label{cor:csxxmatrixeq}
Let $\veAm$ be a matrix over a field $\FF$ and $M=M(\ve A)$. Then $\csdd(\ve A)=\csdd(M)$ and $\cdsd(\ve A)=\cdsd(M)$.
\end{proposition}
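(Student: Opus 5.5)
We follow the proof of \Cref{cor:csdmatrixeq} closely, now using \Cref{lem:startinggutsd} instead of \Cref{lem:startingguts}. It suffices to prove $\csdd(\ve A)=\csdd(M)$. The statement $\cdsd(\ve A)=\cdsd(M)$ then follows by duality (\Cref{obs:depthsduality}), exactly as for d$^*$-depth. Here the matrix operation $\ve A\coextdel\vev$ on the dual side corresponds to the matrix c$^*$-transformation of a representation of $M^*$ (its orthogonal complement). We assume the natural matrix definition of cd$^*$-depth, with row additions and column contractions.

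\textbf{Easy direction} $\csdd(M)\leq\csdd(\ve A)$. We use induction on the matrix recursion of \Cref{def:csdd-representable}. Every $\ve A\extcontr\vev$ with $\vev$ not in the span of zero (i.e., $\vev$ nonzero) represents a c$^*$-transformation of $M(\ve A)$: take $M^+=M(\ve A\,|\,\vev)$ and $f=\vev$. A zero vector $\vev$ gives back $\ve A$ itself, so it never helps the minimum. One also has to check that $f$ is not a coloop; if it were, the contraction is the same as deletion and gives $M$ again, which is again useless for the minimum. Column deletion $\ve A\setminus\ve w$ represents $M\setminus e$. Components and one-element matrices agree in both definitions. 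Hence every recursive certificate for $\ve A$ is a certificate for $M$.

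\textbf{Hard direction} $\csdd(\ve A)\leq\csdd(M)$. We use induction on $\csdd(M)$, plus $|E(M)|$ for the disconnected case. If $|E(M)|=1$ this is trivial. If $M$ is disconnected, we apply induction to the components, since the components of $\ve A$ are the column submatrices of the components of $M$. Otherwise apply \Cref{lem:startinggutsd} to get a tripartition $(A,B,D)$ with
\[
\max\{\csdd(M\setminus D\contract A),\csdd(M\setminus D\contract B)\}\leq\csdd(M)-\lambda_{M\setminus D}(A)-|D| .
\]
The proof of the lemma actually yields $\lambda_{N_0}(A)$ with $N_0=M\setminus D$. This is the quantity we need, and if the stated form with $\lambda_M(A)$ is used instead we take the bound from the proof.

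In the matrix, we first delete the columns of $D$ one at a time, giving $\ve A_0=\ve A\setminus D$. These are legal steps only while the current matrix is connected. If it becomes disconnected earlier, we split into components, and each remaining deletion or contraction can only lower the depth of the relevant component. To make this precise, we will invoke the monotonicity of matrix $\csdd$ under deletion, proved for matrices exactly as \Cref{lem:csdd-monotone-restrictions}, and the fact that splitting into components never increases the count. Either way we get $\csdd(\ve A)\leq\csdd(\ve A_0)+|D|$.

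Next, in $\ve A_0$, we choose a basis of $\langle A\rangle\cap\langle B\rangle$. It has $\lambda_{M\setminus D}(A)$ vectors. We add and contract these vectors one by one. The resulting matrix $\ve A'$ represents the direct sum of $(M\setminus D)\contract B$ on $A$ and $(M\setminus D)\contract A$ on $B$. As before, if an intermediate matrix becomes disconnected we pass to its components and continue inside each, which costs no extra depth. This gives $\csdd(\ve A_0)\leq\csdd(\ve A')+\lambda_{M\setminus D}(A)$.

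By induction, since $\csdd(M(\ve A'))<\csdd(M)$ because $A,B\neq\emptyset$ and $\lambda+|D|\geq1$ in the connected case, we have $\csdd(\ve A')\leq\csdd(M(\ve A'))=\max\{\cdots\}$. Chaining these inequalities gives $\csdd(\ve A)\leq\csdd(M)$.

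\textbf{Main obstacle.} The delicate point is the bookkeeping when intermediate matrices become disconnected before all planned deletions and contractions are done. The recursion then forces a split into components, and we must know that the remaining operations, applied within each component, do not cost more. We plan to handle this with a matrix analogue of the restriction and contraction monotonicity statements, or by applying induction directly to the components. A secondary point is verifying that $\lambda_{M\setminus D}(A)\geq1$ or $|D|\geq1$ in the connected case, so that the induction on $\csdd(M)$ makes progress.
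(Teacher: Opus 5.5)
Your proposal is correct and follows the paper's own route. The easy direction observes that the matrix steps $\ve A\extcontr\vev$ and $\ve A\setminus\ve w$ realise matroid c$^*$d-transformations; the hard direction uses induction with \Cref{lem:startinggutsd}, deleting the columns of $D$ and then adding and contracting a basis of $\langle A\rangle\cap\langle B\rangle$ in $\ve A\setminus D$; and duality gives the cd$^*$-depth claim. Your extra care also fills in details the paper leaves implicit: you use $\lambda_{M\setminus D}(A)$, which is what the proof of \Cref{lem:startinggutsd} actually yields and what matches the number of contractions, and you handle intermediate disconnected matrices via a matrix analogue of \Cref{lem:csdd-monotone-restrictions}.
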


\begin{proof}
We can use a proof analogous to the proof of \Cref{cor:csdmatrixeq} for $\csdd(\ve A)$, now referring to \Cref{lem:startinggutsd}.
For $\cdsd(\ve A)$, we apply the previous claim to the dual matroid $M^*$ and its matrix representation dual to~$\ve A$ (cf.~\Cref{obs:depthsduality}).
\end{proof}

\subsection{Algorithmic and complexity aspects}\label{sec:algoprop}

A natural question concerning structural parameters is the computational complexity of determining their value. Here, we summarise the known results in this area, highlighting both general hardness results and cases where efficient algorithms exist.

It is known that deciding whether the branch-width of a given matroid $M$ is at most $k$ is \NPh~\cite{seymour1994call}. 
For matroids represented over finite fields, a branch-width decomposition of width at most $k$ can however be computed in cubic time for every fixed $k$,
due to a result of Hliněný and Oum~\cite{HliO08}.

\begin{theorem}[\cite{HliO08}]
   There exists an \FPT algorithm that given a matroid represented over a finite field $\FF$ and an integer $k$ outputs a branch-decomposition
   of $M$ of width at most $k$, if such a decomposition exists. The algorithm runs in time $f(|\FF|, k)\cdot n^3$.
\end{theorem}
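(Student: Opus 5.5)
This theorem is the result of Hliněný and Oum~\cite{HliO08}, quoted here as a black box. Were I to reprove it, I would follow the standard iterative-compression scheme for branch-width of $\FF$-represented matroids, built from three ingredients.

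\emph{Iterative step.} Elements are added one at a time. Given a branch-decomposition of width at most $k$ of $M\setminus e$, a trivial one of width at most $k+1$ of $M$ is obtained by attaching $e$ to any edge of the tree, since adding one element changes each connectivity value by at most one. The real work is a compression routine: from a decomposition of width at most $k+1$ (or of some width $f(k)$), either produce one of width at most $k$, or certify that none exists.

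\emph{Compression via automata on parse trees.} Over a finite field $\FF$, a decomposition of bounded width $w$ turns into a bounded-width \emph{parse tree}. Each edge carries a boundary space $\langle X\rangle\cap\langle E-X\rangle$ of dimension at most $w$, so there are only boundedly many ways, depending on $|\FF|$ and $w$, to glue the two sides. On such a parse tree, ``branch-width at most $k$'' can be decided by a finite tree automaton. One route is MSO-definability: the excluded minors for branch-width $k$ are finite in number (Geelen--Gerards--Robertson--Whittle), so the property is MSO-expressible and Hliněný's matroid Courcelle theorem~\cite{Hli06} applies. This gives a decision procedure; constructing the decomposition itself then needs a self-reduction, for instance by testing, element by element, whether it can be placed in a given position.

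\emph{Main obstacle.} The delicate part is making the whole procedure run in $f(|\FF|,k)\cdot n^3$ time and \emph{output} an actual decomposition, rather than only deciding existence. The excluded-minor route gives no explicit bound and only a decision. Hliněný and Oum instead work through rank-width of an associated structure together with Oum--Seymour style dynamic programming over ``partial decomposition types''. The cubic bound comes from $n$ iterations of a quadratic update: the parse tree must be recomputed, which involves Gaussian-elimination-like work on the boundary spaces. I would follow their approach rather than rederive it, since the paper only uses the theorem as a cited tool.
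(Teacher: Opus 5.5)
Your proposal matches the paper, which also states this result purely by citation to \cite{HliO08} without any proof, so invoking it as a black box is exactly right. One correction to your side remarks: in \cite{HliO08} the rank-width algorithm is derived \emph{from} the matroid branch-width algorithm, not the other way round. Their construction is essentially the route you set aside: they decide ``width at most $k$'' via the finitely many excluded minors of Geelen, Gerards, Robertson and Whittle, whose size is explicitly bounded, together with the MSO theorem of \cite{Hli06}, and then build the decomposition by a self-reduction over partitioned matroids; so your ``main obstacle'' paragraph misdescribes the cited work.
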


In~\cite{BriKKPS24}, Briański et al. established that computing the following matroid depth parameters is in general \NPh.
Here, the results apply for both finite and infinite fields~$\FF$.

\begin{theorem}[\cite{BriKKPS24}]\label{thm:depthNPhard}
    Let $\FF$ be any field. Given an $\FF$-represented matroid $M$ and an integer~$k$, the following problems are \NPc.
    \begin{itemize}
        \item Is the d-depth of $M$ at most $k$?
        \item Is the c-depth of $M$ at most $k$?
        \item Is the c$^*$-depth of $M$ at most $k$?
        \item Is the cd-depth of $M$ at most $k$?
        \item Is the \csddepth of $M$ at most $k$?
    \end{itemize}
\end{theorem}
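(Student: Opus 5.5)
The plan is to prove membership in $\mathsf{NP}$ and $\mathsf{NP}$-hardness separately. Hardness goes through Theorem~\ref{thm:min_td_equivalences} and the $\mathsf{NP}$-hardness of graph tree-depth (Pothen). We treat the d-depth, c$^*$-depth and \csddepth first, and then derive the c-depth and cd-depth variants by modified gadgets and duality (\Cref{obs:depthsduality}).

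\emph{Membership in $\mathsf{NP}$.} For the non-starred measures the certificate is the recursion tree of \Cref{def:eight-parameters}. Since every component split partitions the ground set, this tree can be written with at most $|E(M)|$ operations per level. At most $k$ levels are relevant, so the certificate has size $O(k\cdot|E(M)|)$. Checking it only needs rank computations on the given $\FF$-representation, which take polynomial time.

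For c$^*$-depth and \csddepth there is an issue: a certificate would seem to need explicit vectors, and over an infinite field their bit size is not a priori bounded. We avoid this using \Cref{lem:startingguts}, \Cref{lem:startinggutsd} and the proofs of \Cref{cor:csdmatrixeq} and \Cref{cor:csxxmatrixeq}. They show that an optimal recursion can always be taken of the following normal form: guess a bipartition $(A,B)$ (and a deleted set $D$), then contract a basis of $\langle A\rangle\cap\langle B\rangle$. The certificate therefore consists only of the nested partitions and deleted sets, and the verifier computes the intersection bases by Gaussian elimination. Each contraction block $\lambda_M(A)$ is charged against the depth, so verification is polynomial.

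\emph{Hardness for d-depth.} By Theorem~\ref{thm:min_td_equivalences}(a), $\dd(\ve A)=\td^*_P(\ve A)$. Given a graph $G$ and $k$, we would build a matrix $\ve A_G$ over $\FF$ with two properties:
\begin{itemize}
\item its primal graph is $G$, possibly with vertices blown up into bounded-size cliques or pendant gadgets;
\item it is \emph{rigid}: no row-equivalent matrix has a sparser primal graph, so that $\td^*_P(\ve A_G)=\td(G)+c$ for a fixed constant~$c$.
\end{itemize}
A natural candidate is one row per edge $uv$, supported on columns $u$ and $v$, with the rows in general position. Over a small field, and to reinforce rigidity, we would instead duplicate each edge row with distinct coefficient patterns on added parallel-class columns. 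The rows should be chosen so that every nonzero vector in the row space supported on few columns is essentially a scaled edge row. Rigidity then follows because a sparse row space contains no row-combination of smaller support.

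\emph{Hardness for the remaining measures.} Dually, using Theorem~\ref{thm:min_td_equivalences}(b) and~(c), the transposed-type construction gives hardness for c$^*$-depth (dual tree-depth) and \csddepth (incidence tree-depth). For c-depth and cd-depth there is no exact integer-programming identity, so we would reduce from c$^*$-depth. We replace each element of $\ve A_G$ by a large parallel class; then an optimal c$^*$-transformation can be simulated by contracting one element of the class, whose other members become loops, much as in \Cref{rem:starstronger}. This should give $\cd=\csd$ up to an additive constant on these instances, and the same argument with deletions interleaved handles cd-depth. The step we expect to be the main obstacle is the rigidity lemma, i.e.\ showing that row operations cannot reduce the relevant tree-depth of the gadget matrix, uniformly over every field~$\FF$, including $\mathrm{GF}(2)$.
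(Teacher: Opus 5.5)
The paper does not prove this theorem; it is quoted from \cite{BriKKPS24}, so your proposal has to stand on its own. Your $\mathsf{NP}$-membership part is fine. For the starred measures, \Cref{lem:startingguts} and \Cref{lem:startinggutsd} (equivalently \Cref{lem:gddepth}) do reduce certificates to nested bipartitions and deleted sets, which can be verified by rank computations in minors of $M$.

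The hardness part, however, contains no actual reduction. Everything rests on the rigidity lemma, which you leave open, and the concrete candidate you suggest provably fails. Suppose $\ve A_G$ has one row per edge $e=uv$, with nonzero entries exactly in columns $u$ and $v$. Then every kernel vector $c$ satisfies $a_{e,u}c_u+a_{e,v}c_v=0$ for all edges. So for connected $G$, $c$ is determined by its value at a single vertex, and it is either zero or has full support. Hence, over any field and for any choice of coefficients (including the $\mathrm{GF}(2)$ rows $\ve e_u+\ve e_v$), $M(\ve A_G)$ is either free or the single circuit $U_{n-1,n}$. Consequently $\td^*_P(\ve A_G)=\dd(\ve A_G)\le 2$, regardless of $\td(G)$.

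Your variant with duplicated rows and extra parallel-class columns is not specified precisely enough to check. A working reduction must make the rank small compared to the number of columns, e.g.\ by blowing vertices up into many columns. It must then genuinely prove that no row-equivalent matrix has smaller primal tree-depth; that is the entire substance of the hardness proof. The same applies to c$^*$-depth and c$^*$d-depth, whose ``transposed-type construction'' is not given at all. Note also that c$^*$-depth is dual to d$^*$-depth, not to d-depth, so it cannot be inherited from the d-depth case by matroid duality.

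Your route to c-depth and cd-depth is also broken. Padding an element with a parallel class lets a c$^*$-transformation simulate an ordinary contraction (\Cref{rem:starstronger}, giving $\csd\le\cd$), not the converse. An optimal c$^*$-transformation may contract a new element that is parallel to no existing element, e.g.\ a relatively free element in a balanced bipartition of a long circuit, and parallel classes do not change this. Concretely, if every element of a circuit of length $n\ge2$ is replaced by a parallel class, the c-depth is still $n$, while the c$^*$-depth is $O(\log n)$. So nothing in your argument yields $\cd=\csd+O(1)$ on your instances, and the ``same argument with deletions interleaved'' for cd-depth inherits this flaw.

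For c-depth you do not need such an argument at all. We have $\cd(M)=\dd(M^*)$, and an $\FF$-representation of $M^*$ is computable from $\ve A$ in polynomial time, so c-depth hardness follows from d-depth hardness. Cd-depth, however, is self-dual, so it needs its own argument (for instance, showing $\cdd=\dd$ on the constructed instances), which your proposal does not supply.
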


From duality, the same hardness also holds for the following parameters.

\begin{corollary}
    Let $\FF$ be any field. Given an $\FF$-represented matroid $M$ and an integer $k$, the following problems are \NPc.
    \begin{itemize}
        \item Is the \dsdepth of $M$ at most $k$?
        \item Is the \cdsdepth of $M$ at most $k$?
    \end{itemize}
\end{corollary}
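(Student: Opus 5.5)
The corollary follows from \Cref{thm:depthNPhard} by duality (\Cref{obs:depthsduality}b): $\dsd(M)=\csd(M^*)$ and $\cdsd(M)=\csdd(M^*)$. So I would reduce the c$^*$-depth and c$^*$d-depth problems to the d$^*$-depth and cd$^*$-depth problems, respectively, via the map $M\mapsto M^*$ on $\FF$-represented matroids. Membership in \NPc also needs an \textsf{NP} upper bound. That part I would obtain the same way: apply the known membership for c$^*$-depth and c$^*$d-depth from \cite{BriKKPS24} to $M^*$, or verify a certificate consisting of the recursive sequence of $\FF$-vectors and deletions directly.

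The key computational step is that a dual representation is computable in polynomial time over any field. Given $\ve A\in\FF^{m\times n}$, I would first bring it by Gaussian elimination to standard form $[\,I_r \mid \ve D\,]$ after removing zero rows and permuting columns, where $r=\rnk(\ve A)$. The matrix $[\,-\ve D^{T}\mid I_{n-r}\,]$, with columns permuted back, then represents $M(\ve A)^*$ over the same field $\FF$. This is the standard fact from \cite{Oxley}, and it uses only polynomially many field operations. Hence $(\ve A,k)\mapsto(\ve A^*,k)$ is a polynomial-time many-one reduction. Correctness is immediate because $\csd(M(\ve A))\le k$ iff $\dsd(M(\ve A)^*)\le k$, and similarly for the c$^*$d/cd$^*$ pair.

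One subtlety concerns whether \Cref{thm:depthNPhard} speaks about the matrix parameters or the matroid parameters. By \Cref{cor:csdmatrixeq} and \Cref{cor:csxxmatrixeq} these coincide, so the two readings are interchangeable. This makes the reduction valid regardless of which is meant, and it also means that taking the dual representation over the same field $\FF$ loses nothing. I expect no real obstacle. The only point needing care is the \textsf{NP}-membership half, and for that I would rely on the dual statement transferred from \cite{BriKKPS24} rather than constructing new certificates.
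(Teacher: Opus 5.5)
Your proposal is correct and follows the paper's argument, which derives the corollary from \Cref{thm:depthNPhard} by the dualities $\dsd(M)=\csd(M^*)$ and $\cdsd(M)=\csdd(M^*)$ of \Cref{obs:depthsduality}. Your extra details (computing a dual $\FF$-representation in polynomial time, and transferring \textsf{NP}-membership through the same reduction) spell out what the paper leaves implicit.
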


On the positive side, Chan et al.~\cite{ChaCKKP22} showed that if $k$ is fixed, there is a polynomial-time algorithm for computing c$^*$-depth of matroids represented over finite fields.

\begin{theorem}[\cite{ChaCKKP22}]
    \label{algo:fpt_csd}
    Let $\FF$ be a finite field. There exists an \FPT algorithm that given an $\FF$-represented matroid $M$ and an integer $k$
    decides whether the c$^*$-depth of $M$ is at most $k$ in time $f(|\FF|, k)\cdot |M|^{\mathcal{O}(1)}$. 
\end{theorem}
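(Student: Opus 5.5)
The plan is to combine three ingredients: a structural bound showing that small c$^*$-depth forces small branch-width, an MSO-definable recursive characterisation of ``$\csd(M)\le k$'', and Hliněný's matroid version of Courcelle's theorem~\cite{Hli06} together with the branch-decomposition algorithm of~\cite{HliO08}.

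\emph{Step 1 (bounded width).} If $\csd(M)\le k$, then $\mtd(M)\le k$ by \Cref{lem:mtreedepth-leq}. In particular the matroid tree-width of $M$ is at most $k$, and since matroid tree-width is within a constant factor of branch-width~\cite{matroid-tree-width}, we get $\bw(M)\le ck$ for an absolute constant $c$. The algorithm therefore first runs the algorithm of~\cite{HliO08} with width bound $ck$. If no branch-decomposition of width at most $ck$ exists, it answers ``no''. Otherwise it obtains a decomposition of width $O(k)$ in time $f(|\FF|,k)\cdot n^3$.

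\emph{Step 2 (recursive characterisation).} I claim that for a matroid $M$ with more than one element, $\csd(M)\le k$ holds if and only if one of the following is true.
\begin{itemize}
\item $M$ is disconnected and every component $N$ satisfies $\csd(N)\le k$.
\item $M$ is connected and there is a bipartition $(A,B)$ of $E(M)$ with $A,B\neq\emptyset$, $j:=\lambda_M(A)\ge1$, and $\max\{\csd(M\contract A),\csd(M\contract B)\}\le k-j$.
\end{itemize}
The ``only if'' direction for connected $M$ is \Cref{lem:startingguts}; there $\lambda_M(A)\ge1$ because $M$ is connected. For the ``if'' direction, add and contract a basis of $\langle A\rangle\cap\langle B\rangle$, exactly as in the proof of \Cref{cor:csdmatrixeq}. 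This yields the direct sum of $M\contract A$ and $M\contract B$ after $j$ c$^*$-transformations, so $\csd(M)\le j+(k-j)=k$. Every connected step decreases $k$ by at least one, so unfolding the characterisation gives a recursion of depth at most $k$. Along the way the current minor is always of the form $M\contract C\restriction S$ for two sets $C,S\subseteq E(M)$.

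\emph{Step 3 (MSO formula).} Using the recursion, I will write an MSO formula $\varphi_k(C,S)$ over the independence predicate of $M$ expressing ``$\csd(M\contract C\restriction S)\le k$''. The required building blocks are all MSO-expressible:
\begin{itemize}
\item independence in $M\contract C$, namely $I$ is independent iff $I\cup J$ is independent for some maximal independent $J\subseteq C$;
\item connectivity of a minor, via circuits;
\item component membership;
\item ``$\lambda(A)=j$'' for the fixed values $j\le k$, via comparing bases of $A$, of $S-A$ and of $S$ with bounded cardinality differences.
\end{itemize}
The quantification over components and over bipartitions uses one set quantifier per recursion level. Hence $\varphi_k$ has size bounded by a function of $k$ alone.

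\emph{Step 4 (evaluation).} Applying Hliněný's theorem~\cite{Hli06} to $\varphi_k(\emptyset,E(M))$ on the $\FF$-represented matroid, using the decomposition from Step~1, decides the property in time $f(|\FF|,k)\cdot n^{O(1)}$.

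I expect the main obstacle to be Step~3. The formula must correctly relativise the nested minors $M\contract C\restriction S$, the components and the connectivity values, while keeping the formula length independent of $n$. The bound on $\lambda$ per level is what keeps the numeric comparisons within a fixed finite range.
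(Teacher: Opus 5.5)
Your argument is correct, but it follows a different route from the one behind this theorem. The paper gives no proof of its own; it cites \cite{ChaCKKP22}, whose algorithm is a direct combinatorial dynamic programming procedure. That procedure starts from the approximate contraction$^*$-depth decomposition of \Cref{thm:csdapprox} and works with decompositions in the sense of \Cref{def:csd-decomposition}. When the answer is positive, it returns a certifying decomposition of depth at most $k$.

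You instead transplant the model-checking approach that the paper uses for the c$^*$d-depth (\Cref{lem:gddepth} and \Cref{cor:csdd_fpt}) to the c$^*$-depth. \Cref{lem:startingguts}, together with adding and contracting a basis of $\langle A\rangle\cap\langle B\rangle$, replaces the c$^*$-transformation, which MSO cannot define, by a quantified bipartition with a bounded value of $\lambda$. This is the gd-depth recursion without the deletion option. \Cref{lem:mtreedepth-leq} together with \cite{matroid-tree-width} then gives the branch-width bound that \cite{Hli06} needs. Your route is shorter and reuses the paper's structural lemmas. However, it inherits the huge parameter dependence of MSO model checking and only answers yes or no, whereas the cited dynamic programming produces the decomposition itself.

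You leave one step implicit, as the paper also does in \Cref{cor:csdmatrixeq}. One of the intermediate matroids among your $j$ c$^*$-transformations may already be disconnected. So to conclude $\csd(M)\le j+\csd(M_j)$ you need $\csd(N)\le 1+\csd(N')$ for every c$^*$-transformation $N'$ of an arbitrary matroid $N$. This follows componentwise. For a component $K$ of $N$, the matroid $N'\restriction K$ either equals $K$ or is a c$^*$-transformation of $K$, and \Cref{lem:extcd-to-csd} gives $\csd(N'\restriction K)\le\csd(N')$.

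Also, disconnected steps do not decrease $k$, so your recursion has depth at most $2k$ rather than $k$. This is still bounded, so the formula size remains a function of $k$ alone.
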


By duality, we also obtain that computing the d$^*$-depth of matroids represented over finite fields 
can be done in \FPT time.

\begin{corollary}
    \label{cor:dsd_fpt}
    Let $\FF$ be a finite field. There exists an \FPT algorithm that given an $\FF$-represented matroid $M$ and an integer $k$
    decides whether the d$^*$-depth of $M$ is at most $k$ in time $f(|\FF|, k)\cdot |M|^{\mathcal{O}(1)}$.
\end{corollary}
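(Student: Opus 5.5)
The statement is Corollary~\ref{cor:dsd_fpt}, which I will derive from Theorem~\ref{algo:fpt_csd} by duality. There are two ingredients. First, $\dsd(M)=\csd(M^*)$ by Observation~\ref{obs:depthsduality}(b). Second, an $\FF$-representation of $M^*$ can be computed from an $\FF$-representation of $M$ in polynomial time.

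The algorithm runs in three steps.

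\textbf{Step 1.} Given a matrix $\ve A\in\FF^{m\times n}$ representing $M$, compute its reduced row echelon form by Gaussian elimination. This takes polynomially many field operations, each of constant cost over the fixed finite field $\FF$. Zero rows are discarded, and the columns are permuted so that the matrix reads $[\,I_r\mid \ve D\,]$, where $r=\rnk(M)$.

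\textbf{Step 2.} Form the standard dual representation $[\,-\ve D^{T}\mid I_{n-r}\,]$, with the columns indexed consistently with the original elements. By the classical fact from Oxley~\cite{Oxley}, this matrix represents $M^*$ over $\FF$. Its size is at most $n\times n$, so it is polynomial in $|M|$.

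\textbf{Step 3.} Run the \FPT algorithm of Theorem~\ref{algo:fpt_csd} on this representation of $M^*$ with the same $k$, and return its answer. This algorithm decides whether $\csd(M^*)\le k$, which is exactly the question whether $\dsd(M)\le k$.

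There are two degenerate cases, $r=0$ and $r=n$. Then $M^*$ is represented by the empty matrix or the zero matrix, so Step~3 either handles it directly or the answer is trivial from the definition. In both cases the answer for one-element components is $1$.

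The total running time is $f(|\FF|,k)\cdot |M|^{\mathcal O(1)}$ plus the polynomial preprocessing, which has the required form.

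Two points need care. First, Theorem~\ref{algo:fpt_csd} is stated in terms of the matroid c$^*$-depth, whereas \cite{ChaCKKP22} works with the represented (matrix) notion, and with the variant that differs by one as in Remark~\ref{rem:diff1}. Proposition~\ref{cor:csdmatrixeq} resolves both issues: it gives $\csd(\ve A^*)=\csd(M^*)$ for the representation $\ve A^*$ produced in Step~2, so the choice of representation does not matter. Any off-by-one is a checkable shift of $k$, and the loops-and-coloops condition that triggers it can be tested in polynomial time. Second, the dual representation must be labelled consistently with the elements of $M$. Apart from this bookkeeping the argument is routine.
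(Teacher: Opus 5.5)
Your proposal is correct and is exactly the paper's argument: the paper derives this corollary from \Cref{algo:fpt_csd} simply ``by duality'' via $\dsd(M)=\csd(M^*)$ (\Cref{obs:depthsduality}). Your explicit construction of the $\FF$-representation $[\,-\ve D^{T}\mid I_{n-r}\,]$ of $M^*$ just spells out the step the paper leaves implicit. The only slip is cosmetic: for $r=0$ the dual representation is $I_n$ rather than an empty or zero matrix, but when $M$ consists only of loops and coloops every component has at most one element, so $\dsd(M)=1$ anyway.
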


We remark that \Cref{algo:fpt_csd} is 
originally stated for contraction$^*$-depth in the sense of \Cref{def:csd-decomposition}. 
Moreover, whenever the answer is positive, 
the algorithm outputs a certifying contraction$^*$-depth decomposition 
of depth at most $k$. The algorithm underlying \Cref{algo:fpt_csd} 
is purely combinatorial and is based on dynamic programming.
As a starting point, it employs the approximation algorithm of 
Kardo\v{s} et al.~\cite{KarKLM17}, which applies 
to general matroids given by an independence oracle and computes a contraction$^*$-depth decomposition whose depth is at most exponential in the contraction$^*$-depth.

\begin{theorem}[\cite{KarKLM17}]\label{thm:csdapprox}
    There exists a polynomial algorithm that, given a matroid $M$ of contraction$^*$-depth~$d$, 
    computes a contraction$^*$-depth decomposition of depth at most $4^d$.
\end{theorem}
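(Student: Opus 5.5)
The plan is to go through the circumference $u(M)$ and an algorithmic version of the argument of DeVos, Kwon, and Oum behind \Cref{thm:cd_circuits}. By \Cref{thm:contractionstarx} and \Cref{thm:csd_circuits}, a matroid of contraction$^*$-depth $d$ satisfies $\log_2 u(M)\le \csd(M)\le d+1$, so $u(M)\le 2^{d+1}$. It therefore suffices to construct, in polynomial time with an independence oracle, a contraction$^*$-depth decomposition of height bounded by a quadratic function of $u(M)$. The target is roughly $u(u+1)/2$, which is of order $4^d$; the exact constants will be tuned at the end, and a slightly more careful use of the lower bound may be needed there. The algorithm never needs to know $d$.

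\textbf{Recursion.} The decomposition is built recursively, mirroring \Cref{def:eight-parameters} for c-depth.
\begin{enumerate}
\item If $M$ is disconnected, compute its components in polynomial time, recurse on each, and glue the resulting trees at a common root.
\item If $M$ is connected, find a circuit $C$ and contract the elements of $C$ one by one. This gives a root path with $\rnk_M(C)=|C|-1$ edges and turns the rest of $C$ into loops.
\item Recurse on the components of $M\contract C$.
\end{enumerate}
Validity of the tree as in \Cref{def:csd-general} is checked exactly as in the proof of \Cref{lem:csd-to-kklm}. The edge count works because $\rnk(M)=\rnk_M(C)+\rnk(M\contract C)$. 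The rank condition for an arbitrary $X$ follows because $\rnk_M(X)\le \rnk_M(C)+\rnk_{M\contract C}(X-C)$.

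\textbf{Height bound.} The height is controlled by the key property: for a suitable circuit $C$, every component of $M\contract C$ has circumference strictly less than $|C|\le u(M)$. Given this, the height is at most $\sum_{k\le u}k$, as in \Cref{thm:cd_circuits}. When $C$ is a longest circuit, this is exactly the lemma of DeVos, Kwon, and Oum, with the circuit-shrinking argument as in \cite[Lemma 5.6]{DeVKO20}.

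\textbf{Main obstacle: finding a good circuit.} Computing a longest circuit is hard, so the algorithm will use local improvement instead.
\begin{enumerate}
\item Start from any circuit $C$ of the connected matroid $M$.
\item Compute the components of $M\contract C$ and, in each, some circuit $D$ with $|D|\ge|C|$ if one is found. Circuits of $M\contract C$ are obtained by the oracle as minimal dependent sets modulo $\clo_M(C)$.
\item If some $D$ with $|D|\ge|C|$ is found, use the connectivity of $M$ to build a circuit of $M$ strictly longer than $C$. The natural route is to lift $D$ to a circuit $D'\subseteq D\cup C$ of $M$ and then apply circuit elimination between $D'$ and $C$ along a shortest connecting path, as in the proof of the DeVos--Kwon--Oum lemma. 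Replace $C$ by this longer circuit and repeat.
\end{enumerate}
Since $|C|$ increases each time and is at most $u(M)$, there are at most $u(M)$ rounds, each polynomial.

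The delicate point is that the analysis only needs a circuit $D$ with $|D|\ge|C|$ when one exists, but searching for long circuits is itself hard. I would handle this by weakening the invariant. The recursion will only require that each component of $M\contract C$ has circumference at most some function like $2|C|$; the recursive calls then take care of long circuits. Re-running the height analysis with this halving-style guarantee should give height at most about $2^{2(d+1)}$-ish, and the constants would then be fitted to reach $4^d$. This is the step I expect to be hardest, together with making the lifting and elimination step above rigorous without knowing a longest circuit.
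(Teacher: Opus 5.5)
This theorem is not proved in the paper; it is quoted from \cite{KarKLM17}. Your proposal therefore has to stand on its own, and it has a genuine gap exactly where you flag one.

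The non-algorithmic skeleton is sound. Contracting a circuit $C$ as a root path with $|C|-1$ edges and recursing on the components of $M\contract C$ gives a valid contraction$^*$-depth decomposition, by the calculation of \Cref{lem:csd-to-kklm}. If the contracted circuits strictly decrease in size along every root-to-leaf path, the depth is at most $\sum_{k=2}^{u(M)}(k-1)<u(M)^2$. For the constant you need the sharper fact, due to \cite{KarKLM17}, that circuits of a matroid of contraction$^*$-depth $d$ have at most $2^d$ elements. With $u(M)\le 2^{d+1}$ from \Cref{thm:csd_circuits}, your target $u(u+1)/2$ exceeds $4^d$.

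The real problem is that nothing in the proposal produces suitable circuits in polynomial time. Step~2 of your improvement loop asks for a circuit of $M\contract C$ of size at least $|C|$ whenever one exists. That is the long-circuit problem itself, which is NP-hard already for graphic matroids when the length threshold is part of the input. So the claim that each round is polynomial is unsupported. If the search is allowed to miss such circuits, the decreasing-size invariant is lost, and the height bound with it.

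The proposed weakening cannot repair this, because it has no decreasing potential. Let $G_n$ be $K_{2,n}$ with parts $\{u,v\}$ and $\{y_1,\dots,y_n\}$ and every edge doubled. Then $M(G_n)$ is connected, has circumference $4$, and hence has bounded contraction$^*$-depth. Contracting the parallel pair $C$ between $u$ and $y_1$ gives the same kind of graph with $n-1$ vertices $y_i$ and two extra parallel $uv$ edges. That matroid is again connected, of circumference $4=2|C|$. Repeating this satisfies your invariant ``every component of $M\contract C$ has circumference at most $2|C|$'' at every step, yet builds a root path of length about $n$.

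What is missing is a polynomial-time construction whose depth is bounded in terms of $u(M)$ by a structural argument, without ever exhibiting a long circuit. Compare graphs, where any DFS tree of a $2$-connected graph has depth at most its longest path, and that is polynomial in the circumference.

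You could make ``if one is found'' precise by a self-certifying recursion: each call returns its top circuit, and you restart (after lifting) whenever a child's top circuit has size at least $|C|$. That would enforce the strict invariant. However, the restarts nest across recursion levels, and your count of at most $u(M)$ rounds per node then only bounds the running time by something exponential in $u(M)$. That is not polynomial.
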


Subsequently, Briański et al.~\cite{BriKKPS24} showed that the problem of computing d-depth of matroids
represented over finite fields is also in \FPT.

\begin{theorem}[\cite{BriKKPS24}]
    \label{algo:fpt_dd}
    Let $\FF$ be a finite field. There exists an \FPT algorithm that given an $\FF$-represented matroid $M$ and an integer $k$
    decides whether the d-depth of $M$ is at most $k$ in time $f(|\FF|, k)\cdot|M|^{\mathcal{O}(1)}$.
\end{theorem}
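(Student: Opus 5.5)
The plan is to reduce to a bounded-size kernel over a bounded-depth decomposition, following the strategy of \Cref{algo:fpt_csd}.

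\textbf{Reduction to bounded structure.} By \Cref{obs:depthsduality} we have $\dd(M)=\cd(M^*)$. A representation of $M^*$ over the same finite field $\FF$ is computable in polynomial time, so it suffices to decide whether $\cd(N)\le k$ for an $\FF$-represented matroid $N$. If $\cd(N)\le k$, then \Cref{thm:cd_circuits} gives $u(N)\le 2^k$, and \Cref{thm:csd_circuits} then gives $\csd(N)\le 4^k+1$. We first run the algorithm of \Cref{algo:fpt_csd} with the bound $4^k+1$. If it rejects, we answer ``no''. Otherwise we obtain a contraction$^*$-depth decomposition $(T,f)$ of $N$ of depth $D=D(k)$; equivalently, by \Cref{thm:csclosurexx}, a represented extension $N'\supseteq N$ of c-depth at most $D$. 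We may also compute the components of $N$ and treat them separately, so we assume $N$ is connected.

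\textbf{Types and a reduction rule.} For every node $x$ of $T$ we define the \emph{type} of the subtree $T_x$. It records the represented matroid on the elements mapped below $x$, taken modulo the subspace $W_x$ spanned by the root-to-$x$ path vectors, together with how this element set sits relative to $W_x$. Since $\dim W_x\le D$ and $\FF$ is finite, a bottom-up induction on the height shows the following: once children of equal type are collapsed to a bounded multiplicity $g(k)$, the number of types at each height is bounded by a function of $k$ and $|\FF|$. The reduction rule is: if a node has more than $g(k)$ children of the same type, delete the elements of one such child subtree. The claim to prove is that this preserves whether $\cd(N)\le k$. The direction ``removal does not increase'' is delicate, since c-depth is not minor-monotone (\Cref{pro:nonminorcl}). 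Here I would use a pigeonhole argument. An optimal contraction sequence of length at most $k$ touches at most $k$ elements, and by \Cref{thm:cd_circuits} every circuit it must break has length at most $2^k$. Hence among more than $g(k)$ identical siblings, some sibling is untouched and can be mapped onto the removed one, and a symmetric exchange handles the converse direction.

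\textbf{Finishing and main obstacle.} After exhaustively applying the rule (polynomially many times, each application computable in polynomial time over $\FF$), the instance has size bounded by a function of $k$ and $|\FF|$. On it we evaluate \Cref{def:eight-parameters} by brute force. The main obstacle I expect is the correctness of the reduction rule for c-depth. Contracting an element of one subtree changes the ambient space and hence the types of its siblings, so the types must be defined relative to $W_x$ plus the span of the up to $k$ elements already contracted. I would first try to refine the type to include the action of all at most $|\FF|^{D+k}$ possible contracted subspaces, and then show that the threshold $g(k)=k+1$ copies per refined type suffices.
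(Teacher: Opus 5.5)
\textbf{The gap: the reduction rule is never shown to be safe.} Your dualisation to $\cd$ and your use of \Cref{algo:fpt_csd} to get a bounded-depth structure are fine. Two small corrections there. First, $\csd(N)\le\cd(N)\le k$ already holds by \Cref{rem:starstronger}. Second, \Cref{thm:csclosurexx} gives only an abstract extension built from relatively free extensions, which need not be $\FF$-representable (cf.\ the Fano example). Your vectors on tree edges therefore have to come from a row-equivalent matrix of small dual tree-depth instead, via \Cref{thm:min_td_equivalences}(b) and \Cref{cor:csdmatrixeq}.

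The real gap is that deleting a surplus same-type sibling subtree is never proved to preserve $\cd\le k$. That is the entire content of a kernelization, and the argument you sketch does not work.

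For the direction ``removal does not increase'', your pigeonhole rests on a false premise. A witness for $\cd(N)\le k$ is not one contraction sequence. It is a recursion tree that branches at every disconnection, so only each root-to-leaf branch contracts at most $k$ elements, and globally every sibling may be touched. Take the cycle matroid of $m\ge 2$ triangles sharing one edge $w$. Every witness of its c-depth $3$ contracts $w$ first, and then contracts one element in each of the $m$ resulting parallel pairs. So no interchangeable sibling is left untouched. A correct argument has to work branch by branch, using automorphisms that swap identical siblings while fixing everything already contracted. It also has to track how the remaining copies distribute among components after each disconnection. Your threshold $g(k)=k+1$ is not justified by anything you wrote.

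For the converse direction, interchangeability of copies cannot suffice at all. All elements of the circuit $U_{m-1,m}$ are pairwise interchangeable, yet $\cd(U_{m-1,m})=m$, while deleting one element leaves a matroid of c-depth $1$. Any valid proof must therefore use that siblings meet the rest only through the at most $D$-dimensional space $W_x$. Your proposal contains no such argument: the refined types and the threshold are only announced as something you would try.

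\textbf{How the paper avoids this.} The paper needs no exchange lemma. The property $\dd(M)\le k$ is expressible in MSO logic of matroids directly from the recursive definition, with one quantified element per recursion level, since connectivity is MSO-definable. Moreover $\bw(M)\le\dd(M)$. Hlin\v{e}n\'y's theorem~\cite{Hli06} then decides MSO properties of $\FF$-represented matroids of bounded branch-width in polynomial time, without needing a decomposition. If you want to keep your combinatorial route, the missing piece is exactly a proved lemma of the following form: above some threshold depending on $k$, $D$ and $|\FF|$, deleting one of several same-type sibling subtrees preserves the answer to ``$\cd\le k$'' in both directions.
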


In contrast to the combinatorial algorithm of \Cref{algo:fpt_csd},
the algorithm in \Cref{algo:fpt_dd} follows a model-checking approach,
relying on the fact that the property of having deletion-depth at most $d$
is definable in monadic second-order logic (MSO) directly from the definition 
(one simply uses a special existential quantifier for each level of the recursion).
Since $\bw(M)\le \dd(M)$ for every matroid $M$, and MSO-definable properties can be decided in polynomial time
on matroids of bounded branch-width which are represented over a finite field~$\FF$ \cite{Hli06}
(\cite{Hli06} does not need a branch-decomposition of $M$ on input, only an $\FF$-representation), 
this yields an \FPT algorithm parameterised by $d$ and $|\FF|$ for deciding whether $\dd(M)\le d$.

By duality between d-depth and c-depth, as a corollary, we obtain that computing c-depth of matroids represented
over finite fields can also be done in FPT time.

\begin{corollary}
    \label{cor:cd_fpt}
    Let $\FF$ be a finite field. There exists an \FPT algorithm that given an $\FF$-represented matroid $M$ and an integer $k$
    decides whether the c-depth of $M$ is at most $k$ in time $f(|\FF|, k)\cdot|M|^{\mathcal{O}(1)}$. 
\end{corollary}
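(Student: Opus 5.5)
The statement to prove is \Cref{cor:cd_fpt}: an \FPT algorithm deciding whether the c-depth of an $\FF$-represented matroid is at most $k$. The plan is to reduce to \Cref{algo:fpt_dd} via duality, using \Cref{obs:depthsduality}\,b), which gives $\cd(M)=\dd(M^*)$ for every matroid $M$.

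\textbf{Step 1: compute a representation of the dual.} We are given an $\FF$-representation $\ve A\in\FF^{m\times n}$ of $M$. By Gaussian elimination we row-reduce $\ve A$, discard zero rows, and permute columns to reach the standard form $[\,I_r \mid \ve D\,]$, where $r=\rnk(M)$. The matrix $[\,-\ve D^{T}\mid I_{n-r}\,]$, with columns permuted back to the original order, is then an $\FF$-representation of $M^*$. This is the classical fact from Oxley~\cite{Oxley}. The whole step takes $\mathcal{O}(n^3)$ field operations, and the resulting matrix has the same ground set, so $|M^*|=|M|$.

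\textbf{Step 2: run the d-depth algorithm.} We apply the algorithm of \Cref{algo:fpt_dd} to the represented matroid $M^*$ with the same integer $k$. It answers whether $\dd(M^*)\le k$, and since $\dd(M^*)=\cd(M)$, this is exactly the answer to whether $\cd(M)\le k$. The running time is $f(|\FF|,k)\cdot|M|^{\mathcal{O}(1)}$ plus the polynomial cost of Step~1, which is of the required form.

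\textbf{Main check.} The only point needing care is that the duality in \Cref{obs:depthsduality} holds exactly with our convention that a one-element matroid has depth $1$. It does: the dual of a component is a component of the dual, a single element stays a single element, and contraction in $M$ corresponds to deletion in $M^*$. So the recursive definitions of c-depth and d-depth correspond step by step under duality. Beyond that, the argument only needs the dual representation to be computable in polynomial time over $\FF$, which Step~1 provides.
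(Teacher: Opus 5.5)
Your proposal is correct and follows the same route as the paper, which derives this corollary directly from \Cref{algo:fpt_dd} via the duality $\cd(M)=\dd(M^*)$ of \Cref{obs:depthsduality}. Your explicit polynomial-time construction of an $\FF$-representation of $M^*$ from the standard form $[\,I_r\mid \ve D\,]$ simply spells out a step that the paper leaves implicit.
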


Finally, the MSO formula from \Cref{algo:fpt_dd} can be routinely extended to handle a contraction of an element
equivalently to its deletion (this applies to each level of the recursion in the construction of the formula),
so we can also obtain an analogous result for the notion of cd-depth.

\begin{corollary}
    \label{cor:cdd_fpt}
    Let $\FF$ be a finite field. There exists an \FPT algorithm that given an $\FF$-represented matroid $M$ and an integer $k$
    decides whether the cd-depth of $M$ is at most $k$ in time $f(|\FF|, k)\cdot|M|^{\mathcal{O}(1)}$. 
\end{corollary}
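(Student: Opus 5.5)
The plan is to reduce the question to testing a monadic second-order sentence on a matroid of bounded branch-width, just as in the proof of \Cref{algo:fpt_dd}. The first step is to bound the branch-width by the cd-depth. We have $\bw(M)\le\dd(M)$, and the same inductive argument should also give $\bw(M)\le\cdd(M)$. In a connected matroid, a single deletion or contraction of an element $e$ changes the connectivity function $\lambda$ of any set by at most one. So we take the components of $M\setminus e$ or $M\contract e$, combine their branch-decompositions obtained by induction, and attach a leaf for $e$; every edge width then grows by at most one. The disconnected case is handled by joining the component decompositions along edges of width~$0$.

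Alternatively, we can get a functional bound without any new argument. By \Cref{lem:starredineq} and \Cref{thm:bd-csdsd}, $\bd(M)\le\csdsd(M)\le\cdd(M)$, and branch-width is bounded by a function of branch-depth (a $(k,k)$-decomposition can be refined into a branch-decomposition of width bounded in terms of~$k$). Either route gives $\bw(M)\le g(\cdd(M))$ for a computable $g$. The algorithm can therefore first use \cite{HliO08} to test whether $\bw(M)\le g(k)$, and answer ``no'' if the test fails.

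Next, for each fixed $k$ I would write an MSO sentence $\phi_k$ such that $M\models\phi_k$ if and only if $\cdd(M)\le k$. This follows the formula for d-depth by unrolling \Cref{def:eight-parameters} to depth $k$, and the formula has to express ``the current minor is connected'' together with its components. At each level of the recursion we existentially quantify an element $e$ together with a bit saying whether $e$ is deleted or contracted. After $k$ levels the current minor has the form $M\setminus X\contract Y$, where $X$ and $Y$ are the sets of quantified elements restricted to a component. Independence in such a minor is MSO-expressible in $M$: a set $I$ is independent in $M\setminus X\contract Y$ if and only if $I\cup J$ is independent in $M$, where $J$ is a basis of $Y$ in $M$. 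Hence circuits, and therefore components (``$x,y$ lie in a common circuit''), of the current minor are definable.

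The recursion is encoded by quantifying, at each level, a component $Z$ of the current minor restricted to the ground set. Universally over components we require $|Z|\le1$, or the existence of an element of $Z$ and an operation leading to the next level. The sets of deleted and contracted elements are kept as two set variables that grow level by level, and the formula size depends only on $k$. Finally, we apply \cite{Hli06} to $\phi_k$ on the $\FF$-represented $M$ of branch-width at most $g(k)$, which yields running time $f(|\FF|,k)\cdot|M|^{\mathcal O(1)}$.

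I expect the main obstacle to be making the MSO encoding of mixed deletions and contractions correct. The key point is that a contraction inside a component of a minor is again expressible through a single basis of the accumulated contracted set. I would verify this by checking that components of $(M\setminus X\contract Y)\restriction Z$ coincide with components of the minor on $Z$. The branch-width bound is routine by comparison.
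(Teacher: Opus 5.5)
Your proposal is correct and follows essentially the same route as the paper: extend the MSO formula for d-depth from \Cref{algo:fpt_dd} so that each recursion level may either delete or contract the chosen element, then apply the model-checking result of \cite{Hli06} on $\FF$-represented matroids of bounded branch-width. You also make explicit the needed bound $\bw(M)\le\cdd(M)$, which the paper leaves implicit; it cannot simply reuse $\bw(M)\le\dd(M)$, since bounded cd-depth does not bound d-depth.
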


A direct extension of the model-checking approach of \Cref{algo:fpt_dd} to our starred measures, namely to the c$^*$d-depth and the cd$^*$-depth,
is not easy due to the impossibility of defining (even just existentially) a single c$^*$- or d$^*$-transformation.
This shortcoming of monadic second-order logic of matroids can be overcome with the following alternative description of the c$^*$d-depth (which dually applies also to the cd$^*$-depth).

\begin{definition}
Let $M$ be a matroid. The \emph{gd-depth} of $M$ is defined as follows:
\begin{enumerate}
\item gd-depth$(M)=1$ if $M$ has only one element, and
\item otherwise gd-depth$(M)= \min\{d,g\}$ where $d$ is the minimum, ranging over all $e\in E(M)$, of $1+$gd-depth$(M\setminus e)$,
and $g$ is the minimum, ranging over all bipartitions $(A,B)$ of~$E(M)$, of the value $\lambda_M(A)+\max\{$gd-depth$(M\contract A),$\,gd-depth$(M\contract B)\}$.
\label{it:gdiii}
\end{enumerate}
\end{definition}


\begin{lemma}\label{lem:gddepth}
    For every matroid $M$, the gd-depth of $M$ equals the c$^*\!$d-depth of~$M$.
\end{lemma}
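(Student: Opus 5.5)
We prove the two inequalities $\text{gd-depth}(M)\le\csdd(M)$ and $\csdd(M)\le\text{gd-depth}(M)$ separately, each by induction on $|E(M)|$ (and on the value of the parameter). In both, the one-element case is trivial since both measures equal $1$. One remark used throughout: although gd-depth has no explicit rule for disconnected matroids, taking $A$ to be a component in \Cref{it:gdiii} gives $\lambda_M(A)=0$, $M\contract A=M\restriction B$ and $M\contract B=M\restriction A$. So the $g$-option covers the disconnected case.

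\emph{Direction gd-depth $\le$ c$^*$d-depth.} Apply \Cref{lem:startinggutsd} to get a tripartition $(A,B,D)$ with $A\neq\emptyset\neq B$. Its proof actually yields the sharper bound
\[
\max\{\csdd(M\setminus D\contract A),\csdd(M\setminus D\contract B)\}\le\csdd(M)-\lambda_{M\setminus D}(A)-|D|,
\]
and we use $\lambda_{M\setminus D}(A)$ in what follows. Deleting the elements of $D$ one at a time with the $d$-option gives $\text{gd-depth}(M)\le |D|+\text{gd-depth}(M\setminus D)$; here $M\setminus D$ still has at least two elements because $A,B\neq\emptyset$. Next apply the $g$-option to $M\setminus D$ with the bipartition $(A,B)$. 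Finally, use induction on the smaller matroids $M\setminus D\contract A$ and $M\setminus D\contract B$. Chaining these three steps gives the claim.

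\emph{Direction c$^*$d-depth $\le$ gd-depth.} We need two facts.
\begin{enumerate}[(i)]
\item $\csdd(M)\le1+\csdd(M\setminus e)$ for every $M$ and every $e\in E(M)$.
\item $\csdd(M)\le\lambda_M(A)+\max\{\csdd(M\contract A),\csdd(M\contract B)\}$ for every bipartition $(A,B)$ of $E(M)$.
\end{enumerate}
Given these, induction on the value of gd-depth finishes the proof.

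Fact (i) is easy. If $M$ is connected, it is the definition. Otherwise take the component $C$ containing $e$: we have $\csdd(C)\le1+\csdd(C\setminus e)$, and the components of $C\setminus e$ are components of $M\setminus e$.

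For fact (ii), first reduce to connected $M$, by induction on $|E|$. If $M$ is disconnected, treat each component $C$ with the bipartition $(A\cap C,B\cap C)$. We have $\lambda_C(A\cap C)\le\lambda_M(A)$, and $C\contract(A\cap C)$ is a direct summand of $M\contract A$; if one side of the bipartition of $C$ is empty, $C$ is itself a direct summand of $M\contract A$ or $M\contract B$. Now let $M$ be connected, so $\ell:=\lambda_M(A)>0$ and $(A,B)$ is a connected bispan. \Cref{thm:GGWadde} gives a relatively free extension $M^+_1$ by $e_1$ with $e_1\in\clo(A)\cap\clo(B)$. Set $M_1=M^+_1\contract e_1$, which is a c$^*$-transformation. \Cref{lem:omegaineq}\,b) with $k=2$, where $\omega_M(A,B)=\lambda_M(A)$, gives $\lambda_{M_1}(A)=\ell-1$. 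Moreover, since $e_1\in\clo_{M^+_1}(A)$,
\[
M_1\contract A=M^+_1\contract A\contract e_1=M^+_1\contract A\setminus e_1=M\contract A,
\]
and symmetrically for $B$. Hence $\csdd(M)\le 1+\csdd(M_1)$, and we recurse on $M_1$. When $M_1$ is disconnected we do not extend it as a whole; instead we pass to its components as in the reduction above. Since $\lambda$ is additive over direct sums, every branch of this recursion performs at most $\ell$ c$^*$-transformations in total. The recursion ends at a matroid $N$ with $\lambda_N(A)=0$, so $N=N\restriction A\oplus N\restriction B=N\contract B\oplus N\contract A$, and the contraction identities above give $N\contract A=M\contract A$ and $N\contract B=M\contract B$. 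This yields (ii).

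The main obstacle is this bookkeeping in (ii): \Cref{def:eight-parameters} allows a c$^*$-step only on a connected matroid, but intermediate matroids may become disconnected. The plan is to recurse componentwise, using additivity of $\lambda$ over components and the fact that restrictions and contractions by $A$ commute with taking direct summands.
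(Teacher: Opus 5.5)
Your proposal is correct and follows the paper's proof. The direction $\text{gd-depth}\le\csdd$ goes through \Cref{lem:startinggutsd}: delete $D$, apply the $g$-option to $M\setminus D$, and induct. The converse goes through iterated relatively free extensions from \Cref{thm:GGWadde}, and there your componentwise bookkeeping, fact (i) for disconnected $M$, and the identity $M_1\contract A=M\contract A$ fill in what the paper leaves to ``by definition''.

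Your switch to $\lambda_{M\setminus D}(A)$ is the right repair: it is what the proof of \Cref{lem:startinggutsd} actually establishes and what the $g$-option on $M\setminus D$ needs. The lemma as printed, with $\lambda_M(A)$, already fails for $U_{2,3}$, and the paper's equality $\lambda_M(A)=\lambda_{M'}(X)$ does not hold in general. Note, though, that $\lambda_{M\setminus D}(A)\le\lambda_M(A)$, so this is the weaker bound, not a sharper one.
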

\begin{proof}
We start with a proof in the `$\geq$' direction, using a structural induction on the definition of gd-depth.
If $M$ has one element, then gd-depth$(M)=1=\csdd(M)$, as required.
If gd-depth$(M)=1+$gd-depth$(M\setminus e)$, then gd-depth$(M\setminus e)\geq\csdd(M\setminus e)$ by induction and $\csdd(M)\leq1+\csdd(M\setminus e)$ by the definition,
and so gd-depth$(M)\geq1+\csdd(M\setminus e)\geq\csdd(M)$.
At last, assume that, up to symmetry, gd-depth$(M)=\lambda_M(A)+$gd-depth$(M\contract A)$ and \mbox{gd-depth}$(M)\geq\lambda_M(A)+$gd-depth$(M\contract B)$ for a bipartition $(A,B)$ of~$E(M)$.
By induction, gd-depth$(M\contract A)\geq\csdd(M\contract A)$.
By iterated application of \Cref{thm:GGWadde}, there is a sequence of $\lambda_M(A)=\lambda_M(B)$ c$^*$-transformations of the matroid $M$ which result in the direct sum of the matroids $M\contract A$ and $M\contract B$.
This, in turn, certifies by definition that $\csdd(M)\leq\lambda_M(A)+\csdd(M\contract A)$,
and we again conclude that gd-depth$(M)=\lambda_M(A)+$\mbox{gd-depth}$(M\contract A)\geq\lambda_M(A)+\csdd(M\contract A)\geq\csdd(M)$.

In the opposite `$\leq$' direction, we employ \Cref{lem:startinggutsd}.
Again, if $M$ has one element, then gd-depth$(M)=1=\csdd(M)$.
So, we may consider that $M$ has more than one element, and that there exists a tripartition $(A,B,D)$ of $E(M)$ with the properties claimed in \Cref{lem:startinggutsd}.
In particular, for each $X\in\{A,B\}$, we have $\csdd(M\setminus D\contract X)+\lambda_{M}(A)+|D| \leq \csdd(M)$.
By induction, gd-depth$(M\setminus D\contract X)\leq\csdd(M\setminus D\contract X)$.
Let $M'=M\setminus D$, and note that $\lambda_{M}(A)=\lambda_{M'}(X)$.
Since $(A,B)$ is now a bipartition of $E(M')$, from the definition we get gd-depth$(M')\leq\lambda_{M'}(X)+\max_{X\in\{A,B\}}\,$gd-depth$(M'\contract X)$.
Likewise, we get from the definition that gd-depth$(M)\leq|D|+\,$gd-depth$(M')$.
Altogether, gd-depth$(M)\leq|D|+\lambda_{M'}(X)+\max_{X\in\{A,B\}}\,$gd-depth$(M'\contract X)\leq\csdd(M)$.
\end{proof}

We may now formulate a new algorithmic result for the c$^*\!$d-depth and cd$^*$-depth as an easy corollary of \Cref{lem:gddepth}.

\begin{corollary}
    \label{cor:csdd_fpt}
    Let $\FF$ be a finite field. There exists an \FPT algorithm that given an $\FF$-represented matroid $M$ and an integer $k$
    decides whether the c$^*\!$d-depth of $M$ (resp., the cd$^*$-depth of $M$) is at most $k$ in time $f(|\FF|, k)\cdot|M|^{\mathcal{O}(1)}$. 
\end{corollary}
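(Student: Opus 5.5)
We follow the model-checking route of \Cref{algo:fpt_dd}, with \Cref{lem:gddepth} replacing the recursive definition of c$^*$d-depth, which is not MSO-expressible, by the recursive definition of gd-depth. The point of gd-depth is that it only uses deletions of elements, contractions of subsets $A$ or $B$ of the current ground set, and the value $\lambda$ of a bipartition. Deletions and contractions of sets of original elements are MSO-interpretable: a minor $M\setminus D\contract C$ is determined by the set variables $D,C$, and its independent sets can be defined from those of $M$. It remains to express, for a fixed constant $j\le k$, that $\lambda_{M'}(A)\le j$ in a minor $M'$. This is MSO-definable: we existentially quantify bases of $A$, of $E(M')-A$ and of $M'$ (each as independent sets in the minor), and compare their cardinalities up to the bounded difference $j$. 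Comparing sizes that differ by at most a constant is MSO-expressible by guessing at most $j$ extra elements and requiring a bijection-free inclusion argument. Concretely, $\lambda\le j$ holds if and only if there are a basis $I_A$ of $A$, a basis $I_B$ of $B$, and a set $Z\subseteq I_A\cup I_B$ with $|Z|\le j$ such that $(I_A\cup I_B)-Z$ is independent.

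We then build, by induction on $k$, an MSO sentence $\varphi_k(D_0,C_0)$ over the original matroid $M$. It asserts that the minor $M\setminus D_0\contract C_0$ has gd-depth at most $k$. Either the minor has at most one element, or some element $e$ satisfies $\varphi_{k-1}(D_0\cup\{e\},C_0)$, or there exist a bipartition $(A,B)$ of the remaining ground set and some $j<k$ with $\lambda\le j$ such that $\varphi_{k-j}(D_0,C_0\cup A)$ and $\varphi_{k-j}(D_0,C_0\cup B)$ both hold. The case $j=0$ is allowed here; it makes the recursion through components well-founded, since $A,B\ne\emptyset$ strictly shrinks the ground set. The formula length depends only on $k$, although the $j=0$ branch does not decrease $k$. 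To handle this, we instead induct on a lexicographic measure, or we observe that the recursion depth in $A,B$ is bounded by the number of splits. The cleanest fix is to note that a sequence of zero-cost splits amounts to passing to components, and connectivity is MSO-definable. So we replace the $j=0$ option by the statement ``every component has gd-depth $\le k$'' and use $j\ge1$ otherwise. This keeps the recursion strictly decreasing in $k$.

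Finally, by \Cref{lem:gddepth} the sentence $\varphi_k(\emptyset,\emptyset)$ holds if and only if $\csdd(M)\le k$. To apply the theorem of \cite{Hli06} we need bounded branch-width. This follows from $\bw(M)\le\bd(M)+O(1)$ and $\bd(M)\le\csdsd(M)\le\csdd(M)$ (\Cref{thm:bd-csdsd}, \Cref{lem:starredineq}), together with the standard fact that branch-depth bounds branch-width. So we first test branch-width at most $f(k)$ using \cite{HliO08}, answer no if it fails, and otherwise run the MSO model checker; the total time is $f(|\FF|,k)\cdot|M|^{\mathcal{O}(1)}$. The cd$^*$-depth case follows by duality (\Cref{obs:depthsduality}), since a representation of $M^*$ is computable in polynomial time. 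The main obstacle is making the definability of $\lambda\le j$ and of the recursion on minors fully rigorous within matroid MSO, especially the treatment of the zero-connectivity splits.
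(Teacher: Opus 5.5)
Your proposal is correct and follows the paper's route. Like the paper, you express gd-depth${}\le k$ in matroid MSO via \Cref{lem:gddepth}, run the model checker on inputs of bounded branch-width as in \Cref{algo:fpt_dd}, and handle cd$^*$-depth by dualising the representation. Your extra details fill in what the paper leaves implicit: the $|Z|\le j$ encoding of $\lambda\le j$; replacing zero-cost splits by a clause over components, evaluated with the connected-case formula; and bounding branch-width via $\bd\le\csdsd\le\csdd$. The component clause is justified because gd-depth equals c$^*$d-depth, which is by definition the maximum over components.
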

\begin{proof}
We follow the model-checking approach of \Cref{algo:fpt_dd} and use \Cref{lem:gddepth} in it:
for every integer $d$ we express the property that gd-depth$(M)\leq d$ in MSO logic of matroids in a way analogous to \cite{BriKKPS24}.
In the case of cd$^*$-depth, we apply the same algorithm to the dual matroid~$M^*$.
\end{proof}

\subsection{Connection to graph depth parameters}\label{sub:graphdepth}

First, we compare the tree-depth of a graph with the c-depth of its cycle matroid $M(G)$.
It is well known that graph classes of bounded tree-depth are precisely those in which all paths have bounded length.
An analogous statement holds for $2$-connected graphs and cycles~\cite[Section~6.2]{sparsity-book}.

\begin{proposition}[e.g., \cite{sparsity-book}]\label{thm:cycles-treedepth}
~
\begin{enumerate}[(a)]
\item If $\ell$ is the length of the longest path in a graph $G$, then $\lceil\log_2 \ell\rceil \le \td(G) \le \ell$.
\item If $G$ is a $2$-connected graph and $\ell$ is the length of the longest cycle in $G$, then
\[1 + \lceil\log_2 \ell\rceil \le \td(G) \le 1 + (\ell -2)^2.\]
\end{enumerate}
\end{proposition}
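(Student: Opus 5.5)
The statement is cited from the sparsity literature, so I plan to reprove it from the standard facts on paths, DFS trees and elimination trees; part (b) will rest on part (a).

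For (a), lower bound, I will show by induction that a path $P$ with $\ell$ vertices has $\td(P)\ge\lceil\log_2(\ell+1)\rceil$. Removing any vertex from $P$ leaves a component with at least $\lceil(\ell-1)/2\rceil$ vertices, and the recursive characterisation of tree-depth then gives the bound. Since tree-depth is subgraph-monotone, this already gives $\lceil\log_2\ell\rceil\le\td(G)$, whether $\ell$ counts edges or vertices. For the upper bound I take a DFS forest of $G$. Every edge of $G$ joins an ancestor--descendant pair in a DFS tree, so $G\subseteq\cl(F)$. Each root-to-leaf path of $F$ is a path of $G$, so the height of $F$ is at most $\ell+1$ vertices. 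To get exactly $\ell$ I will fix the length convention, reading $\ell$ as the number of vertices on the longest path, so that the height is at most $\ell$.

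For (b), the lower bound: if $C$ is a longest cycle, of length $\ell$, then $\td(G)\ge\td(C)$. Deleting one vertex of $C$ leaves a path on $\ell-1$ vertices. Since $C$ is connected, $\td(C)=1+\td(P_{\ell-1})\ge 1+\lceil\log_2\ell\rceil$, using the path bound $\td(P_m)=\lceil\log_2(m+1)\rceil$.

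For the upper bound I will use the classical fact that in a $2$-connected graph with circumference $\ell$, every path has at most roughly $(\ell-2)^2/\text{const}$ vertices. More precisely, if $P$ is a long path, $2$-connectivity and Menger's theorem give two disjoint paths from its ends back to a longest cycle $C$. A standard argument then shows that any path in $G$ has length at most about $\ell^2/4$, giving $|V(P)|\le(\ell-2)^2$ up to small additive terms. Plugging this bound into the DFS argument of (a), after removing one root vertex, gives $\td(G)\le1+(\ell-2)^2$. In detail, I delete one vertex $v$ of $G$ to get the additive $1$. Since $G-v$ is connected, its longest path is the quantity to bound. I then show that every path of $G-v$ has at most $(\ell-2)^2$ vertices, by routing both ends of the path to $C$ via two disjoint connecting paths (Menger) and forming a cycle whose length would exceed $\ell$ otherwise.

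The main obstacle is getting the exact constant $(\ell-2)^2$ in the path-length bound; the lower bounds and the DFS reduction are routine. I would first try the known lemma that a $2$-connected graph containing a path of length $L$ has a cycle of length at least $2\sqrt{L}$. This gives $L\le\ell^2/4$, which is comfortably below $(\ell-2)^2$ once $\ell\ge 8$. The small cases $\ell\le 7$ would have to be handled by hand, or by accepting that the cited inequality is stated with generous constants.
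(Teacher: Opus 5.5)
The paper gives no proof of this proposition; it only cites \cite{sparsity-book}. Your part (a) is correct, including your observation that its upper bound forces $\ell$ to count vertices ($K_2$ has a longest path with one edge, but $\td(K_2)=2$). Your lower bound in (b) is also correct. The genuine problem is the upper bound in (b). The small cases you postpone cannot be ``handled by hand'', because the inequality is false for $\ell=3$. The triangle $K_3$ is $2$-connected with circumference $3$, yet $\td(K_3)=3>2=1+(3-2)^2$, and every $2$-connected simple graph of circumference $3$ is $K_3$. So the bound can only hold for $\ell\ge4$. This is harmless for the paper, which uses the proposition only functionally, in \Cref{cor:td-cd} and \Cref{cor:func-equiv-td-mtd}.

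Your detailed route does not work as written either. Deleting a vertex $v$ first gains nothing, since every path of $G-v$ is a path of $G$. Moreover, the claim that every path of $G-v$ has at most $(\ell-2)^2$ vertices already fails for $K_3$. The Menger step also does not do what you need. Menger only gives two disjoint $V(P)$--$V(C)$ paths, and these may leave $P$ at arbitrary vertices. The resulting cycle therefore bounds only the subpath of $P$ between the two attachment points, not $|V(P)|$. (If the paths could always start at the ends of $P$, you would get $|E(P)|<\ell/2$, which is false already for $K_{2,n}$.) So the quadratic bound genuinely rests on the lemma you quote. With that lemma no case analysis is needed. A path with $L$ edges satisfies $L\le\ell^2/4$, so the DFS bound of (a) applied to $G$ itself gives $\td(G)\le 1+\lfloor\ell^2/4\rfloor$. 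Since $\ell^2/4\le(\ell-2)^2$ holds precisely for $\ell\ge4$ (not only for $\ell\ge8$), this covers every case where the statement is true.

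If you want a self-contained quadratic bound instead, take a cycle $D$ through both ends of $P$, of length $d\le\ell$. Then $P$ splits into at most $d-1$ segments with ends on $D$ and interiors avoiding $D$. Each segment, together with the longer arc of $D$ between its ends, forms a cycle. Hence $|E(P)|\le(d-1)(\ell-\lceil d/2\rceil)\le\ell(\ell-1)/2$. This suffices for the functional use in the paper, though not for the constant $(\ell-2)^2$.
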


Combining \Cref{thm:cycles-treedepth} together with \Cref{thm:cd_circuits} immediately yields the following corollary originally stated in~\cite{DeVKO20}.

\begin{corollary}[\cite{DeVKO20}]\label{cor:td-cd}
There are functions $f$ and $g$ such that for any graph $G$, $\cd(M(G)) \le f(\td(G))$, and for any $2$-connected graph $G$, $\td(G) \le g(\cd(M(G)))$.
\end{corollary}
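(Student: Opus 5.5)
The plan is to chain \Cref{thm:cycles-treedepth}(b) with \Cref{thm:cd_circuits}, using the fact that the circuits of the cycle matroid $M(G)$ are exactly the edge sets of cycles of $G$. This gives $u(M(G))=\ell$, where $\ell$ is the length of the longest cycle in $G$. When $G$ is a forest, $M(G)$ has no circuit and the convention $u(M(G))=1$ applies.

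For the first inequality, take an arbitrary graph $G$. A cycle of length $\ell$ contains a path with $\ell$ vertices, so the longest path of $G$ has length at least $\ell-1$. By \Cref{thm:cycles-treedepth}(a), $\log_2(\ell-1)\le\td(G)$, and hence $\ell\le 2^{\td(G)}+1$. The degenerate cases need a separate check. If $G$ is acyclic, then $u=1$. If $G$ has loops or parallel edges, then $\ell\in\{1,2\}$ and the bound still holds; tree-depth ignores multiplicity, but those short cycles are covered by the additive constant. By the upper bound of \Cref{thm:cd_circuits},
\[\cd(M(G))\le u(u+1)/2,\]
so we may set $f(t)=(2^t+1)(2^t+2)/2$. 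The convention $\td(G)\ge1$ for a nonempty graph avoids trouble at small values.

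For the second inequality, let $G$ be $2$-connected. \Cref{thm:cycles-treedepth}(b) gives $\td(G)\le 1+(\ell-2)^2$. The lower bound of \Cref{thm:cd_circuits} gives $\log_2\ell\le\cd(M(G))$, so $\ell\le 2^{\cd(M(G))}$. Together these yield $g(c)=1+(2^c)^2$. Small $2$-connected graphs, such as $K_2$ or a pair of parallel edges, where $\ell\le2$, are handled by making $g$ large enough.

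There is no real obstacle here. The only step needing care is the correspondence between path length in \Cref{thm:cycles-treedepth}(a) and cycle length, together with the conventions for loops, multigraphs and forests. Everything else is direct substitution of earlier results.
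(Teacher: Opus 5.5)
Your proposal is correct and matches the paper's argument: the paper obtains the corollary directly by combining \Cref{thm:cycles-treedepth} with \Cref{thm:cd_circuits}, using that the circuits of $M(G)$ are exactly the cycles of $G$. As you do, part (a) handles the first bound via a path inside a longest cycle, and part (b) handles the $2$-connected bound. Your separate treatment of forests, loops and parallel edges (where, for instance, the explicit bound $1+(\ell-2)^2$ would fail for a digon) is extra care that the paper leaves implicit.
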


Note that, by \Cref{cor:xd_circuits}, ~\Cref{cor:td-cd} holds if and only if it holds when the c-depth is replaced by \csdepth.
Likewise, referring to \Cref{thm:mtreedepth-rel} we also immediately derive:

\begin{corollary}\label{cor:func-equiv-td-mtd}
There are functions $f$ and $g$ such that for any $2$-connected graph $G$, $\mtd(M(G)) \le f(\td(G))$ and $\td(G) \le g(\mtd(M(G)))$.
\end{corollary}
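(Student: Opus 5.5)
The corollary follows by chaining \Cref{cor:td-cd} with the functional equivalences between c-depth, c$^*$-depth and matroid tree-depth established earlier. No new structural argument should be needed; the only care required is to keep track of which direction needs $2$-connectivity.

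For the upper bound on $\mtd(M(G))$, I start from \Cref{cor:td-cd}, which gives $\cd(M(G))\le f_0(\td(G))$ for any graph $G$. Remark~\ref{rem:starstronger} gives $\csd(M)\le\cd(M)$ for every matroid. The left inequality of \Cref{thm:mtreedepth-rel} (that is, \Cref{lem:mtreedepth-leq}) gives $\mtd(M)\le\csd(M)$. Chaining these,
\[\mtd(M(G))\le\csd(M(G))\le\cd(M(G))\le f_0(\td(G)),\]
so $f:=f_0$ works, and this direction holds even without $2$-connectivity.

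For the lower bound on $\mtd(M(G))$, I assume $G$ is $2$-connected and use the right inequality of \Cref{thm:mtreedepth-rel}: $\csd(M(G))\le\mtd(M(G))^2+1$. Next I convert c$^*$-depth to circumference via \Cref{thm:csd_circuits}, namely $u(M(G))\le 2^{\csd(M(G))}$. Since $G$ is $2$-connected, circuits of $M(G)$ are exactly cycles of $G$, so the longest cycle of $G$ has length $\ell=u(M(G))$. \Cref{thm:cycles-treedepth}(b) then gives $\td(G)\le 1+(\ell-2)^2$. Composing these yields $\td(G)\le g(\mtd(M(G)))$ with
\[g(k)=1+\bigl(2^{k^2+1}\bigr)^2 .\]
Equivalently, I could compose with $\td(G)\le g_0(\cd(M(G)))$ from \Cref{cor:td-cd}, using $\cd\funle\csd$ from \Cref{cor:xd_circuits}.

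The only step I expect to need care is degenerate inputs in the second direction, such as a single edge or a $2$-connected graph with loops or parallel edges, where the circumference conventions ($u=1$ when there is no circuit) must still match \Cref{thm:cycles-treedepth}. If necessary, I will enlarge $g$ by an additive constant to absorb these finitely many small cases. The functions obtained are clearly computable.
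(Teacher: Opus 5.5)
Your proposal is correct and follows the paper's route: the paper derives this corollary immediately from \Cref{cor:td-cd}, the equivalence $\cd\funeq\csd$ (\Cref{cor:xd_circuits}) and \Cref{thm:mtreedepth-rel}, which is exactly your chain. The only difference is that you unfold the second direction through \Cref{thm:csd_circuits} and \Cref{thm:cycles-treedepth}(b), which is how \Cref{cor:td-cd} is obtained anyway; your explicit bound $g(k)=1+(2^{k^2+1})^2$ checks out, since $(\ell-2)^2\le\ell^2$ for $\ell\ge1$.
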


Observe that the requirement of $2$-connectedness in the second part of \Cref{cor:td-cd} and of \Cref{cor:func-equiv-td-mtd} is necessary because the class of all trees has unbounded tree-depth but the cycle matroid of any tree has c-depth $1$ and matroid tree-depth~$0$.

We remark that all matroids used in the proof of \Cref{pro:allvalid} are graphic (recall the ``fat cycle'' depicted in \Cref{fig:fat-cycle}), which means that the relations depicted in \Cref{fig:hasse-csdsd} are valid when restricted to graphic matroids.
However, when restricted to cycle matroids of $3$-connected graphs, all
parameters between c-depth and \csdsdepth (equivalently, branch-depth) become functionally equivalent to each other.

\begin{proposition}[{\cite[Proposition 5.19]{DeVKO20}}]\label{prop:3-connected}
There are functions $f$ and $g$ such that for any $3$-connected graph $G$ of tree-depth $t$, we have
$f(t) \le \csdsd(M(G)) \le \cd(M(G)) \le g(t)$.
\end{proposition}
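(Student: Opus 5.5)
The middle inequality and the upper bound follow from earlier results, so the real work is the lower bound $f(t)\le\csdsd(M(G))$.

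For the middle inequality $\csdsd(M(G))\le\cd(M(G))$, I chain \Cref{lem:starredineq} with the trivial bound $\cdd\le\cd$: this gives $\csdsd(M)\le\csdd(M)\le\cdd(M)\le\cd(M)$ for every matroid $M$. For the upper bound $\cd(M(G))\le g(t)$, I apply the first part of \Cref{cor:td-cd}. It holds for every graph, so $3$-connectivity is not used here.

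For the lower bound I reduce to branch-depth. By \Cref{thm:bd-csdsd} we have $\bd(M)\le\csdsd(M)$, so it suffices to show that $\bd(M(G))$ grows with $t=\td(G)$ for $3$-connected $G$. A $3$-connected graph is $2$-connected, so by \Cref{thm:cycles-treedepth}(b) $G$ has a cycle of length $\ell\ge 2^{\Omega(\sqrt{t})}$. I then need the structural step: a $3$-connected graph with a very long cycle contains a large wheel $W_n$ as a minor, with $n\to\infty$ as $\ell\to\infty$. I would argue this by taking the long cycle $C$ and using $3$-connectivity to find many vertex-disjoint $C$-paths (bridges) attaching to $C$ at spread-out points. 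Either many of them can be routed to a common region and contracted to a hub, which gives a wheel, or they form a long ``ladder'' of crossing chords, which also contracts to a large wheel. The known alternative outcome of large $3$-connected graphs, a big $K_{3,n}$ minor, must be ruled out here, since it has bounded circumference and bounded tree-depth.

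Since branch-depth is minor-monotone \cite{DeVKO20} (alternatively, $\csdsd$ is minor-monotone by \Cref{thm:minor-csdsd}), it remains to show $\bd(M(W_n))\ge c\log n$. I would prove this by a counting argument on a $(k,k)$-decomposition $(T,\sigma)$. Near the centre node, every subtree whose leaves carry a set $X$ satisfies $\lambda(X)\le k$. In the wheel, a set of rim-and-spoke elements has connectivity at least roughly the number of maximal rim intervals it contains. Hence each node splits the $2n$ elements into boundedly many interval-like pieces, up to $O(k)$ exceptional elements. Since the radius is at most $k$, after $k$ levels some leaf set of size $1$ must arise, which forces $n\le (O(k))^{O(k)}$.

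I expect the main obstacle to be the structural step ``long cycle plus $3$-connected implies large wheel minor''. Its proof needs care to exclude the $K_{3,n}$-type outcome and to get an explicit (computable) function. If that step gets messy, I would try instead to lower-bound $\bd(M(G))$ directly. For this I would use the long cycle $C$ together with the property that every separation $(X,Y)$ of $M(G)$ of small order splits $C$ into few arcs. This again comes from $3$-connectivity, because vertex separations of $G$ of order at most $2$ are trivial.
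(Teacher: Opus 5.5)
You handle the two easy inequalities correctly: $\csdsd\le\csdd\le\cdd\le\cd$ follows from \Cref{lem:starredineq}, and the upper bound follows from \Cref{cor:td-cd}. Reducing the lower bound to branch-depth via \Cref{thm:bd-csdsd} is also the natural reading; the paper gives no proof of its own and imports the statement from \cite{DeVKO20}. The genuine gap is the step you flag yourself, namely that a $3$-connected graph with a very long cycle has a large wheel minor. This is a nontrivial Ramsey-type statement, it is not among the paper's tools, and your sketch does not establish it. It does not follow from the Oporowski--Oxley--Thomas dichotomy. A graph can contain both a long cycle and a huge $K_{3,n}$ minor (a large grid, say), so the bounded circumference of $K_{3,n}$ rules nothing out. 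Nor is your proposed dichotomy an argument. If $C$ itself is the rim, the hub must lie inside a single $C$-bridge, so that option works only when one bridge has many attachments. Otherwise you must contract part of $C$ into the hub and then find a new rim avoiding it. Nothing in the sketch says how to do this when bridges have a bounded number (possibly more than two) of attachments in arbitrary crossing patterns. There are also two smaller slips, harmless for a functional statement. First, \Cref{thm:cycles-treedepth}(b) gives only $\ell\ge 2+\sqrt{t-1}$, not $2^{\Omega(\sqrt t)}$; for $K_n$ one has $\ell=t=n$. Second, your counting gives $\bd(M(W_n))=\Omega(\log n/\log\log n)$, not $c\log n$, and this is the true order: splitting the rim into $k$ intervals at every node gives a matching upper bound.

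The fallback you mention at the end closes the gap and makes wheels, minors and minor-monotonicity unnecessary, so it should be your main argument. For connected $G$ and $\emptyset\ne X\subsetneq E(G)$ one has $\lambda_{M(G)}(X)=\eta(X)-c(X)-c(E(G)-X)+1$. Here $\eta(X)$ is the number of vertices incident with both $X$ and $E(G)-X$, and $c(Y)$ is the number of components of the subgraph formed by the edges $Y$. First delete parallel edges and loops; this does not change $\td(G)$ and does not increase $\csdsd(M(G))$ by \Cref{thm:minor-csdsd}. Then, by $3$-connectivity, every component on either side has at least three boundary vertices, with two exceptions. It may be a single edge, or it may span all of $V(G)$, in which case it is the only component on its side. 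Since all degrees are at least $3$, no boundary vertex is an end of single-edge components on both sides. Counting now gives $\lambda_{M(G)}(X)\ge\eta(X)/6$. Moreover, every vertex where $C$ passes between $X$ and $E(G)-X$ is a boundary vertex.

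Now root a $(k,k)$-decomposition at a centre. Suppose the edges of $C$ meet $p\ge 2$ of the parts at some node. A uniformly random union $U$ of parts then has at least $p/2$ transitions along $C$ in expectation. Hence some $U$ has $\lambda_{M(G)}(U)\ge p/12$, which forces $p\le 12k$. All leaves have depth at most $k$, so $|C|\le(12k)^k$. Combined with $|C|\ge 2+\sqrt{t-1}$, this bounds $\bd(M(G))$, and hence $\csdsd(M(G))$, from below by an unbounded computable function of $t$.
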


We prove that the d-depth (and \dsdepth) of the cycle matroid of a $3$-connected graph $G$ is not functionally equivalent to the tree-depth of $G$.
Note that the following proposition can be easily generalised to $k$-connected graphs 
for any $k \ge 4$. 

\begin{proposition}\label{pro:K3nexample}
For each $n \ge 1$, the tree-depth of $K_{3, n}$ is at most $4$ and the d-depth of $M(K_{3,n})$ is at least $n$.
\end{proposition}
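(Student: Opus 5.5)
The statement has two independent parts, and I will prove them separately.

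\emph{Tree-depth of $K_{3,n}$.} Let the part of size three be $\{a,b,c\}$. I will take the rooted path $a,b,c$ and hang all $n$ vertices of the other side as leaves below $c$. The closure of this rooted tree contains every edge of $K_{3,n}$: each leaf is adjacent to all three of its ancestors $a,b,c$. Its height is $4$, so $\td(K_{3,n})\le 4$.

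\emph{Lower bound $\dd(M(K_{3,n}))\ge n$.} I will argue by induction on $n$, using \Cref{def:eight-parameters} for d-depth directly. The aim is to show that every deletion keeps a component that contains a restriction of the form $M(K_{3,n-1})$, possibly with some extra elements. The tools are:
\begin{itemize}
\item the structure of d-depth: deletions cannot create new connections, and a component of a restriction is a restriction of a component;
\item the fact that for $n\ge 2$, $K_{3,n}$ is $2$-connected, so $M(K_{3,n})$ is connected.
\end{itemize}
Monotonicity of d-depth under restriction is not stated earlier in the paper, so I will first prove it. The argument is the exact analogue of \Cref{lem:csdd-monotone-restrictions}, with only deletion steps, and it goes through verbatim since deletions commute. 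With this in hand I will prove $\dd(M(G))\ge n$ for every graph $G$ containing $K_{3,n}$ as a subgraph.

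The base case $n=1$ is trivial, since every d-depth is at least $1$.

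For the step with $n\ge2$, it suffices to bound the restriction $M(K_{3,n})$ itself. This matroid is connected, so its d-depth equals $1+\dd(M(K_{3,n})\setminus e)$ for some edge $e$. Say $e=av$ with $v$ on the $n$-side. Then $K_{3,n}-e$ still contains $K_{3,n-1}$, obtained by removing $v$. The component of $M(K_{3,n}\setminus e)$ containing those edges has d-depth at least $\dd(M(K_{3,n-1}))\ge n-1$, by restriction monotonicity and the induction hypothesis. Hence the total is at least $n$.

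The main obstacle is making sure restriction monotonicity of d-depth is correctly justified, since the paper only stated it for c$^*$d-depth. If needed, I will give the short induction explicitly. Alternatively, I can use \Cref{thm:cd_circuits}, but its logarithmic bound $\log_2 u^*$ only gives $\log n$ via cocircuits of size $n$, which is too weak. So the direct inductive deletion argument is the route.
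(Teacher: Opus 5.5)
Your proof is correct and follows essentially the same route as the paper: induction on $n$, connectivity of $M(K_{3,n})$ for $n\ge 2$, the fact that deleting any edge still leaves $K_{3,n-1}$ as a subgraph, and restriction-monotonicity of d-depth (which the paper calls trivial and you plan to justify via the argument of \Cref{lem:csdd-monotone-restrictions}). Your explicit rooted tree, the path $a,b,c$ with the $n$ vertices as leaves below $c$, just fills in the tree-depth bound that the paper states as an easy observation.
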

\begin{proof}
It can be easily observed that $K_{3, n}$ has tree-depth $4$, for every $n \ge 3$.
Let $M_n = M(K_{3, n})$ be a matroid.
Clearly, $\dd(M_1) = 1$.
Let $n > 1$ and observe that $M_n$ is a connected matroid.
Let $M' = M_n \setminus e$ for some $e \in E(M_n)$; observe that all choices of $e$ lead to the same matroid $M'$, up to isomorphism.
By \Cref{def:eight-parameters}, $\dd(M_n) = 1 + \dd(M')$.
Observe that $M_{n-1}$ is a restriction of $M'$, and so trivially, we obtain $\dd(M_{n-1})\leq\dd(M')$.
Hence, by induction hypothesis, $\dd(M') \ge \dd(M_{n-1}) \ge n-1$, which concludes the proof.
\end{proof}

Since matroid connectedness corresponds to graph $2$-connectedness, it is natural to compare matroid depth parameters also to the recently introduced \emph{$2$-tree-depth}~\cite{HuynhJMSW22}.
Recall that the decomposition of $G$ into blocks partitions the \emph{edges} of $G$ into maximal $2$-connected subgraphs and single edges, which corresponds to the components of the matroid $M(G)$.
We can define $2$-tree-depth as follows~\cite{hodor2025treedepth}:
\[ \td_2(G) = \begin{cases}
1 \text{ if } |V(G)| = 1,\\
\max_{i \in [k]} \td_2(B_i) \text{ if $G$ consists of blocks $B_1, \ldots, B_k$ where $k > 1$},\\
1 + \min_{v \in V(G)} \td_2(G - v) \text{ if } G \cong K_2 \text{ or  } G \text{ is $2$-connected}.
\end{cases}\]

Now we compare $2$-tree-depth with \csdsdepth and cd-depth. 

\begin{proposition}\label{prop:td2-no-csdsd}
There is no function $f$ such that for every graph $G$, $\csdsd(M(G)) \le f(\td_2(G))$.
\end{proposition}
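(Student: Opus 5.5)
The plan is to use wheels. Let $W_n$ be the wheel with $n$ spokes: a cycle $C_n$ (the rim) plus an apex vertex adjacent to every rim vertex. I will show that $\td_2(W_n)$ is bounded by an absolute constant while $\csdsd(M(W_n))$ is unbounded as $n\to\infty$. This rules out any function $f$.

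First, the upper bound on $2$-tree-depth, computed directly from the recursive definition.
\begin{itemize}
\item A single edge $K_2$ has $\td_2(K_2)=1+\td_2(K_1)=2$.
\item A path on at least three vertices is not $2$-connected, and its blocks are single edges, so $\td_2(P)=2$.
\item A cycle $C_n$ ($n\geq3$) is $2$-connected, and deleting any vertex leaves a path, so $\td_2(C_n)\leq 1+2=3$.
\item For $n\geq3$, the wheel $W_n$ is $2$-connected (indeed $3$-connected). Deleting the apex leaves $C_n$, so $\td_2(W_n)\leq 1+3=4$.
\end{itemize}
Hence $\td_2(W_n)\leq 4$ for all $n\geq 3$.

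Second, the lower bound on $\csdsd(M(W_n))$. For $n\geq3$, $W_n$ is a $3$-connected graph, so \Cref{prop:3-connected} applies and gives $f(\td(W_n))\leq\csdsd(M(W_n))$. The rim contains a path of length $n-1$, so by \Cref{thm:cycles-treedepth}(a), $\td(W_n)\geq\lceil\log_2(n-1)\rceil$, which is unbounded. It remains to know that the function $f$ in \Cref{prop:3-connected} tends to infinity. This is what that proposition is meant to express: on $3$-connected graphs, tree-depth and $\csdsd$ are functionally equivalent. If the cited $f$ were not explicitly unbounded, the upper bound $\cd(M(G))\leq g(t)$ points the wrong way, so I would instead use $\csdsd\funeq\bd$ (\Cref{thm:bd-csdsd}) together with the lower bound on branch-depth from \cite{DeVKO20}. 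That bound comes from the $3$-connected case, where long cycles force large branch-depth, and would give $\csdsd(M(W_n))\to\infty$ directly.

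Combining the two steps, the family $\{W_n\}$ has $\td_2\leq4$ but unbounded $\csdsd$ of the cycle matroid, which proves the proposition. The main point to check carefully is the second step: that the lower-bounding function in \Cref{prop:3-connected} is unbounded. If relying on the proposition as a black box is unconvincing, the fallback is a direct argument showing that $M(W_n)$ has large branch-depth. I would try to do this by showing that any branch-depth decomposition of small radius and small width must separate the rim into few large arcs. In the wheel, any partition of the rim into many arcs yields a bipartition of high connectivity, because each arc boundary is spanned by spokes.
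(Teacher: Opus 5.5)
Your proof is correct and follows the same route as the paper: you exhibit a family of $3$-connected graphs with $2$-tree-depth at most $4$ but unbounded tree-depth, and then invoke \Cref{prop:3-connected}, reading its lower-bound function as unbounded exactly as the paper does. The only difference is the family: the paper uses trees with two added universal vertices, while you use wheels, and your bound $\td_2(W_n)\le 1+\td_2(C_n)\le 4$ is the direct analogue of the paper's computation.
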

\begin{proof}
Let $\ca C$ be a graph class such that $G \in \ca C$ iff $G$ can be obtained from a tree (with at least two vertices) by adding two universal vertices.
Clearly, each graph in $\ca C$ is $3$-connected and $\ca C$ has unbounded tree-depth, which means that the matroid class $\{M(G) \sep G \in \ca C\}$ has unbounded \csdsdepth by \Cref{prop:3-connected}.
However, each graph in $\ca C$ has $2$-tree-depth $4$, which concludes the proof. 
\end{proof}

\begin{proposition}\label{prop:td2-cdd}
For any graph $G$, $\td_2(G) \le 2\cdot \cdd(M(G))$. 
\end{proposition}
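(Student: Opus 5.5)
We argue by induction on $|E(G)|+|V(G)|$, following the recursive definition of $\cdd$ in \Cref{def:eight-parameters} and comparing each step with the recursive definition of $\td_2$. Two standard properties of $2$-tree-depth do the main work, and we prove them first by a simultaneous induction on $|V(H)|$:
\begin{enumerate}[(P1)]
\item monotonicity: $\td_2(H')\le\td_2(H)$ whenever $H'$ is a subgraph of $H$;
\item vertex deletion: $\td_2(H)\le 1+\td_2(H-x)$ for every graph $H$ and every vertex $x\in V(H)$.
\end{enumerate}

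For (P1), every block of $H'$ lies inside some block of $H$. This reduces the claim to the case where $H$ is $2$-connected or $K_2$. Then $\td_2(H)=1+\td_2(H-v)$ for an optimal $v$. If $v\notin V(H')$, apply induction to $H'\subseteq H-v$. Otherwise, combine (P2) for $H'$ with induction applied to $H'-v\subseteq H-v$.

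For (P2), if $H$ is $2$-connected or $K_2$, the claim is the definition itself. Otherwise $\td_2(H)=\td_2(B)$ for some block $B$. If $x\notin V(B)$, then $B$ is a subgraph of $H-x$ and (P1) gives the claim. If $x\in V(B)$, then $\td_2(B)\le 1+\td_2(B-x)\le 1+\td_2(H-x)$, using the definition and then (P1) for $B-x\subseteq H-x$.

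Some care is needed with the degenerate cases: one-vertex blocks, loops, and isolated vertices. Since matroid components must correspond exactly to blocks, loops should be treated as single-edge blocks. I expect these base-case conventions, not the main induction, to be the main source of friction.

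Now the main induction. If $G$ has at most one edge, then $\cdd(M(G))=1$ and $\td_2(G)\le 2$, the extreme case being $G\cong K_2$, where $\td_2(K_2)=1+\td_2(K_1)=2$. If $M(G)$ is disconnected, its components are precisely the edge sets of the blocks of $G$. Since both parameters take the maximum over these parts, the claim follows from induction applied to each block.

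So assume $M(G)$ is connected with at least two edges; then $G$, after discarding isolated vertices (harmless by (P1)), is $2$-connected. By \Cref{def:eight-parameters}, $\cdd(M(G))=1+\cdd(M')$, where $M'=M(G)\setminus e$ or $M'=M(G)\contract e$ for some edge $e=uv$.

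In the deletion case, $M'=M(G\setminus e)$. Here $G-u$ is a subgraph of $G\setminus e$, so the definition, (P1), and induction give
\[
\td_2(G)\le 1+\td_2(G-u)\le 1+\td_2(G\setminus e)\le 1+2\cdd(M').
\]
In the contraction case, $M'=M(G/e)$, and $G/e$ has a merged vertex $w$. Apply the definition and then (P2) to get $\td_2(G)\le 2+\td_2(G-u-v)$. Since $G-u-v=(G/e)-w$ is a subgraph of $G/e$, (P1) and induction give
\[
\td_2(G)\le 2+\td_2(G/e)\le 2+2\cdd(M')=2\cdd(M(G)).
\]
Any parallel edges that become loops in $G/e$ do not affect $\td_2$, again by (P1) and the convention on loops.
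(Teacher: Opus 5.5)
Your proof is correct and takes essentially the same route as the paper. You induct along the recursion for $\cdd$, note that $G-\{u,v\}$ (or just $G-u$ in the deletion case) is a subgraph of the transformed graph, and finish using two facts: $\td_2$ is monotone under subgraphs, and adding a vertex raises $\td_2$ by at most one. The paper uses these facts without proof, and you verify them as (P1) and (P2).

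The only slip is cosmetic. Discarding isolated vertices is harmless because each one is a one-vertex block with $\td_2$-value $1$, so $\td_2(G)=\max\{1,\td_2(G_0)\}$. It is not harmless because of (P1), which gives the inequality in the wrong direction.
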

\begin{proof}
Let $M := M(G)$.
We proceed by induction on $|E(M)|$.
If $|E(M)| \le 1$, then $\cdd(M) = 1$ and $\td_2(G) \le 2$ because each connected component of $G$ contains at most two vertices.
The case when $M$ is disconnected is trivial because each component of $M$ is a block of $G$.
Hence, assume that $M$ is connected.
By \Cref{def:eight-parameters}, there is an edge $e = uv \in E(G)$ such that $k := \cdd(M) = 1 + \cdd(M(G_1))$, where $G_1 \in \{G \setminus e, G \contract e\}$.
By induction hypothesis, $\td_2(G_1) \le 2(k-1)$.
Since $G_2 := G - \{u, v\}$ is an induced subgraph of $G_1$, we obtain $\td_2(G_2) \le 2(k-1)$.
Hence, $\td_2(G) \le 2k$ as desired because adding two vertices can increase $\td_2$ by at most~$2$.
\end{proof}

We leave as an open problem whether \Cref{prop:td2-cdd} can be strengthened by replacing $\cdd$ with $\csdd$, $\cdsd$ or $\csdsd$.
The issue is that a c$^*$-transformation or a d$^*$-transformation may lead to a non-graphic matroid.
It is natural to ask whether the issue disappears if we allow only ``graphic transformations''.
The answer is positive; cd-depth can be replaced with a ``graphic'' variant of \csdsdepth, which we now define.

\begin{definition}\label{def:graphic}
Let $G$ be a graph. A graph $G'$ is a \emph{c$^{*\rm g}$-transformation} (resp. \emph{d$^{*\rm g}$-tran\-sformation}) of $G$ if there is a graph $G^+$ and an edge $e \in E(G^+)$ such that $G = G^+ \setminus e$ and $G' = G^+ \contract e$ (resp. $G = G^+ \contract e$ and $G' = G^+ \setminus e$).
The \csdsdepth of a graph $G$, denoted $\csdsd(G)$, is defined as follows:
\begin{enumerate}
\item If $G$ has at most one edge, then $\csdsd(G) = 1$.
\item If $G$ is not $2$-connected, then $\csdsd(G) = \max\{\csdsd(G')\mid G'$ a block of $M\}$.
\item If $G$ is $2$-connected, then 
$\csdsd(G) = 1 + \min\{\csdsd(G')\mid G'$ is a c$^{*\rm g}$- or d$^{*\rm g}$-transformation of~$G\}$.
\end{enumerate}
\end{definition}

Analogously, graph variants of other starred matroid parameters can be defined.
Notice that $\csdsd(M(G)) \le \csdsd(G) \le \cdd(M(G))$ for any graph $G$.
We do not know whether $\csdsd(G) \le f(\csdsd(M(G)))$ for some function $f$.
However, we prove the following.

\begin{proposition}\label{prop:td2-csdsd}
For any graph $G$, $\td_2(G) \le 2\cdot \csdsd(G)$. 
\end{proposition}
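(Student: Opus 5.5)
We mimic the proof of \Cref{prop:td2-cdd} and proceed by induction on the recursive definition of $\csdsd(G)$ in \Cref{def:graphic}. To make the induction run cleanly, we will prove the statement for all graphs simultaneously with the following observations about $\td_2$:
\begin{itemize}
\item[(i)] $\td_2$ does not increase under taking subgraphs;
\item[(ii)] deleting a vertex decreases $\td_2$ by at most one, i.e.\ $\td_2(G)\le \td_2(G-v)+1$;
\item[(iii)] $\td_2(G)$ is the maximum of $\td_2$ over the blocks of $G$.
\end{itemize}
Facts (i) and (ii) are standard consequences of the recursive definition, proved by a routine induction.

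\emph{Base case.} If $G$ has at most one edge, every block is a $K_2$ or a single vertex. Hence $\td_2(G)\le 2=2\cdot\csdsd(G)$.

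\emph{Not $2$-connected case.} If $G$ is not $2$-connected, then $\td_2(G)=\max_i\td_2(B_i)$ over its blocks $B_i$. By induction, $\td_2(B_i)\le 2\csdsd(B_i)\le 2\csdsd(G)$ for each block. One detail needs care here: the blocks in the sense of \Cref{def:graphic} are blocks with at least one edge, and isolated vertices are harmless.

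\emph{$2$-connected case.} Let $k=\csdsd(G)$, and let $G^+$ and $e=uv\in E(G^+)$ realize the minimum. Let $G'$ be the corresponding transformation, so $\csdsd(G')=k-1$ and, by induction, $\td_2(G')\le 2(k-1)$. The key step is to show that $G-\{u,v\}$ is a subgraph of $G'$ (up to isomorphism); then (i) and two applications of (ii) give $\td_2(G)\le \td_2(G-\{u,v\})+2\le 2k$. We check this in the two cases.
\begin{itemize}
\item \emph{c$^{*\rm g}$-transformation.} Here $G=G^+\setminus e$ and $G'=G^+\contract e$. Contracting $uv$ merges $u,v$ into one vertex and leaves every other vertex and edge untouched. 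Hence $G'-\{w\}=G^+-\{u,v\}=G-\{u,v\}$, where $w$ is the merged vertex.
\item \emph{d$^{*\rm g}$-transformation.} Here $G=G^+\contract e$ and $G'=G^+\setminus e$. Now $G$ has a merged vertex $w$ coming from $u,v$, and $G-w=G^+-\{u,v\}=G'-\{u,v\}$. So in this case we remove only the single vertex $w$, and $\td_2(G)\le\td_2(G-w)+1\le\td_2(G')+1\le 2k-1$.
\end{itemize}
A subtle point is that $u$ or $v$ might be a new vertex of $G^+$ not present in $G$, and that loops may appear after contraction. Neither matters, since we only pass to vertex-deleted subgraphs. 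Note also that if $e$ is a loop in $G^+$, the operations coincide and the inequality is trivial.

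\emph{Main obstacle.} The main thing to verify carefully is fact (ii), the vertex-deletion bound for $\td_2$, together with monotonicity (i) under subgraphs, since $\td_2$ is defined through blocks rather than components. The plan is to prove (i) by induction on the definition, noting that each block of a subgraph lies inside a block of the original graph. For (ii), one shows $\td_2(G)\le 1+\td_2(G-v)$ by induction on $|V(G)|$: if $G$ is $2$-connected this is the definition. Otherwise, each block $B$ of $G$ containing $v$ satisfies $\td_2(B)\le 1+\td_2(B-v)$ inductively, and $B-v$ is a subgraph of $G-v$. Combining these gives the claim.
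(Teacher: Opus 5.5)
Your proof is correct and follows essentially the same route as the paper: induction on the recursive definition of $\csdsd(G)$. In the $2$-connected case you delete the (at most two) vertices of the edge $e=uv$ to obtain a common induced subgraph of $G$ and $G'$, and then use that removing a vertex lowers $\td_2$ by at most one. The differences are minor: you spell out the subgraph-monotonicity and vertex-deletion facts for $\td_2$ that the paper uses implicitly, and you note the slightly sharper bound $2k-1$ in the d$^{*\rm g}$ case.
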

\begin{proof}
We proceed by induction on $k + |E(G)|$, where $k = \csdsd(G)$.
If $k = 1$, then $G$ is not $2$-connected and each block of $G$ is a single edge.
Hence, $G$ is a forest, and $\td_2(G) \le 2$ as desired.
Suppose that $k > 1$.
The case when $G$ is not $2$-connected is trivial because each block of $G$ has fewer edges than $G$, so assume that $G$ is $2$-connected.

By \Cref{def:graphic}, there is a graph $G_1$ that is a c$^{*\rm g}$- or d$^{*\rm g}$-transformation of $G$ such that $\csdsd(G_1) = k - 1$.
By induction hypothesis, $\td_2(G_1) \le 2(k-1)$.
Let $G^+$ be the graph such that for some edge $e = uv \in E(G^+)$, $G = G^+ \setminus e$ and $G_1 = G^+ \contract e$, or $G = G^+ \contract e$ and $G_1 = G^+ \setminus e$.
If $G_1$ is a c$^{*\rm g}$-transformation of $G$, let $G_2 := G - \{u, v\}$, and if $G_1$ is a d$^{*\rm g}$-transformation of $G$, let $G_2 := G_1 - \{u, v\}$.
In both cases, $G_2$ is an induced subgraph of both $G$ and $G_1$.
Hence, $\td_2(G_2) \le 2(k-1)$, and since $|V(G) \setminus V(G_2)| \le 2$, we obtain $\td_2(G) \le 2k$ as desired.
\end{proof}

\section{Closure properties of depth measures}\label{sec:closurede}

\subsection{\boldmath C$^*$-depth}

In this section, we return to \Cref{thm:contrclosure} of \cite{BriKL25}, which beautifully relates 
the contraction$^*$-depth parameter (\Cref{def:csd-decomposition}) to the restriction closure of c-depth.
Using the tools developed in this paper, we can easily (and independently of \Cref{thm:contrclosure} and \cite{BriKL25})
derive an analogous restriction-closure result for the c$^*$-depth:

\begin{theorem}\label{thm:csclosure}
Let $M$ be a matroid and $\ell$ denote the minimum c-depth of a matroid $M'$ that contains $M$ as a restriction.
Then~$\ell=\csd(M)$.
\end{theorem}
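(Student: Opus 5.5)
The inequality $\ell\geq\csd(M)$ is the easy direction. Let $M'$ contain $M$ as a restriction and satisfy $\cd(M')=\ell$. By \Cref{lem:extcd-to-csd}, since $M$ is a minor of $M'$, we get $\csd(M)\leq\csd(M')$. By \Cref{rem:starstronger}, $\csd(M')\leq\cd(M')=\ell$.

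For $\ell\leq\csd(M)$ I will prove, by induction on $|E(M)|$ (and on $\csd(M)$), that $M$ has an extension $M'$ with $\cd(M')\leq\csd(M)$.

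\emph{Base cases.} If $M$ has one element, take $M'=M$. If $M$ is disconnected, take extensions of the components separately and let $M'$ be their direct sum.

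\emph{Connected case.}
\begin{enumerate}
\item Apply \Cref{lem:startingguts}. This gives a bipartition $(A,B)$ with $\lambda:=\lambda_M(A)>0$ and $\max\{\csd(M\contract A),\csd(M\contract B)\}\leq\csd(M)-\lambda$.
\item Since $M$ is connected, $(A,B)$ is a connected bispan. Apply \Cref{thm:GGWadde} $\lambda$ times, each time to the current matroid, to add elements $F=\{f_1,\dots,f_\lambda\}$. Each $f_i$ is relatively free in $(A,B)$ and lies in $\cl(A)\cap\cl(B)$. Call the result $M^F$. The rank count in \Cref{lem:startingseq} and \Cref{lem:startingguts} shows that $M^F\contract F$ is exactly the direct sum $(M\contract A)\oplus(M\contract B)$: after $\lambda$ contractions the connectivity of $A$ drops to $0$, and $A$, $B$ keep the ranks of $M\contract B$, $M\contract A$.
\item By induction, $M\contract A$ has an extension $N_B$ (on $B\cup X_B$) and $M\contract B$ has an extension $N_A$ (on $A\cup X_A$), with $\cd$ at most $\csd(M)-\lambda$. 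Then $N_A\oplus N_B$ is an extension of $M^F\contract F$ by $X=X_A\cup X_B$.
\end{enumerate}

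\emph{Lifting lemma (main obstacle).} Every single-element extension of $N=K\contract F$ by an element $x$ lifts to an extension $K^+$ of $K$ by $x$ with $K^+\contract F=N+x$. The plan uses modular cuts. Flats of $N$ correspond bijectively to flats of $K$ containing $\cl_K(F)$, via $Z\mapsto Z\cup\cl_K(F)$. Under this correspondence, a pair of flats is modular in $N$ iff the corresponding pair is modular in $K$, because all ranks shift by $\rnk_K(F)$. Meets are also preserved. Hence the image of a modular cut of $N$ is upward closed and closed under intersections of modular pairs, so it is a modular cut of $K$. I will check that the extension $K^+$ of $K$ defined by this cut has the required property, namely $K^+\contract F=N+x$; this is a routine rank comparison. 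Applying this lemma element by element to $X$ yields an extension $M'$ of $M^F$, and hence of $M$, with $M'\contract F=N_A\oplus N_B$.

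\emph{Bounding $\cd(M')$.} It remains to show $\cd(M')\leq\lambda+\cd(M'\contract F)$. Since $\cd(M'\contract F)=\max\{\cd(N_A),\cd(N_B)\}\leq\csd(M)-\lambda$, this gives $\cd(M')\leq\csd(M)$. I will prove the general inequality $\cd(K)\leq|F|+\cd(K\contract F)$ by induction on $|F|$. If $K$ is connected, contract $f_1$ and use Definition~\ref{def:eight-parameters}(iii). If $K$ is disconnected, contracting $F$ acts componentwise, so the components of $K\contract F$ are unions of components of $K_i\contract(F\cap E(K_i))$; apply induction in each component $K_i$ with $|F\cap E(K_i)|\leq|F|$.

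A subtlety here is that $\cd$ is not minor-monotone. So I must make sure each component of $K_i\contract(F\cap E(K_i))$ is a component of $K\contract F$ itself, not just a minor of one. This holds because contraction inside a direct sum stays inside its summand.

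I expect the lifting lemma to be the main difficulty. If the modular-cut argument turns out awkward, the fallback is the route sketched in the overview: pass to certificates built only from relatively free extensions and prove a commutation lemma that lets all extensions be performed first.
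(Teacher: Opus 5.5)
Your proof is correct, but at the key step it takes a different route from the paper's proof.

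\textbf{How the paper proceeds.} The paper adds the relatively free elements to the \emph{contracted} matroids one at a time: $N_i^+$ is a relatively free extension of $N_{i-1}$ in $(A,B)$, and $N_i=N_i^+\contract f_i$. It then pulls these back to an extension of $M$ using \Cref{lem:commutesame}. That lemma lifts only \emph{relatively free} extensions through a contraction. So the whole induction has to carry the invariant that the constructed $M'$ arises from $M$ by a sequence of relatively free extensions. The inductively obtained extension $M''$ of $N_a$ is then lifted one extension at a time.

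\textbf{How your argument differs.} You add all $\lambda_M(A)$ relatively free elements to $M$ itself and contract them at once. You replace \Cref{lem:commutesame} by the general fact that \emph{every} single-element extension of $K\contract F$ lifts to an extension of $K$. You obtain the lift by pulling the modular cut back along $Z\mapsto Z\cup F$. Your outline of that check is right: ranks in $K\contract F$ are ranks in $K$ shifted by $\rnk_K(F)$, and $\cl_K(Z\cup F)-F=\cl_{K\contract F}(Z)$.

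\textbf{What each approach buys.} Your route lets you use the inductive extensions $N_A$ and $N_B$ as black boxes, with no invariant to maintain. The price is reliance on Crapo's modular-cut description of single-element extensions. The paper stays entirely within the definition of relative freeness. It also gets, for free, the structural information that the optimal $M'$ is built from $M$ by relatively free extensions.

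\textbf{Your bound on c-depth under contraction.} Your explicit proof of $\cd(K)\le|F|+\cd(K\contract F)$ is a welcome addition; the paper simply cites the definition. The induction there should run on $|E(K)|$ as well as $|F|$, because $F$ may lie inside a single component of $K$.

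\textbf{One step to make explicit.} The identity $M^F\contract F=(M\contract A)\oplus(M\contract B)$ needs $F$ to be independent. This does not follow from $f_i\in\cl(A)\cap\cl(B)$ alone; for instance, $f_2$ could be parallel to $f_1$. It does follow from relative freeness. Write $F_{i-1}=\{f_1,\dots,f_{i-1}\}$. If $f_i\in\cl(F_{i-1})$, then $(A\cup F_{i-1},B\cup F_{i-1})$ would be modular. Since $F_{i-1}\subseteq\cl(A)\cap\cl(B)$, this would give $\lambda_M(A)=\rnk(F_{i-1})\le i-1$, which is impossible for $i\le\lambda_M(A)$.
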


The proof will use \Cref{lem:startingguts} and the following straightforward technical lemma.
We first recall the notion a relatively free extension of a matroid $M$ by element $e$ in a bispan $(A,B)$,
and we shortly write `a relatively free extension of $M$' if particular $(A,B)$ and $e$ are not important.

\begin{lemma}\label{lem:commutesame}
Let $M_1$ be a matroid and $M_2=M_1/Y$ where $Y\subseteq E(M_1)$.
Assume that a matroid $M_2'$ is a relatively free extension of $M_2$ and $E(M_2')\cap Y=\emptyset$.
Then there exists a matroid $M_1'$ which is a relatively free extension of $M_1$ such that $M_2'=M_1'/Y$.
\end{lemma}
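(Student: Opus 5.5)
The approach is to lift the relatively free extension through the contraction of $Y$, using the fact that $(A,B)$ is a bispan of $M_2$ and $M_2=M_1/Y$. Let $M_2'$ be a relatively free extension of $M_2$ by $e$ in the connected bispan $(A,B)$ of $M_2$, where $A,B\subseteq E(M_2)=E(M_1)-Y$. I will work with the pair $(A\cup Y,B\cup Y)$ in $M_1$. This is not a partition, but relative freeness is defined via modular pairs $(X\cup Z,Y'\cup Z)$, so overlapping sets cause no trouble; if disjointness is needed, split $Y$ between the two sides.

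The first step is to check that $(A\cup Y,B\cup Y)$ is a connected bispan of $M_1$. Since $\cl_{M_1/Y}(A)=\cl_{M_1}(A\cup Y)-Y$, the bispan condition transfers directly. Connectivity also transfers: $\rnk_{M_2}(A)=\rnk_{M_1}(A\cup Y)-\rnk_{M_1}(Y)$, and similarly for $B$ and for the whole matroid, so
\[\rnk_{M_1}(A\cup Y)+\rnk_{M_1}(B\cup Y)-\rnk(M_1)=\rnk_{M_2}(A)+\rnk_{M_2}(B)-\rnk(M_2)+\rnk_{M_1}(Y)>\rnk_{M_1}(Y)\ge 0.\]
Hence \Cref{thm:GGWadde} gives a relatively free extension $M_1'$ of $M_1$ by $e$ in this bispan. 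If the split version is used instead, I will verify the bispan and connectivity conditions for it in the same way.

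The second step, which I expect to be the main obstacle, is to show $M_1'/Y=M_2'$. Both matroids extend $M_2$ by $e$, so it suffices to compare, for $Z\subseteq E(M_2)$, when $e\in\cl(Z)$. In $M_1'/Y$ we have $e\in\cl(Z)$ iff $e\in\cl_{M_1'}(Z\cup Y)$. By relative freeness in $M_1$, this holds iff $(A\cup Y\cup Z,\,B\cup Y\cup Z)$ is modular in $M_1$. Subtracting $\rnk_{M_1}(Y)$ from each of the four rank terms converts this into modularity of $(A\cup Z,B\cup Z)$ in $M_1/Y=M_2$, which is exactly the condition defining $e\in\cl_{M_2'}(Z)$. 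An extension by $e$ is determined by which flats (equivalently, which sets) span $e$. Here $e$ is not a coloop and its rank is pinned down by whether it is a loop, and the connectivity bound above ensures $e$ is not a loop in $M_1'/Y$. So the two extensions coincide.

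The technical care needed in this step is to confirm that "determined by which sets span $e$" is valid, via the standard modular-cut correspondence, and that $e\notin\cl_{M_1'}(Y)$, which the strict inequality guarantees. Finally, $M_1'/Y$ is itself a relatively free extension of $M_2$ in $(A,B)$, and relatively free extensions in a given bispan are unique by the same modular-cut argument. So even if $M_2'$ is given only abstractly, $M_2'=M_1'/Y$.
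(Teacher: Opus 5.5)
Your overall plan matches the paper's. You lift the bispan of $M_2$ to one of $M_1$ that carries $Y$, invoke \Cref{thm:GGWadde}, and compare when $Z\cup Y$ spans $e$ by subtracting $\rnk_{M_1}(Y)$ from the four rank terms; that second step is correct. The gap is the pair you actually use. $(A\cup Y,B\cup Y)$ is not a bispan, because bispans are pairs of disjoint sets, so \Cref{thm:GGWadde} does not apply. Your claim that overlap ``causes no trouble'' is also false: the extension you ask for need not exist.

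Take $M_1=U_{2,3}$ on $\{a,b,y\}$ and $Y=\{y\}$. Then $a,b$ are parallel in $M_2$, and $(\{a\},\{b\})$ is a connected bispan of $M_2$. Your condition puts $e\in\cl(\{a\})$ and $e\in\cl(\{b\})$, since $(\{a,y\},\{a,b,y\})$ and $(\{a,b,y\},\{b,y\})$ are modular pairs in $M_1$. It also gives $e\notin\cl(\emptyset)$, since $\rnk(\{a,y\})+\rnk(\{b,y\})=4\neq 3=\rnk(\{a,b,y\})+\rnk(\{y\})$. So $e$ would be parallel to both $a$ and $b$, which are not parallel in $M_1$. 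In other words, the prescribed family of flats is not a modular cut of $M_1$: it contains the modular pair $\{a\},\{b\}$ but not their intersection.

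Your fallback of splitting $Y$ is what the paper does, putting all of $Y$ on $A$'s side. But it needs a step you did not supply. For a general bispan $(A_0,B_0)$ of $M_2$, the bispan property need not survive: an element of $\cl_{M_2}(B_0)-\cl_{M_2}(A_0)$ need not lie in $\cl_{M_1}(B_0)$, nor in $\cl_{M_1}(A_0\cup Y)$. The paper therefore first enlarges $(A_0,B_0)$ to a bipartition $(A,B)$ of $E(M_2)$ with $A_0\subseteq A\subseteq\cl_{M_2}(A_0)$ and $B_0\subseteq B\subseteq\cl_{M_2}(B_0)$. This leaves the relatively free extension unchanged, since $\rnk_{M_2}(A\cup Z)=\rnk_{M_2}(A_0\cup Z)$ and likewise for $B$. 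Then $(A\cup Y,B)$ partitions $E(M_1)$. It is connected because $\rnk_{M_1}(A\cup Y)+\rnk_{M_1}(B)\geq\rnk_{M_2}(A)+\rnk_{M_1}(Y)+\rnk_{M_2}(B)>\rnk(M_1)$, which uses $\rnk_{M_1}(B)\geq\rnk_{M_2}(B)$ rather than your computation.

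With this pair, taking $Z\cup Y$ in the relative-freeness condition of $M_1$ yields exactly modularity of $(A\cup Y\cup Z,B\cup Y\cup Z)$, so your second step goes through unchanged. You should also handle the non-connected bispan case separately; there $e$ is a loop and the lemma is trivial.
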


\begin{proof}
Let $M_2'$ be a relatively free extension of~$M_2$ by~$e$ in a bispan $(A_0,B_0)$.
If the bispan $(A_0,B_0)$ is not connected, then $e$ is a loop and the lemma is trivial.
Otherwise, let $(A,B)$ be a bipartition of $E(M_2)$ such that $A_0\subseteq A\subseteq\cl_{M_2}(A_0)$ and $B_0\subseteq B\subseteq\cl_{M_2}(B_0)$.
Then, in particular, $\rnk_{M_2}(A)+\rnk_{M_2}(B)\geq \rnk_{M_2}(A_0)+\rnk_{M_2}(B_0)>\rnk(M_2)$, 
and so $\rnk_{M_1}(A\cup Y)+\rnk_{M_1}(B)\geq\rnk_{M_2}(A)+\rnk_{M_1}(Y)+\rnk_{M_2}(B)>\rnk(M_2)+\rnk_{M_1}(Y)=\rnk(M_1)$
which means that the bispan $(A\cup Y,B)$ is connected in~$M_1$.
Using \Cref{thm:GGWadde}, we define $M_1'$ as a relatively free extension of $M_1$ by $e$ in $(A\cup Y,B)$.

By the  definition of a relatively free extension, for every $Z\subseteq E(M_2)$ we have 
$e\in\cl_{M_2'}(Z)$ if and only if $\rnk_{M_2}(A\cup Z)+\rnk_{M_2}(B\cup Z)=\rnk_{M_2}(Z)+\rnk(M_2)$.
Since $M_2=M_1/Y$, we have $\rnk_{M_2}(A\cup Z)=\rnk_{M_1}(A\cup Y\cup Z)-\rnk_{M_1}(Y)$,
$\rnk_{M_2}(B\cup Z)=\rnk_{M_1}(B\cup Y\cup Z)-\rnk_{M_1}(Y)$ and $\rnk_{M_2}(Z)=\rnk_{M_1}(Y\cup Z)-\rnk_{M_1}(Y)$, $\rnk(M_2)=\rnk(M_1)-\rnk_{M_1}(Y)$.
Hence,
\begin{align*} e\in\cl_{M_2'}(Z) \quad\iff\quad
\rnk_{M_1}(A\cup Y\cup Z)+\rnk_{M_1}(B\cup Y\cup Z) =& \rnk_{M_1}(Y\cup Z)+\rnk(M_1)
.\end{align*}

The latter equality, again by the  definition of a relatively free extension, is equivalent to $e\in\cl_{M_1'}(Y\cup Z)$.
This is if and only if $e\in\cl_{M_1'/Y}(Z)$, and hence $M_1'/Y=M_2'$ is proved.
\end{proof}

\begin{proof}[Proof of \Cref{thm:csclosure}]
The inequality $\csd(M)\leq\ell$ has been proved in \Cref{lem:extcd-to-csd}.
In the opposite direction, we are going to prove the existence of a matroid $M'$ such that $M$ is a restriction of $M'$ and $\cd(M')\leq\csd(M)$,
by structural induction on the definition of $\csd(M)$.
Moreover, we will maintain an invariant that $M'$ is obtained from $M$ by a sequence of relatively free extensions.

If $|E(M)|=1$, then $M'=M$ and $\cd(M')=\csd(M)=1$.
Otherwise, by \Cref{lem:startingguts}, we get a bipartition $(A,B)$ of $E(M)$ such that $\>\max\{\csd(M\contract A),\csd(M\contract B)\}\leq\csd(M)-\lambda_{M}(A)$.
If $\lambda_{M}(A)=0$, that is, if $M$ is disconnected, then we simply take the components $M_1,\ldots,M_k$, $k\geq2$, of $M$
and inductively construct matroids $M_i'$, $i\in[k]$, such that $\cd(M_i')=\csd(M_i)$.
Then $M'$ is the direct sum of $M_1',\ldots,M_k'$ and $\cd(M')=\csd(M)$.

We further assume $a=\lambda_{M}(A)>0$, and define a sequence of matroids $N_0=M$ and $N_1,\ldots,N_a$ inductively as follows.
For $i=1,\ldots,a$, let $N^+_i$ be obtained, using \Cref{thm:GGWadde}, as a relatively free extension of $N_{i-1}$ by $f_i$ in $(A,B)$,
and let $N_i=N^+_i\contract f_i$.
Since $f_i\in\cl_{N^+_i}(A)\cap\cl_{N^+_i}(B)$, we get $\lambda_{N_i}(A)=\lambda_{N_{i-1}}(A)-1$, and so $\lambda_{N_i}(A)=a-i$ by induction on~$i$.

Again by induction on~$i=1,\ldots,a$, we get that there is a matroid $N_i'$ on $E(N_i')=E(M)\cup\{f_1,\ldots,f_i\}$, 
obtained by a sequence of relatively free extensions from $M$, such that $N_i=N_i'\contract\{f_1,\ldots,f_i\}$;
this is trivial for $i=1$ and follows straightforwardly from \Cref{lem:commutesame} for~$i>1$.
Let, shortly, $N=N_a'$ and $Z=\{f_1,\ldots,f_a\}$. 

Since $N_a=N\contract Z$, $\>\lambda_{N_a}(A)=a-a=0$ and $|Z|=a=\lambda_{M}(A)\leq\lambda_{N}(A)$, 
we necessarily have $\lambda_{N}(A)=a$ and $Z\subseteq\cl_N(A)\cap\cl_N(B)$.
Therefore, $N_a\contract A=N\contract(Z\cup A)=N\contract A\setminus Z=M\contract A$, and symmetrically $N_a\contract B=M\contract B$,
which means that $N_a$ can be written as a direct sum of $M\contract A$ and $M\contract B$.
So, by \Cref{lem:startingguts}, $\csd(N_a)\leq\csd(M)-a$.

By the induction assumption, we get a matroid $M''$ obtained from $N_a$ by a sequence of relatively free extensions, such that $\cd(M'')\leq\csd(M)-a$.
Let this sequence construct, in order, matroids $M''_m=N_a$ and $M''_{m-1},\ldots,M''_0$ where $m=|E(M'')-E(N_a)|$ and~$M''_0=M''$,
and for $i\in[m]$, $M''_{i-1}$ is a relatively free extension of $M''_i$ by an element~$g_i$.
We define a sequence of matroids $M'_0=N$ and $M'_1,\ldots,M'_m$ where $M'_i$ for $i\in[m]$ is obtained by invoking \Cref{lem:commutesame}
onto the matroids $M'_{i-1}$ and $M''_{i-1}$ and the set $Y=Z\cup\{g_1,\ldots,g_{i-1}\}$, 
considering a relatively free extension $M''_i$ of $M''_{i-1}$ by~$g_i$.
(This makes $M'_i$ a relatively free extension of $M'_{i-1}$ by~$g_i$.)

Hence, we construct a matroid $M'=M'_m$ which results by a sequence of $a+m$ relatively free extensions from $M$.
Moreover, $M'\contract Z=M''$ by \Cref{lem:commutesame}, and so $\cd(M')\leq\cd(M'')+|Z|=\cd(M'')+a$ by \Cref{def:eight-parameters}.
Together with $\cd(M'')\leq\csd(M)-a$, we conclude that  $\cd(M')\leq\csd(M)$ and the proof is finished.
\end{proof}

Applying things in the dual matroid, we also immediately obtain:
\begin{corollary}\label{cor:dsclosure}
Let $M$ be a matroid and $\ell$ denote the minimum d-depth of a matroid $M'$ such that $M=M'\contract X$ for some~$X\subseteq E(M')$.
Then~$\ell=\dsd(M)$.
\qed\end{corollary}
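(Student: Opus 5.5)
The plan is to derive \Cref{cor:dsclosure} from \Cref{thm:csclosure} by passing to the dual matroid and using \Cref{obs:depthsduality}(b), namely $\dsd(M)=\csd(M^*)$ and $\dd(N)=\cd(N^*)$ for every matroid~$N$.

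The first step is the correspondence between restrictions and contractions under duality. If $M=M'\contract X$, then $M^*=(M'\contract X)^*=(M')^*\setminus X$, so $M^*$ is a restriction of $(M')^*$. Conversely, if $M^*=N\setminus X$ for some matroid $N$ with $X\subseteq E(N)$, then setting $M'=N^*$ gives $M=(N\setminus X)^*=N^*\contract X=M'\contract X$. Hence the map $M'\mapsto (M')^*$ is a bijection between the matroids $M'$ with $M=M'\contract X$ for some $X$ and the matroids $N$ containing $M^*$ as a restriction.

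The second step is the value comparison. This bijection satisfies $\dd(M')=\cd((M')^*)$, so the minimum $\ell$ in the statement equals the minimum c-depth of a matroid containing $M^*$ as a restriction. By \Cref{thm:csclosure} applied to $M^*$, that minimum is $\csd(M^*)$, which equals $\dsd(M)$ by \Cref{obs:depthsduality}(b).

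I expect no real obstacle here. The only point needing care is the duality identities $(M'\contract X)^*=(M')^*\setminus X$ extended from single elements to sets, and these are standard. The minimum is attained because a witness exists by \Cref{thm:csclosure} and depths are nonnegative integers.
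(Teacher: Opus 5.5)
Your proposal is correct and takes the same route as the paper. The paper obtains this corollary directly from \Cref{thm:csclosure} applied to the dual matroid. Your bijection $M'\mapsto (M')^*$ between contraction-extensions of $M$ and restriction-extensions of $M^*$, combined with $\dd(M')=\cd((M')^*)$ and $\dsd(M)=\csd(M^*)$ from \Cref{obs:depthsduality}, is exactly that argument written out.
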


\subsection{\boldmath Contraction$^*$-depth}\label{sub:contrstartproof}

Our results actually also give a relatively simple proof of \Cref{thm:contrclosure}, thus providing an independent alternative to the lengthy proof in \cite{BriKL25}.
In this respect, we remark that, although we rely on \Cref{thm:GGWadde} from \cite{DBLP:journals/siamdm/GeelenGW06}, too, its proof in 
\cite{DBLP:journals/siamdm/GeelenGW06} takes only about one page of elementary arguments.

We recall \Cref{def:csd-general} of a contraction$^*$-depth decomposition $(T,f)$, and especially the notation
$T_X$ meaning the subtree from the root of $T$ to all leaves $f(x)$ where~$x\in X$.

\begin{lemma}\label{lem:kklm-to-csd}
Let $M$ be a matroid and $k$ denote the contraction$^*$-depth of $M$. 
Then~$\csd(M)\leq k+1$.
\end{lemma}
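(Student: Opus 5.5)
We prove $\csd(M)\leq k+1$ by induction on $|E(M)|$, taking a contraction$^*$-depth decomposition $(T,f)$ of $M$ of height $k+1$, i.e., $\hght(T)-1=k$. Loops are harmless: they are separate components of c$^*$-depth~$1$. The base cases are $|E(M)|\le1$, or $M$ consisting only of loops and coloops, where $\csd(M)=1\le k+1$. If $M$ is disconnected, each component $M_i$ inherits a decomposition from $(T,f)$. To see this, restrict $f$ to $E(M_i)$ and take the subtree $T_{E(M_i)}$; its edge count is at least $\rnk(M_i)$ by the defining inequality. We may have to prune it to exactly $\rnk(M_i)$ edges, or alternatively relax the definition to $|E(T)|\geq\rnk(M)$, which does not change the optimum. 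Since $\sum_i\rnk(M_i)=\rnk(M)=|E(T)|$ and the subtrees cover $T$, they are in fact edge-disjoint up to the root path. Either way, heights do not increase, so induction gives the claim. So assume $M$ is connected with at least two elements.

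The core step mimics a single c$^*$-transformation that removes the top edge of the tree. If the root $r$ of $T$ has a unique child, let $g$ be the top edge. We want a c$^*$-transformation $M'=M^+\contract e$ such that $(T-r,f)$, rooted at the child of $r$, is a decomposition of $M'$. Then $\csd(M)\le1+\csd(M')\le1+(\hght(T)-1)=k+1$ by induction on $\hght(T)$, which is legitimate since $\hght(T)$ decreases (this becomes a lexicographic induction). We need $\rnk_{M'}(X)\le|E(T_X)|-1$ for every nonempty $X$, and $\rnk(M')=\rnk(M)-1$. Take $M^+$ to be the free extension of $M$ by $e$. Then $\rnk_{M'}(X)=\rnk_M(X)-1$ whenever $\rnk_M(X)<\rnk(M)$, and $\rnk_{M'}(X)=\rnk_M(X)-1$ holds also when $X$ spans $M$. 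So $\rnk_{M'}(X)=\rnk_M(X)-1$ for all nonempty non-loop $X$, and every $T_X$ with $X$ nonempty contains $g$. Hence the inequality transfers. The one exception is that the free extension makes $e$ a coloop-free element only if $\rnk(M)\ge1$, which holds here.

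If the root has several children, we split $T$ at $r$ into branches $T^1,\dots,T^p$ with element sets $X_1,\dots,X_p$. The decomposition inequality gives $\rnk_M(X_i)\le|E(T^i)|$, while $\sum_i|E(T^i)|=\rnk(M)\le\sum_i\rnk_M(X_i)$. So $\rnk_M(X_i)=|E(T^i)|$ and the ranks add up, meaning $M$ is the direct sum of the restrictions $M\restriction X_i$. This contradicts connectivity unless $p=1$, which reduces to the previous case.

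The main obstacle is the bookkeeping of loops and coloops. These are the source of the ``$+1$'' and the exceptional cases in \Cref{thm:contractionstarx}. In particular, leaves at depth $0$ or elements mapped to the root, and the case where contracting $e$ leaves a matroid with only loops and coloops, must be handled so that the base case still yields value $1\le k+1$. I would also double-check the free-extension rank identity for sets $X$ of full rank.
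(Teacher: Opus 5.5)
\textbf{There is a genuine gap: the core step with the free extension fails.}

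If $M^+$ is the free extension of $M$ by $e$, then $e\in\cl_{M^+}(X)$ exactly when $X$ spans $M$. So $M'=M^+\contract e$ is the truncation of $M$: $\rnk_{M'}(X)=\rnk_M(X)$ for every non-spanning $X$, and only spanning sets lose one in rank. Your identity $\rnk_{M'}(X)=\rnk_M(X)-1$ for all non-loop $X$ would force $e$ to be parallel to every non-loop element, which is impossible once $\rnk(M)\ge 2$.

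Consequently $\rnk_{M'}(X)\le|E(T_X)|-1$ fails for every \emph{tight} non-spanning set, i.e.\ every $X$ with $\rnk_M(X)=|E(T_X)|<\rnk(M)$. Such sets appear as soon as $T$ branches. Concretely, let $M$ be the cycle matroid of $K_4$ minus an edge: edges $ab,ac,bc,ad,bd$, rank $3$, connected. Let $T$ consist of a root with a single child $t$, and two leaves $\ell_1,\ell_2$ below $t$, with $ab,ac,bc\mapsto\ell_1$ and $ad,bd\mapsto\ell_2$. This is a valid decomposition of height $3$, so $k=2$. The truncation is $M'\cong U_{2,5}$, in which $\{ac,bc\}$ has rank $2$ while $|E(T'_{\{ac,bc\}})|=1$. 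Moreover $\csd(U_{2,5})=3$, so your step can only give $\csd(M)\le 4$, not the required $3$. Your argument does work when $T$ is a path, since then every tight set spans $M$. The paper handles that case directly via $\csd(M)\le\rnk(M)+1=\hght(T)$.

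\textbf{What is missing.} You need an extension element $e$ lying in $\cl_{M^+}(X)$ simultaneously for \emph{all} tight sets $X$. Non-tight sets are harmless, because contraction never increases rank. The paper obtains such an $e$ as follows:
\begin{itemize}
\item Take the node $t$ closest to the root with at least two children. Let $A$ be the elements mapped below one child of $t$, and $B=E(M)-A$.
\item Connectivity gives $\lambda_M(A)>0$, so $(A,B)$ is a connected bispan. Then \Cref{thm:GGWadde} (Geelen--Gerards--Whittle) provides a relatively free extension of $M$ by $e$ in $(A,B)$.
\item For a tight $X$, one has $T_{A\cup X}\cup T_{B\cup X}=T$ and $T_{A\cup X}\cap T_{B\cup X}=T_X$.
\item Hence $\rnk_M(A\cup X)+\rnk_M(B\cup X)\le|E(T)|+|E(T_X)|=\rnk(M)+\rnk_M(X)$.
\item Together with submodularity, $(A\cup X,B\cup X)$ is a modular pair, so $e\in\cl(X)$ by relative freeness.
\end{itemize}
In the example above, this puts $e$ parallel to $ab$; contracting it leaves two parallel pairs and a loop, which gives $\csd(M)\le 3$.

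This nontrivial existence theorem is the real crux of the lemma. The loops and coloops you name as the main obstacle play no essential role.
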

\begin{proof}
Let $(T,f)$ be a contraction$^*$-depth decomposition of $M$ of minimum height.
We are going to prove, by induction on the size of $M$, that $\csd(M)\leq\hght(T)=k+1$.
Note that if the root of $T$ has more than one child, and $A\subseteq E(M)$ is the set of elements mapped by $f$ 
to the descendants of one child of the root and $B=E(M)-A$,
then $\rnk_M(A)+\rnk_M(B)\leq|E(T_A)|+|E(T_B)|\leq|E(T)|=\rnk(M)$ by \Cref{def:csd-decomposition}, and hence $\lambda_M(A)=0$ and $M$ is disconnected.

So, whenever $M$ is not connected, we choose a bipartition $(A,B)$ of $E(M)$ such that $\lambda_M(A)=\rnk_M(A)+\rnk_M(B)-\rnk(M)=0$.
This implies $\rnk_M(A)=|E(T_A)|$ and $\rnk_M(B)=|E(T_B)|$, and so the subtrees $T_A$ and $T_B$ give us
contraction$^*$-depth decompositions of $M_1=M\restriction\!A$ and $M_2=M\restriction\!B$, respectively.
By induction assumption, we have $\csd(M_1)\leq\hght(T_A)\leq\hght(T)$ and $\csd(M_2)\leq\hght(T_B)\leq\hght(T)$, 
and since $M$ is a direct sum of $M_1$ and $M_2$ in this case, we conclude $\csd(M)=\max\{\csd(M_1),\csd(M_2)\}\leq\hght(T)$.

If $M$ is connected and $T$ is a path with one end in the root, then $k=\hght(T)-1=|E(T)|=\rnk(M)$, and we trivially have $\csd(M)\leq\rnk(M)=\hght(T)-1$.

Otherwise, the root has only one child, and we pick a node $t\in V(T)$ closest to the root which has at least two children.
Let $A\subseteq E(M)$ be the set of elements mapped by $f$ to one of the subtrees rooted at $t$ and $B=E(M)-A$.
We have $\lambda_M(A)>0$ since $M$ is connected.
Using \Cref{thm:GGWadde}, we obtain a matroid $M^+$ as a relatively free extension of $M$ by an element $e$ in $(A,B)$,
and set~$M'=M^+\contract e$. So, by \Cref{def:eight-parameters},~$\csd(M)\leq\csd(M')+1$.
We construct a tree $T'$ from $T$ by removing the root of $T$ (so that its child becomes the new root).
We claim that $(T',f)$ is a valid contraction$^*$-depth decomposition of $M'$.
Hence, by induction, $\csd(M')\leq\hght(T')=\hght(T)-1$ and $\csd(M)\leq\hght(T)$.

\smallskip
It remains to prove that $(T',f)$ is a contraction$^*$-depth decomposition of $M'$ according to \Cref{def:csd-decomposition}.
First, easily, $\rnk(M')=\rnk(M)-1=|E(T)|-1=|E(T')|$.
Second, we need that for every $X\subseteq E(M')$, $\rnk_{M'}(X)\leq|E(T'_X)|=|E(T_X)|-1$.
This trivially holds if $\rnk_{M}(X)<|E(T_X)|$, and so we assume $\rnk_{M}(X)=|E(T_X)|$ and focus more on the decomposition $(T,f)$ of~$M$.
In particular, we easily derive
\begin{align*}
|E(T_{A\cup X})|+|E(T_{B\cup X})|=|E(T)|+|E(T_X)| = \rnk(M)+\rnk_{M}(X)
,\end{align*}
and since $\rnk_M(A\cup X)\leq|E(T_{A\cup X})|$ and $\rnk_M(B\cup X)\leq|E(T_{B\cup X})|$ by \Cref{def:csd-decomposition}, and 
\begin{align}\label{eq:modularx}
\rnk_M(A\cup X)+\rnk_M(B\cup X) \geq \rnk(M)+\rnk_{M}(X)
\end{align}
by submodularity, we actually must have equality in \eqref{eq:modularx}. Thus, $(A\cup X,B\cup X)$ is a modular pair.

Consequently, by the definition of a relatively free extension, $e\in\cl_M(X)$.
Then, since $M'=M\contract e$, we get $\rnk_{M'}(X)=\rnk_{M}(X)-1\leq|E(T_X)|-1=|E(T'_X)|$, and the proof is finished.
\end{proof}

Now we simply combine the previous lemmas:
\begin{proof}[Alternative proof of \Cref{thm:contrclosure}]
If $M$ is of positive rank and consists of only loops and coloops, in which case $k=1$, we trivially get~$\cd(M)=1=k=\ell$.
Otherwise, we have that $k=\csd(M)-1$;
the inequality $k\leq\csd(M)-1$ is proved in \Cref{lem:csd-to-kklm} and $k\geq\csd(M)-1$ in \Cref{lem:kklm-to-csd}.
Finally, by \Cref{thm:csclosure}, we get a matroid $M'$ such that $\cd(M')=\ell=\csd(M)=k+1$.
\end{proof}

\section{Summary and Open Problems}

We have introduced a unifying framework for recursive depth measures of matroids that captures several previously studied depth notions, 
including those that were previously defined only for matrices (i.e., for particular vector representations of matroids).
The contribution of this framework is two-fold.
First, we have provided abstract matroid definitions for concepts previously tied to particular vector representations,
and we have proved their basic structural properties and verified that the new abstract view exactly coincides with their previous view handling vector representations.

Second, our framework also
yields a few new depth measures, not previously considered in the literature,
and we have proved functional equivalence of measures within our framework to two other natural, decomposition based, depth measures
-- branch-depth and matroid tree-depth.
We have also clarified all mutual relationships and functional equivalences between the measures of our framework,
and studied some 
computational aspects of these parameters.

\medskip
Several natural questions related to the parameters remain open. 
In particular, the computational complexity of evaluating some of them is not yet known -- for example,
in the case of matroid branch-depth and \csdsdepth (cf.~\Cref{thm:depthNPhard}). 
We conjecture that both problems are computationally hard.

\begin{conjecture}\label{conj:bdhard}
Computing exactly the branch-depth and the \csdsdepth of matroids is \NPh.
\end{conjecture}

In algorithmic applications of structural measures, one often does not need to compute the measure (and preferably, also a decomposition)
exactly; a functional approximation is often enough, at least in theory.
To this end, \Cref{thm:csdapprox} shows how to compute an approximate contraction$^*$-depth decomposition,
this time without any parameter dependence and for all matroids given by an oracle.
This result implies analogous approximations for c$^*$-depth and matroid tree-depth, and, dually, for d$^*$-depth.

What about other depth measures, in particular, branch-depth?
This question is also interesting in connection with the very recent breakthrough algorithm
of Korhonen and Oum~\cite{korhonen2026branchwidthconnectivityfunctionsfixedparameter} for branch-width.

\begin{problem}\label{prb:bdapprox}
    Find a computable function $f$ and a polytime algorithm that, for a matroid $M$ given by an independence oracle, finds
    a branch-depth decomposition of $M$ of width and depth at most $f(\bd(M))$.
\end{problem}

Closely related to the task of computing the value of a depth measure is the problem of finding the ``obstructions''
for each value of the measure. That is, for a parameter $k$, to determine the set of matroids which are restriction- or contraction- or minor-minimal
with respect to having the considered depth measure strictly greater than $k$.

For instance, DeVos, Kwon, and Oum~\cite{DeVKO20} proved that, for every finite field $\mathbb{F}$,
every class of matroids representable over $\FF$ of bounded c-depth is well-quasi-ordered with respect to restriction.
This implies that the set of restriction-minimal matroids of c-depth greater than~$k$
(i.e., the set of obstructions to having c-depth at most~$k$) is finite for each $k$, albeit without giving an explicit size bound.
Gajarsk\'y, Pek{\'{a}}rkov{\'{a}} and Pilipczuk~\cite{GajPP25} later re-proved an equivalent result with an explicit upper
bound on the size of such obstructions in terms of $k$ and $|\mathbb{F}|$.
Again, this result extends to the contraction-minimal obstructions for d$^*$-depth via duality.
It would be interesting to prove an analogous result concerning, for example, the minor-minimal obstructions for c$^*$d-depth.

\medskip
Finally, we would like to return to the closure properties of depth measures, as studied in \Cref{sec:closurede}.
In \cite{BriKL25} (\Cref{thm:contrclosure}), it is proved that the contraction$^*$-depth of a matroid $M$ is equal to the restriction closure of its c-depth,
modulo the technical difference by $1$ caused by our adjusted definition of c-depth.
In \Cref{thm:csclosure}, we prove that the c$^*$-depth of a matroid $M$ is equal to the restriction closure of its c-depth, that is, to the minimum c-depth 
of a matroid $M'$ that contains $M$ as a restriction.
Analogously, \cite{DBLP:journals/corr/abs-2402-16215prep} proves that the branch-depth of a \emph{representable} matroid $M$ is equal to the minor closure of its cd-depth,
that is, to the minimum cd-depth of a matroid $M'$ that contains $M$ as a minor.
However, \cite{DBLP:journals/corr/abs-2402-16215prep} leaves open the question whether the same statement holds for all matroids.

In the setting of our paper, this naturally brings a new question, perhaps a bit easier than the one from~\cite{DBLP:journals/corr/abs-2402-16215prep}:
\begin{problem}
Is it true, that for any matroid $M$, the c$^*$d-depth of $M$ is equal to the minimum cd-depth of a matroid $M'$ that contains $M$ as a restriction?
\end{problem}

There is yet another problem left open in our paper, not directly related to closure properties, but seemingly affected by the same kind of difficulties as those described in \cite{DBLP:journals/corr/abs-2402-16215prep} for the closure property of branch-depth in general matroids:
\begin{problem}
Is it true, that for any matrix $\ve A$ over any field, $\csdsd(\ve A)=\csdsd(M(\ve A))$?
\end{problem}

\bibliography{bibliography}

\end{document}